\def\draft{n}
\documentclass[11pt]{amsart}
\usepackage[headings]{fullpage}
\usepackage{amssymb,epic,eepic,epsfig}
\usepackage{pb-diagram}
\usepackage[all]{xy}
\usepackage{graphicx}
\usepackage{latexsym}
\usepackage{amsmath,amsfonts,amsthm,mathdots}
\usepackage{stmaryrd}
\usepackage{enumerate}
\usepackage[T1]{fontenc}
\usepackage[latin1]{inputenc}
\usepackage[english]{babel}
\usepackage{mathtools}  
\usepackage{url}
\usepackage[bookmarks=true,%
    colorlinks=true,%
    linkcolor=blue,%
    citecolor=blue,%
    filecolor=blue,%
    menucolor=blue,%
    urlcolor=blue,%
    breaklinks=true]{hyperref}


\newtheorem{theorem}{Theorem}[section]
\theoremstyle{definition}
\newtheorem{proposition}[theorem]{Proposition}
\newtheorem{lemma}[theorem]{Lemma}
\newtheorem{definition}[theorem]{Definition}
\newtheorem{remark}[theorem]{Remark}
\newtheorem{corollary}[theorem]{Corollary}
\newtheorem{conjecture}[theorem]{Conjecture}

\newtheorem{example}[theorem]{Example}

\newtheorem{notation}[theorem]{Notation}

\numberwithin{equation}{section}

\def\printname#1{
        \if\draft y
                \smash{\makebox[0pt]{\hspace{-0.5in}
                        \raisebox{8pt}{\tt\tiny #1}}}
        \fi
}

\newcommand{\psdraw}[2]
         {\begin{array}{c} \hspace{-1.3mm}
        \raisebox{-4pt}{\epsfig{figure=draws/#1,width=#2}}
        \hspace{-1.9mm}\end{array}}

\def\lbl#1{\label{#1}\printname{#1}}

\usepackage[all]{xy}
\CompileMatrices
\SelectTips{cm}{}

\def\cxymatrix#1{\xy*[c]\xybox{\xymatrix#1}\endxy}

\usepackage{graphicx}
\usepackage{caption}
\captionsetup{font=small}
\usepackage{subfig}

\usepackage{ifpdf}
\ifpdf
\DeclareGraphicsRule{*}{mps}{*}{}
\fi


\DeclareMathOperator{\Li}{Li}
\DeclareMathOperator{\FT}{FT}

\DeclareMathOperator{\Vol}{Vol}
\DeclareMathOperator{\CS}{CS}
\DeclareMathOperator{\cs}{cs}

\DeclareMathOperator{\Imag}{Im}

\DeclareMathOperator{\SL}{SL}
\DeclareMathOperator{\SU}{SU}
\DeclareMathOperator{\GL}{GL}

\DeclareMathOperator{\PSL}{PSL}

\DeclareMathOperator{\Ker}{Ker}

\DeclareMathOperator{\ind}{ind}

\DeclareMathOperator{\Tr}{Tr}
\DeclareMathOperator{\Stab}{Stab}
\DeclareMathOperator{\gen}{gen}

\DeclareMathOperator{\diag}{diag}
\DeclareMathOperator{\Adj}{Adj}
\DeclareMathOperator{\geo}{geo}
\DeclareMathOperator{\odd}{odd}
\DeclareMathOperator{\Conj}{Conj}
\DeclareMathOperator{\cusp}{cusp}

\DeclarePairedDelimiter{\Norm}{\lvert}{\rvert}

\newcommand{\R}{\mathbb R} 
\newcommand{\C}{\mathbb C}
\renewcommand{\H}{\mathbb H}
\newcommand{\F}{\mathbb F}
\newcommand{\Mat}[4]{\left(\begin{smallmatrix}#1&#2\\#3&#4\end{smallmatrix}
\right)}
\newcommand{\N}{\mathbb N} 
\newcommand{\Z}{\mathbb Z}
\newcommand{\Q}{\mathbb Q}
\newcommand{\B}{\mathcal B}
\newcommand{\Pre}{\mathcal P}

\newcommand{\Cinfty}{\C\cup\{\infty\}}
\newcommand{\Cremove}{\C\backslash\{0,1\}}

\widowpenalty=10000000
\clubpenalty=10000000


\def\BC{\mathbb C}

\def\a{\alpha}

\def\e{\epsilon}

\def\b{\beta}

\def\CS{\mathrm{CS}}

\def\Li{\mathrm{Li}}

\def\diag{\mathrm{diag}}

\def\Tr{\mathrm{Tr}}
\def\SU{\mathrm{SU}}

\hyphenation{Thurs-ton}
\hyphenation{co-chain}
\newcommand{\comma}{,\penalty300}
\newcommand{\semico}{;\penalty 300 }

\begin{document}


\title[The complex volume of $\SL(n,\C)$-representations of 3-manifolds]
{The complex volume of $\SL(n,\C)$-representations of 3-manifolds}

\author{Stavros Garoufalidis}
\address{School of Mathematics \\
         Georgia Institute of Technology \\
         Atlanta, GA 30332-0160, USA \newline
         {\tt \url{http://www.math.gatech.edu/~stavros}}}
\email{stavros@math.gatech.edu}
\author{Dylan P. Thurston}
\address{Department of mathematics \\
        Columbia University \\
         MC 4436 \\
        New York, NY 10027, USA \newline 
        {\tt \url{http://www.math.columbia.edu/~dpt}}}
\email{dthurston@barnard.edu}
\author{Christian K. Zickert}
\address{University of Maryland \\
         Department of Mathematics \\
         College Park, MD 20742-4015, USA \newline
         {\tt \url{http://www2.math.umd.edu/~zickert}}}
\email{zickert@umd.edu}

\thanks{The authors were supported in part by the NSF. \\
\newline
2001 {\em Mathematics Classification.} Primary 57N10, 57M27, 58J28. 
Secondary 11R70, 19F27, 11G55.
\newline
{\em Key words and phrases: Ptolemy coordinates, $\SL(n,\C)$-representations, 
complex volume, Chern-Simons invariant, extended Bloch group, hyperbolic $3$-manifolds, Cheeger-Chern-Simons class,
Rogers dilogarithm, algebraic $K$-theory, census manifolds, SnapPy.
}
}

\date{September 21, 2011} 


\begin{abstract}
For a compact 3-manifold $M$ with arbitrary (possibly empty) boundary, we 
give a parametrization of the set of conjugacy classes of boundary-unipotent 
representations of $\pi_1(M)$ into $\SL(n,\C)$. Our parametrization uses
Ptolemy coordinates, which are inspired by coordinates on higher 
Teichm\"{u}ller spaces due to Fock and Goncharov. We show that a 
boundary-unipotent representation determines an element in Neumann's extended 
Bloch group $\widehat\B(\C)$, and use this to obtain an efficient formula for 
the Cheeger-Chern-Simons invariant, and in particular for the volume. 
Computations for the census manifolds show that boundary-unipotent 
representations are abundant, and
numerical comparisons with census volumes, suggest that the volume of a 
representation is an integral linear 
combination of volumes of hyperbolic $3$-manifolds. This is in agreement with 
a conjecture of Walter Neumann, stating that the Bloch group is generated by 
hyperbolic manifolds.
\end{abstract}

\maketitle

\tableofcontents

\section{Introduction}
\lbl{sec.intro}

For a closed 3-manifold $M$, the Cheeger-Chern-Simons 
invariant~\cite{CheegerSimons,ChernSimons} of a representation 
$\rho$ of $\pi_1(M)$ in $\SL(n,\C)$ is given by the Chern-Simons integral
\begin{equation}
\lbl{ChernSimonsintegral}
\widehat c(\rho)=\frac{1}{2}\int_Ms^*
\big(\Tr(A\wedge dA+\frac{2}{3}A\wedge A\wedge A)\big)\in \C/4\pi^2\Z,
\end{equation}
where $A$ is the flat connection in the flat $\SL(n,\C)$-bundle $E_\rho$ 
with holonomy $\rho$, and $s\colon M\to E_\rho$ is a section of $E_\rho$. 
Since $\SL(n,\C)$ is $2$-connected a section always exists, and a different 
choice of section changes the value of the integral by a multiple of $4\pi^2$.

When $n=2$, the imaginary part of the Cheeger-Chern-Simons invariant equals 
the hyperbolic volume of~$\rho$. More precisely, if 
$D\colon\widetilde M\to\H^3$ is a developing map for $\rho$ and $\nu_{\H^3}$ 
is the hyperbolic volume form, $\Imag(\widehat c(\rho))$ equals the integral 
of $D^*(\nu_{\rho})$ over a fundamental domain for $M$. In particular, if 
$M=\H^3/\Gamma$ is a hyperbolic manifold, and $\rho$ is a lift to 
$\SL(2,\C)$ of 
the geometric representation $\rho_{\geo}\colon\pi_1(M)\to\PSL(2,\C)$, the 
imaginary part equals the volume of $M$.
In fact, in this case we have
\begin{equation}
\lbl{CCSeqVolC}
\widehat c(\rho)=i(\Vol(M)+i\CS(M)),
\end{equation}
where $\CS(M)$ is the Chern-Simons invariant of $M$ (with the Riemannian 
connection). Although this result is known to experts, no proof seems to be 
available (see \cite{Snapdescription,Neumann} for discussions). We give a 
proof in Section~\ref{sec.CCS}.
The invariant $\Vol(M)+i\CS(M)$ is often referred to as \emph{complex volume}. 
Motivated by this, we define the complex volume $\Vol_\C$ of a representation 
$\rho\colon\pi_1(M)\to\SL(n,\C)$ by
\begin{equation}
\widehat c(\rho)=i\Vol_\C(\rho)
\end{equation} 
and define the \emph{volume} of $\rho$ to be the real part of the complex 
volume, i.e.~the imaginary part of the Cheeger-Chern-Simons invariant. 
Surprisingly, as we shall see, the relationship to hyperbolic volume seems 
to persist even when $n>2$.

The set of $\SL(n,\C)$-representations is a complex variety with finitely 
many components, and the complex volume is constant on components. This 
follows from the fact that representations in the same component have 
cohomologous Chern-Simons forms.
Hence, for any $M$, the set of complex volumes is a finite set.

We show that the definition of the Cheeger-Chern-Simons invariant naturally 
extends to compact manifolds with boundary, and representations 
$\rho\colon \pi_1(M)\to\SL(n,\C)$ that are \emph{boundary-unipotent}, 
i.e.~take peripheral subgroups to a conjugate of the unipotent group $N$ 
of upper triangular matrices with $1$'s on the diagonal. We formulate all 
our results in this more general setup.

The main result of the paper is a concrete algorithm for computing the set 
of complex volumes. The idea is that the set of (conjugacy classes of) 
boundary-unipotent representations can be parametrized by a variety, 
called the \emph{Ptolemy variety}, which is defined by homogeneous 
polynomials of degree $2$. The Ptolemy variety depends on a choice of 
triangulation, but if the triangulation is sufficiently fine, every 
representation is detected by the Ptolemy variety. We show that a point $c$ in the 
Ptolemy variety naturally determines an element $\lambda(c)$ in Neumann's 
extended Bloch group $\widehat\B(\C)$, such that if $\rho$ is the representation 
corresponding to $c$, we have
\begin{equation}
\lbl{VolCintro}
R(\lambda(c))=i\Vol_\C(\rho),
\end{equation}
where $R\colon\widehat\B(\C)\to\C/4\pi^2\Z$ is a Rogers dilogarithm.

There is a canonical group homomorphism
$$
\phi_n\colon\SL(2,\C)\to\SL(n,\C)
$$ 
coming from the natural $\SL(2,\C)$-action on the vector space 
$\text{Sym}^{n-1}(\BC^2)$. The map $\phi_n$ preserves unipotent elements, 
and we show that composing a boundary-unipotent representation in $\SL(2,\C)$ 
with $\phi_n$ multiplies the complex volume by $\binom{n+1}{3}$. If 
$M=\H^3/\Gamma$ is a hyperbolic $3$-manifold, the geometric representation 
$\rho_{\geo}$ always lifts to a representation in $\SL(2,\C)$, but if $M$ has 
cusps, lifts are not necessarily boundary-unipotent. In fact, by a result of 
Calegari~\cite{Calegari}, if $M$ has a single cusp, any lift of the geometric representation takes a longitude to an element with 
trace $-2$.
When $n$ is even, we shall thus, more generally, be interested in 
boundary-unipotent representations in
\begin{equation}
p\SL(n,\C)=\SL(n,\C)\big/\langle \pm I\rangle.
\end{equation}
Such representations have a complex volume defined modulo $\pi^2i$, and our 
algorithm computes these as well. By studying representations in 
$p\SL(n,\C)$, we make sure that when $M$ is hyperbolic, there is always at 
least one representation with non-trivial complex volume, namely 
$\phi_n\circ \rho_{\geo}$.

Walter Neumann has conjectured that every element in the Bloch group 
$\B(\C)$ is an integral linear combination of Bloch group elements of 
hyperbolic $3$-manifolds. Since the extended Bloch group equals the Bloch 
group up to torsion, Neumann's conjecture would imply that all complex 
volumes are, up to rational multiples of $i\pi^2$, integral linear 
combinations of complex volumes of hyperbolic $3$-manifolds.
In particular, the volumes should all be integral linear combinations of 
volumes of hyperbolic manifolds.

Our algorithm has been implemented by Matthias Goerner. The algorithm uses 
Magma~\cite{Magma} to 
compute a primary decomposition of the Ptolemy variety, and then uses 
\eqref{VolCintro} 
to compute the complex volumes. 
For $n=2$, we have computed primary decompositions of the Ptolemy varieties for all census manifolds with 
$\leq 8$ simplices (these usually finish within a fraction of a second) and all link complements with $\leq 16$ simplices in the SnapPy census~\cite{SnapPy} of knots with up to 11 crossings and links with up to 10 crossings. 
When there are more than 16 simplices some of the computations don't terminate.
For $n=3$, computations are feasible for many manifolds with up to $4$ simplices, but for $n=4$ the computations run 
out of memory for all manifolds with more than $2$ simplices. It would be 
interesting to perform numerical calculations for $n \geq 4$. 
Our computations have revealed numerous (numerical) examples of linear 
combinations as predicted by Neumann's conjecture. 
To the best of our knowledge, our examples are the first concrete 
computations (the first of which were carried out in $2009$) of the Cheeger-Chern-Simons invariant (complex volume) 
for $n>2$.

\subsection{Statement of our results}
\lbl{sub.results}

This section gives a brief summary of our main results. More details can be 
found in the paper.

Let $M$ be a compact, oriented $3$-manifold with (possibly empty) boundary, 
and let $K$ be a closed $3$-cycle (triangulated complex; see 
Definition~\ref{cycledefn}) homeomorphic to the space obtained from $M$ by 
collapsing each boundary component to a point. We identify each of the 
simplices of $K$ with a standard simplex
\begin{equation}
\Delta^3_n=\left\{(x_0,x_1,x_2,x_3)\in\R^4\bigm \vert 0\leq x_i\leq n,
\quad x_0+x_1+x_2+x_3=n\right\}.
\end{equation}

Let $\Delta^3_n(\Z)$ be the set of points in $\Delta^3_n$ with integral coordinates, and let $\dot\Delta^3_n(\Z)$ be $\Delta^3_n(\Z)$ with the $4$ vertex points removed.

\begin{definition}
\lbl{Ptolemyassignmentdefnintro} 
A \emph{Ptolemy assignment} on 
$\Delta^3_n$ is an assignment $\dot\Delta^3_n(\Z)\to\C^*$, $t\mapsto c_t$, 
of a non-zero complex number $c_t$ to each (non-vertex) integral point $t$ 
of $\Delta^3_n$ such that for each $\alpha\in \Delta^3_{n-2}$, the 
\emph{Ptolemy relation}
\begin{equation}
\lbl{Ptolemyrelationintro}
c_{\alpha_{03}}c_{\alpha_{12}}+c_{\alpha_{01}}c_{\alpha_{23}}=c_{\alpha_{02}}c_{\alpha_{13}}
\end{equation}
is satisfied. Here, $\alpha_{ij}$ denotes the integral point $\alpha+e_i+e_j$.
A Ptolemy assignment on $K$ is a Ptolemy assignment $c^i$ on each simplex 
$\Delta_i$ of $K$ such that the Ptolemy coordinates agree on identified faces.
\end{definition}

\begin{remark} 
The name is inspired by the resemblance of \eqref{Ptolemyrelationintro} 
with the Ptolemy relation between the lengths of the sides and diagonals 
of an inscribed quadrilateral (see Figure~\ref{Ptolemymotivation}). In the 
work of Fock and Goncharov~\cite{FockGoncharov}, the Ptolemy relations 
appear as relations between coordinates on the higher Teichm\"{u}ller 
space when the triangulation of a surface is changed by a flip.
\end{remark}

\begin{figure}[ht]
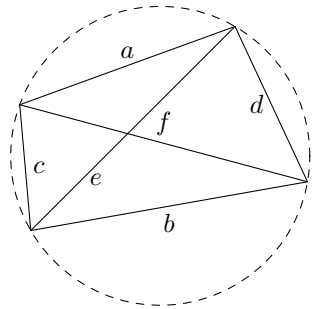

$$
\psdraw{SLrepFigures.21}{4cm}
$$
\caption{A quadrilateral is inscribed in a circle if and only if 
$ab+cd=ef$.}\lbl{Ptolemymotivation}
\end{figure}

It follows immediately from the definition that the set of Ptolemy 
assignments on $K$ is an algebraic set $P_n(K)$, which we shall refer to as 
the \emph{the Ptolemy variety}. As we shall see, the Ptolemy variety 
parametrizes \emph{generically decorated} boundary-unipotent representations 
on in the sense of Definition~\ref{genericdecorationdefn}. If 
$\rho\colon\pi_1(M)\to \SL(n,\C)$ is a boundary-unipotent representation, 
and $E_\rho$ is the corresponding flat $\SL(n,\C)$-bundle, a decoration of 
$\rho$ determines a reduction of $(E_\rho)_{\partial M}$ to an $N$-bundle (see Proposition~\ref{decorationeqreduction}).



The extended pre-Bloch group $\widehat\Pre(\C)$ is generated by 
tuples $(u,v)\in \C^2$ with $e^u+e^v=1$. We refer to Section~\ref{sec.bloch} for a review. Using~\eqref{Ptolemyrelationintro}, we obtain that a 
Ptolemy assignment $c$ on $\Delta^3_n$ gives rise to an element 
\begin{equation}
\lambda(c)=\sum_{\alpha\in T^3(n-2)}(\widetilde c_{\alpha_{03}}+\widetilde c_{\alpha_{12}}-\widetilde c_{\alpha_{02}}-\widetilde c_{\alpha_{13}},\widetilde c_{\alpha_{01}}+\widetilde c_{\alpha_{23}}-\widetilde c_{\alpha_{02}}-\widetilde c_{\alpha_{13}})\in\widehat\Pre(\C),
\end{equation}
where the tilde denotes a branch of logarithm (the particular choice is 
inessential).
We thus have a map 
\begin{equation}
\lbl{betamapdefn}
\lambda\colon P_n(K)\to\widehat\Pre(\C),\quad 
c\mapsto \sum_i\epsilon_i\lambda(c^i),
\end{equation} 
where the sum is over the simplices of $K$.
Let $R_{\SL(n,\C),N}(M)\big/\Conj$ denote the set of conjugacy classes of 
boundary-unipotent representations $\pi_1(M)\to\SL(n,\C)$.

\begin{theorem}[Proof in Section~\ref{Proofofmainthm}]
\lbl{mainthmintro} 
A Ptolemy assignment $c$ uniquely determines a boundary-unipotent representation $\rho(c)\in R_{\SL(n,\C),N}(M)\big/\Conj$. The map $\lambda$ has image in the 
extended Bloch group $\widehat\B(\C)$, and we have a commutative diagram.
\begin{equation}
\lbl{mainthmdiag}
\cxymatrix{@C=3em{P_n(K)\ar[r]^-\lambda\ar[d]^\rho&{\widehat\B(\C)}\ar[d]^R
\\{R_{\SL(n,\C),N}(M)\big/\Conj}\ar[r]^-{i\Vol_\C}&{\C/4\pi^2\Z}}}
\end{equation}
If $c\in P_n(K)$ is a Ptolemy assignment on $K$, $\lambda(c)$ only depends 
on the representation $\rho(c)$. Moreover, if the triangulation 
is sufficiently fine (a single barycentric subdivision suffices), the map 
$\rho$ is surjective.\qed
\end{theorem}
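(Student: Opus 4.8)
The plan is to build the representation from a Ptolemy cochain, then establish the commutativity of the diagram piece by piece, and finally prove surjectivity via a developing-map argument.

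The plan is to build the representation and its Bloch element from a Ptolemy cochain, check that the diagram commutes by a simplex-by-simplex comparison of the Chern--Simons integral with Rogers dilogarithms, and finally obtain surjectivity by a general-position argument. By the decoration theory (Definition~\ref{genericdecorationdefn}, Proposition~\ref{decorationeqreduction}), a Ptolemy cochain on $K$ is equivalent to a $\pi_1(M)$-equivariant assignment of affine flags to the vertices of $\widetilde K$ with all four flags of each simplex in general position; such a flag field has a holonomy that is well-defined up to the simultaneous $\SL(n,\C)$-action, giving $\rho(c)\in R_{\SL(n,\C),N}(M)/\Conj$. The Ptolemy relation~\eqref{Ptolemyrelationintro} is the compatibility that makes this construction unimodular, so the holonomy lands in $\SL(n,\C)$, and collapsing each boundary component to a vertex puts the peripheral holonomy into a conjugate of $N$, so $\rho(c)$ is boundary-unipotent. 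For $\lambda(c)\in\widehat\B(\C)$, I would first rewrite~\eqref{Ptolemyrelationintro} as $e^{u_\alpha}+e^{v_\alpha}=1$, where $(u_\alpha,v_\alpha)$ is the $\alpha$-summand of $\lambda$, so each summand is a genuine generator of $\widehat\Pre(\C)$; then I would compute the image of $\lambda(c)$ under Neumann's homomorphism $\widehat\Pre(\C)\to\C\wedge_\Z\C$, $(u,v)\mapsto u\wedge v$, and observe that after expanding into wedges of the logarithms $\widetilde c_t$ the terms are supported on the faces and edges of $K$, so the gluing identifications together with the hypothesis that $K$ is \emph{closed} force them to cancel. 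This is the $\SL(n,\C)$-analogue of Neumann's argument that a triangulated $3$-cycle represents a class in $\widehat\B(\C)$.

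For the commutativity of~\eqref{mainthmdiag}, which I expect to be the main obstacle, the goal is $R(\lambda(c))=i\Vol_\C(\rho(c))$ in $\C/4\pi^2\Z$. I would decompose the Chern--Simons integral~\eqref{ChernSimonsintegral} over the simplices of $K$, using the decoration to trivialize $E_\rho$ and to subdivide each $n$-simplex into the elementary sub-tetrahedra indexed by $\alpha\in T^3(n-2)$ prescribed by the Ptolemy/Fock--Goncharov structure. The shape of the sub-tetrahedron attached to $\alpha$ is the cross-ratio whose flattening is $(u_\alpha,v_\alpha)$, so its contribution to the integral is the Rogers dilogarithm term $R(u_\alpha,v_\alpha)$; this per-tetrahedron computation is the $\SL(2,\C)$ boundary-unipotent case, which I would take from Section~\ref{sec.CCS}. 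Summing over all $\alpha$ and all simplices with the orientation signs then assembles $R(\lambda(c))$. The delicate points are controlling the logarithm lifts so that the identity holds in $\C/4\pi^2\Z$ rather than merely modulo $\pi^2$, and verifying that the flattenings match across identified faces so that the local contributions glue to the integral over a fundamental domain.

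To show that $\lambda(c)$ depends only on $\rho(c)$, I would identify the fibers of $\rho$ with the orbits of the gauge group acting on decorations of a fixed representation and prove invariance of $\lambda$ on its generators. Rescaling the affine flag at vertex $j$ by $\mu_j$ multiplies $c_t$ by $\prod_j\mu_j^{t_j}$ for $t=(t_0,t_1,t_2,t_3)$, hence shifts $\widetilde c_t$ by $\sum_j t_j\log\mu_j$; since $\alpha_{03}+\alpha_{12}-\alpha_{02}-\alpha_{13}=0$ and $\alpha_{01}+\alpha_{23}-\alpha_{02}-\alpha_{13}=0$ as integer tuples, every coordinate of each flattening has total weight zero and is unchanged, so $\lambda(c)$ is invariant already in $\widehat\Pre(\C)$. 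The remaining generators, which change the flag at a single vertex, alter $\lambda$ by a combination of the defining (lifted five-term) relations of $\widehat\Pre(\C)$ supported on the star of that vertex, and hence act trivially as well.

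Finally, for surjectivity of $\rho$ after one barycentric subdivision, given a boundary-unipotent $\rho\colon\pi_1(M)\to\SL(n,\C)$ I must produce a generic equivariant decoration, i.e.\ one in which no Ptolemy coordinate vanishes. A decoration always exists, and genericity is a Zariski-open condition on the finitely many vertex-orbits, so the only issue is non-emptiness of this open locus. I would show that after a single barycentric subdivision the constraints decouple: the new barycenter vertices carry unconstrained flags, and a transversality count performed equivariantly on the quotient then lets one choose flags orbit by orbit so that the four flags of every simplex are in general position. Carrying out this general-position argument equivariantly is the most technical ingredient after the commutativity statement.
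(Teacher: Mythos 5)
Your overall architecture matches the paper's in several places (reconstruction of $\rho(c)$ from the cochain, cancellation in $\wedge^2(\C)$ to land in $\widehat\B(\C)$, the weight-zero cancellation for rescalings, and the free-choice-at-interior-vertices argument for surjectivity), but the step you yourself flag as the main obstacle --- commutativity of \eqref{mainthmdiag} --- is a genuine gap as proposed. You plan to decompose the Chern--Simons integral \eqref{ChernSimonsintegral} over the subsimplices and match each piece with a Rogers dilogarithm term, ``taking the per-tetrahedron computation from Section~\ref{sec.CCS}.'' No such per-tetrahedron computation exists there: Section~\ref{sec.CCS} treats only closed manifolds and the $n=2$ geometric representation globally (via Yoshida), and for a manifold with boundary the integral \eqref{ChernSimonsintegral} is not even defined --- the paper \emph{defines} $i\Vol_\C$ on $H_3(\SL(n,\C),N)$ as $R\circ\lambda$ (Remark~\ref{extensionremark}), so that in the relative case the square commutes essentially by construction. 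The actual content is Theorem~\ref{formulaforchat}: that $R\circ\lambda\circ i_*$ agrees with the honest Cheeger--Chern--Simons class on $H_3(\SL(n,\C))$. The paper proves this not by any local integral but by showing $\lambda$ commutes with stabilization (Lemma~\ref{stabilizationlemma}), invoking the decomposition $H_3(\SL(n,\C))\cong H_3(\SL(2,\C))\oplus K_3^M(\C)$, citing Goette--Zickert for the $\SL(2,\C)$ summand, and checking both $\widehat c$ and $\lambda$ vanish on the Milnor summand (Lemmas~\ref{Milnorgenerator} and \ref{zeroonMilnor}). Even in the $n=2$ closed case, the identity ``Chern--Simons integral $=$ sum of Rogers dilogarithms of the flattenings'' is the Neumann/Goette--Zickert theorem, not something derivable tetrahedron by tetrahedron from the connection form; your plan would amount to reproving it analytically and then extending it to $n>2$ with no mechanism supplied.

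Two further steps are asserted rather than proved. First, that an abstract assignment $\mathring{T}^3(n)\to\C^*$ satisfying the Ptolemy relations actually arises from a generic tuple of $N$-cosets (so that $\rho(c)$ exists) is the content of the paper's entire Section~\ref{proofsection}: the explicit formulas \eqref{qij} and \eqref{alphaijk} for the long and short edges in terms of diamond coordinates, and the inductive Propositions~\ref{reconstructtwotuple} and \ref{reconstructtriple} showing the Ptolemy relations are the only relations. Saying the Ptolemy relation ``makes the construction unimodular'' does not produce the cocycle. Second, your decoration-independence argument only treats scalar rescalings $g_j\mapsto\mu_jg_j$, whereas the ambiguity is the full torus $N_G(N)/N$ acting by $g_jN\mapsto g_jd_jN$ with $d_j=\diag(d_{j0},\dots,d_{j,n-1})$; the shift of $\widetilde c_t$ is then $\sum_j\sum_{k\le t_j}\log d_{jk}$, and the cancellation still works but needs to be said (and your fallback claim that the ``remaining generators'' act by five-term relations is unsubstantiated --- the paper instead routes this through the homological statement that equivalent decorations have the same class in $H_3(\SL(n,\C),N)$). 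You also never address independence of the logarithm branch (Proposition~\ref{independenceoflog}), which is where the delicate $2$-torsion and orientability argument about $02$/$13$ edges lives; without it neither your definition of $\lambda$ nor your rescaling computation is well posed in $\widehat\Pre(\C)$.
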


The above theorem (as well as Theorem~\ref{mainthmpSLintro} below) gives an efficient algorithm 
for computing the set of complex volumes. For numerous examples, see Section
\ref{sec.examples}.

\begin{corollary}
\lbl{Corintro}
A boundary-unipotent representation $\rho\colon\pi_1(M)\to \SL(n,\C)$ 
determines an element $[\rho]\in\widehat\B(\C)$ such that
$R([\rho])=i\Vol_\C(\rho)$. \qed
\end{corollary}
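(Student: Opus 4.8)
The plan is to deduce the Corollary directly from Theorem~\ref{mainthmintro}, whose commutative diagram already contains all the needed machinery; the only work is to extract a statement about an arbitrary boundary-unipotent $\rho$ from a statement phrased in terms of Ptolemy cochains. First I would invoke the surjectivity clause of the theorem: after passing to a sufficiently fine triangulation of $K$ (a single barycentric subdivision suffices), the map $\rho\colon P_n(K)\to R_{\SL(n,\C),N}(M)/\Conj$ is surjective, so there exists a Ptolemy cochain $c\in P_n(K)$ with $\rho(c)=\rho$. I would then \emph{define} the element $[\rho]\in\widehat\B(\C)$ to be $\lambda(c)$ for such a $c$, noting that $\lambda$ indeed lands in $\widehat\B(\C)$ (not merely in $\widehat\Pre(\C)$) by the theorem.

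The next step is to verify that this definition is independent of the chosen cochain $c$, so that $[\rho]$ is genuinely a function of $\rho$ alone. This is exactly the content of the theorem's assertion that ``if $c\in P_n(K)$ is a Ptolemy cochain on $K$, $\lambda(c)$ only depends on the representation $\rho(c)$'': if $c,c'$ both satisfy $\rho(c)=\rho(c')=\rho$, then $\lambda(c)=\lambda(c')$, so the assignment $[\rho]:=\lambda(c)$ is well defined. With well-definedness in hand, the identity $R([\rho])=i\Vol_\C(\rho)$ follows immediately by chasing the commutative diagram~\eqref{mainthmdiag}: starting from $c\in P_n(K)$, going right then down gives $R(\lambda(c))$, while going down then right gives $i\Vol_\C(\rho(c))=i\Vol_\C(\rho)$, and commutativity equates the two. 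Hence $R([\rho])=R(\lambda(c))=i\Vol_\C(\rho)$, as claimed.

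I would expect essentially no obstacle here, since the Corollary is a formal consequence of the theorem; the genuine mathematical content — surjectivity of $\rho$ after subdivision, the fact that $\lambda$ lands in $\widehat\B(\C)$, well-definedness of $\lambda$ modulo the representation, and commutativity of the square — has all been packaged into Theorem~\ref{mainthmintro}. The one subtlety worth a sentence is that the construction of $[\rho]$ a priori uses a specific triangulation $K$ and its subdivision, so to present $[\rho]$ as an invariant of $\rho$ alone one should remark that the value $R([\rho])=i\Vol_\C(\rho)$ is triangulation-independent because $i\Vol_\C(\rho)$ is; if one wants the stronger statement that $[\rho]\in\widehat\B(\C)$ itself (not just its image under $R$) is triangulation-independent, that would follow from the compatibility of $\lambda$ with subdivision, which I would cite rather than reprove. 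The cleanest write-up simply fixes one sufficiently fine triangulation, defines $[\rho]=\lambda(c)$, and quotes the diagram — a two-line argument.
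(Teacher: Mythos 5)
Your proposal is correct and is essentially the paper's intended argument: the corollary is stated with a \qed precisely because it is the formal consequence of Theorem~\ref{mainthmintro} that you describe (surjectivity after barycentric subdivision, $\lambda$ landing in $\widehat\B(\C)$, well-definedness of $\lambda(c)$ as a function of $\rho(c)$, and the commuting square). Your closing remark on triangulation-independence is also consistent with the paper, which handles it by noting that the fundamental class in $H_3(\SL(n,\C),N)$ is independent of the $3$-cycle structure and that $[\rho]$ is $\lambda$ of that class.
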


The Cheeger-Chern-Simons invariant can be viewed as a characteristic class
$H_3(\SL(n,\C))\to \C/4\pi^2\Z$, and the result underlying the proof of 
commutativity of \eqref{mainthmdiag} is Theorem~\ref{cocycleintro} below, 
giving an explicit cocycle formula for the Cheeger-Chern-Simons class. The 
formula generalizes the formula in Goette-Zickert~\cite{GoetteZickert} for 
$n=2$. Recall that a homology class can be represented by a formal sum of 
tuples $(g_0,\dots,g_3)$. 
To such a tuple, we can assign a Ptolemy assignment $c(g_0,\dots,g_3)$ defined 
by
\begin{equation}
\lbl{Ptolemydeterminants}
c(g_0,\dots,g_3)_t=\det(\{g_0\}_{t_0}\cup\dots\cup\{g_3\}_{t_3}),
\quad t=(t_0,\dots,t_3),
\end{equation}
where $\{g_i\}_{t_i}$ denotes the ordered set consisting of the first $t_i$ 
column vectors of $g_i$. One can always represent a homology class by tuples, 
such that all the determinants in~\eqref{Ptolemydeterminants} are non-zero. 

\begin{theorem}[Proof in Section~\ref{sec.computationofchat}]
\lbl{cocycleintro}
The Cheeger-Chern-Simons class $\widehat c$ factors as
\begin{equation}
\xymatrix{{H_3(\SL(n,\C))}\ar[r]^-{\lambda}&{\widehat\B(\C)}\ar[r]^-R&
{\C/4\pi^2\Z}}, 
\end{equation}
where $\lambda$ is induced by the map taking a tuple $(g_0,\dots,g_3)$ to 
$\lambda(c(g_0,\dots,g_3))\in\widehat\Pre(\C)$.\qed
\end{theorem}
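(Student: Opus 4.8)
The plan is to establish Theorem~\ref{cocycleintro} by reducing the cocycle identity for $\widehat c$ to the already-established commutativity of the diagram~\eqref{mainthmdiag}, combined with the classical homological interpretation of the Cheeger--Chern--Simons class. First I would recall that $\widehat c$, as a characteristic class, is computed by evaluating the Chern--Simons integral~\eqref{ChernSimonsintegral} on a fundamental class; for a group homology class represented by a formal sum of tuples $\sum_j \epsilon_j(g_0^j,\dots,g_3^j)$, this amounts to evaluating on the associated simplicial $3$-cycle. The central observation is that the determinant assignment~\eqref{Ptolemydeterminants} equips each such tuple $(g_0,\dots,g_3)$ with a Ptolemy cochain $c(g_0,\dots,g_3)$: the Ptolemy relation~\eqref{Ptolemyrelationintro} is exactly the Pl\"ucker (Grassmann--Pl\"ucker) relation among the minors $\det(\{g_0\}_{t_0}\cup\dots\cup\{g_3\}_{t_3})$, which holds identically. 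Thus the map $(g_0,\dots,g_3)\mapsto \lambda(c(g_0,\dots,g_3))$ is well defined into $\widehat\Pre(\C)$.

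Next I would verify that this assignment descends to homology, i.e.\ that $\lambda$ is indeed a chain map landing in the extended Bloch group $\widehat\B(\C)$ rather than merely the pre-Bloch group. The key point is $\SL(n,\C)$-invariance: multiplying all $g_i$ on the left by a fixed $g\in\SL(n,\C)$ multiplies every minor in~\eqref{Ptolemydeterminants} by a factor that is constant across a simplex (indeed $\det g = 1$ controls the top minors), so the cross-ratio--type combinations entering $\lambda(c)$ are unchanged; hence $\lambda$ factors through $H_3(\SL(n,\C))$. To see that the image lands in $\widehat\B(\C)$, I would invoke Theorem~\ref{mainthmintro}, whose proof (assumed available) already shows that $\lambda$ maps into the extended Bloch group for Ptolemy cochains arising from representations; the homological boundary $\partial_4$ of a $4$-tuple produces precisely the five-term relation defining $\widehat\B(\C)$, so that the $3$-cycle condition on a homology class maps to a relation cycle in $\widehat\Pre(\C)$ that vanishes in $\widehat\B(\C)$.

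Finally I would identify the composite $R\circ\lambda$ with $\widehat c$. The strategy is to compare two functions on $H_3(\SL(n,\C))$: both $\widehat c$ and $R\circ\lambda$ are group $3$-cocycles valued in $\C/4\pi^2\Z$, and the commutativity of~\eqref{mainthmdiag} established in Theorem~\ref{mainthmintro} shows they agree on every class arising from a boundary-unipotent representation $\rho(c)$ via $R(\lambda(c))=i\Vol_\C(\rho(c))=\widehat c(\rho(c))$. To upgrade this pointwise agreement on representation-induced classes to an equality of cohomology classes on all of $H_3(\SL(n,\C))$, I would use that $H_3(\SL(n,\C);\Z)$ is generated (up to the relevant torsion) by the fundamental classes of such representations, or equivalently invoke the naturality of $\widehat c$ together with the fact that the Rogers dilogarithm $R$ computes the universal Cheeger--Chern--Simons class on $\widehat\B(\C)$ as in Neumann's framework.

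I expect the main obstacle to be the \emph{well-definedness and chain-map property} of $\lambda$ at the level of group homology: one must check that the branch-of-logarithm choices (the tildes in the definition of $\lambda(c)$) assemble coherently so that the flattening data required by the extended Bloch group is canonical modulo the relations, and that the Pl\"ucker relations supply exactly the five-term relations under $\partial_4$ without spurious boundary terms. Establishing that the apparent dependence on logarithm branches cancels in $\widehat\B(\C)$—and controlling the resulting ambiguity precisely to $4\pi^2\Z$ after applying $R$—is the delicate step; the rest follows formally from Theorem~\ref{mainthmintro} and the generation of $H_3(\SL(n,\C))$ by geometric classes.
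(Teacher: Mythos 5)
Your first two paragraphs are fine in outline: the Pl\"ucker relations do give the Ptolemy relations for the determinant cochain $c(g_0,\dots,g_3)$, and the descent of $\lambda$ to homology with image in $\widehat\B(\C)$ is established in the paper by the chain complex of Ptolemy cochains (Lemmas~\ref{beta4} and~\ref{beta3} and Proposition~\ref{independenceoflog} handle the boundary terms and the logarithm-branch ambiguity you flag). The problem is your third step. You propose to deduce $R\circ\lambda=\widehat c$ from the commutativity of diagram~\eqref{mainthmdiag} in Theorem~\ref{mainthmintro}, but in the paper the logical dependence runs the other way: the relative class $\widehat c\colon H_3(\SL(n,\C),N)\to\C/4\pi^2\Z$ is \emph{defined} as $R\circ\lambda$ (Remark~\ref{extensionremark}), and this is only legitimate as an extension of the genuine Cheeger--Chern--Simons class because Theorem~\ref{cocycleintro} has already been proved on absolute homology. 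Using~\eqref{mainthmdiag} to establish~\ref{cocycleintro} is therefore circular. Nor does "generation by geometric classes" rescue the argument: while $H_3(B\SL(n,\C)^\delta)$ is indeed generated by classes $B\rho_*([M])$ for closed $M$ (oriented bordism), the equality $\widehat c(B\rho_*([M]))=R(\lambda(B\rho_*([M])))$ on those generators is exactly the content of the theorem, with no independent verification available.

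What is actually needed, and absent from your proposal, is the structural input the paper uses in Section~\ref{sec.computationofchat}: the isomorphisms $H_3(\SL(n,\C))\cong H_3(\SL(3,\C))\cong H_3(\SL(2,\C))\oplus K_3^M(\C)$ (Suslin's stabilization and Sah's eigenspace decomposition for the transpose-inverse involution). On the $H_3(\SL(2,\C))$ summand one quotes Goette--Zickert, but this requires first proving that $\lambda$ commutes with stabilization --- a genuinely nontrivial computation (Lemma~\ref{stabilizationlemma}, via the prism operator $P$ and the identity~\eqref{theresult}). On the $K_3^M(\C)$ summand one must show both maps vanish: $\widehat c$ vanishes by Suslin's description of the generators as torus classes $B\rho_*([T])$ with diagonal holonomy together with \cite[Theorem~8.22]{CheegerSimons}, and $\lambda$ vanishes by the explicit triangulated-cube computation of Lemma~\ref{zeroonMilnor}. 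Without these two ingredients there is no bridge from the known $n=2$ case to general $n$, and your proposal does not supply a substitute.
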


We stress that the variety $P_n(K)$ depends on the triangulation of $K$, 
and may be empty. If a representation $\rho$ is in the image of 
$P_n(K)\to R_{\SL(n,\C),N}(M)\big/\Conj$, we say that $P_n(K)$ \emph{detects} 
$\rho$. 

Let $\phi_n\colon \SL(2,\C)\to\SL(n,\C)$ denote the canonical irreducible 
representation. Note that when $n$ is odd $\phi_n$ factors through $\PSL(2,\C)$.

\begin{theorem}[Proof in Section~\ref{sub.essential}]
\lbl{detectgeo} 
Suppose $M=\H^3/\Gamma$ is an oriented, 
hyperbolic manifold with finite volume and geometric representation 
$\rho_{\geo}\colon\pi_1(M)\to\PSL(2,\C)$. If the triangulation of $K$ has no 
non-essential edges, and if $n$ is odd, $P_n(K)$ is non-empty and detects 
$\phi_n\circ \rho_{\geo}$.  \qed
\end{theorem}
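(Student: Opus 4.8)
The plan is to construct an explicit Ptolemy cochain $c\in P_n(K)$ whose associated representation $\rho(c)$ is $\phi_n\circ\rho_{\geo}$; non-emptiness and detection then follow at once. Since $M$ is hyperbolic of finite volume, the developing map realizes each ideal vertex of $K$ (the point to which a boundary torus is collapsed) as a parabolic fixed point on $\partial\H^3=\Cinfty$, and the peripheral subgroups of $\rho_{\geo}$ are parabolic, hence boundary-unipotent. Choosing a $\rho_{\geo}$-equivariant lift of each such fixed point to a nonzero vector in $\C^2$ produces a decoration in the sense of Definition~\ref{genericdecorationdefn}. Because $n$ is odd, $\phi_n(-I)=(-1)^{n-1}I=I$, so $\phi_n$ factors through $\PSL(2,\C)$ and $\phi_n\circ\rho_{\geo}$ is a genuine boundary-unipotent $\SL(n,\C)$-representation (the same parity makes $\Sym^{n-1}$ of a decorating vector independent of its sign, so the induced decoration of $\phi_n\circ\rho_{\geo}$ is well-defined from the $\PSL(2,\C)$-data).

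Applying $\Sym^{n-1}$ to the decorating vectors gives, at each ideal vertex, the osculating flag of the rational normal curve in $\Sym^{n-1}(\C^2)\cong\C^n$; feeding these frames into \eqref{Ptolemydeterminants} defines the candidate cochain $c$. The crucial point is that each determinant \eqref{Ptolemydeterminants} factors as a monomial
\[
c_t=\pm\prod_{0\le i<j\le 3}\det(v_i,v_j)^{m_{ij}(t)}
\]
in the $2\times2$ minors $\det(v_i,v_j)$ of the underlying $\C^2$-vectors, with non-negative integer exponents $m_{ij}(t)$ determined by the combinatorics of $t$. Consequently $c_t\neq0$ for every non-vertex integral point $t$ precisely when all edge-minors $\det(v_i,v_j)$ are non-zero. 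An edge of $K$ develops to a geodesic joining the ideal points $[v_i],[v_j]$, and $\det(v_i,v_j)=0$ exactly when these points coincide, i.e.\ when the edge is non-essential. Thus the hypothesis that $K$ has no non-essential edges is exactly what guarantees $c_t\in\C^*$ throughout, so that $c$ lands in $P_n(K)$.

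That $c$ satisfies the Ptolemy relations \eqref{Ptolemyrelationintro} is built into the correspondence between decorated representations and Ptolemy cochains (Proposition~\ref{decorationeqreduction}): determinants of frames attached to a decorated representation always obey \eqref{Ptolemyrelationintro}. With $c$ a bona fide point of $P_n(K)$, the construction identifies $\rho(c)$ with $\phi_n\circ\rho_{\geo}$ via Theorem~\ref{mainthmintro}, proving both non-emptiness and detection. The main obstacle is the factorization displayed above: one must show that the minors of a concatenation of osculating flags of the rational normal curve split into a product of the elementary $2\times2$ minors with the correct non-negative exponents. This is a Wronskian/Schubert-type computation for the Veronese embedding, and establishing it — together with the non-negativity of the exponents, which is what forbids any accidental vanishing — is where the real work lies.
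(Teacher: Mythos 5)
Your strategy is the same as the paper's: decorate $\rho_{\geo}$ equivariantly by lifts of the parabolic fixed points of the peripheral subgroups, use the essential-edge hypothesis to make this decoration generic, and push forward along $\phi_n$ using the fact that the level-$n$ Ptolemy coordinates are monomials in the level-$2$ ones. The factorization you defer to ``a Wronskian/Schubert-type computation'' is precisely the paper's Lemma~\ref{adjointPtolemy}: the exponents are $m_{ij}(t)=t_it_j\geq 0$, i.e.\ $\phi_n(c)_t=\prod_{i<j}c_{ij}^{t_it_j}$, and the verification is routine rather than deep --- one checks the Ptolemy relations directly (all three products $\phi_n(c)_{\alpha_{0 3}}\phi_n(c)_{\alpha_{1 2}}$, etc.\ share the common factor $k_\alpha^2 l_\alpha$ with $k_\alpha=\prod c_{ij}^{a_ia_j}$, $l_\alpha=\prod c_{ij}^{a_i+a_j}$), and matches the resulting cocycle with $\phi_n(\tau)$ via \eqref{phinlong} and the diamond coordinates. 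So the ``real work'' you isolate is exactly the right statement, and your non-negativity observation is exactly why essentialness of the edges suffices for all odd $n$ at once. (Incidentally, the fact that the determinants of a decorated tuple satisfy \eqref{Ptolemyrelationintro} is Proposition~\ref{PtolemyProposition}, not Proposition~\ref{decorationeqreduction}.)

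Two steps are glossed over. First, the sentence ``$\det(v_i,v_j)=0$ exactly when these points coincide, i.e.\ when the edge is non-essential'' conflates two conditions: by Definition~\ref{def.essential}, non-essential means the lifted endpoints coincide \emph{in $L$}, and what you actually need is that distinct endpoints in $L$ force distinct points in $\Cinfty$. That is the content of Proposition~\ref{detectsgeometric}: endpoints over distinct cusps have distinct parabolic fixed points because distinct cusps have distinct stabilizers, and for endpoints $e_2=\gamma e_1$ over the same cusp one uses that the $\Gamma$-stabilizer of a parabolic fixed point is exactly the peripheral subgroup, so $\gamma D(e_1)=D(e_1)$ would give $\gamma\in\Stab(e_1)$ and hence $e_1=e_2$, contradicting essentialness. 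Second, $K$ may have interior vertices (in the closed hyperbolic case it has only those), where there is no parabolic fixed point to use; the decoration there must be chosen freely, which is possible because the stabilizers of interior vertices are trivial, and the finitely many choices can be made so that each simplex has distinct vertex images. Neither gap is fatal, but both need to be filled for the argument to be complete.
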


When $n$ is even, $\phi_n\circ\rho_{\geo}$ is only a representation in 
$p\SL(n,\C)=\SL(n,\C)\big/\langle\pm I\rangle$.

\begin{definition}
Let $\sigma\in Z^2(\Delta^3_n;\Z/2\Z)$ be a cocycle. A 
$p\SL(n,\C)$-\emph{Ptolemy assignment} on $\Delta^3_n$ with 
\emph{obstruction cocycle} $\sigma$ is an assignment of Ptolemy 
coordinates to the integral points of $\Delta^3_n$ such that 
\begin{equation}
\sigma_2\sigma_3c_{\alpha_{03}}c_{\alpha_{12}}
+\sigma_0\sigma_3c_{\alpha_{01}}c_{\alpha_{23}}=c_{\alpha_{02}}c_{\alpha_{13}}.
\end{equation}
Here $\sigma_{i}\in\Z/2\Z=\langle\pm 1\rangle$ is the value of $\sigma$ 
on the face opposite the $i$th vertex of $\Delta^3_n$. A $p\SL(n,\C)$-Ptolemy 
assignment on $K$ with obstruction cocycle $\sigma\in Z^2(K;\Z/2\Z)$ is a collection of 
$p\SL(n,\C)$-Ptolemy assignments $c^i$ on $\Delta_i$ with obstruction class 
$\sigma_{\Delta_i}$ such that the Ptolemy coordinates agree on common faces.
\end{definition}

The set of $p\SL(n,\C)$-Ptolemy assignments on $K$ with obstruction cocycle 
$\sigma$ is an algebraic set $P_n^\sigma(K)$, which up to canonical 
isomorphism, only depends on the cohomology class of $\sigma$. The obstruction class
to lifting a boundary-unipotent representation in $p\SL(n,\C)$ to a boundary-unipotent representation in $\SL(n,\C)$ is 
a class in $H^2(M,\partial M;\Z/2\Z)=H^2(K;\Z/2\Z)$. For $\sigma\in H^2(K;\Z/2\Z)$, let 
$R^\sigma_{p\SL(n,\C),N}(M)$ denote the set of boundary-unipotent representations in $p\SL(n,\C)$ 
with obstruction class $\sigma$. 

\begin{theorem}[Proof in Section~\ref{Proofofmainthm}]
\lbl{mainthmpSLintro} 
Let $n$ be even. For each 
$\sigma\in H^2(K;\Z/2\Z)$, we have a commutative diagram
\begin{equation}
\cxymatrix{@C=3em{P^\sigma_n(K)\ar[r]^-\lambda\ar[d]^\rho&
{\widehat\B(\C)_{\PSL}}\ar[d]^R\\
{R^{\sigma}_{p\SL(n,\C),N}(M)\big/\Conj}\ar[r]^-{i\Vol_\C}&{\C/\pi^2\Z}}}
\end{equation}
The extended Bloch group element of a Ptolemy assignment $c$ only depends on the 
representation $\rho(c)$, and if the triangulation of $K$ is sufficiently 
fine, $\rho$ is surjective. If $M=\H^3/\Gamma$ is hyperbolic, and if $K$ 
has no non-essential edges, $P_n^{\sigma_{\geo}}(K)$ detects 
$\phi_n\circ\rho_{\geo}$. Here $\sigma_{\geo}$ is the obstruction class to 
lifting the geometric representation to a boundary-unipotent representation 
in $\SL(2,\C)$.\qed
\end{theorem}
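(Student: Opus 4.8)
The plan is to reduce everything to the $\SL(n,\C)$ case of Theorem~\ref{mainthmintro} by a local-lifting argument, using the obstruction cocycle $\sigma$ as the bookkeeping device for the $\pm I$ ambiguity. Since $\SL(n,\C)\to p\SL(n,\C)$ is a double cover, over each (contractible) simplex $\Delta_i$ the quotient admits a section, and because $H^2(\Delta^3_n;\Z/2\Z)=0$ the sign cocycle $\sigma$ restricted to a single simplex is a coboundary. Hence a $p\SL(n,\C)$-Ptolemy cochain $c$ with obstruction $\sigma$ can be rescaled, simplex by simplex, to an ordinary $\SL(n,\C)$-Ptolemy cochain $\tilde c^i$ on each $\Delta_i$; the sign-modified Ptolemy relation says precisely that these local lifts fail to agree on a common face $F$ by the factor $\sigma_F\in\{\pm1\}$. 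This identifies $P_n^\sigma(K)$ with simplexwise $\SL(n,\C)$-Ptolemy data glued by the $\Z/2\Z$-cocycle $\sigma$, and shows that the associated representation $\rho(c)$ is a $p\SL(n,\C)$-representation whose obstruction to lifting to $\SL(n,\C)$ is exactly $[\sigma]\in H^2(K;\Z/2\Z)=H^2(M,\partial M;\Z/2\Z)$. The correspondence with decorations (Proposition~\ref{decorationeqreduction}) and the genericity condition (Definition~\ref{genericdecorationdefn}) then carry over verbatim from the $\SL$ case.

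For the map $\lambda$, I would apply the flattening construction of \eqref{betamapdefn} to each local lift $\tilde c^i$. The individual tuples $\lambda(\tilde c^i)\in\widehat\Pre(\C)$ depend on the choice of local lift, but the only ambiguity is $\tilde c^i\mapsto -\tilde c^i$, which shifts each logarithm $\widetilde c_t$ by $\pi i$; a direct check shows this changes the assembled element of $\widehat\B(\C)$ by a torsion element that becomes trivial in the $\PSL$ version $\widehat\B(\C)_{\PSL}$, on which the Rogers dilogarithm $R$ takes values in $\C/\pi^2\Z$. Hence $\lambda(c)\in\widehat\B(\C)_{\PSL}$ is well defined independently of the lifts. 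Commutativity of the square is then immediate from Theorem~\ref{cocycleintro}: choosing local lifts and applying the $\SL(n,\C)$ cocycle formula computes $i\Vol_\C$ of a lift modulo $4\pi^2\Z$, and reducing modulo $\pi^2\Z$ records exactly the $p\SL(n,\C)$ complex volume.

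Decoration-independence and surjectivity follow the $\SL$ template. For the former, a change of decoration alters the $\tilde c^i$ by coboundary data contributing a boundary term that vanishes in $\widehat\B(\C)_{\PSL}$, exactly as in Theorem~\ref{mainthmintro}; for the latter, after one barycentric subdivision every boundary-unipotent $p\SL(n,\C)$-representation with obstruction $\sigma$ admits a generic decoration, hence lies in the image of $\rho$. The one genuinely new input is detection of $\phi_n\circ\rho_{\geo}$. Here I would lift $\rho_{\geo}$ locally through $\phi_n$ together with the decoration by ideal fixed points at the cusps, exactly as in the proof of Theorem~\ref{detectgeo}, and read off the resulting $p\SL(n,\C)$-Ptolemy cochain; since $\phi_n(-I)=-I$ for $n$ even, $\phi_n$ sends the center to the center and the global obstruction to an $\SL(n,\C)$-lift is forced to be the obstruction $\sigma_{\geo}$ to a boundary-unipotent $\SL(2,\C)$-lift of $\rho_{\geo}$, landing the cochain in $P_n^{\sigma_{\geo}}(K)$.

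The main obstacle is the sign bookkeeping in the middle step: one must check that the torsion ambiguity introduced by the local $\pm I$ choices is exactly what is needed to make $\lambda$ land in $\widehat\B(\C)_{\PSL}$ and to match the $\C/\pi^2\Z$ (rather than $\C/4\pi^2\Z$) target, and that the gluing cocycle governing $\rho(c)$ genuinely represents the lifting obstruction class and not merely some cocycle in it. Verifying that the branch-of-logarithm ambiguities and the $\pm I$ ambiguities interact compatibly, so that the flattening conditions defining $\widehat\B(\C)_{\PSL}$ hold on the nose, is the delicate calculation underlying the whole argument.
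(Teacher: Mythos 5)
Your overall route is the same as the paper's: reduce to the $\SL(n,\C)$ case by trivializing $\sigma$ on each simplex (the coboundary action of a $1$-cochain $\eta$ with $\delta\eta=\sigma|_{\Delta_i}$, as in Lemma~\ref{boundaryactiononPtolemy} and Proposition~\ref{pGcocyclesandPtolemy}), run the $\pi\sqrt{-1}$-variants of the independence arguments to land in $\widehat\B(\C)_{\PSL}$, reuse barycentric subdivision for surjectivity, and detect $\phi_n\circ\rho_{\geo}$ via the compatibility of $\phi_n$ with Ptolemy cochains and obstruction cocycles (Lemma~\ref{adjointPtolemy}, Corollary~\ref{adjointPtolemycor}, Proposition~\ref{detectsgeometric}). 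So the architecture is right, and the last step is handled exactly as in the paper.

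Two steps are too loose as written. First, the lift ambiguity is not ``$\tilde c^i\mapsto-\tilde c^i$'': two choices of $\eta$ with $\delta\eta=\sigma|_{\Delta_i}$ differ by an element of $Z^1(\Delta_i;\Z/2\Z)\cong(\Z/2\Z)^3$, which negates $c_t$ according to the parity of $t_it_j$ edge by edge, not uniformly. The uniform global sign you describe in fact acts \emph{trivially} on each flattening (each entry is a sum of four logarithms with signs $+,+,-,-$, so a uniform $\pi\sqrt{-1}$ shift cancels), so the ``direct check'' you propose verifies the one case where nothing happens and misses the cases that carry the content. What is actually needed is the $\pi\sqrt{-1}$-analogue of Lemmas~\ref{addtwopi}--\ref{interiorpoint} and of Proposition~\ref{independenceoflog}, including the orientability argument showing that an edge point lies on a $02$- or $13$-edge of an even number of subsimplices; this is where the paper does the sign bookkeeping you correctly identify as the crux. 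Second, ``applying the $\SL(n,\C)$ cocycle formula to a lift'' is not literally available when $[\sigma]\neq 0$, since $\rho(c)$ then has no global boundary-unipotent $\SL(n,\C)$-lift. The paper instead constructs a genuine class $\widehat c\colon H_3(p\SL(n,\C),N)\to\C/\pi^2\Z$ (via the class $u\in H^4(BpG;\pi^2\Z)$ with $p^*u=W(P)$ from Proposition~\ref{orderfourkernel}, extended to the relative group in Section~\ref{sub.pSLcochains}) and \emph{defines} the $p\SL(n,\C)$ complex volume through it; commutativity of the square is then Remark~\ref{extensionremark} in its $pG$ form, not a reduction modulo $\pi^2\Z$ of a volume of a nonexistent lift. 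Local lifting of the group elements in a cycle representing a class in $H_3(p\SL(n,\C),N)$ does work, but that is a statement about chains, and one must still verify that the resulting $\C/\pi^2\Z$-valued cocycle represents the characteristic class $S_{P,u}$.
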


\begin{remark} 
As a corollary, we see that a boundary-unipotent representation in 
$p\SL(n,\C)$ determines an element in $\widehat\B(\C)_{\PSL}$ computing the 
complex volume. For $n=2$, this corollary (and also Corollary~\ref{Corintro}), 
was proved in Zickert~\cite{ZickertDuke}.
\end{remark}
\begin{remark}
If the triangulation has a non-essential edge, all Ptolemy varieties are empty. Hence, if $P^\sigma_2(K)$ is non-empty for some $\sigma$, and if $M$ is hyperbolic, the Ptolemy variety $P^{\sigma_{\geo}}(K)$ will detect the geometric representation.
\end{remark}

\begin{theorem}[Proof in Section~\ref{sec.adjoint}]
\lbl{thm.rhoSL2}
Let $\rho$ be a representation in $\SL(2,\C)$ or $\PSL(2,\C)$. The extended 
Bloch group element of $\phi_n\circ \rho$ is $\binom{n+1}{3}$ times that of 
$\rho$. In particular, composition with $\phi_n$ multiplies complex volume by 
$\binom{n+1}{3}$.\qed
\end{theorem}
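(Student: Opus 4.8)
The plan is to reduce the statement to a single simplex and then to a classical determinant identity. By Theorem~\ref{cocycleintro}, the extended Bloch group element of a representation is the image of the relevant fundamental class, represented by a sum of tuples $(g_0,\dots,g_3)$ on which one evaluates $\lambda(c(g_0,\dots,g_3))$. Since $(\phi_n)_*$ sends a cycle for $\rho$ to the cycle $\sum(\phi_n g_0,\dots,\phi_n g_3)$ for $\phi_n\circ\rho$, it suffices to prove the per-tuple identity $\lambda(c(\phi_n g_0,\dots,\phi_n g_3))=\binom{n+1}{3}\lambda(c(g_0,g_1,g_2,g_3))$ in $\widehat\Pre(\C)$. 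The number of summands defining $\lambda$ is $|T^3(n-2)|=\binom{n+1}{3}$, so the claim is equivalent to showing that each summand indexed by $\alpha\in T^3(n-2)$ reproduces the single cross-ratio datum of the $\SL(2,\C)$-tetrahedron $(g_0,g_1,g_2,g_3)$.

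First I would compute the Ptolemy coordinates of $\phi_n\circ\rho$ in terms of those of $\rho$. Writing $v_i,w_i$ for the columns of $g_i$ and using that $\phi_n=\Sym^{n-1}$, the first $t_i$ columns of $\phi_n(g_i)$ are $v_i^{n-1},v_i^{n-2}w_i,\dots,v_i^{n-t_i}w_i^{t_i-1}$, spanning the osculating subspace of the rational normal curve. Since the Ptolemy coordinate is unchanged under right multiplication by $N$, we may normalize $v_i=(1,z_i)$ and $w_i=(0,1)$; this identifies these vectors, up to a fixed diagonal (binomial) change of basis and factorials, with the derivatives $\gamma(z_i),\gamma'(z_i),\dots$ of the moment curve $\gamma(z)=(1,z,\dots,z^{n-1})$. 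The classical confluent Vandermonde determinant then yields
\begin{equation*}
c(\phi_n g_0,\dots,\phi_n g_3)_t=C_t\prod_{i<j}z_{ij}^{\,t_it_j},
\end{equation*}
with $z_{ij}=\det(v_i,v_j)$, where the exponent of $z_{ij}$ is the order of vanishing as the two points collide and the constant factors as $C_t=\mu_n\prod_i\kappa_n(t_i)$, with $\mu_n$ independent of $t$ and $\kappa_n$ depending only on a single coordinate. Establishing this factorization — in particular that the nontrivial constant $C_t$ is a product of per-vertex contributions — is the technical heart of the argument, though it is classical once the reduction to a confluent Vandermonde matrix is made.

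Given the formula, I would compute the dilogarithm arguments. The first coordinate of the $\alpha$-summand is $\widetilde c_{\alpha_{03}}+\widetilde c_{\alpha_{12}}-\widetilde c_{\alpha_{02}}-\widetilde c_{\alpha_{13}}$, whose exponential is $\prod_{i<j}z_{ij}^{\Delta E_{ij}}$ times the constant ratio $C_{\alpha_{03}}C_{\alpha_{12}}/(C_{\alpha_{02}}C_{\alpha_{13}})$. Because $E_{ij}(t)=t_it_j$ is quadratic and $(e_0+e_3)+(e_1+e_2)=(e_0+e_2)+(e_1+e_3)$, the terms linear in $\alpha$ cancel, so $\Delta E_{ij}$ is $+1$ on $(0,3),(1,2)$, is $-1$ on $(0,2),(1,3)$, and vanishes otherwise, independently of $\alpha$; this produces exactly the cross-ratio $z_{03}z_{12}/(z_{02}z_{13})$. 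Moreover the tuples $\alpha_{03},\alpha_{12}$ and $\alpha_{02},\alpha_{13}$ have the same multiset of values in each coordinate slot, so the product form of $C_t$ forces $C_{\alpha_{03}}C_{\alpha_{12}}=C_{\alpha_{02}}C_{\alpha_{13}}$ and the constants cancel; the second coordinate is identical. Choosing logarithms $\widetilde c_t=\log C_t+\sum_{i<j}t_it_j\,\widetilde z_{ij}$ with the $\log C$ compatible across the four tuples makes these equalities hold already at the level of flattenings, so each $\alpha$-summand equals the single generator $(\widetilde z_{03}+\widetilde z_{12}-\widetilde z_{02}-\widetilde z_{13},\ \widetilde z_{01}+\widetilde z_{23}-\widetilde z_{02}-\widetilde z_{13})\in\widehat\Pre(\C)$, which is precisely $\lambda(c(g_0,g_1,g_2,g_3))$. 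Summing over the $\binom{n+1}{3}$ values of $\alpha$ gives the per-tuple identity, hence the theorem, for $\SL(2,\C)$.

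For $\PSL(2,\C)$ (and the even-$n$ target $p\SL(n,\C)$) the same computation applies after decorating with an obstruction cocycle: the determinants acquire signs governed by $\sigma$, these signs drop out of the cross-ratios exactly as the constants $C_t$ do, and the element lands in $\widehat\B(\C)_{\PSL}$ with complex volume multiplied by $\binom{n+1}{3}$ modulo $\pi^2i$; I expect only bookkeeping here. The one point deserving care throughout is the flattening: one must check that the branches can be chosen coherently so that the identity holds in the extended pre-Bloch group and not merely in $\Pre(\C)$, and this is exactly where the product structure of $C_t$ and the coincidence of the chosen logarithms across the four tuples are used.
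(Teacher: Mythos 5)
Your proposal is correct and follows the same overall architecture as the paper's proof: establish the product formula for the Ptolemy coordinates of $\phi_n$ of a tuple, observe via a second-difference computation that every one of the $\binom{n+1}{3}$ subsimplices then carries the \emph{same} flattening as the original $\SL(2,\C)$-simplex (using the freedom to choose the lift $\widetilde c_t=\sum_{i<j}t_it_j\widetilde c_{ij}$, which the paper justifies by Proposition~\ref{independenceoflog}), and count. Where you genuinely diverge is in how the key formula is proved. The paper's Lemma~\ref{adjointPtolemy} shows directly that $t\mapsto\prod_{i<j}c_{ij}^{t_it_j}$ satisfies the Ptolemy relations and then verifies, via the reconstruction formulas for long edges and diamond coordinates, that this is exactly the Ptolemy cochain of the cocycle $\phi_n(\tau)$ — in particular the constant is identically $1$ in the paper's normalization, so no bookkeeping of per-vertex factors $C_t$ is needed. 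You instead derive the formula from the confluent Vandermonde determinant for osculating flags of the rational normal curve; this is more classical and self-contained (it does not pass through the paper's cocycle-reconstruction machinery), at the cost of having to carry and cancel the constants $C_t$, which works because they factor per vertex. Two small points: the reduction to a per-tuple identity for a boundary-unipotent representation should cite the relative fundamental class (Proposition~\ref{generichomology} and the definition of $\lambda([\rho])$) rather than Theorem~\ref{cocycleintro}, which concerns the absolute group $H_3(\SL(n,\C))$; and your normalization $v_i=(1,z_i)$ involves a rescaling outside $N$, which is harmless precisely because the resulting factors are again of the per-vertex form you allow in $C_t$ — worth saying explicitly.
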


When $n=2$, Thurston's gluing equation variety $V(K)$ is another variety, 
which is often used to compute volume. It is given by an equation 
for each edge of $K$ and an equation for each generator of the fundamental 
groups of the boundary-components of $M$ (see Section~\ref{sec.gluingequations}).

\begin{theorem}[Proof in Section~\ref{sec.gluingequations}]
\lbl{thm.Mhboundary}
Suppose $M$ has $h$ boundary components. There is a surjective regular map
\begin{equation}
\coprod_{\sigma\in H^2(K;\Z/2\Z)} P_2^\sigma(K)\to V(K)
\end{equation}
with fibers disjoint copies of $(\C^*)^h$.\qed
\end{theorem}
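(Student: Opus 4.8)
The plan is to realise the map as the assignment sending a Ptolemy cochain to its Thurston shape parameters, to verify the edge and cusp equations by telescoping the Ptolemy coordinates, and to identify the fibers with the space of decorations of the associated $\PSL(2,\C)$-representation.

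First I would write down the map. On a tetrahedron with vertices $0,1,2,3$ and Ptolemy coordinates $c_{ij}$, attach to the three pairs of opposite edges the shape parameters
$$z=\frac{c_{01}c_{23}}{c_{02}c_{13}},\qquad\frac1{1-z}=\frac{c_{02}c_{13}}{c_{03}c_{12}},\qquad\frac{z-1}{z}=-\frac{c_{03}c_{12}}{c_{01}c_{23}},$$
where in the $p\SL(2,\C)$-setting each of the three products carries the sign prescribed by the obstruction cocycle $\sigma$. Since every $c_{ij}\in\C^*$ these are regular functions, so collecting them over all simplices defines a regular map from $P_2^\sigma(K)$ into a product of copies of $\C\setminus\{0,1\}$, one for each tetrahedron. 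Each shape is a cross-ratio, hence invariant under the gauge action of $(\C^*)^h$ that rescales the decoration vector at each vertex, $c_{ij}\mapsto\lambda_i\lambda_jc_{ij}$; here I use that the vertices of $K$ correspond to the $h$ points obtained by collapsing the boundary components of $M$.

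Next I would show the image lands in $V(K)$. For the edge equation at an edge $E$ with endpoints $a,b$, I would run through the cyclically ordered tetrahedra $\Delta^{(1)},\dots,\Delta^{(k)}$ around $E$, glued along the faces containing $E$, and write the shape of $\Delta^{(j)}$ at $E$ as the cross-ratio $c_{ac^{(j)}}c_{bd^{(j)}}/(c_{ad^{(j)}}c_{bc^{(j)}})$, where $c^{(j)},d^{(j)}$ are the two remaining vertices of $\Delta^{(j)}$. Because the gluing identifies $d^{(j)}$ with $c^{(j+1)}$, the factors cancel in pairs and the product telescopes to $1$, while the accompanying signs multiply to $+1$ precisely because $\sigma$ is a cocycle. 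The cusp equations follow from the same telescoping applied to a normal path representing a generator of $\pi_1$ of a boundary torus in the induced triangulation of the corresponding vertex link; the product along the path is $1$, which is exactly the parabolicity (boundary-unipotency) equation cutting out $V(K)$.

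Finally I would establish surjectivity and compute the fibers. Given $z\in V(K)$, its shapes determine, through the correspondence of Theorem~\ref{mainthmpSLintro}, a boundary-unipotent $\PSL(2,\C)$-representation, whose obstruction to a boundary-unipotent lift to $\SL(2,\C)$ is a well-defined class $\sigma(z)\in H^2(K;\Z/2\Z)$; a decoration of this representation (an equivariant choice of fixed vector at each of the $h$ cusps, with a $\C^*$ of freedom per cusp) yields through the determinant formula \eqref{Ptolemydeterminants} a cochain in $P_2^{\sigma(z)}(K)$ mapping to $z$, proving surjectivity. Since the shapes are gauge-invariant cross-ratios, any two cochains over the same $z$ differ by the gauge action, so each nonempty fiber is a single orbit of the $h$-dimensional torus $(\C^*)^h$; as every tetrahedron forces a loop or a triangle among the vertex classes, the $1$-skeleton of $K$ is non-bipartite and the gauge stabilizer is finite, whence the orbit is isomorphic to $(\C^*)^h$. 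The main obstacle I expect is exactly this reconstruction—producing a globally consistent decoration from the shapes and checking that the signs forced by the reconstruction assemble into the single class $\sigma(z)$, so that the fiber meets only one summand—together with the sign bookkeeping ensuring that the product of signs around each edge is trivial in the telescoping step.
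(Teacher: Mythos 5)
Your route is essentially the paper's: the map sends a Ptolemy cochain to the cross-ratios of its coordinates, surjectivity is obtained by converting a shape solution into an equivariant map $L^{(0)}\to\C\cup\{\infty\}$ and then into a generic decoration (Lemma~\ref{developingdecoration}), and the fibre is the orbit of the rescaling action $c_{ij}\mapsto\lambda_i\lambda_j c_{ij}$ coming from changing the decoration by a diagonal matrix at each of the $h$ vertices. Your labelling of the three shape parameters differs from the paper's ($z=c_{01}c_{23}/(c_{02}c_{13})$ versus $z=c_{03}c_{12}/(c_{02}c_{13})$), but both are consistent with the Ptolemy relation, so this is only a convention. The one real divergence is that you check the gluing equations by telescoping the Ptolemy ratios directly, whereas the paper quotes Zickert's result that the log-parameters \eqref{logparameters} satisfy the logarithmic gluing equations and then corrects the resulting signs.

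That divergence is where your argument has a gap, and it is not quite the one you anticipated. The signs that must cancel around an edge are not (only) the obstruction signs: since $zz'z''=-1$, one of the three cross-ratio parameters of every simplex is necessarily \emph{minus} a ratio of Ptolemy coordinates (in the paper's convention $z''=-c_{01}c_{23}/(c_{03}c_{12})$), and this intrinsic sign is present already for the trivial obstruction class, i.e.\ for genuine $\SL(2,\C)$-Ptolemy cochains, where ``$\sigma$ is a cocycle'' gives you nothing. Your telescoping of the unsigned ratios does yield $1$, so the edge and cusp products equal $(-1)^m$, where $m$ is the number of simplices around the edge (resp.\ corners along the normal curve) for which the relevant edge is of the signed type ($02$ or $13$ in the paper's vertex ordering). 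That $m$ is even is an orientability statement --- when a curve passes through two faces of a simplex, the induced face orientations disagree exactly at the $02$ and $13$ edges --- and this is precisely the parity argument the paper imports from the proof of Proposition~\ref{independenceoflog}. Once you add it, your telescoping proof is complete and in fact more self-contained than the paper's citation; and your observation that the gauge stabilizer is finite but possibly nontrivial, so the fibre is a quotient torus still isomorphic to $(\C^*)^h$, is a point the paper passes over in silence.
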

\begin{remark} The Ptolemy variety offers significant computational advantage over the gluing equations and allows for exact computations that are practical even if the manifold has many simplices.
\end{remark}

As shown in Zickert~\cite{ZickertAlgK}, the extended Bloch group can also 
be defined over a number field $F$, and we have a canonical isomorphism 
$\widehat\B(F)\cong K_3^{\ind}(F)$.
\begin{theorem}[Proof in Section~\ref{otherfields}] Let $F$ be a number field. A boundary-unipotent 
representation $\rho\colon\pi_1(M)\to \SL(n,F)$ determines an element of 
$\widehat\B(F)=K_3^{\ind}(F)$ such that for each embedding $\tau\colon F\to\C$, 
we have 
\begin{equation}
R(\tau([\rho]))=i\Vol_\C(\tau\circ\rho).
\end{equation}
If $\rho$ is irreducible, $[\rho]$ lies in $\widehat\B(\Tr(\rho))$, where 
$\Tr(\rho)\subset F$ is the trace field of $\rho$.\qed 
\end{theorem}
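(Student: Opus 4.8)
The plan is to carry out the constructions behind Theorem~\ref{mainthmintro} verbatim over $F$ and then to read off the complex-volume statement by functoriality of the extended Bloch group in the coefficient field. The first step is to produce an $F$-valued Ptolemy cochain. Since $\rho$ is boundary-unipotent, each peripheral subgroup maps into a conjugate of $N$ and therefore preserves a complete flag; such a flag is cut out over $F$ by the kernels of the operators $(\rho(\gamma)-I)^k$ for peripheral $\gamma$, and the associated $N$-reduction of Proposition~\ref{decorationeqreduction} gives a decoration defined over $F$. With this decoration the Ptolemy coordinates \eqref{Ptolemydeterminants} are determinants of vectors with entries in $F$, so $c$ is an $F$-rational point of $P_n(K)$ (and similarly in the $p\SL(n,\C)$ case).

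Next I would invoke the field-theoretic Bloch group of Zickert~\cite{ZickertAlgK}, where $\widehat\Pre(F)$ and $\widehat\B(F)\cong K_3^{\ind}(F)$ are defined purely algebraically: the flattening that over $\C$ is carried by the branches $\widetilde c_t$ is now encoded combinatorially, so each summand of $\lambda(c)$ is recorded as a shape $z_\alpha=c_{\alpha_{03}}c_{\alpha_{12}}/(c_{\alpha_{02}}c_{\alpha_{13}})\in F\setminus\{0,1\}$ together with integral flattening data determined by the triangulation. That $\lambda(c)$ lands in the Bloch group $\widehat\B(F)$, and depends only on $\rho$ rather than on the decoration or the triangulation, is proved by exactly the computation of Theorem~\ref{mainthmintro}: the Ptolemy relation \eqref{Ptolemyrelationintro} yields the lifted five-term relation, rescaling the decoration is absorbed because each $z_\alpha$ is invariant under the torus (indeed Borel) ambiguity of a flag basis, and the boundary terms cancel on identified faces. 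Every step is algebraic and therefore valid over $F$; this defines $[\rho]\in\widehat\B(F)$.

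The complex-volume identity is then pure functoriality. An embedding $\tau\colon F\to\C$ carries $c$ to the $\C$-valued cochain $\tau(c)$, which is precisely the Ptolemy cochain of the decorated representation $\tau\circ\rho$; since $\lambda$ is natural in the coefficient field, $\tau([\rho])=\lambda(\tau(c))=[\tau\circ\rho]$ in $\widehat\B(\C)$. Corollary~\ref{Corintro} then gives $R([\tau\circ\rho])=i\Vol_\C(\tau\circ\rho)$, which is the asserted formula $R(\tau([\rho]))=i\Vol_\C(\tau\circ\rho)$.

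The remaining statement — that for irreducible $\rho$ the class descends to $\widehat\B(\Tr(\rho))$ — is the real obstacle, and I would reduce it to showing that each shape $z_\alpha$ lies in the trace field, the flattening integers being field-independent. The point is that $z_\alpha$ is an invariant of $\rho$ alone: the canonical decoration is determined by $\rho$, and $z_\alpha$ is unchanged both under conjugation (the $\SL(n)$-action cancels in the ratio of determinants) and under the flag-basis rescaling, as one verifies directly from the balancing of the exponents in the four factors of $z_\alpha$. Hence $z_\alpha$ is an $\SL(n,\C)$-invariant rational function on the representation variety, i.e.\ a ratio of trace polynomials by Procesi's theorem that the invariant ring is generated by traces. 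Evaluating at the irreducible point $[\rho]$, where $z_\alpha$ is finite and nonzero, forces $z_\alpha\in\Tr(\rho)$, whence $[\rho]\in\widehat\B(\Tr(\rho))$ by functoriality of the inclusion $\Tr(\rho)\hookrightarrow F$. The delicate points to nail down are that the peripheral fixed-flag decoration is canonically determined by $\rho$ and varies rationally with it — so that $z_\alpha$ is a genuine rational invariant and not merely an algebraic one — and that at an irreducible representation the value of such an invariant lands in the trace field and nothing larger.
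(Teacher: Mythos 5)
Your first three paragraphs follow the paper's route for the existence of $[\rho]\in\widehat\B(F)$ and the volume formula: define the Ptolemy variety and the map $\lambda$ over $F$ using a set-theoretic section of an extension $E\to F^*$ in place of a logarithm, note that all the relevant computations (lifted five-term relation, independence of decoration, stabilization) are algebraic, and deduce the identity $R(\tau([\rho]))=i\Vol_\C(\tau\circ\rho)$ from naturality in the coefficient field together with Corollary~\ref{Corintro}. That part is essentially the paper's argument and is fine.

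The trace-field statement is where your proposal has a genuine gap. Your plan is to show that each individual shape $z_\alpha=c_{\alpha_{03}}c_{\alpha_{12}}/(c_{\alpha_{02}}c_{\alpha_{13}})$ lies in $\Tr(\rho)$ by arguing that it is a conjugation-invariant rational function of $\rho$ alone and then invoking Procesi. But the shapes are \emph{not} functions of $\rho$: they depend on the decoration, and the decoration is not canonically determined by $\rho$. The ``peripheral fixed flag'' is unique only when the image of the peripheral subgroup is large enough to have a unique invariant full flag; for a vertex whose stabilizer has trivial or degenerate image (in particular for every interior vertex created by barycentric subdivision, and for \emph{all} vertices when $\partial M=\emptyset$), any coset is allowed, and changing it changes the cross-ratios of every incident simplex. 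What the paper actually proves (Proposition~\ref{independenceofdec}) is that only the \emph{sum} $\lambda(c)\in\widehat\B(F)$ is decoration-independent, not the individual flattenings; so there is no well-defined rational invariant $z_\alpha$ on the representation variety to which Procesi's theorem could be applied, and indeed the claim that each shape lies in the trace field is false in general. The paper's proof of the descent statement is entirely different and much softer: for $\sigma\in\Aut(F/\Tr(\rho))$ and any embedding $\tau\colon F\to\C$, the representations $\tau\circ\rho$ and $\tau\circ\sigma\circ\rho$ have the same character, hence (by irreducibility) are conjugate in $\SL(n,\C)$ and have the same class in $\widehat\B(\C)$; by the injectivity of $\widehat\B(F)\to\widehat\B(\C)$ (Corollary~\ref{injectivity}) this forces $[\rho]=[\sigma\circ\rho]$ already in $\widehat\B(F)$, and then Galois descent for the extended Bloch group (Corollary~\ref{Galoisdescent}) puts $[\rho]$ in $\widehat\B(\Tr(\rho))$. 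You should replace your final paragraph with this Galois-descent argument; it uses only that $[\rho]$ is a conjugation invariant, which is exactly what the earlier part of the paper establishes.
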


\subsection{Neumann's conjecture} 
\lbl{sub.neumannc}

The fact that \eqref{betamapdefn} has image in $\widehat\B(\C)$ as 
opposed to $\widehat\Pre(\C)$ has very interesting conjectural consequences. 
It is well known (see e.g.~Suslin~\cite{Suslin}) that the Bloch group $\B(\C)$ 
is a $\Q$-vector space, and Walter Neumann has conjectured that it is generated by Bloch invariants of hyperbolic manifolds.
More generally, Walter Neumann has proposed the following stronger conjecture~\cite{NeumannRealize}:


\begin{conjecture}\lbl{StrongNeumannConj} Let $F\subset\C$ be a 
concrete number field which is not in $\R$. The Bloch group $\B(F)$ is 
generated (integrally) modulo torsion by hyperbolic manifolds with invariant 
trace field contained in $F$.
\end{conjecture}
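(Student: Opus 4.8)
The plan is to attack the conjecture through the identification of $\B(F)$ with $K_3^{\ind}(F)$ up to torsion, recalled above, together with the Borel regulator. First I would split the integral statement into two problems: a rank (spanning) problem over $\Q$, and an index problem. By Borel's theorem $K_3^{\ind}(F)$ has rank equal to the number $r_2$ of complex places of $F$ (the hypothesis $F\not\subset\R$ guarantees $r_2\geq 1$), and the Borel regulator assembles into an isomorphism
\[
\mathrm{reg}_F\colon\B(F)\otimes\R\;\xrightarrow{\ \sim\ }\;\R^{r_2},
\]
whose component at the complex place of an embedding $\tau\colon F\to\C$ sends a class to a fixed rational multiple of the value of the Rogers dilogarithm $R$ on its $\tau$-realization, i.e.\ to a volume. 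Thus the rational form of the conjecture is equivalent to showing that the Bloch invariants $\beta(M)$ of hyperbolic manifolds with invariant trace field contained in $F$ have regulator vectors $(\,\Imag R(\tau(\beta(M)))\,)_\tau$ spanning $\R^{r_2}$.

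Second, I would produce the required manifolds by realization theorems. By the work of Maclachlan--Reid and Neumann--Reid on invariant trace fields, every number field, and in particular $F$ together with suitable subfields seeing each complex place, occurs as the invariant trace field of some finite-volume hyperbolic $3$-manifold; by Neumann--Yang the Bloch invariant of such an $M$ lies in $\B(k_M)\subseteq\B(F)$, where $k_M$ is its invariant trace field. The goal is then to select $M_1,\dots,M_{r_2}$ whose regulator vectors are linearly independent. For a field admitting an arithmetic manifold (necessarily with a unique complex place) one can invoke Borel's covolume formula relating $\Vol$ to the special value $\zeta_F(2)$, which shows the regulator is nonzero and, through the functional equation, pins it to the regulator lattice; this settles the $r_2=1$ case up to index.

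The hard part --- and the reason this remains a conjecture --- is precisely the passage to general $F$ with $r_2>1$ and the \emph{integral} (index one) assertion. For $r_2>1$ I have no control over the joint distribution of the volume of a non-arithmetic $M$ and the volumes of its Galois conjugates, so I cannot guarantee that a chosen finite family of manifolds has independent regulator vectors; this is exactly where a direct geometric argument stalls. The integral statement is deeper still: controlling the cokernel of the subgroup generated by hyperbolic manifolds inside $K_3^{\ind}(F)$ amounts to matching it with the full motivic/étale lattice, which via Quillen--Lichtenbaum ties the index to exact special values of $\zeta_F$ and to the vanishing of a genuinely arithmetic obstruction. I would therefore expect any unconditional progress to come not from a bare geometric construction but from identifying the regulator lattice of hyperbolic volumes with the zeta-value lattice predicted by $K$-theory --- a matching that is currently out of reach except in the arithmetic, one-complex-place situation.
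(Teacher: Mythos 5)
This statement is not a theorem of the paper: it is Conjecture~\ref{StrongNeumannConj}, attributed to Walter Neumann, and the paper offers no proof of it --- it is used only as a hypothesis from which the authors derive Conjecture~\ref{newconjecture} via Theorems~\ref{mainthmintro} and~\ref{mainthmpSLintro}. So there is no proof in the paper to compare your attempt against, and your proposal, as you yourself acknowledge, is not a proof either. You have written a strategy sketch whose two essential steps are left open.

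To name the gaps concretely: your reduction via $\B(F)\otimes\Q\cong K_3^{\ind}(F)\otimes\Q$ and the Borel regulator is the standard and correct framing, and the realization results you cite (every number field occurs as an invariant trace field; the Bloch invariant of $M$ lies in $\B(k_M)$ by Neumann--Yang) are genuine inputs. But the conjecture is precisely the assertion that (i) the regulator vectors of hyperbolic manifolds with invariant trace field in $F$ span $\R^{r_2}$, and (ii) the subgroup they generate has finite index prime to nothing --- i.e.\ full index modulo torsion --- in $\B(F)$. You establish neither. For (i), having \emph{some} manifold with invariant trace field $F$ gives one vector in $\R^{r_2}$; nothing in your argument controls the Galois-conjugate volumes needed to produce $r_2$ independent vectors when $r_2>1$, and even the $r_2=1$ non-arithmetic case is not settled by Borel's covolume formula, which applies only to arithmetic quotients. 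For (ii), the integral statement is strictly stronger than spanning over $\Q$ or $\R$, and your appeal to Quillen--Lichtenbaum and zeta values is a heuristic for why it \emph{might} hold, not an argument. Since the statement is an open conjecture, the honest conclusion is that your proposal correctly diagnoses why it is open rather than resolving it.
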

Using Theorems~\ref{mainthmintro} and \ref{mainthmpSLintro}, 
Conjecture~\ref{StrongNeumannConj} implies:

\begin{conjecture}
\lbl{newconjecture} 
Let $\rho$ be a boundary-unipotent 
representation of $\pi_1(M)$ in $\SL(n,\C)$ or $p\SL(n,\C)$.  There exist 
hyperbolic $3$-manifolds $M_1,\dots,M_k$ and integers $r_1,\dots ,r_k$ such that
\begin{equation}
\Vol_\C(\rho)=\sum r_i\Vol_\C(M_i)\in \C/i\pi^2\Q.
\end{equation}
In particular, $\Vol(\rho)=\sum r_i\Vol(M_i)\in\R$.
\end{conjecture}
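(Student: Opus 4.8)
The plan is to prove the \emph{implication} asserted just before the statement: assuming Conjecture~\ref{StrongNeumannConj}, every boundary-unipotent $\rho$ satisfies the conclusion of Conjecture~\ref{newconjecture}. The first step is to reduce to a representation defined over a number field. Given $\rho$, I would choose a triangulation of $K$ fine enough (a barycentric subdivision, by Theorem~\ref{mainthmintro}) that $\rho$ is detected by a Ptolemy cochain $c\in P_n(K)$ with $\rho(c)=\rho$. The Ptolemy variety is cut out by the relations \eqref{Ptolemyrelationintro}, which have integer coefficients, so $P_n(K)$ is defined over $\Q$ and its $\bar\Q$-points are Zariski dense in every component. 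Since $\Vol_\C$ is constant on components of the representation variety, hence on components of $P_n(K)$ (which map regularly to it), I may replace $c$ by a $\bar\Q$-point $c'$ in the same component without changing $i\Vol_\C=R\circ\lambda$. The coordinates of $c'$ generate a number field $F\subset\C$, and $\rho':=\rho(c')$ takes values in $\SL(n,F)$ (or $p\SL(n,F)$) with $\Vol_\C(\rho')=\Vol_\C(\rho)$. By the number-field refinement of Section~\ref{otherfields}, the class $[\rho']=\lambda(c')$ lies in $\widehat\B(F)\cong K_3^{\ind}(F)$ and still satisfies $R([\rho'])=i\Vol_\C(\rho')$.

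Next I would pass from the extended Bloch group to the ordinary one. Since $\widehat\B(F)$ agrees with $\B(F)$ up to torsion, the image of $[\rho']$ in $\B(F)$ determines $[\rho']$ modulo the finite torsion subgroup of $\widehat\B(F)$. Conjecture~\ref{StrongNeumannConj} asserts that, modulo torsion, $\B(F)$ is generated integrally by the Bloch invariants $\beta(N_j)$ of hyperbolic $3$-manifolds $N_j$ with invariant trace field contained in $F$; equivalently the finitely generated free abelian group $\B(F)/\mathrm{tors}$ is spanned over $\Z$ by the classes $\beta(N_j)$. It is essential here that the hypothesis is the integral, number-field statement rather than mere $\Q$-generation of $\B(\C)$, as this is precisely what produces \emph{integer} coefficients. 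Consequently there are integers $r_j$ and a torsion element $\tau\in\widehat\B(F)$ with
\[
[\rho']=\sum_j r_j\,\widehat\beta(N_j)+\tau,
\]
where $\widehat\beta(N_j)\in\widehat\B(F)$ is the extended Bloch invariant of $N_j$ lifting $\beta(N_j)$.

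Finally I would apply the Rogers dilogarithm and track the moduli. Using $R([\rho'])=i\Vol_\C(\rho')$, the $n=2$ identity $R(\widehat\beta(N_j))=i\Vol_\C(N_j)$ from Section~\ref{sec.CCS}, and the fact that $R$ sends the torsion subgroup of $\widehat\B(\C)$ into the torsion subgroup $4\pi^2\Q/4\pi^2\Z$ of $\C/4\pi^2\Z$, I obtain
\[
i\Vol_\C(\rho)=i\Vol_\C(\rho')=\sum_j r_j\,i\Vol_\C(N_j)+R(\tau),\qquad R(\tau)\in 4\pi^2\Q .
\]
Dividing by $i$ turns the real ambiguity $R(\tau)\in 4\pi^2\Q$ into an element of $i\pi^2\Q$, which gives $\Vol_\C(\rho)=\sum_j r_j\Vol_\C(N_j)$ in $\C/i\pi^2\Q$; the $p\SL(n,\C)$ case is identical, starting from the diagram of Theorem~\ref{mainthmpSLintro} with target $\C/\pi^2\Z$. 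Since the remaining ambiguity $i\pi^2\Q$ is purely imaginary, taking real parts of $\Vol_\C$ yields $\Vol(\rho)=\sum_j r_j\Vol(N_j)$ exactly in $\R$, and the $N_j$ are the manifolds $M_1,\dots,M_k$ of the statement. The only real obstacle is the conditional input itself: every step above is a formal consequence of the main theorems together with Conjecture~\ref{StrongNeumannConj}, so the genuine difficulty lies in Neumann's conjecture, which remains open.
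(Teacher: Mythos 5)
The statement is a conjecture, and the paper offers no proof of it --- only the one-line remark that it follows from Conjecture~\ref{StrongNeumannConj} together with Theorems~\ref{mainthmintro} and~\ref{mainthmpSLintro}. Your proposal correctly proves exactly that implication, and the details you supply (reduction to a number field via $\overline{\Q}$-points of the $\Q$-defined Ptolemy variety and constancy of $\Vol_\C$ on components, integral generation of $\B(F)$ modulo torsion, and the fact that $R$ sends torsion into $4\pi^2\Q/4\pi^2\Z$ so that everything survives modulo $i\pi^2\Q$) are precisely the route the paper sketches in the introduction and Section~\ref{sub.neumannc}, so this matches the paper's intended approach.
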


We give some examples in Section~\ref{sec.examples}.    

\begin{remark}
The concept of a Ptolemy assignment and its associated shapes first appeared in Zickert~\cite{ZickertAlgK} (where it was called an \emph{ideal cochain}).
Subsequently, other authors have studied similar ideas. These include Bergeron-Falbel-Guilloux~\cite{FrenchPeople}, Garoufalidis-Goerner-Zickert~\cite{GaroufalidisGoernerZickert} and Dimofte-Gabella-Goncharov~\cite{DGon}.
\end{remark}

\subsection{Overview of the paper}
\lbl{sub.plan}

Section~\ref{sec.CCS} gives a detailed review of the Cheeger-Chern-Simons 
classes for flat bundles. Many details are included in order to give a 
self-contained proof of~\eqref{CCSeqVolC}. Section~\ref{sec.bloch} gives a 
brief review of the two variants of the extended Bloch group, and 
Section~\ref{sec.decorations} reviews the theory, introduced in 
Zickert~\cite{ZickertDuke}, of decorated representations and relative 
fundamental classes. In Section~\ref{sec.genericdecorations}, we introduce 
the notion of \emph{generic} decorations and define the Ptolemy variety $P_n(K)$.
In Section~\ref{sec.ptolemy}, we construct a chain complex of Ptolemy assignments, and use it to construct a 
map from $H_3(\SL(n,\C),N)$ to $\widehat\B(\C)$ commuting with stabilization. 
This shows that a decorated boundary-unipotent representation determines an element in the extended Bloch group, 
which is given explicitly in terms of the Ptolemy coordinates.
In Section~\ref{decorationsection}, we show that the extended Bloch group 
element of a decorated representation is independent of the decoration, 
and in Section~\ref{sec.computationofchat}, we show that the 
Cheeger-Chern-Simons class is given as in Theorem~\ref{cocycleintro}. In 
Section~\ref{proofsection}, we show that the Ptolemy variety parametrizes generically decorated representations,
and give an explicit formula for recovering a representation from its Ptolemy coordinates. 
In Section~\ref{sec.examples}, we give some examples of 
computations, and list some interesting findings. Section~\ref{sec.adjoint} 
discusses the irreducible representations of $\SL(2,\C)$, and 
Section~\ref{sec.gluingequations} discusses the relationship to Thurston's 
gluing equations when $n=2$. Finally, Section~\ref{otherfields} is a brief 
discussion of other fields.

\subsection{Acknowledgment}
The authors wish to thank Ian Agol, Johan Dupont, Matthias Goerner and 
Walter Neumann for stimulating conversations. We are particularly grateful 
to Matthias Goerner for a computer implementation of our formulas, and for
supplying our theory with computational data for more than 20000 manifolds.
The software has been incorporated into SnapPy~\cite{SnapPy}, and computational data can be found at \url{http://unhyperbolic.org/ptolemy.html}.

\section{The Cheeger-Chern-Simons classes}
\lbl{sec.CCS}


The Cheeger-Chern-Simons classes~\cite{CheegerSimons,ChernSimons} are 
characteristic classes of principal bundles with connection. For general 
bundles, the characteristic classes are differential 
characters~\cite{CheegerSimons}, but for flat bundles they reduce to 
ordinary (singular) cohomology classes. In this paper we will focus 
exclusively on flat bundles. 
Let $\F$ denote either $\R$ or $\C$, and let $\Lambda$ be a proper subring 
of $\F$. Let $G$ be a Lie group over $\F$ with finitely many components. 
There is a characteristic class $S_{P,u}$ for each pair $(P,u)$ consisting 
of an invariant polynomial $P\in I^k(G;\F)$ and a class 
$u\in H^{2k}(BG;\Lambda)$, whose image in $H^{2k}(BG;\F)$ equals $W(P)$, 
where $W$ is the Chern-Weil homomorphism
\begin{equation}
W\colon I^k(G;\F)\to H^{2k}(BG;\F).
\end{equation}
The characteristic class $S_{P,u}$ associates to each flat $G$-bundle 
$E\to M$ a cohomology class $S_{P,u}(E)\in H^{2k-1}(M;\F/\Lambda)$.

\subsection{Simply connected, simple Lie groups} 
\lbl{sub.simplelie}

If $G$ is simply connected and simple, $H^1(G;\Z)$ and $H^2(G;\Z)$ are 
trivial, and $H^3(G;\Z)\cong\Z$. Hence, by the Serre spectral sequence 
for the universal bundle, we have an isomorphism
\begin{equation}
S\colon H^{4}(BG;\Z)\cong H^{3}(G;\Z)\cong\Z
\end{equation}
called the \emph{suspension}.
The Killing form on $G$ defines an invariant polynomial $B\in I^2(G;\F)$, 
and since $B$ is real on the maximal compact subgroup $K$ of $G$, $W(B)$ 
is a real class. Hence, there exists a unique positive real number $\alpha$ 
such that $W(\alpha B)$ is a generator of $H^4(BG;4\pi^2\Z)$. We refer to 
$\alpha B$ as the \emph{renormalized Killing form}, and denote the 
Cheeger-Chern-Simons class $S_{\alpha B,W(\alpha B)}$ by $\widehat c$. 

Recall that every class in $H^3(G;\F)$ can be represented by a $G$-invariant 
$3$-form.
The following is well known (see e.g.~Kamber-Tondeur~\cite[(5.74) p.~116]{KamberTondeur}).

\begin{proposition} Let $P\in I^2(G;\F)$. The suspension of $W(P)$ is 
represented by the invariant $3$-form
\begin{equation}\lbl{sigmaPformula}
\sigma(P)=-\frac{1}{6}P(\omega\wedge[\omega,\omega])\in\Omega^3(G;\F)^G
\end{equation}
where $\omega$ is the Maurer-Cartan form on $G$.\qed
\end{proposition}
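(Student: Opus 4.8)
The plan is to realize the suspension concretely as a transgression and to evaluate the degree-two Chern--Simons form on a fiber. Recall that the suspension $S\colon H^4(BG;\F)\to H^3(G;\F)$ is the transgression $d_4$ in the Serre spectral sequence of the universal fibration $G\to EG\to BG$; since $EG$ is contractible it admits a de~Rham description which I would exploit. Choose a connection $\theta$, with curvature $F=d\theta+\tfrac12[\theta,\theta]$, on the universal principal $G$-bundle --- or, to stay finite-dimensional, on an $N$-classifying bundle carrying a Narasimhan--Ramanan universal connection. The Chern--Weil form $P(F\wedge F)$ represents the pullback of $W(P)$ and, on the contractible total space, is exact: $P(F\wedge F)=d\,TP(\theta)$, where $TP$ is the degree-two Chern--Simons transgression form. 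Restricting $TP(\theta)$ to a fiber over the basepoint yields a closed $3$-form on $G$ whose class is, by the very definition of the transgression, equal to $S(W(P))$. The whole proof then reduces to computing this restriction explicitly.

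The degree-two Chern--Simons form is $TP(\theta)=2\int_0^1 P(\theta\wedge F_t)\,dt$, with $F_t=t\,d\theta+\tfrac{t^2}{2}[\theta,\theta]=tF+\tfrac12(t^2-t)[\theta,\theta]$, normalized so that $d\,TP(\theta)=P(F\wedge F)$. Along a fiber the connection restricts to the left Maurer--Cartan form $\omega$, so $d\theta$ restricts to $-\tfrac12[\omega,\omega]$ and the curvature restricts to $F=0$; hence $F_t$ becomes $\tfrac12(t^2-t)[\omega,\omega]$. Pulling the scalar out of the integral gives
\[
TP(\theta)\big|_{\text{fiber}}=2\cdot\tfrac12\Big(\int_0^1(t^2-t)\,dt\Big)\,P(\omega\wedge[\omega,\omega])=-\tfrac16\,P(\omega\wedge[\omega,\omega])=\sigma(P),
\]
using $\int_0^1(t^2-t)\,dt=-\tfrac16$. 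This already reproduces the claimed formula, with the correct constant.

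It remains to verify that $\sigma(P)$ is a well-defined invariant cohomology class, which is where the structure equations and the invariance of $P$ enter. Left-invariance of $\omega$ gives left-invariance of $\sigma(P)$; since $R_g^*\omega=\operatorname{Ad}(g^{-1})\omega$ and $P$ is $\operatorname{Ad}$-invariant, $\sigma(P)$ is also right-invariant, so $\sigma(P)\in\Omega^3(G;\F)^G$. For closedness I would use the Maurer--Cartan equation $d\omega=-\tfrac12[\omega,\omega]$ together with the graded Jacobi identity $[[\omega,\omega],\omega]=0$, which gives $d[\omega,\omega]=0$; the Leibniz rule then reduces $d\,\sigma(P)$ to a multiple of $P([\omega,\omega]\wedge[\omega,\omega])$, and this $4$-form vanishes because the trilinear form $(X,Y,Z)\mapsto P([X,Y],Z)$ is totally antisymmetric (infinitesimal invariance of $P$) and its antisymmetrized contraction with the structure constants vanishes by Jacobi. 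Thus $\sigma(P)$ is closed and invariant. Since $H^3(G;\F)$ is represented by invariant forms and the transgression sends a generator of $H^4(BG)$ to a generator of $H^3(G)$, the two invariant $3$-forms agreeing on the nose determines the class, giving $S(W(P))=[\sigma(P)]$.

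The main obstacle is not the algebra but making the transgression rigorous: $EG$ and $BG$ are not finite-dimensional manifolds, so the Chern--Weil/Chern--Simons computation must be carried out on a finite-dimensional universal bundle with a universal connection, or in a simplicial de~Rham framework, and one must check that restriction of $TP$ to a fiber computes the spectral-sequence transgression and not merely a cohomologous form. Once this identification is established, the elementary integral and the invariance/Jacobi bookkeeping above complete the proof, and I expect these final steps to be routine.
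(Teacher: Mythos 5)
Your argument is correct: the paper itself gives no proof of this proposition, stating it as well known and citing Kamber--Tondeur, and the transgression computation you carry out (restricting the degree-two Chern--Simons form $TP(\theta)=2\int_0^1P(\theta\wedge F_t)\,dt$ to a fiber, where $F_t=\tfrac12(t^2-t)[\omega,\omega]$, and integrating to get the factor $-\tfrac16$) is precisely the standard proof found in that reference. The caveats you flag --- working on a finite-dimensional or simplicial model of $EG\to BG$ and identifying the fiber restriction of $TP$ with the spectral-sequence transgression --- are genuine but are exactly what the cited literature supplies, and your constant and invariance/closedness bookkeeping check out.
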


Let $E\to M$ be a $G$-bundle with flat connection $\theta$. We can view 
$\theta$ as a map $\mathfrak g^*\to\Omega^1(E;\F)$, so by taking exterior 
powers, $\theta$ induces a map
\begin{equation}
\theta\colon\Omega^3(G)^G=\wedge^3(\mathfrak g^*)\to\Omega^3(E;\F).
\end{equation}

Note that $\theta(\sigma(P))=-\frac{1}{6}P(\theta\wedge[\theta,\theta])$. 
In the following, $P$ denotes the renormalized Killing form.

\begin{proposition}[{\cite[Proposition~2.8]{CheegerSimons}}]
Let $E\to M$ be a $G$-bundle, with flat connection $\theta$, over a closed 
$3$-manifold $M$.
The cohomology class $\widehat c(E)\in H^3(M;\F/4\pi^2\Z)$ satisfies 
\begin{equation}
\lbl{CSintegral}
\widehat c(E)([M])=\int_{M} s^*\big(\theta(\sigma(P))\big)\in \F/4\pi^2\Z,
\end{equation}
where $s$ is a section of $E$ (which exists since $G$ is $2$-connected).\qed
\end{proposition}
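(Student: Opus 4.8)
The plan is to identify the de Rham representative of $\widehat c(E)$ coming from Chern–Weil/transgression theory and then evaluate it on the fundamental class via a section. The starting point is the preceding proposition, which tells us that the suspension of $W(P)$ is represented by the invariant $3$-form $\sigma(P)=-\tfrac16 P(\omega\wedge[\omega,\omega])$ on $G$. For a flat bundle, the characteristic class $S_{P,u}$ is a differential character that, because the curvature vanishes, reduces to an ordinary cohomology class in $H^{3}(M;\F/4\pi^2\Z)$; the content of the statement is that this class is computed by pulling back $\theta(\sigma(P))$ along a section.

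First I would recall the general construction of $S_{P,u}$ for flat bundles. The invariant polynomial $P$ and the integral lift $u=W(P)$ together determine a secondary class via the transgression form: on the total space $E$ one has the Chern–Simons form $TP(\theta)$ whose exterior derivative is $P(\Omega^2)$, the Chern–Weil form of the curvature $\Omega$. Since $\theta$ is flat, $\Omega=0$, so $TP(\theta)$ is closed on $E$; evaluated against the simplicial/transgression data and reduced modulo the period lattice $4\pi^2\Z$ (the normalization built into the choice of $\alpha B$), it represents $\widehat c(E)$. The key computation is that for a flat connection the Chern–Simons $3$-form on $E$ is exactly $\theta(\sigma(P))=-\tfrac16 P(\theta\wedge[\theta,\theta])$, i.e. the pullback of the invariant form $\sigma(P)$ under the map $\theta\colon\wedge^3\mathfrak g^*\to\Omega^3(E)$. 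This is the step where the identity $\Omega=d\theta+\tfrac12[\theta,\theta]=0$ is used to collapse the usual $TP(\theta)=\int_0^1 P(\theta\wedge\Omega_t^{\,k-1})\,dt$ down to its cubic term.

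Next I would use that $G$ is $2$-connected to choose a global section $s\colon M\to E$. Pulling the closed form $\theta(\sigma(P))$ back along $s$ gives a closed $3$-form on $M$, and pairing with $[M]$ yields the integral on the right-hand side. The remaining point is that evaluating the singular cohomology class $\widehat c(E)$ on $[M]$ agrees with this de Rham integral modulo $4\pi^2\Z$: the indeterminacy $4\pi^2\Z$ is precisely the ambiguity from changing the section (as noted in the introduction, a different section shifts the integral by the period of the generator of $H^4(BG;4\pi^2\Z)$), and the normalization of $P=\alpha B$ is exactly what makes this lattice $4\pi^2\Z$.

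The main obstacle I expect is bookkeeping the normalization and the reduction $\F/4\pi^2\Z$ consistently: one must verify that the period lattice of $W(\alpha B)$, the $4\pi^2$ factor, and the sign/coefficient conventions in $\sigma(P)$ all match so that the section-dependence lands exactly in $4\pi^2\Z$ and not some rescaled lattice. This is a matching of constants rather than a conceptual difficulty, and since the cited reference \cite[Proposition~2.8]{CheegerSimons} establishes the general differential-character formula, the real work here is specializing to the flat case and checking that the transgression form equals $\theta(\sigma(P))$, which the previous proposition has essentially prepared.
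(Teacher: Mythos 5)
The paper gives no proof of this proposition at all---it is quoted directly from Cheeger--Simons (their Proposition~2.8) and marked \qed---and your sketch is precisely the standard argument underlying that citation: the transgression form $TP(\theta)=P(\theta\wedge d\theta)+\tfrac13P(\theta\wedge[\theta,\theta])$ collapses to $-\tfrac16P(\theta\wedge[\theta,\theta])=\theta(\sigma(P))$ when $\Omega=d\theta+\tfrac12[\theta,\theta]=0$, and the section (which exists by $2$-connectedness) pulls this back to $M$ with ambiguity exactly the period lattice $4\pi^2\Z$ of $\sigma(P)\in H^3(G;4\pi^2\Z)$. Your proposal is correct and consistent with the route the paper takes by citation.
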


\begin{remark}
Since $\sigma(P)\in H^3(G;4\pi^2\Z)$ is a generator, it follows that a 
change of section changes the integral by a multiple of $4\pi^2\Z$.
\end{remark}

\begin{example}
\lbl{SLnExample}
For $G=\SL(n,\C)$, the renormalized killing form $P$ equals $\frac{1}{2}\Tr$, 
where $\Tr$ is the trace form $(A,B)\mapsto \Tr(AB)$. For a flat connection, $d\theta=-\frac{1}{2}[\theta,\theta]=-\theta\wedge\theta$, so~\eqref{CSintegral} yields
\begin{equation}
\widehat c(E)([M])=\frac{1}{2}\int_{M} 
s^*\big(\Tr(\theta\wedge d\theta+\frac{2}{3}\theta\wedge\theta\wedge\theta)\big)\in 
\C/4\pi^2\Z
\end{equation}
recovering the Chern-Simons integral~\eqref{ChernSimonsintegral}. Note that $P$ also equals the (renormalized) second Chern-polynomial $c_2$. 
It thus follows that $\widehat c=\widehat c_2$. 
\end{example}

\subsection{Complex groups and volume}
\lbl{sub.cvolume}

Recall that there is a $1$-$1$ correspondence between flat $G$-bundles over 
$M$ and representations $\pi_1(M)\to G$ up to conjugation. This correspondence 
takes a flat bundle to its holonomy representation. If 
$\rho\colon\pi_1(M)\to G$ is a representation, we let $E_\rho$ denote the 
corresponding flat bundle.
In the following $G$ denotes a simply connected, simple, complex Lie group, 
and $M$ a closed, oriented $3$-manifold. 
The following definition is motivated by Theorem~\ref{SL2complexvolume} 
below.

\begin{definition}
\lbl{compvoldefn} 
The \emph{complex volume} $\Vol_\C(\rho)$ of a representation 
$\rho\colon\pi_1(M)\to G$ is defined by
\begin{equation}
\widehat c(E_\rho)(M)=i\Vol_\C(\rho)\in\C/4\pi^2\Z.
\end{equation}
The \emph{volume} $\Vol(\rho)$ of $\rho$ is the real part of $\Vol_\C(\rho)$.
\end{definition}

The bundle $E_\rho$ is isomorphic to $\widetilde M\times_\rho G$, and we thus have a $1$-$1$ correspondence between
sections of $E_\rho$ and $\rho$-equivariant maps $\widetilde M\to G$ such that $f\colon\widetilde M\to G$ 
corresponds to the section $s(x)=[\widetilde x,f(\widetilde x)]$. 
\begin{lemma}
\lbl{rhomapandVolC}
For any $\rho$-equivariant map $f\colon\widetilde M\to G$, we have 
$i\Vol_\C(\rho)=\int_D f^*(\sigma(P))$, where $D$ is a fundamental domain for $M$ in $\widetilde M$.
\end{lemma}
\begin{proof}
For any invariant form $\eta\in\Omega^3(G)^G$, the form $\theta(\eta)\in \Omega^3(E_\rho;\F)$ 
is induced by the pullback of $\eta$ under the projection $\widetilde M\times G\to G$. Letting $\eta=\sigma(P)$, the result follows from~\eqref{CSintegral}.
\end{proof}

Let $\H^3=\SL(2,\C)/\SU(2)$ be hyperbolic $3$-space. We identify the 
orthonormal frame bundle $F(\H^3)$ of $\H^3$ with $\PSL(2,\C)$.

\begin{lemma}
\lbl{sigmaPandCS} 
For $G=\SL(2,\C)$, $\sigma(P)=-h^*\wedge e^*\wedge f^*$, where
$h=\Mat{1}{0}{0}{-1}$, $e=\Mat{0}{1}{0}{0}$ and $f=\Mat{0}{0}{1}{0}$ are the 
standard generators of $\mathfrak{sl}(2,\C)$ over $\C$.
\end{lemma}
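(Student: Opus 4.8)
The plan is to reduce the lemma to a direct computation with the structure constants of $\fsl(2,\C)$, using the explicit formula~\eqref{sigmaPformula} for $\sigma(P)$ together with the identification $P=\tfrac12\Tr$ from Example~\ref{SLnExample} (specialized to $n=2$).

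First I would expand the Maurer-Cartan form in the basis $\{h,e,f\}$ as $\omega=h^*h+e^*e+f^*f$, where $h^*,e^*,f^*$ denote the dual left-invariant $1$-forms; by definition these are exactly the coordinate forms appearing in the statement. Next I would compute the $\fsl(2,\C)$-valued $2$-form $[\omega,\omega]$ from the commutation relations $[h,e]=2e$, $[h,f]=-2f$, $[e,f]=h$, using $[\omega,\omega]=\sum_{i,j}\omega^i\wedge\omega^j\,[X_i,X_j]$. Combining each pair $[X_i,X_j]$ with $[X_j,X_i]$ and using antisymmetry of the wedge, I would obtain
\begin{equation*}
[\omega,\omega]=4\,(h^*\wedge e^*)\,e-4\,(h^*\wedge f^*)\,f+2\,(e^*\wedge f^*)\,h.
\end{equation*}

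Then I would record the values of the trace form on the basis, namely $\Tr(h^2)=2$, $\Tr(ef)=\Tr(fe)=1$, and $\Tr$ vanishing on all remaining unordered pairs. Wedging $\omega$ against $[\omega,\omega]$ and applying $P=\tfrac12\Tr$ to the Lie-algebra factors, only the pairings $(h,h)$, $(e,f)$ and $(f,e)$ survive; after reordering the $1$-forms into standard order, each contributes $2\,(h^*\wedge e^*\wedge f^*)$, so that $P(\omega\wedge[\omega,\omega])=6\,h^*\wedge e^*\wedge f^*$. Multiplying by the prefactor $-\tfrac16$ from~\eqref{sigmaPformula} then yields $\sigma(P)=-h^*\wedge e^*\wedge f^*$, as claimed.

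The computation is entirely mechanical, so the only real hazard is sign bookkeeping: first, correctly combining the two contributions $[X_i,X_j]$ and $[X_j,X_i]$ when assembling $[\omega,\omega]$; and second, tracking the sign picked up when reordering $e^*\wedge h^*\wedge f^*$ and $f^*\wedge h^*\wedge e^*$ into the standard order $h^*\wedge e^*\wedge f^*$. I would also double-check that the factor $\tfrac12$ in $P$ and the factor $-\tfrac16$ conspire with the three surviving terms to give exactly $-1$ as the final coefficient.
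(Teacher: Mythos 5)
Your computation is correct and follows essentially the same route as the paper: both start from the formula~\eqref{sigmaPformula} with $P=\tfrac12\Tr$ and verify the identity by direct calculation, and your structure-constant bookkeeping (the coefficients $4,-4,2$ in $[\omega,\omega]$, the three surviving pairings, and the final factor $-\tfrac16\cdot 6=-1$) all checks out. The paper merely organizes the arithmetic differently, using cyclic invariance of the trace to collapse $\sigma(P)$ to the trilinear form $(A,B,C)\mapsto-\tfrac12\Tr(A[B,C])$ and then recognizing this as $-\det$ of the coordinate matrix of $A,B,C$ in the basis $h,e,f$, rather than expanding $[\omega,\omega]$ in that basis.
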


\begin{proof}
As in Example~\ref{SLnExample}, $P=\frac{1}{2}\Tr$. Using the fact that 
$\Tr(AB)=\Tr(BA)$, it follows from~\eqref{sigmaPformula} that 
$\sigma(P)\in\Omega^3(G)^G=\wedge^3(\mathfrak g^*)$ is given by
\begin{equation}
\mathfrak g\times\mathfrak g\times\mathfrak g\to \C,\quad (A,B,C)\mapsto -\frac{1}{2}\Tr(A[B,C]).
\end{equation}
A simple computation shows that if $A=\Mat{a}{b}{c}{-a}$, $B=\Mat{e}{f}{g}{-e}$ and $C=\Mat{i}{j}{k}{-i}$
\begin{equation}
-\frac{1}{2}\Tr(A[B,C])=-\det\left(\begin{smallmatrix}a&b&c\\e&f&g\\i&j&k\end{smallmatrix}\right)=-h^*\wedge e^*\wedge f^*(A,B,C).
\end{equation}
This proves the result.
\end{proof}

\begin{theorem}
\lbl{SL2complexvolume} 
Let $M=\H^3/\Gamma$ be a closed hyperbolic $3$-manifold, and let $\rho\colon\pi_1(M)\to\SL(2,\C)$ be a lift of the geometric representation. We have
\begin{equation}
\widehat c(E_\rho)([M])=i(\Vol(M)+i\CS(M))\text{ in }\C/2\pi^2\Z,
\end{equation}
where $\CS(M)=2\pi^2\cs(M)$, and $\cs(M)$ is the (Riemannian) Chern-Simons invariant~\cite[(6.2)]{ChernSimons}.
\end{theorem}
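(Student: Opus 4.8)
The plan is to compute the Chern-Simons integral $\widehat c(E_\rho)([M])$ directly using Lemma~\ref{rhomapandVolC}, and to match it term-by-term against the known expression for the Riemannian Chern-Simons invariant and the hyperbolic volume. The key geometric input is the identification $F(\H^3)=\PSL(2,\C)$ of the orthonormal frame bundle of hyperbolic $3$-space with $\PSL(2,\C)$, so that the invariant $3$-form $\sigma(P)=-h^*\wedge e^*\wedge f^*$ computed in Lemma~\ref{sigmaPandCS} can be pulled back to the frame bundle and compared with the standard frame-bundle integrand defining $\cs(M)$.

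First I would choose a $\rho$-equivariant developing map $f\colon\widetilde M\to\SL(2,\C)$; by Lemma~\ref{rhomapandVolC}, $i\Vol_\C(\rho)=\int_D f^*(\sigma(P))$, where $D$ is a fundamental domain for $M$. Next I would decompose $\sigma(P)=-h^*\wedge e^*\wedge f^*$ into real and imaginary parts with respect to the Cartan decomposition $\mathfrak{sl}(2,\C)=\mathfrak{su}(2)\oplus i\,\mathfrak{su}(2)$, writing the dual basis in terms of the coframe of the maximal compact $\SU(2)$ and its orthogonal complement. The imaginary part of the resulting $3$-form should be a multiple of the hyperbolic volume form $\nu_{\H^3}$ pulled back along the projection $\SL(2,\C)\to\H^3=\SL(2,\C)/\SU(2)$; integrating this over $D$ recovers $\Vol(M)$ and shows $\Imag\,\widehat c(E_\rho)([M])=-\Vol(M)$, consistent with the claimed $\widehat c=i(\Vol+i\CS)$. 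The real part should, after this decomposition, coincide with the invariant $3$-form on $\PSL(2,\C)=F(\H^3)$ whose integral over the pulled-back frame field is precisely $\CS(M)=2\pi^2\cs(M)$ in Chern-Simons' normalization~\cite{ChernSimons}.

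The main obstacle will be fixing all the normalizations so that the real part matches $\CS(M)$ exactly, and in particular tracking the factors of $2\pi^2$ so that the congruence holds in $\C/2\pi^2\Z$ rather than $\C/4\pi^2\Z$. Two points require care. First, the geometric representation lands in $\PSL(2,\C)$, and we have chosen an $\SL(2,\C)$-lift $\rho$; since $\phi$ factors through $\PSL(2,\C)$ on the relevant form $\sigma(P)$ (it is $\Ad$-invariant and descends), the computation is unaffected, but one must verify that the developing map and the frame field are compatible and that passing to the $\PSL(2,\C)$-quotient is what reduces the indeterminacy from $4\pi^2$ to $2\pi^2$. Second, I would need to identify the constant relating $-h^*\wedge e^*\wedge f^*$ to the Chern-Simons frame-bundle form; this is where the renormalized Killing form $P=\tfrac12\Tr$ and the normalization $\cs(M)=\CS(M)/2\pi^2$ must be reconciled against the generator of $H^3(\SL(2,\C);4\pi^2\Z)$. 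Once these two normalization issues are settled, the theorem follows by assembling the real and imaginary parts of $\int_D f^*(\sigma(P))$.
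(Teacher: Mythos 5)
Your proposal follows essentially the same route as the paper: apply Lemma~\ref{rhomapandVolC} and Lemma~\ref{sigmaPandCS}, split $\sigma(P)=-h^*\wedge e^*\wedge f^*$ into real and imaginary parts, and identify these with $2\pi^2\cs$ and the volume form respectively, using $F(\H^3)=\PSL(2,\C)$ and the fact that the equivariant map descends to a frame field on $F(M)$. The two identifications you defer as ``normalization issues to settle'' are exactly the nontrivial analytic content, and the paper does not derive them either --- it cites Dupont for the statement that $\Imag\sigma(P)$ is \emph{cohomologous} to the pullback of the hyperbolic volume form (note: cohomologous, not equal, since $\sigma(P)$ is not horizontal for $\SL(2,\C)\to\H^3$; for closed $M$ the exact discrepancy integrates to zero), and Yoshida's Lemma~3.1 for $\Real(h^*\wedge e^*\wedge f^*)=2\pi^2\cs$ on the frame bundle. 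Your explanation of the passage from $\C/4\pi^2\Z$ to $\C/2\pi^2\Z$ via the $\PSL(2,\C)$-quotient is correct: the generator of $H^3(\SL(2,\C);4\pi^2\Z)$ descends to a form integrating to $2\pi^2$ on the generator of $H_3(\PSL(2,\C))$, so a change of frame field on $F(M)$ shifts the integral by multiples of $2\pi^2$ only.
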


\begin{proof} 
The fact that the imaginary part equals volume is well known, and follows from the fact (see Dupont~\cite{Dupont}) that the imaginary part of $\sigma(P)$ is cohomologous to the pullback of the hyperbolic volume form. Yoshida~\cite[Lemma~3.1]{Yoshida} shows that the real part of the form $h^*\wedge e^*\wedge f^*$ equals $2\pi^2\cs$, where $\cs$ is the Riemannian Chern-Simons form on $F(\H^3)=\PSL(2,\C)$ (pulled back to $\SL(2,\C)$. Note that the Riemannian connection on $F(\H^3)=\PSL(2,\C)$ descends to the Riemannian connection on $F(M)=\PSL(2,\C)/\Gamma$. If $f\colon\widetilde M\to\SL(2,\C)$ is $\rho$-equivariant, the composition
\begin{equation}
\xymatrix{{\widetilde M}\ar[r]^-f&{\SL(2,\C)}\ar[r]&{\PSL(2,\C)}\ar[r]&{\PSL(2,\C)/\Gamma=F(M)}}
\end{equation}
is $\rho$-invariant, and thus descends to a section of $F(M)$. The result now follows from Yoshida's result together with Lemma~\ref{sigmaPandCS} and Lemma~\ref{rhomapandVolC}. 
\end{proof}

\begin{remark} 
Note that Theorem~\ref{SL2complexvolume} implies that modulo $2\pi^2$, the complex volume of a representation lifting the geometric representation only depends on $M$ and not on the choice of lift.
\end{remark}

\begin{remark} 
Since $P$ is real on $K$, the imaginary part of $\sigma(P)$ is cohomologous to an invariant $3$-form on $G/K$. Since $H^3(\mathfrak g,\mathfrak k;\R)=\R$, there is a unique such form up to scaling. We may thus think of $\Imag(\sigma(P))$ as a volume form. 
\end{remark}

\subsection{The universal classes and group cohomology}
\lbl{sub.universal}

The Cheeger-Chern-Simons classes are also defined for the universal flat bundle $EG^\delta\to BG^\delta$. For an explicit construction, we refer to Dupont-Kamber~\cite{DupontKamber} or Dupont-Hain-Zucker~\cite{DupontHainZucker}.
In particular, we have a class $\widehat c\in H^3(BG^\delta;\C/4\pi^2\Z)$. 
If $\rho\colon \pi_1(M)\to G$ is a representation, with classifying map $B\rho\colon M\to BG^\delta$, we thus have

\begin{equation}
\widehat c(B\rho_*([M]))=i\Vol_\C(\rho).
\end{equation}

It is well known that the homology of $BG^\delta$ is the homology of the chain complex $C_*\otimes_{\Z[G]}\Z$, where $C_*$ is any free $\Z[G]$-resolution of $\Z$. A convenient choice of free resolution is the complex $C_*$, generated in degree $n$ by tuples $(g_0,\dots, g_n)$, and with boundary map given by 
\begin{equation}
\partial (g_0,\dots,g_n)=\sum (-1)^i (g_0,\dots,\widehat g_i,\dots,g_n).
\end{equation} 
The homology of $C_*\otimes_{\Z[G]}\Z$ is denoted $H_*(G)$, so $H_*(G)=H_*(BG^\delta)$. Theorem~\ref{cocycleintro} gives a concrete cocycle formula for $\widehat c\colon H_3(\SL(n,\C))\to \C/4\pi^2\Z$.

\subsection{Compact manifolds with boundary}
\lbl{sub.compactMb}
In Section~\ref{sub.map2ebloch} below, we construct a natural extension of $\widehat c\colon H_3(\SL(n,\C))\to\C/4\pi^2\Z$ to a homomorphism
\begin{equation}
\widehat c\colon H_3(\SL(n,\C),N)\to \C/4\pi^2\Z,
\end{equation}
where $N$ is the subgroup of upper triangular matrices with $1$'s on the diagonal.
\begin{definition}
\lbl{complexvolumedefn} 
Let $\rho\colon\pi_1(M)\to \SL(n,\C)$ be a boundary-unipotent representation. The \emph{complex volume} of $\rho$ is defined by
\begin{equation}
\widehat c(B\rho_*([M,\partial M]))=i\Vol_\C(\rho),
\end{equation}
where $B\rho\colon(M,\partial M)\to (B\SL(n,\C)^\delta,BN^\delta)$ is a classifying map for $\rho$.
\end{definition}

\begin{remark}
Unlike when $M$ is closed, the classifying map is not uniquely determined by $\rho$; it depends on a choice of decoration (see Section~\ref{sec.decorations}). The complex volume, however, is independent of this choice.
\end{remark}

\subsection{Central elements of order $2$}
\lbl{sub.central}

For any simple complex Lie group $G$, there is a canonical homomorphism (defined up to conjugation) 
\begin{equation}
\phi_G\colon \SL(2,\C)\to G.
\end{equation}
The element $s_G=\phi_G(-I)$ is a central element of $G$ of order dividing $2$, and equals $(-I)^{n+1}$ if $G=\SL(n,\C)$ (see e.g.~Fock-Goncharov~\cite[Corollary~2.1]{FockGoncharov}). Let  
\begin{equation}
pG=G/\langle s_G\rangle.
\end{equation} 
Note that $\phi_G$ descends to a homomorphism $\PSL(2,\C)\to pG$. The following follows easily from the Serre spectral sequence.

\begin{proposition}
\lbl{orderfourkernel}
Suppose $s_G$ has order $2$. The canonical map $p^*\colon H^4(BpG;\Z)\to H^4(BG;\Z)$ is surjective with kernel of order dividing $4$.\qed
\end{proposition}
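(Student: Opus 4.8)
The plan is to analyze the fibration $BG^\delta \to BpG^\delta$ arising from the central extension
\begin{equation*}
1 \to \langle s_G\rangle \to G \to pG \to 1,
\end{equation*}
where $\langle s_G\rangle \cong \Z/2\Z$ since $s_G$ has order $2$. Passing to classifying spaces (in the topological category, with $G$ a Lie group) gives a fibration $B\Z/2\Z \to BG \to BpG$, and the Serre spectral sequence in integral cohomology computes $H^*(BpG;\Z)$ from $H^*(BG;\Z)$ and $H^*(B\Z/2\Z;\Z)$. The map $p^* \colon H^4(BpG;\Z)\to H^4(BG;\Z)$ is the edge homomorphism of this spectral sequence, so understanding its cokernel and kernel reduces to understanding the low-degree contributions of $H^*(B\Z/2\Z;\Z)$ and the relevant differentials.

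First I would record the integral cohomology of $B\Z/2\Z = \R P^\infty$, namely $H^{2k}(B\Z/2\Z;\Z)=\Z/2\Z$ for $k\geq 1$ and $H^{\text{odd}}=0$, $H^0=\Z$. Next I would use that $G$ is simply connected and simple, so that $\pi_1(BG)=\pi_2(BG)=0$ in the relevant range and, crucially, $H^4(BG;\Z)\cong\Z$ with all lower positive-degree integral cohomology vanishing except $H^4$ (this is the standard consequence of $H^1(G;\Z)=H^2(G;\Z)=0$, $H^3(G;\Z)\cong\Z$ used in Section~\ref{sub.simplelie}). The spectral sequence has $E_2^{p,q}=H^p(BpG;H^q(B\Z/2\Z;\Z))$. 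Surjectivity of $p^*$ onto $H^4(BG;\Z)=\Z$ should follow because $H^4(BG;\Z)$ is the fiber-direction contribution $E_\infty^{0,4}$-type class pulled back, together with the fact that $\Z$ has no proper finite-index subgroup arising as a cokernel here once one checks the transgression; more precisely, the edge map surjects onto the subgroup of permanent cycles in $E_2^{0,4}=H^4(BG;\Z)^{\text{invariant}}$, and since the $\Z/2\Z$-action on the fiber cohomology is trivial (central extension), this subgroup is all of $\Z$ modulo differentials that I must check vanish.

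The kernel of $p^*$ is the main point. By the spectral sequence, the kernel of the edge homomorphism $H^4(BpG;\Z)\to E_\infty^{0,4}\hookrightarrow H^4(BG;\Z)$ is built from the filtration pieces $E_\infty^{p,q}$ with $p>0$ and $p+q=4$, i.e.\ contributions from $E_\infty^{2,2}$ and $E_\infty^{4,0}$ (the odd-$q$ rows vanish). These are subquotients of $H^2(BpG;H^2(B\Z/2\Z;\Z))=H^2(BpG;\Z/2\Z)$ and $H^4(BpG;\Z)$ respectively, and each surviving contribution is $2$-torsion coming from the fiber class in $H^2(B\Z/2\Z;\Z)=\Z/2\Z$. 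The bound ``order dividing $4$'' should emerge because there are at most two such filtration quotients, each of order dividing $2$, so the total kernel has order dividing $4$. The hard part will be verifying that the relevant transgression differentials (in particular $d_3$ and $d_5$ on the fiber class) behave as expected and that no larger contributions survive to $E_\infty$; this amounts to a careful bookkeeping of the low-degree terms of the Serre spectral sequence for $B\Z/2\Z \to BG \to BpG$, using the triviality of the coefficient action and the known cohomology of both fiber and total space. I expect this transgression analysis to be the only genuinely delicate step, with everything else being a direct reading of the spectral sequence.
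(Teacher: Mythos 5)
Your choice of the fibration $B(\Z/2\Z)\to BG\to BpG$ and its Serre spectral sequence is the approach the paper has in mind, but you have interchanged the two edge homomorphisms, and this derails the argument. In $E_2^{p,q}=H^p(BpG;H^q(B(\Z/2\Z);\Z))\Rightarrow H^{p+q}(BG;\Z)$, the map $p^*$ is the \emph{base} edge homomorphism $E_2^{4,0}=H^4(BpG;\Z)\to E_\infty^{4,0}\subseteq H^4(BG;\Z)$; the group $E_2^{0,4}$ that you identify with ``$H^4(BG;\Z)^{\text{invariant}}$'' is actually $H^4(B(\Z/2\Z);\Z)=\Z/2\Z$, the cohomology of the fibre. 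Consequently the filtration quotients $E_\infty^{0,4}$ and $E_\infty^{2,2}$ that you enumerate (each a subquotient of a group of order $2$, since $H^2(BpG;\Z/2\Z)\cong\Z/2\Z$ by Hurewicz) measure the \emph{cokernel} of $p^*$, not its kernel. The kernel of the base edge homomorphism is the union of the images of the incoming differentials $d_r\colon E_r^{4-r,r-1}\to E_r^{4,0}$, and these all vanish here: the rows $q=1,3$ are zero and $E_2^{1,2}=H^1(BpG;\Z/2\Z)=0$ because $BpG$ is simply connected. What the spectral sequence actually yields is therefore that $p^*$ is \emph{injective} with cokernel of order dividing $4$. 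Also note that your surjectivity argument leans on ``$\Z$ has no proper finite-index subgroup arising as a cokernel here,'' which is not a usable principle: $\Z$ has finite-index subgroups of every index.

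This is not just a labelling slip, because the surjectivity you set out to prove is false in general: for $G=\SL(2,\C)\simeq\SU(2)$ and $pG=\PSL(2,\C)\simeq\SO(3)$ one has $H^4(B\SO(3);\Z)=\Z\langle p_1\rangle$, $H^4(B\SU(2);\Z)=\Z\langle c_2\rangle$, and $p^*(p_1)=-4c_2$, so the image of $p^*$ has index $4$. The proposition as printed appears to have kernel and cokernel interchanged; the assertion that the Serre spectral sequence delivers, and that the corollary following it actually uses, is that $p^*$ is injective with cokernel of order dividing $4$ (this is exactly what produces $u\in H^4(BpG;\pi^2\Z)$ with $p^*(u)=W(P)$, and injectivity is what makes $u$, hence $\widehat c$, canonical). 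Your count of the two $2$-torsion filtration quotients is precisely the input needed for that cokernel bound; once the edge homomorphisms are relabelled correctly, no delicate transgression analysis is required.
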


\begin{corollary}
There is a canonical characteristic class 
$\widehat c\colon H_3(pG)\to \C/\pi^2\Z$.
\end{corollary}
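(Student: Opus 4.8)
The plan is to produce the class $\widehat c$ on $pG$ by imitating the construction on $G$. Recall from Section~\ref{sub.simplelie} that $\widehat c=S_{\alpha B,W(\alpha B)}$, where $\alpha B$ is the renormalized Killing form and $W(\alpha B)\in H^4(BG;4\pi^2\Z)$ is the distinguished integral lift, a generator of $H^4(BG;4\pi^2\Z)$. Since $pG$ has the same Lie algebra as $G$, the renormalized Killing form $\alpha B$ is also an invariant polynomial in $I^2(pG;\C)$, and the only missing ingredient is an integral lift of the Chern--Weil class $W_{pG}(\alpha B)\in H^4(BpG;\C)$. Once such a lift $u\in H^4(BpG;4\pi^2\Z)$ is chosen, the general construction of Section~\ref{sec.CCS} furnishes a characteristic class $S_{\alpha B,u}$ for flat $pG$-bundles, and hence, applied to the universal flat bundle, a homomorphism $H_3(pG)\to\C/4\pi^2\Z$. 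The two remaining tasks are to construct $u$ and to show that the resulting class becomes canonical after coarsening the target to $\C/\pi^2\Z$.

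To construct the lift I would use Proposition~\ref{orderfourkernel} together with the fact that the fibration $B\langle s_G\rangle\to BG\to BpG$ has rationally trivial fiber (as $\langle s_G\rangle$ is finite), so that $p^*\colon H^4(BpG;\C)\to H^4(BG;\C)$ is an isomorphism. Choose $v\in H^4(BpG;\Z)$ with $p^*v$ equal to the generator $g$ of $H^4(BG;\Z)$; this is possible because $p^*$ is surjective on $H^4(-;\Z)$ by the proposition. Then $p^*(4\pi^2 v)=4\pi^2 g=W_G(\alpha B)=p^*\big(W_{pG}(\alpha B)\big)$, and since $p^*$ is injective over $\C$ we conclude $W_{pG}(\alpha B)=4\pi^2 v$ in $H^4(BpG;\C)$. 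Thus $u:=4\pi^2 v$ is the desired integral lift, and $S_{\alpha B,u}$ defines a class $\widehat c\colon H_3(pG)\to\C/4\pi^2\Z$.

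Finally I would address the dependence on the choice of $v$, which is the crux of the statement. By the proposition, $v$ is determined only modulo the kernel $T=\Ker p^*$, a finite group of order dividing $4$; a different admissible lift has the form $v+t$ with $t\in T$ of some order $m\mid 4$. Changing the integral lift $u$ by the torsion class $4\pi^2 t$ changes $S_{\alpha B,u}$, on flat bundles, by the purely topological (flat) class associated to $4\pi^2 t$: since $t$ is $m$-torsion its image in $H^4(BpG;\C)$ vanishes, so $4\pi^2 t$ lifts under the Bockstein to a class valued in $\tfrac1m(4\pi^2\Z)/4\pi^2\Z\subseteq\pi^2\Z/4\pi^2\Z$, and this is exactly the ambiguity introduced in $\widehat c$. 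Reducing the target from $\C/4\pi^2\Z$ to $\C/\pi^2\Z$ annihilates this ambiguity for every $m\mid 4$, so the composite $H_3(pG)\to\C/4\pi^2\Z\to\C/\pi^2\Z$ is independent of all choices and gives the asserted canonical class $\widehat c\colon H_3(pG)\to\C/\pi^2\Z$. The main obstacle is precisely this last step: making rigorous how $S_{P,u}$ transforms under a change of integral lift by a torsion class (via its defining Bockstein property $\beta(S_{P,u})=\iota^*u$ on $BpG^\delta$) and checking that the order-dividing-$4$ bound on $T$ translates into an ambiguity by integral multiples of $\pi^2$, which is what forces the coarser target $\C/\pi^2\Z$.
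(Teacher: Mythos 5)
Your overall strategy (lift the Chern--Weil class of the renormalized Killing form to an integral class on $BpG$, apply the Cheeger--Simons construction $S_{P,u}$, and control the dependence on the choice of lift) is the right one, and your final paragraph about how $S_{P,u}$ changes under a torsion change of $u$ via the Bockstein is sound in principle. The gap is in the very first step of your construction: you choose $v\in H^4(BpG;\Z)$ with $p^*v$ equal to the \emph{generator} $g$ of $H^4(BG;\Z)$ and set $u=4\pi^2v$. No such $v$ exists already in the basic case $G=\SL(2,\C)$, $pG=\PSL(2,\C)$: there $p^*\colon H^4(B\SO(3);\Z)\to H^4(B\SU(2);\Z)$ sends $p_1$ to $-4c_2$, so the image of $p^*$ is the index-$4$ subgroup $4\Z\cdot c_2$ and the generator $c_2$ does not lift. (Despite its literal wording, Proposition~\ref{orderfourkernel} is being used with this content --- $p^*$ is injective with cokernel of order dividing $4$ --- which is exactly why the paper's proof takes $u$ in $H^4(BpG;\pi^2\Z)$ rather than in $H^4(BpG;4\pi^2\Z)$; if $p^*$ were honestly surjective, the coefficient group $\pi^2\Z$ would be pointless.)

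This changes the mechanism behind the target group $\C/\pi^2\Z$. In the paper's one-line proof one lifts $W(P)=4\pi^2g$, not $g$: since $4g$ lies in the image of $p^*$, there exists $u\in H^4(BpG;\pi^2\Z)$ with $p^*u=W(P)$, and injectivity of $p^*$ together with $H^4(BpG;\C)\cong H^4(BG;\C)$ makes this $u$ the canonical integral lift of $W_{pG}(P)$; it is an integral multiple of $\pi^2$ but in general not of $4\pi^2$, so $S_{P,u}$ is born with values in $\C/\pi^2\Z$. In your argument the passage from $\C/4\pi^2\Z$ to $\C/\pi^2\Z$ is attributed entirely to killing a torsion ambiguity of order dividing $4$ in the choice of $v$; that ambiguity is not the reason the invariant only lives modulo $\pi^2\Z$ --- the lift simply does not exist over $4\pi^2\Z$. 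To repair your proof, replace ``choose $v$ with $p^*v=g$'' by ``choose $u\in H^4(BpG;\pi^2\Z)$ with $p^*u=W(P)$''; your Bockstein discussion then becomes the (correct, and worth making explicit) verification that any residual torsion indeterminacy in $u$ does not affect $S_{P,u}$ modulo $\pi^2\Z$.
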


\begin{proof}
By Proposition~\ref{orderfourkernel}, there exists a canonical class $u\in
H^4(BpG;\pi^2\Z)$ such that $p^*(u)=W(P)\in H^4(BG\semico \pi^2\Z)$. Define $\widehat c=S_{P,u}$.
\end{proof}

In Section~\ref{sub.pSLassignments}, we construct a homomorphism 
\begin{equation}
\widehat c\colon H_3(p\SL(n,\C),N)\to \C/\pi^2\Z,
\end{equation}
which extends $\widehat c$ to a characteristic class of bundles with boundary-unipotent holonomy. The complex volume of a representation in $p\SL(n,\C)$ is defined as in Definition~\ref{complexvolumedefn}.

\section{The extended Bloch group}
\lbl{sec.bloch}

We use the conventions of Zickert~\cite{ZickertAlgK}; the original 
reference is Neumann~\cite{Neumann}.

\begin{definition} 
\lbl{def.bloch}
The \emph{pre-Bloch group} $\Pre(\C)$ is the free abelian group on $\C\setminus\{0,1\}$ modulo the \emph{five term relation}
\begin{equation}
x-y+\frac{y}{x}-\frac{1-x^{-1}}{1-y^{-1}}+\frac{1-x}{1-y}=0,\quad \text{for }x\neq y\in\C\setminus\{0,1\}.
\end{equation}
The \emph{Bloch group} is the kernel of the map $\nu\colon \Pre(\C)\to\wedge^2(\C^*)$ taking $z$ to $z\wedge(1-z)$.
\end{definition}

\begin{definition} 
\lbl{def.ebloch}
The \emph{extended pre-Bloch group} $\widehat\Pre(\C)$ is the free abelian group on the set
\begin{equation}
\widehat\C=\left\{(e,f)\in\C^2 \bigm\vert \exp(e)+\exp(f)=1\right\}
\end{equation} 
modulo the \emph{lifted five term relation}
\begin{equation}
(e_0,f_0)-(e_1,f_1)+(e_2,f_2)-(e_3,f_3)+(e_4,f_4)=0
\end{equation}
if the equations
\begin{equation}
\begin{gathered}\lbl{fiveeq}
e_2=e_1-e_0,\quad e_3=e_1-e_0-f_1+f_0,\quad f_3=f_2-f_1\\
e_4=f_0-f_1,\quad f_4=f_2-f_1+e_0
\end{gathered}
\end{equation}
are satisfied. The \emph{extended Bloch group} is the kernel of the map $\widehat\nu\colon\widehat\Pre(\C)\to\wedge^2(\C)$ taking $(e,f)$ to $e\wedge f$.
\end{definition}

An element $(e,f)\in\widehat \C$ with $\exp(e)=z$ is called a \emph{flattening} with \emph{cross-ratio} $z$.
Letting $\mu_{\C}$ denote the roots of unity in $\C^*$, we have a
commutative diagram. 
\begin{equation}
\lbl{bigdiagram}
\cxymatrix{{&0\ar[d]&0\ar[d]&0\ar[d]&&\\0\ar[r]&\mu_{\C}\ar[r]^-{2\log}\ar[d]^-{\chi}&{\C/4\pi i\Z}\ar[r]\ar[d]^{\chi}&{\C^*/\mu_{\C}}\ar[r]\ar[d]&0\ar[d]&\\
0\ar[r]&{\widehat \B(\C)}\ar[r]\ar[d]^-\pi&{\widehat \Pre(\C)}\ar[r]^-{\widehat \nu}\ar[d]^-\pi&{\wedge^2(\C)}\ar[r]\ar[d]&
{K_2(\C)}\ar@2{-}[d]\ar[r]&0\\0\ar[r]&{\B(\C)}\ar[r]\ar[d]&{\Pre(\C)}\ar[r]^-\nu\ar[d]&{\wedge^2(\C^*)\ar[d]}\ar[r]&{K_2(\C)}\ar[r]\ar[d]&0\\&0&0&0&0}}
\end{equation}
The map $\pi$ is induced by the map taking a flattening to its cross-ratio, and $\chi$ is the map taking $e\in\C/4\pi i\Z$ to $(e,f+2\pi i)-(e,f)$, where $f\in\C$ is any element such that $(e,f)\in\widehat\C$.

\subsection{The regulator}
\lbl{sub.regulator}

By fixing a branch of logarithm, we may write a flattening with cross-ratio $z$ as $[z;p,q]=\big(\log(z)+p\pi i,\log(1-z)+q\pi i\big)$, where $p,q\in\Z$ are \emph{even} integers. There is a well defined \emph{regulator map}
\begin{equation}
\lbl{regulator}
\begin{gathered}
R\colon\widehat\Pre(\C)\to\C/4\pi^2\Z,\\
{[z;p,q]}\mapsto \Li_2(z)+\frac{1}{2}(\log(z)+p\pi i)(\log(1-z)-q\pi i)-\pi^2/6.
\end{gathered}
\end{equation}

\subsection{The $\PSL(2,\C)$-variant of the extended Bloch group}
\lbl{sub.vbloch}

There is another variant of the extended Bloch group using flattenings 
$[z;p,q]$, where $p$ and $q$ are allowed to be odd. This group is defined 
as above using the set
\begin{equation}
\widehat{\C}_{\odd}=\left\{(e,f)\in\C^2 \bigm\vert \pm\exp(e)\pm\exp(f)=1\right\},
\end{equation}
and fits in a diagram similar to~\eqref{bigdiagram}. We use a subscript 
$\PSL$ to denote the variant allowing odd flattenings. We have an exact 
sequence
\begin{equation}
\xymatrix{0\ar[r]&{\Z/4\Z}\ar[r]&{\widehat\B(\C)}\ar[r]&
{\widehat\B(\C)_{\PSL}}\ar[r]&0.}
\end{equation}
For odd flattenings, the regulator~\eqref{regulator} is well defined modulo 
$\pi^2\Z$.

\begin{theorem}
[Neumann~\cite{Neumann}, Goette-Zickert~\cite{GoetteZickert}] 
There are natural isomorphisms
\begin{equation}
H_3(\PSL(2,\C))\cong\widehat\B(\C)_{\PSL},\quad H_3(\SL(2,\C))\cong\widehat\B(\C)
\end{equation}
such that the Cheeger-Chern-Simons classes agree with the regulators.\qed
\end{theorem}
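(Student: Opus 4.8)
The plan is to construct the two isomorphisms as the composite of a chain-level map from a suitable bar-type complex for the group into the extended pre-Bloch complex, and then verify that the induced map intertwines the Cheeger--Chern--Simons class with the regulator $R$. Since the theorem is attributed jointly to Neumann and to Goette--Zickert, I expect the $\PSL(2,\C)$ statement and the $\SL(2,\C)$ statement to be handled by parallel arguments, the latter being the ``lifted'' refinement of the former that keeps track of the parity of the flattening integers $p,q$.

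First I would fix a free $\Z[G]$-resolution of $\Z$ (the homogeneous bar resolution generated by tuples $(g_0,\dots,g_n)$ described in Section~\ref{sub.universal}), so that $H_3(G)=H_3(BG^\delta)$ is computed by tuples $(g_0,g_1,g_2,g_3)$. Given such a tuple, one uses the canonical action of $G=\SL(2,\C)$ (or $\PSL(2,\C)$) on $\BP^1=\partial\H^3$ to assign to the four points $g_0\cdot\infty,\dots,g_3\cdot\infty$ a cross-ratio $z\in\C\setminus\{0,1\}$, together with a \emph{flattening} $[z;p,q]$ recording logarithmic data along the edges; this is exactly the $n=2$ specialization of the Ptolemy-determinant construction \eqref{Ptolemydeterminants} and the map $\lambda$ of \eqref{betamapdefn}. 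The core computation is that the boundary relation $\partial(g_0,\dots,g_4)=0$ for a $5$-tuple translates, after this assignment, precisely into the lifted five-term relation \eqref{fiveeq}; this is what makes $\lambda$ a well-defined chain map and shows its image lands in $\widehat\B(\C)$ rather than merely $\widehat\Pre(\C)$ (the condition $\widehat\nu=0$ comes from the cocycle being a cycle). One then checks that $\lambda$ induces an isomorphism on $H_3$, for instance by comparing with Dupont's computation of $H_3(\SL(2,\C))$ and $H_3(\PSL(2,\C))$ and exhibiting an inverse via the diagram \eqref{bigdiagram}.

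To obtain the compatibility with Cheeger--Chern--Simons, I would evaluate the explicit group cocycle for $\widehat c$ on the universal flat bundle against the image under $\lambda$. By Lemma~\ref{sigmaPandCS}, the invariant $3$-form $\sigma(P)$ for $G=\SL(2,\C)$ is $-h^*\wedge e^*\wedge f^*$, and Lemma~\ref{rhomapandVolC} expresses $\widehat c$ as an integral of this form over a fundamental domain. Pulling back along the developing map into $\H^3$ and integrating over the geodesic simplex with vertices $g_0\cdot\infty,\dots,g_3\cdot\infty$ produces the Bloch--Wigner dilogarithm for the imaginary part and, keeping the flattening data, the full Rogers dilogarithm $R([z;p,q])$ of \eqref{regulator} for the complex value. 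The agreement of $\widehat c$ with $R\circ\lambda$ is then the assertion that this geometric integral equals the regulator, which is Neumann's extended Rogers dilogarithm identity.

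The hard part will be the bookkeeping of the flattening integers $p,q$ and the resulting $4\pi^2\Z$ (respectively $\pi^2\Z$) ambiguities: one must show that the branch choices in $\lambda$ assemble coherently across the boundary of a $4$-simplex so that the lifted five-term relation holds on the nose, and that the additive constants ($-\pi^2/6$, the half-integer normalizations, and the difference between the even-integer and odd-integer conventions) match the renormalization of the Killing form $\alpha B=\tfrac12\Tr$ fixed in Section~\ref{sub.simplelie}. Equivalently, the delicate point is pinning down the exact sequence $0\to\Z/4\Z\to\widehat\B(\C)\to\widehat\B(\C)_{\PSL}\to0$ and checking that the two isomorphisms are compatible with it, since the $\SL(2,\C)$-case records strictly more torsion information than the $\PSL(2,\C)$-case. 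Everything else is a matter of assembling Dupont's and Neumann's computations with the cocycle formulas reviewed in Section~\ref{sec.CCS}; I would cite those sources for the hardest analytic identity rather than reprove the dilogarithm functional equation from scratch.
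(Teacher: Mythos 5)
The paper does not prove this theorem: it is stated with attributions to Neumann and Goette--Zickert and a \qed, i.e.\ it is imported from the literature, so there is no internal proof to compare your proposal against. That said, your outline is a faithful summary of the strategy actually used in those references: a chain-level map from the bar complex of tuples $(g_0,\dots,g_3)$ to $\widehat\Pre(\C)$ via cross-ratios of the points $g_i\cdot\infty\in\partial\H^3$ equipped with flattening data, the lifted five-term relation arising from the boundary of a $5$-tuple, and the identification of the Cheeger--Chern--Simons class with the Rogers dilogarithm via the invariant form $\sigma(P)$ and the half-ideal-simplex integral.

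Two caveats on where your sketch underestimates the work. First, a general tuple $(g_0,\dots,g_3)$ can have $g_i\cdot\infty=g_j\cdot\infty$, so the cross-ratio assignment is not defined on the full bar complex; one must pass to a generic (nondegenerate) subcomplex and prove it still computes $H_3(G)$ (the analogue in this paper is Corollary~\ref{freeaction} and the remark after \eqref{Ptolemydeterminants}). Moreover the cross-ratio alone does not determine the flattening $[z;p,q]$: the integers $p,q$ must be extracted from chosen logarithms of the Ptolemy determinants, and well-definedness of the resulting class (independence of the branch choices, which is the content of results like Proposition~\ref{independenceoflog} here) is a genuine argument, not bookkeeping. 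Second, and more seriously, your justification that $\lambda$ is an \emph{isomorphism} --- ``comparing with Dupont's computation and exhibiting an inverse via \eqref{bigdiagram}'' --- is where essentially all of the difficulty of Neumann's paper lives: one has to compare the defining exact sequence of $\widehat\B(\C)_{\PSL}$ in \eqref{bigdiagram} with the Bloch--Wigner exact sequence for $H_3(\PSL(2,\C))$, which requires a delicate analysis of the components of the space of flattenings and of the torsion ($\mu_\C$ on one side, $\Q/\Z$ on the other). Likewise the $\SL(2,\C)$ refinement is not merely ``parallel'': pinning down the $\Z/4\Z$ kernel of $\widehat\B(\C)\to\widehat\B(\C)_{\PSL}$ against the kernel of $H_3(\SL(2,\C))\to H_3(\PSL(2,\C))$ is the main point of Goette--Zickert. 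As a plan that defers these steps to the cited sources your proposal is sound; as a self-contained proof it is not.
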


The following result is needed in Section~\ref{decorationsection}. The 
first part is proved in Zickert \cite[Lemma~3.16]{ZickertAlgK}, and 
the second has a similar proof, which we leave to the reader.
\begin{lemma}
\lbl{chipqlemma} 
For $(e,f)\in\widehat\C$ and $p,q\in\Z$, we have%
\setlength{\belowdisplayskip}{0pt}%
\setlength{\belowdisplayshortskip}{0pt}%
\setlength{\parskip}{0pt}%
\begin{align}
(e+2\pi ip,f+2\pi iq)-(e,f)&=\chi(qe-pf+2pq\pi i)\in\widehat\Pre(\C),\\
(e+\pi ip,f+\pi iq)-(e,f)&=\chi(qe-pf+pq\pi i)\in\widehat\Pre(\C)_{\PSL}.
\end{align}

\nopagebreak%
\vspace{-\baselineskip}%
\nopagebreak%
\qed
\end{lemma}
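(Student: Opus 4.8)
The first identity is Zickert~\cite[Lemma~3.16]{ZickertAlgK}; I would prove the second by the same method, carried out in the odd variant. Write $\chi$ for the $\PSL$-analogue of the map in~\eqref{bigdiagram}, namely the homomorphism $\C/2\pi i\Z\to\widehat\Pre(\C)_{\PSL}$ sending $e$ to $(e,f+\pi i)-(e,f)$ for any $f$ with $(e,f)\in\widehat\C_{\odd}$; the first task is to check, exactly as in the even case, that this is well defined and additive. Granting this, I would reduce the general statement to a single unit shift by telescoping. Decomposing through the intermediate flattening $(e+\pi ip,f)$ gives
\begin{equation}
(e+\pi ip,f+\pi iq)-(e,f)=\big[(e+\pi ip,f+\pi iq)-(e+\pi ip,f)\big]+\big[(e+\pi ip,f)-(e,f)\big].
\end{equation}
The first bracket telescopes, by the very definition of $\chi$ applied with base point $(e+\pi ip,f)$, to $q\,\chi(e+\pi ip)=\chi(qe+pq\pi i)$; note that the cross term $pq\pi i$ is produced here automatically, and is exactly half of the $2pq\pi i$ of the even case because the shift is by $\pi i$ rather than $2\pi i$. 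The second bracket telescopes to $p$ copies of the single first-coordinate shift.

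Thus everything reduces to the identity
\begin{equation}
\lbl{starredPSL}
(e+\pi i,f)-(e,f)=\chi(-f),
\end{equation}
whose $p$-fold iterate contributes the $-pf$ term (the right-hand side of~\eqref{starredPSL} depends only on $f$, so the iteration is immediate, and one checks via $\widehat\nu$ that both sides have image $-f\wedge\pi i$). The second-coordinate unit shift is definitional, so~\eqref{starredPSL} is the only genuine content. I would emphasize that the easy even-case argument does \emph{not} suffice here: combining the reflection relation $(e,f)+(f,e)=\kappa$ (which lies in $\widehat\B(\C)_{\PSL}$ since $\widehat\nu((e,f)+(f,e))=e\wedge f+f\wedge e=0$) with a $2\pi i$ shift, which preserves the cross-ratio $\exp(e)$ and hence fixes $\kappa$, only yields the \emph{double} shift $(e+2\pi i,f)-(e,f)=\chi(-2f)$. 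The single $\pi i$ shift in~\eqref{starredPSL} genuinely requires the odd lifted five-term relation.

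To establish~\eqref{starredPSL} I would use the same specialization of the lifted five-term relation that Zickert uses in the even case, but now allowing flattenings in $\widehat\C_{\odd}$: the constraints~\eqref{fiveeq} are identical in form, and the specialization that yields a $\chi$-term as a $2\pi i$ difference of flattenings in $\widehat\C$ should, reinterpreted over $\widehat\C_{\odd}$, yield the corresponding $\chi$-term as a $\pi i$ difference. The main obstacle is the bookkeeping of sign sectors: a $\pi i$ shift of the first coordinate sends $\exp(e)\mapsto-\exp(e)$, so the shifted flattening $(e+\pi i,f)$ lies in a different sector $\pm\exp(e)\pm\exp(f)=1$ of $\widehat\C_{\odd}$ than $(e,f)$. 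I would therefore need to choose the degenerate five-tuple so that all five membership conditions and the constraints~\eqref{fiveeq} are simultaneously satisfiable with an admissible assignment of signs, and verify that the resulting identity produces exactly one factor of $\pi i$, i.e.\ precisely $\chi(-f)$ and not $\chi(-2f)$. Once~\eqref{starredPSL} is in hand, assembling the two brackets and invoking additivity of $\chi$ gives $\chi(qe+pq\pi i)+\chi(-pf)=\chi(qe-pf+pq\pi i)$, completing the proof; the remaining steps are routine and parallel to the even case, so I would indicate only the modifications.
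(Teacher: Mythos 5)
Your proposal matches the paper's treatment: the paper simply cites Zickert~\cite[Lemma~3.16]{ZickertAlgK} for the first identity and states that the second ``has a similar proof, which we leave to the reader,'' and your adaptation of that method to the odd variant --- telescoping to the single shift $(e+\pi i,f)-(e,f)=\chi(-f)$ and reassembling via additivity of the $\PSL$-variant $\chi$, with the cross term $pq\pi i$ emerging correctly from $q\,\chi(e+\pi ip)$ --- is exactly the intended argument. The one step you leave as a verification (the odd lifted five-term computation behind the single $\pi i$ shift, with its sign-sector bookkeeping) is precisely the content the paper itself omits, so your write-up is, if anything, more explicit than the original.
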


\subsection{Arbitrary fields}
\lbl{sub.allfields}

In Zickert~\cite{ZickertAlgK}, extended Bloch groups $\widehat\B_E(F)$ and 
$\widehat\B_E(F)_{\PSL}$ are defined for an arbitrary field $F$ and a primitive extension $E$ of $F^*$ by $\Z$. The definitions 
are as above using the sets
\begin{equation}
\widehat E_F=\left\{(e,f)\in E^2\bigm\vert \pi(e)+\pi(f)=1\right\},
\quad (\widehat E_F)_{\odd}=\left\{(e,f)\in E^2\bigm\vert 
\pm\pi(e)\pm\pi(f)=1\right\}.
\end{equation}
If $F$ is a number field, the extended Bloch groups are up to canonical isomorphism 
independent of the choice of extension, so we may omit the subscript $E$. 

\begin{theorem} 
[{Zickert~\cite[Theorem~1.1]{ZickertAlgK}}]  
Let $F$ be a number field. There is a natural isomorphism
\begin{equation}
K_3^{\ind}(F)\cong\widehat\B(F)
\end{equation}
respecting Galois actions.\qed
\end{theorem}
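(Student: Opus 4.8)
The plan is to exhibit both $K_3^{\ind}(F)$ and $\widehat\B(F)$ as extensions of the classical Bloch group $\B(F)$ by one and the same finite group, and then to produce a natural homomorphism between them inducing the identity on the quotient $\B(F)$, so that the isomorphism follows from the five lemma. On the $K$-theoretic side the input is Suslin's theorem~\cite{Suslin}, which for any infinite field gives a natural exact sequence
\[
0\to\Tor(\mu_F,\mu_F)^{\sim}\to K_3^{\ind}(F)\to\B(F)\to 0,
\]
where $\mu_F$ is the group of roots of unity of $F$ and $\Tor(\mu_F,\mu_F)^{\sim}$ denotes the unique non-split extension of $\Tor(\mu_F,\mu_F)$ by $\Z/2\Z$. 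For a number field $\mu_F$ is finite cyclic of even order $w$, so this kernel is cyclic of order $2w$; for $F=\C$ it is $\Q/\Z\cong\mu_\C$, which is exactly the left-hand column of~\eqref{bigdiagram}, and for $F=\Q$ it is $\Z/4\Z$, consistent with $K_3^{\ind}(\Q)=\Z/24\Z$ and $\B(\Q)=\Z/6\Z$. The first step is therefore to derive the parallel sequence
\[
0\to\Tor(\mu_F,\mu_F)^{\sim}\to\widehat\B(F)\to\B(F)\to 0
\]
directly from the definition, i.e.\ to compute the kernel of the map $\pi\colon\widehat\B(F)\to\B(F)$ sending a flattening to its cross-ratio. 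This is the number-field analogue of the left column of~\eqref{bigdiagram}: the kernel is the image under $\chi$ of the torsion of the extension $E$ of $F^*$ by $\Z$, and the relations of Lemma~\ref{chipqlemma} together with the comparison of the even- and odd-flattening variants (cf.\ the sequence $0\to\Z/4\Z\to\widehat\B(\C)\to\widehat\B(\C)_{\PSL}\to 0$) produce the decisive $\Z/2\Z$-twist.

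Next I would construct a natural comparison map compatible with the two projections to $\B(F)$. Suslin's sequence is obtained from the action of $\SL(2,F)$ on configurations of points of $\BP^1(F)$, which yields a map from $H_3$ of the relevant linear group to $\Pre(F)$, and hence the class in $\B(F)$; the extended pre-Bloch group $\widehat\Pre(F)$ refines these configurations by recording a choice of logarithm (a flattening) on each edge, exactly as the map $\lambda$ of the present paper attaches a flattening to a tuple $(g_0,\dots,g_3)$. Passing to this refined, flag-decorated configuration complex produces a natural homomorphism between $K_3^{\ind}(F)$ and $\widehat\B(F)$ which, by construction, recovers the classical configuration class on $\B(F)$ and carries torsion kernel to torsion kernel. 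Naturality in $F$ is automatic, since every ingredient — the configuration complex, $\chi$, the cross-ratio, and Suslin's sequence — is functorial for field homomorphisms.

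It then remains, by the five lemma, to show that the comparison map restricts to an isomorphism on the kernels $\Tor(\mu_F,\mu_F)^{\sim}$, the induced map on the quotient $\B(F)$ being the identity by the configuration-complex construction. This kernel matching is the crux: one must identify the $\Z/2\Z$-twist produced by Suslin's construction — whose non-triviality comes from the central element $-I\in\SL(2,F)$ — with the twist arising from the flattening ambiguity computed in the first step. Over $\C$ this is precisely the base case $H_3(\SL(2,\C))\cong\widehat\B(\C)$ quoted above, but for a number field one cannot reduce to $\C$ by functoriality, since $K_3^{\ind}(F)\to K_3^{\ind}(\C)$ has enormous kernel; instead one compares the explicit $2$-primary torsion of $\widehat\B(F)$ with the computation of $K_3(F)_{\mathrm{tors}}$ (Merkurjev--Suslin, Levine). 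As a consistency check on the free part, $K_3^{\ind}(F)$ is finitely generated of rank $r_2$ (Quillen, Borel) and $\widehat\B(F)$ has the same rank since it surjects onto $\B(F)$ with finite kernel; moreover the Borel regulator agrees up to normalization with the Rogers dilogarithm $R$ of~\eqref{regulator}, so the free parts correspond and the map is compatible with complex volumes.

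Finally, Galois equivariance requires no extra work: Suslin's sequence, the extended Bloch group, the comparison map, and the regulator are all natural in $F$, hence equivariant under $\Aut(F)$ and in particular under $\Gal(F/\Q)$. The main obstacle is the kernel identification of the third paragraph — proving that the $\Z/2\Z$-twist built into $\widehat\B(F)$ is genuinely Suslin's non-split twist, and establishing this naturally in $F$ rather than only after passing to $\C$, where the $2$-primary subtleties (the same ones separating the $\SL$ and $\PSL$ variants) become invisible.
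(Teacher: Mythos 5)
This statement is not proved in the paper at all: it is imported from Zickert~\cite[Theorem~1.1]{ZickertAlgK} and merely quoted here, so the comparison has to be with that reference. Your skeleton does coincide with the route taken there: exhibit $K_3^{\ind}(F)$ (via Suslin~\cite{Suslin}) and $\widehat\B(F)$ as extensions of $\B(F)$ by the same group $\Tor(\mu_F,\mu_F)^{\sim}\cong\Z/2w$, build a comparison map, and conclude by the five lemma. But as a proof your text has two genuine gaps, both of which you flag yourself and neither of which is routine. First, the comparison map: ``passing to the flag-decorated configuration complex produces a natural homomorphism'' is an assertion, not a construction. To obtain a map involving $K_3^{\ind}(F)$ one needs Suslin-type results expressing $K_3^{\ind}(F)$ through $H_3(\SL(2,F))$, and one must prove that the flattening map is well defined on homology --- independent of the set-theoretic section of $E\to F^*$ and of all decoration choices --- which is exactly where the chain-level work lives (the analogues over $F$ of Lemma~\ref{chipqlemma}, Lemma~\ref{I3Bhat}, and the $\chi$-calculus in the left column of~\eqref{bigdiagram}). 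Second, the identification $\Ker\bigl(\widehat\B(F)\to\B(F)\bigr)\cong\Tor(\mu_F,\mu_F)^{\sim}$, including the non-splitness of the $\Z/2\Z$-twist and its matching with Suslin's twist coming from $-I\in\SL(2,F)$, is the crux of the cited paper; you explicitly defer it (``the main obstacle\dots''), so the five lemma never receives its hypotheses. A correct strategy with the decisive lemma left open is a roadmap, not a proof.

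There is also a factual error in your third paragraph: you claim one cannot reduce to $\C$ because $K_3^{\ind}(F)\to K_3^{\ind}(\C)$ ``has enormous kernel.'' This contradicts Corollary~\ref{injectivity} of the present paper, which asserts that $\widehat\B(F)\to\widehat\B(\C)$ is injective for every embedding $\tau\colon F\to\C$; equivalently, $K_3^{\ind}(F)$ --- finitely generated of rank $r_2$ --- injects into $K_3^{\ind}(\C)$. The genuine obstruction to deducing the number-field case from the theorems of Neumann and Goette--Zickert over $\C$ is different: that injectivity is itself derived from the isomorphism being proved (it appears in \cite{ZickertAlgK} downstream of Theorem~1.1), and over $\C$ the divisibility of $\mu_\C$ makes the $2$-primary extension data you must control invisible, as you correctly observe. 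So the reduction fails for reasons of circularity and loss of torsion information, not because of a kernel.
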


\begin{corollary}
[{Zickert~\cite[Corollary~7.14]{ZickertAlgK}}]
\lbl{injectivity} 
For each embedding $\tau\colon F\to\C$, the induced map 
$\tau\colon\widehat\B(F)\to\widehat\B(\C)$ is injective.\qed
\end{corollary}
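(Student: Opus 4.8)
The plan is to transport the statement to algebraic $K$-theory and prove it there. Via the natural, Galois-equivariant isomorphism $\widehat\B(F)\cong K_3^{\ind}(F)$, together with functoriality and the comparison between $\widehat\B(\C)$ and $K_3^{\ind}(\C)$, the map $\tau\colon\widehat\B(F)\to\widehat\B(\C)$ is identified with the map on $K_3^{\ind}$ induced by the field embedding $\tau$. Since $F$ is a number field, Borel's theorem shows $\widehat\B(F)\cong K_3^{\ind}(F)$ is finitely generated, of rank $r_2$ (the number of complex places of $F$) with finite torsion subgroup. I would therefore split the argument into two claims: (a) the kernel of $\tau$ is torsion, and (b) $\tau$ is injective on the torsion subgroup. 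Together these give $\ker\tau\subseteq\widehat\B(F)_{\mathrm{tors}}$ with $\tau|_{\mathrm{tors}}$ injective, hence $\ker\tau=0$.

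For (a), I would use the exact sequence $0\to\mu_\C\to\widehat\B(\C)\xrightarrow{\pi}\B(\C)\to 0$ from \eqref{bigdiagram}, together with its analog over $F$. Because $\pi$ is natural, composing $\tau$ with $\pi$ reduces the claim to showing that the induced map $\B(F)\to\B(\C)$ has torsion kernel: if $x\in\ker\tau$ then $\pi(x)$ maps to $0$ in $\B(\C)$, so $\pi(x)$ is torsion, and since $\ker\pi=\mu_F$ is finite, $x$ is torsion. Now $\B(\C)$ is a $\Q$-vector space (Suslin), hence torsion-free, so it remains to check that $\B(F)\otimes\Q\to\B(\C)$ is injective. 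Replacing $F$ by its isomorphic image $\tau(F)\subset\C$, I would factor the inclusion as $F\subset\overline\Q\subset\C$: injectivity of $\B(F)\otimes\Q\to\B(\overline\Q)\otimes\Q$ follows from the transfer (norm) maps, whose composite with restriction is multiplication by $[E:F]$ on the finite subextensions exhausting $\overline\Q$; and injectivity of $\B(\overline\Q)\to\B(\C)$ follows from Suslin's rigidity results for the $K$-theory of algebraically closed fields of characteristic zero.

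For (b), I would identify the torsion of $\widehat\B(F)\cong K_3^{\ind}(F)$ with $H^0(F,\Q/\Z(2))$, a finite cyclic group. Since $\B(\C)$ is torsion-free, $\tau$ carries this torsion into $\ker\pi=\mu_\C\cong\Q/\Z$, and under the identification of torsion $K$-groups with étale cohomology (again via Suslin rigidity), the resulting map is the inclusion of Galois-fixed submodules $H^0(F,\Q/\Z(2))\hookrightarrow H^0(\C,\Q/\Z(2))=\Q/\Z$, which is injective. Combining (a) and (b) completes the proof.

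The main obstacle is step (a), and specifically the injectivity of $\B(\overline\Q)\to\B(\C)$ (equivalently of $K_3^{\ind}(\overline\Q)\to K_3^{\ind}(\C)$): this is precisely where Suslin's theorem on the $K$-theory of algebraically closed fields is genuinely required. It is worth stressing why a softer argument fails. One might hope to detect the free part of $\widehat\B(F)$ by composing $\tau$ with the regulator $R\colon\widehat\B(\C)\to\C/4\pi^2\Z$; but a single embedding contributes only one complex place, so when $r_2>1$ the analytic regulator attached to $\tau$ cannot see all of the rank-$r_2$ free part, and an element may have vanishing regulator at $\tau$ yet be nonzero in $\widehat\B(F)$. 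The purely algebraic transfer-and-rigidity argument, which shows that such an element is nonetheless nonzero in $\widehat\B(\C)$, is therefore essential rather than a convenience.
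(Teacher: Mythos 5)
The paper gives no proof of this corollary: it is imported verbatim from \cite[Corollary~7.14]{ZickertAlgK} and stated without argument, so there is no in-paper proof to compare against. Your argument is correct and is essentially the one the cited reference rests on: transport the statement to $K_3^{\ind}$, show the kernel is torsion (transfers from the finite subextensions of $\overline\Q$ give rational injectivity of $K_3^{\ind}(F)\to K_3^{\ind}(\overline\Q)$, and $K_3^{\ind}(\overline\Q)\to K_3^{\ind}(\C)$ is injective), and then check injectivity on torsion via the Levine/Merkurjev--Suslin identification $K_3^{\ind}(F)_{\mathrm{tors}}\cong H^0(F,\Q/\Z(2))$. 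Two remarks. First, the reduction to $K$-theory requires the isomorphism $\widehat\B(F)\cong K_3^{\ind}(F)$ to be natural with respect to the embedding $\tau$ and compatible with the comparison $\widehat\B(\C)\cong K_3^{\ind}(\C)$, not merely Galois-equivariant over a fixed field; this is indeed established in \cite{ZickertAlgK}, but it is the load-bearing input of your proof and should be flagged as such. Second, the injectivity of $K_3(\overline\Q)\to K_3(\C)$ does not need Suslin's rigidity theorem: an element of the kernel already dies in $K_3(A)$ for some finitely generated $\overline\Q$-subalgebra $A\subset\C$, and a $\overline\Q$-point of $A$ (Nullstellensatz) retracts $K_3(A)$ onto $K_3(\overline\Q)$, so the element was zero; rigidity is only needed, implicitly, in the computation of the torsion subgroups in your step (b). Your closing observation --- that the regulator at a single embedding sees only one of the $r_2$ complex places and therefore cannot replace the algebraic argument when $r_2>1$ --- is accurate and worth keeping.
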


\begin{corollary}
[{Galois descent; Zickert~\cite[Corollary~7.15]{ZickertAlgK}}]
\lbl{Galoisdescent}
Let $F_2:F_1$ be an extension of number fields. An element in $\widehat\B(F_2)$ is in 
$\widehat\B(F_1)$ if and only if it is invariant under all automorphisms of $F_2$ over $F_1$.\qed 
\end{corollary}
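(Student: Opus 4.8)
The plan is to transport the statement to algebraic $K$-theory via the isomorphism $K_3^{\ind}(F)\cong\widehat\B(F)$ established above, which is natural and respects Galois actions; Galois descent for $\widehat\B$ then becomes Galois descent for indecomposable $K_3$ of number fields. Since the content is vacuous unless $F_2/F_1$ is normal, I would reduce to the case where $F_2/F_1$ is a finite Galois extension and set $G=\Aut(F_2/F_1)=\Gal(F_2/F_1)$; the statement to prove is then the equality $\widehat\B(F_1)=\widehat\B(F_2)^G$ inside $\widehat\B(F_2)$. The forward inclusion is immediate: the inclusion $F_1\hookrightarrow F_2$ is fixed pointwise by every $\sigma\in G$, so $\sigma$ acts as the identity on the image of $\widehat\B(F_1)$, and every element coming from $F_1$ is $G$-invariant.

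For the reverse inclusion I would first establish the equality rationally and then upgrade it integrally. Rationally, the Borel regulator identifies $\widehat\B(F)\otimes\Q\cong K_3^{\ind}(F)\otimes\Q$ with a $\Q$-vector space of dimension $r_2(F)$, the number of complex places of $F$. Counting the embeddings $F_2\hookrightarrow\C$ lying over a fixed embedding of $F_1$, and noting that $G$ permutes them freely, one gets $\dim_\Q\big(\widehat\B(F_2)\otimes\Q\big)^G=r_2(F_1)$. Hence the inclusion $\widehat\B(F_1)\otimes\Q\hookrightarrow\big(\widehat\B(F_2)\otimes\Q\big)^G$ is an isomorphism, so $\widehat\B(F_1)$ is a finite-index subgroup of $\widehat\B(F_2)^G$. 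The transfer (norm) map $N\colon\widehat\B(F_2)\to\widehat\B(F_1)$ coming from the functoriality of $K_3^{\ind}$ satisfies the standard relations $N\circ\mathrm{res}=[F_2:F_1]$ and $\mathrm{res}\circ N=\sum_{\sigma\in G}\sigma$; applied to a $G$-invariant $x$ this gives $[F_2:F_1]\cdot x=\mathrm{res}(N(x))\in\widehat\B(F_1)$.

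The main obstacle is therefore the integral statement: the transfer only shows that $\widehat\B(F_2)^G/\widehat\B(F_1)$ is killed by $[F_2:F_1]$, and one must rule out torsion at the primes dividing the degree. I would dispose of the free part using Corollary~\ref{injectivity}: an invariant element has the same image under all embeddings $F_2\hookrightarrow\C$ extending a given embedding of $F_1$, and the injectivity of these embeddings pins its regulator class down to a class already attached to $F_1$. For the torsion part one uses that $K_3^{\ind}(F)_{\mathrm{tors}}$ is the cyclic group $\Z/w_2(F)$ governed by the Galois module $\Q/\Z(2)$, whose descent is controlled by the étale (Hochschild--Serre) descent spectral sequence; verifying that the $G$-invariants of this torsion module reproduce exactly $K_3^{\ind}(F_1)_{\mathrm{tors}}$ closes the gap between the rational isomorphism and the desired integral equality.
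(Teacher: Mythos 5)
The paper does not actually prove this corollary: it is imported verbatim from Zickert~\cite[Corollary~7.15]{ZickertAlgK}, where it is an immediate consequence of the natural, Galois-equivariant isomorphism $\widehat\B(F)\cong K_3^{\ind}(F)$ together with the theorem of Merkurjev and Suslin that $K_3^{\ind}$ satisfies Galois descent for arbitrary Galois extensions. Your opening move --- transporting the statement to $K_3^{\ind}$ --- is exactly the intended reduction. But you then attempt to reprove Merkurjev--Suslin's descent theorem for number fields by elementary means, and that part has a genuine gap. The transfer identities give only that $[F_2:F_1]\cdot x\in\widehat\B(F_1)$ for invariant $x$, i.e.\ that $\widehat\B(F_2)^G/\widehat\B(F_1)$ is finite and killed by the degree; your plan to finish is to check that the rational ranks match \emph{and} that the torsion subgroups match. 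These two facts together do not imply the integral equality: the inclusion $2\Z\subset\Z$ (trivial $G$-action) is an isomorphism rationally and on torsion, yet has nontrivial cokernel. The cokernel you need to kill lives precisely in the part invisible to both checks, and controlling it is the actual content of Merkurjev--Suslin (equivalently, of the known case of Quillen--Lichtenbaum in this degree, via the $H^1(G,-)$ terms of a Hochschild--Serre argument applied to the \emph{whole} group, not just to $\Q/\Z(2)$). Your appeal to Corollary~\ref{injectivity} for the ``free part'' is also circular as stated: injectivity of $\widehat\B(F_2)\to\widehat\B(\C)$ does not by itself identify an invariant class with one ``already attached to $F_1$'' --- that identification is the statement being proved.

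Two smaller points. First, for a non-normal extension the statement is not vacuous but \emph{false} unless one reads $F_1$ as the fixed field of $\Aut(F_2/F_1)$ (e.g.\ a non-normal cubic field with a complex place has trivial automorphism group but nonzero $\widehat\B\otimes\Q$, while $\widehat\B(\Q)\otimes\Q=0$); so the reduction to the Galois case should be phrased as an interpretation of the hypothesis rather than as discarding vacuous cases. Second, your rank count $\dim_\Q(\widehat\B(F_2)\otimes\Q)^G=r_2(F_1)$ is correct but needs the sign of complex conjugation on the Borel regulator (the regulator is anti-invariant), not just the free permutation of embeddings; as written the count would come out wrong for, say, $\Q(i)/\Q$.
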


\section{Decorations of representations}
\lbl{sec.decorations}

In this section we review the notion of decorated representations 
introduced in Zickert~\cite{ZickertDuke}. Throughout the section, $G$ 
denotes an arbitrary group, not necessarily a Lie group. Let $H$ be subgroup 
of $G$. An \emph{ordered simplex} is a simplex with a fixed vertex ordering.

\begin{definition}
\lbl{cycledefn} 
A \emph{closed $3$-cycle} is a cell complex $K$ obtained from a finite 
collection of ordered $3$-simplices $\Delta_i$ by gluing together pairs 
of faces using order preserving simplicial attaching maps. We assume that 
all faces have been glued, and that the space $M(K)$, obtained by truncating 
the $\Delta_i$'s before gluing, is an oriented $3$-manifold with boundary. 
Let $\epsilon_i$ be a sign indicating whether or not the orientation of 
$\Delta_i$ given by the vertex ordering agrees with the orientation of $M(K)$.
\end{definition}

Note that up to removing disjoint balls (which does not effect the 
fundamental group), the manifold $M(K)$ only depends on the underlying 
topological space of $K$, and not on the choice of $3$-cycle structure. 
Also note that for any compact, oriented $3$-manifold $M$ with 
(possibly empty) boundary, the space $\widehat M$ obtained from $M$ by 
collapsing each boundary component to a point has a structure of a closed 
$3$-cycle $K$ such that $M=M(K)$. 

Let $K$ be a closed $3$-cycle, and let $M=M(K)$. Let $L$ denote the space 
obtained from the universal cover $\widetilde M$ of $M$ by collapsing each 
boundary component to a point. The $3$-cycle structure of $K$ induces a 
triangulation of $L$, and also a triangulation of $M$ by truncated simplices. 
The covering map extends to a map $L\to K$, and the action of $\pi_1(M)$ on 
$\widetilde M$ by deck transformations extends to an action on $L$, which is 
determined by fixing, once and for all, a base point in $M$ together with 
one of its lifts. 
Note that the stabilizer of each zero cell is a \emph{peripheral} subgroup 
of $\pi_1(M)$, i.e.~a subgroup induced by inclusion of a boundary component. 

\begin{definition} 
Let $H$ be a subgroup of $G$. A representation $\rho\colon\pi_1(M)\to G$ is a 
$(G,H)$-\emph{representation} if the image of each peripheral subgroup is a 
conjugate of $H$. 
\end{definition}

\begin{definition}
\lbl{decorationdefn} 
Let $\rho$ be a $(G,H)$-representation. A \emph{decoration} (on $K$) of 
$\rho$ is a $\rho$-equivariant assignment of a left $H$-coset to each vertex 
of $L$, i.e.~if $\alpha\in\pi_1(M)$ and the coset $g_eH$ is assigned to $e$, 
the coset assigned to $\alpha e$ must be $\rho(\alpha)g_eH$.
\end{definition}

Note that $g_e^{-1}\rho(\Stab(e))g_e\subset H$, where $\Stab(e)$ is the 
stabilizer of $e$. 

\begin{definition} 
Two decorations $\{g_eH\}$ and $\{g^\prime_eH\}$ of $\rho$ are 
\emph{equivalent} if the corresponding subgroups $g_e^{-1}\rho(\Stab(e))g_e$ and 
$g^{\prime-1}_e\rho(\Stab(e))g^\prime_e$ of $H$ are conjugate (in $H$).
\end{definition}
 
Note that if $\{g_eH\}$ is a decoration of $\rho$, then $\{gg_eH\}$ is a 
decoration of $g\rho g^{-1}$. Since we are only interested in representations 
up to conjugation, we consider such two decorations to be equal.

\begin{remark}
\lbl{normalizerremark} 
Letting $N_G(H)$ denote the normalizer of $H$ in $G$, and $h$ the number of 
boundary components of $M$, right multiplication induces an action of $(N_G(H)/H)^h$ on the set of equivalence classes of 
decorations. If $G=\SL(n,\C)$ and $H=N$, $N_G(H)/H$, is the group of diagonal matrices, and since two subgroups of $N$ are conjugate if and only if they are conjugate by an element in the Borel subgroup $B$ of upper diagonal matrices, this action is transitive.
\end{remark}

\begin{proposition}
\lbl{decorationeqreduction} 
Let $E$ be a flat $G$-bundle over $M$ whose holonomy representation is a 
$(G,H)$-representation $\rho$. There is a $1$-$1$ correspondence between 
decorations of $\rho$ up to equivalence, and reductions of $E_{\partial M}$ to 
an $H$-bundle over $\partial M$.
\end{proposition}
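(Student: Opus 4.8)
The plan is to establish the bijection by reinterpreting decorations as sections of an associated bundle over the boundary. First I would recall the standard dictionary between flat $G$-bundles and representations: writing $E \cong \widetilde{M} \times_\rho G$, a reduction of the structure group of $E_{\partial M}$ to $H$ is the same as a section of the associated fiber bundle $E_{\partial M} \times_G (G/H)$, whose fiber is the coset space $G/H$. The flat structure means this associated bundle over $\partial M$ is $\widetilde{\partial M} \times_\rho (G/H)$, so sections correspond to $\rho$-equivariant maps $\widetilde{\partial M} \to G/H$, where $\rho$ here denotes the restriction of the holonomy to $\pi_1$ of each boundary component acting via left translation on $G/H$.

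The key step is to match such equivariant maps with decorations. A decoration assigns, $\rho$-equivariantly, a left $H$-coset to each vertex of $L$; since the vertices of $L$ lying over a given $0$-cell of $K$ are indexed by $\pi_1(M)/\Stab(e)$ and $\Stab(e)$ is the peripheral subgroup at that vertex, the $\rho$-equivariant coset assignment is precisely the data of a $\pi_1(\partial M)$-equivariant map on the corresponding boundary component. I would argue that, because every $0$-cell of $K$ is the cone point of a boundary component of $M$ and the combinatorics of $L$ near that vertex reproduces the universal cover of that boundary component, passing to the vertices of $L$ loses no information relative to a continuous equivariant map $\widetilde{\partial M} \to G/H$: the boundary component is aspherical (a union of tori, or more generally $K(\pi,1)$'s for the peripheral groups), so an equivariant map is determined up to equivariant homotopy by its behavior on $0$-cells, and conversely any equivariant choice on $0$-cells extends.

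Next I would check that the equivalence relation on decorations matches isomorphism of reductions. Two decorations are equivalent exactly when the subgroups $g_e^{-1}\rho(\Stab(e))g_e$ are conjugate in $H$, which is precisely the condition that the two sections of $E_{\partial M}\times_G(G/H)$ differ by an automorphism of the $H$-reduction, i.e.\ give isomorphic $H$-bundles rather than merely homotopic sections. Here I would invoke Remark~\ref{normalizerremark}: the set of decorations carries a transitive action of $(N_G(H)/H)^h$, and two decorations are equivalent iff they differ by an element of this group, which is exactly the group of gauge symmetries of an $H$-reduction fixing the underlying $G$-bundle. This pins down the correspondence on equivalence classes.

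The main obstacle I anticipate is the careful bookkeeping at the cone points: one must verify that collapsing each boundary component of $\widetilde{M}$ to a point in forming $L$ does not destroy the equivariance data, and that the stabilizer-theoretic condition defining equivalence of decorations translates exactly into isomorphism (as opposed to homotopy) of the resulting $H$-reductions. The delicate point is that a decoration records a coset, not an element, so the residual ambiguity is governed by $N_G(H)/H$; ensuring that this ambiguity corresponds precisely to the automorphisms of the reduction — and not something larger or smaller — is where the proof requires genuine attention rather than routine diagram-chasing. Everything else reduces to the standard associated-bundle/equivariant-map dictionary together with the asphericity of the boundary.
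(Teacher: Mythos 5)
Your overall dictionary (reductions of $E_{\partial M}$ to $H$ $\leftrightarrow$ sections of $E_{\partial M}\times_G(G/H)$ $\leftrightarrow$ $\rho$-equivariant maps to $G/H$ $\leftrightarrow$ equivariant coset assignments at the vertices of $L$) is a legitimate repackaging of what the paper does directly: for each boundary component $S_i$ the paper simply picks $g_i$ with $g_i^{-1}\rho(\pi_1(S_i))g_i\subset H$ and assigns the coset $g_iH$ to a chosen lift $e_i$. But two steps of your argument are off. First, the asphericity/homotopy step is not the right mechanism: in this section $G$ is an arbitrary (discrete) group, so the associated $G/H$-bundle has discrete fiber and a section is locally constant, hence determined exactly --- not merely up to equivariant homotopy --- by a single value on each connected component $\widetilde S_i$ of the preimage of $\partial M$; conversely a value $g_iH$ gives a section iff $g_i^{-1}\rho(\Stab(e_i))g_i\subset H$, which is precisely the equivariance condition of Definition~\ref{decorationdefn}. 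Introducing ``up to equivariant homotopy'' is at best unnecessary and at worst quotients sections by an equivalence that has nothing to do with the statement, since reductions correspond to actual (parallel) sections, not homotopy classes of sections.

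Second, and more seriously, your identification of the equivalence relation fails. You assert that two decorations are equivalent iff they differ by an element of $(N_G(H)/H)^h$, citing Remark~\ref{normalizerremark}. But that remark says the action of $(N_G(H)/H)^h$ is \emph{transitive on equivalence classes}; if differing by such an element were the criterion for equivalence, then all decorations would be equivalent and the proposition would assert that $E_{\partial M}$ admits a unique $H$-reduction, which is false in general. The correct matching --- and the one the paper uses --- is direct: two decorations are equivalent by definition iff the subgroups $g_e^{-1}\rho(\Stab(e))g_e$ of $H$ are conjugate in $H$, and two flat $H$-bundle structures on $E_{S_i}$ are identified iff their holonomies into $H$ are conjugate in $H$; these are literally the same condition, with no detour through $N_G(H)/H$. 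So the step you yourself flag as the delicate one is exactly the step that does not go through as written.
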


\begin{proof} 
For each boundary component $S_i$ of $M$, choose a base point in $S_i$ and 
a path to the base point of $M$. This determines a lift $e_i$ in $L$ of the 
vertex of $K$ corresponding to $S_i$, and an identification of $\pi_1(S_i)$ 
with $\Stab(e_i)\subset\pi_1(M)$. If $F$ is a reduction of $E_{\partial M}$, the 
holonomy representations $\rho_i\colon\pi_1(S_i)\to H$ of $F_{S_i}$ are 
conjugate to $\rho$, so there exist $g_i\in G$ such that 
$g_i^{-1}\rho g_i=\rho_i$. Assigning the coset $g_iH$ to $e_i$ yields a 
decoration, which up to equivalence is independent of the choice of $g_i$'s. 
On the other hand, a decoration assigns cosets $g_iH$ to $e_i$ such that 
$g_i^{-1}\rho(\Stab(e_i))g_i\subset H$. Hence, $g_i$ defines an isomorphism 
of $E_{S_i}$ with an $H$-bundle, which up to isomorphism only depends on the 
equivalence class of the decoration.
\end{proof}

\subsection{The fundamental class of a decorated representation}
\lbl{sub.fundamental}

A flat $G$-bundle over $M$ determines a classifying map $M\to BG^\delta$, 
where the $\delta$ indicates that $G$ is regarded as a discrete group. It 
thus follows from Proposition~\ref{decorationeqreduction} that a decorated 
representation $\rho\colon\pi_1(M)\to G$ determines a map
\begin{equation}
B\rho\colon(M,\partial M)\to (BG^\delta,BH^\delta).
\end{equation}
In particular, $\rho$ gives rise to a fundamental class 
\begin{equation}
[\rho]=B\rho_*([M,\partial M])\in H_3(G,H),
\end{equation}
where, by definition, $H_*(G,H)=H_*(BG^\delta,BH^\delta)$. Note that the 
fundamental class is independent of the particular $3$-cycle structure 
on $K$.


Recall that $M$ is triangulated by truncated simplices. By 
restriction, a $(G,H)$ cocycle on $M$ determines a $(G,H)$-cocycle on each 
truncated simplex $\overline{\Delta_i}$. 
Let $\overline B_*(G,H)$ denote the chain complex generated in degree $n$ by 
$(G,H)$-cocycles on a truncated $n$-simplex. As proved in 
Zickert~\cite[Section~3]{ZickertDuke}, $\overline B_*(G,H)$ computes the 
homology groups $H_3(G,H)$. Note that a $(G,H)$-cocycle on $M$ determines (up to conjugation) a decorated $(G,H)$-representation.

\begin{proposition}
[{Zickert~\cite[Proposition~5.10]{ZickertDuke}}]
\lbl{fundclassrepprop} 
Let $\tau$ be a $(G,H)$-cocycle on $M$ representing a decorated $(G,H)$-representation $\rho$. The cycle
\begin{equation}\lbl{fundclassrep}
\sum\epsilon_i\tau_{\overline\Delta_i}\in \overline B_3(G,H),
\end{equation} 
represents the fundamental class of $\rho$.  \qed
\end{proposition}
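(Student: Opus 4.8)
The plan is to realize the fundamental class geometrically, by showing that the global $(G,H)$-cocycle $\tau$ on $M$ is itself a simplicial model of the classifying map $B\rho$, and then pushing the fundamental cycle of $(M,\partial M)$ forward along it. Recall that $M=M(K)$ is triangulated by the truncated simplices $\overline\Delta_i$, so the relative fundamental class $[M,\partial M]$ is represented by the cellular cycle $\sum_i\epsilon_i[\overline\Delta_i]$, the signs $\epsilon_i$ recording whether the vertex ordering of $\Delta_i$ agrees with the orientation of $M$. That this is a cycle is precisely the statement that the truncation faces cancel in pairs under the gluing while the remaining faces lie in $\partial M$; the very same cancellation shows $\sum_i\epsilon_i\tau_{\overline\Delta_i}$ is a cycle in $\overline B_*(G,H)$, since $\tau$ is globally defined and hence agrees on identified faces. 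It thus suffices to exhibit a map of pairs representing $B\rho$ that is cellular for this triangulation and sends $\overline\Delta_i$ to the generator $\tau_{\overline\Delta_i}$.

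First I would build such a map directly from the decoration. Working on $L$ (the universal cover $\widetilde M$ with boundary components collapsed), the decoration assigns an $H$-coset to each vertex, and $\rho$-equivariance of this assignment lets one construct a $\rho$-equivariant simplicial map from $L$ into the standard bar-type model of $EG^\delta$, using the decorating cosets as the vertex data. On each truncated simplex the transition data between adjacent decorating cosets is exactly the $(G,H)$-cocycle $\tau$, while the truncation faces — whose stabilizers map into a conjugate of $H$ — land in the $H$-part of the model. Passing to the quotient by $\pi_1(M)$, this descends to a map of pairs $(M,\partial M)\to(BG^\delta,BH^\delta)$ that is cellular for the truncated-simplex triangulation, and whose restriction to $\overline\Delta_i$ reads off exactly the generator $\tau_{\overline\Delta_i}$ of $\overline B_3(G,H)$, under the identification of $\overline B_*(G,H)$ with the relative cellular chains of $(BG^\delta,BH^\delta)$ of Zickert~\cite[Section~3]{ZickertDuke}.

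It then remains to identify this geometric map with $B\rho$. By construction it classifies a flat $G$-bundle with holonomy $\rho$, and its boundary reduction to an $H$-bundle is the one determined by the decoration via Proposition~\ref{decorationeqreduction}; since a pair-map into $(BG^\delta,BH^\delta)$ is determined up to homotopy of pairs by the flat bundle together with its boundary $H$-reduction, this is how $B\rho$ is defined, so the two maps agree up to homotopy. Pushing the fundamental cycle forward therefore computes $[\rho]=B\rho_*([M,\partial M])$, and by cellularity this pushforward is literally $\sum_i\epsilon_i\tau_{\overline\Delta_i}$. I expect the main obstacle to be the bookkeeping in the middle step: one must check that the isomorphism of Zickert~\cite[Section~3]{ZickertDuke} between $\overline B_*(G,H)$ and the relative chains of $(BG^\delta,BH^\delta)$ is induced by precisely this cell-to-generator correspondence, with the boundary maps and the orientation signs $\epsilon_i$ matching, so that cellularity delivers the stated cycle on the nose rather than merely up to a chain homotopy.
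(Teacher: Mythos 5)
The paper does not actually prove this proposition --- it is imported verbatim from Zickert~\cite[Proposition~5.10]{ZickertDuke} --- and your argument is essentially the proof given there: use the cocycle $\tau$ (equivalently, the decoration on $L$) to build a cellular, $\rho$-equivariant map of pairs into the aspherical model of $(BG^\delta,BH^\delta)$ whose relative cellular chains are $\overline B_*(G,H)$, identify it with $B\rho$ via the boundary reduction of Proposition~\ref{decorationeqreduction}, and push forward the cellular fundamental cycle $\sum_i\epsilon_i[\overline\Delta_i]$ generator by generator. Your sketch is correct and takes the same route, and you rightly flag the only real bookkeeping point, namely that Zickert's identification of $\overline B_*(G,H)$ with the relative cellular chains is exactly the cell-to-generator correspondence with matching boundary maps and signs.
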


\section{Generic decorations and Ptolemy coordinates}
\lbl{sec.genericdecorations}


In all of the following, $G=\SL(n,\C)$, and $N$ is the subgroup of upper 
triangular matrices with $1$'s on the diagonal. A $(G,N)$-representation 
$\rho\colon\pi_1(M)\to G$ is called \emph{boundary-unipotent}. For a matrix 
$g\in G$ and a positive integer $i\leq n\in \N$, let $\{g\}_i$ be the ordered 
set consisting of the first $i$ column vectors of $g$. 

\begin{definition} 
A tuple $(g_0N,\dots,g_kN)$ of $N$-cosets is \emph{generic} if for each tuple 
$t=(t_0,\dots,t_k)$ of non-negative integers with sum $n$, we have
\begin{equation}\lbl{ctformula}
c_t:=\det\left(\bigcup_{i=0}^k\{g_i\}_{t_i}\right)\neq 0,
\end{equation}
where the determinant is viewed as a function on ordered sets of $n$ vectors 
in $\C^n$. 
The numbers $c_t$ are called \emph{Ptolemy coordinates}.
\end{definition} 

\begin{definition}
\lbl{genericdecorationdefn}
A decoration of a boundary-unipotent representation is \emph{generic} if for each simplex $\Delta$ of $L$, the tuple of cosets assigned to the vertices of $\Delta$ is generic.
\end{definition}

For a set $X$, let $C_*(X)$ be the acyclic chain complex generated in degree $k$ by tuples $(x_0,\dots,x_k)$. If $X$ is a $G$-set, the diagonal $G$-action makes $C_*(X)$ into a complex of $\Z[G]$-modules. Let $C_*^{\gen}(G/N)$ be the subcomplex of $C_*(G/N)$ generated by generic tuples. 

\begin{proposition}
\lbl{generichomology} 
The complex $C_*^{\gen}(G/N)\otimes_{\Z[G]}\Z$ computes the relative homology. If $\rho\colon\pi_1(M)\to G$ is a generically decorated representation, the fundamental class of $\rho$ is represented by 
\begin{equation}
\lbl{cosetrep}
\sum\epsilon_i(g_0^iN,g_1^iN,g_2^iN,g_3^iN)\in C^{\gen}_3(G/N),
\end{equation}
where $(g_0^iN,\dots,g_3^iN)$ are the cosets assigned to lifts $\widetilde\Delta_i$ of the $\Delta_i$'s.\qed
\end{proposition}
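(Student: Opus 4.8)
The plan is to establish Proposition~\ref{generichomology} in two stages: first that $C_*^{\gen}(G/N)\otimes_{\Z[G]}\Z$ computes $H_*(G,N)$, and second that the fundamental class of a generically decorated representation is represented by the explicit cycle \eqref{cosetrep}. For the first stage, I would relate $C_*^{\gen}(G/N)$ to the complex $\overline B_*(G,N)$ of $(G,N)$-cocycles on truncated simplices, which by Zickert~\cite{ZickertDuke} is already known to compute $H_*(G,N)$. The key observation is that a tuple $(g_0N,\dots,g_kN)$ of cosets determines a labeling of a simplex by cosets, and truncating this simplex produces precisely a $(G,N)$-cocycle on the truncated simplex, provided the cosets are generic enough for the associated transition functions (the $N$-valued labels on the short edges of the truncated simplex) to be well-defined. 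So the plan is to exhibit a chain map $C_*^{\gen}(G/N)\to\overline B_*(G,N)$ and argue it is a quasi-isomorphism after tensoring with $\Z$ over $\Z[G]$.

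The main technical point is acyclicity of $C_*^{\gen}(G/N)$ as a complex of $\Z[G]$-modules. The full complex $C_*(G/N)$ is acyclic by a standard cone/contracting-homotopy argument (coning off a fixed coset), but this contracting homotopy need not preserve genericity. So the hard part will be showing that $C_*^{\gen}(G/N)$ is still acyclic, i.e.\ that one can contract within the generic subcomplex. I would handle this by showing that the generic tuples are ``large'' enough: given any generic simplex, a generic cone vertex can be chosen (the non-generic choices form a proper Zariski-closed subset of $G/N$, hence their complement is nonempty and in fact dense), and a spectral-sequence or explicit-homotopy argument then upgrades this to acyclicity of the subcomplex. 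This genericity-of-a-generic-extension argument is the crux, and it is exactly the kind of point where one must check that the Ptolemy determinants \eqref{ctformula} remain nonzero for a suitable choice.

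For the second stage, once $C_*^{\gen}(G/N)\otimes_{\Z[G]}\Z$ is identified with a complex computing $H_*(G,N)$, I would invoke Proposition~\ref{fundclassrepprop}: the fundamental class of a decorated representation $\rho$ is represented by $\sum\epsilon_i\tau_{\overline\Delta_i}$, the sum of the $(G,N)$-cocycles restricted to the truncated simplices. Under the chain map identifying $(G,N)$-cocycles on truncated simplices with generic coset tuples, the cocycle on $\overline\Delta_i$ corresponds exactly to the tuple $(g_0^iN,\dots,g_3^iN)$ of cosets that the decoration assigns to the vertices of a lift $\widetilde\Delta_i$. Since a \emph{generic} decoration assigns generic tuples by Definition~\ref{genericdecorationdefn}, this sum lands in $C_3^{\gen}(G/N)$, and transporting the formula \eqref{fundclassrep} through the identification yields precisely \eqref{cosetrep}. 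I expect the first stage—specifically the acyclicity of the generic subcomplex—to be the only substantive obstacle; the second stage is essentially a translation of Proposition~\ref{fundclassrepprop} through the comparison of the two chain models.
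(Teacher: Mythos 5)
Your proposal follows essentially the same route as the paper: the paper also identifies $C_*^{\gen}(G/N)\otimes_{\Z[G]}\Z$ with the subcomplex $\overline B_*^{\gen}(G,N)$ of generic $(G,N)$-cocycles (a generic tuple determines a cocycle on a truncated simplex by labeling long edges by $q_{ij}$ and short edges by $\alpha^i_{jk}$), proves acyclicity of $C_{*\geq 1}^{\gen}(G/N)$ by the cone-vertex argument you describe, and then reads off \eqref{cosetrep} directly from Proposition~\ref{fundclassrepprop} and \eqref{fundclassrep}. Two remarks on the acyclicity step: it should be run cycle-by-cycle (given a cycle, choose one cone vertex generic with respect to the finitely many tuples appearing in it and cone the whole cycle off that vertex), since the cone vertex depends on the tuple and a single global contracting homotopy is not available; the paper's version of your Zariski-density observation is simply that $\C$ is infinite, so a suitable coset $gN$ exists.

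The one ingredient your plan omits is that the diagonal $G$-action on $C_k^{\gen}(G/N)$ is \emph{free} for $k\geq 1$. Acyclicity of $C_{*\geq 1}^{\gen}(G/N)$ alone does not imply that its coinvariants compute $H_*(G,N)$; one needs the terms to be free $\Z[G]$-modules so that the complex is a free resolution of $\Ker(C_0(G/N)\to\Z)$ (this is why the paper works with $C_{*\geq 1}^{\gen}$ rather than all degrees -- the action on $C_0(G/N)$ is not free). Freeness fails for arbitrary coset tuples (e.g.\ $(N,N)$ has stabilizer $N$), so it is a genuine property of \emph{generic} tuples; the paper extracts it from the explicit normal form $g_0x_{g_0^{-1}g_1}(N,q_{01}N,\alpha^0_{12}N,\dots,\alpha^0_{1k}N)$ of Proposition~\ref{xyqExistence}, which also gives the injectivity of the comparison with $\overline B_*^{\gen}(G,N)$ that your "quasi-isomorphism after tensoring" step needs. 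With that point supplied, your argument goes through as in the paper.
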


Proposition~\ref{generichomology} is proved in Section~\ref{proofsection}. The idea is that a generic tuple canonically determines a $(G,N)$-cocycle on a truncated simplex. Hence, $C_*^{\gen}(G/N)\otimes_{\Z[G]}\Z$ is isomorphic to a subcomplex of $\overline{B}_3(G,N)$, and the representation~\eqref{cosetrep} of the fundamental class is then an immediate consequence of~\eqref{fundclassrep}. 


\begin{proposition}
\lbl{barycentric} 
After a single barycentric subdivision of $K$, every decoration of a boundary-unipotent representation $\rho\colon \pi_1(M)\to G$ is equivalent to a generic one.
\end{proposition}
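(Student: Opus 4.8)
The plan is to exploit the special combinatorial structure of the first barycentric subdivision $K'$. Every $3$-simplex of $K'$ is a chain of barycenters $\hat F_0,\hat F_1,\hat F_2,\hat F_3$ attached to a flag of faces $F_0\subset F_1\subset F_2\subset F_3$ of one of the original simplices, and since the dimensions of the $F_j$ must strictly increase from $0$ to $3$, we have $\dim F_j=j$. Hence \emph{exactly one} vertex of each $3$-simplex of $K'$, namely $\hat F_0$, is an original vertex; the other three are barycenters of an edge, a triangle and a tetrahedron, and therefore lie in the interior of $M$. A lift to $L$ of an interior vertex has trivial stabilizer in $\pi_1(M)$, so for such a vertex $e$ the subgroup $g_e^{-1}\rho(\Stab(e))g_e$ is trivial regardless of $g_e$; the notion of equivalence of decorations is thus vacuous at interior vertices. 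First I would record the consequence: any decoration of $\rho$ on $K'$ is equivalent to any other decoration agreeing with it on the original vertices. (Note that the torus action of Remark~\ref{normalizerremark} only rescales the flags at the original vertices and so cannot by itself remedy non-genericity; this is precisely why the subdivision is needed.) It therefore suffices to fix the cosets at the original vertices and to re-choose, freely, the cosets at the interior vertices so as to make the resulting decoration generic in the sense of Definition~\ref{genericdecorationdefn}.

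Next I would reduce genericity to a finite list of non-vanishing conditions. By $\rho$-equivariance, and since interior-vertex orbits are free, a choice of one coset per interior-vertex orbit of $K'$ determines the whole (equivariant) decoration, and genericity need only be checked on the finitely many $3$-simplices of $K'$ (genericity of a $3$-simplex is exactly the condition that its one ideal vertex and three interior vertices be in general position, and it implies genericity of all lower faces, since those are the subcases $t_j=0$). For each such $3$-simplex and each index $t=(t_0,t_1,t_2,t_3)$ with $\sum t_i=n$, the Ptolemy coordinate $c_t=\det(\bigcup_i\{g_i\}_{t_i})$ is a polynomial in the data of the four vertices, and the requirement $c_t\neq 0$ cuts out a Zariski-open subset of the space of free-vertex choices. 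Letting $V=\prod_v (G/N)$ be the irreducible variety indexed by the interior-vertex orbits, the set of choices making the whole decoration generic is a finite intersection of Zariski-open subsets of $V$.

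The one substantive point is that each of these open sets is nonempty, i.e.\ that no $c_t$ vanishes identically as the three interior flags vary with the single ideal flag held fixed. This is a general-position statement: for a fixed complete flag and three flags chosen generically, any selection of $t_0,t_1,t_2,t_3$ of the leading vectors of the four (decorated) flags, with $\sum t_i=n$, is linearly independent, so $c_t\neq0$. I would either cite the standard transversality of a generic tuple of flags in $\C^n$ or exhibit an explicit choice of the three free cosets realizing all $c_t\neq0$ simultaneously; because three of the four vertices are free, this is routine. Granting non-vanishing, each open condition is dense in the irreducible variety $V$, so their finite intersection is dense and in particular nonempty; choosing the interior cosets in this intersection yields a generic decoration agreeing with the original on the ideal vertices, hence equivalent to it. The main obstacle is the non-vanishing claim together with the fact that interior vertices are shared among several $3$-simplices, which couples the choices; this is handled cleanly by working inside the single product variety $V$ and intersecting all the open conditions at once, rather than choosing vertices one at a time.
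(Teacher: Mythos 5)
Your argument is correct and follows essentially the same route as the paper's proof: after barycentric subdivision each $3$-simplex has at least three interior vertices, whose lifts have trivial stabilizers, so the cosets there may be reassigned freely within the equivalence class, and a generic choice exists. The paper compresses the final step into ``the $g_i$'s can be chosen arbitrarily,'' whereas you make explicit the Zariski-density argument (finitely many nonempty open conditions on the irreducible product $\prod_v G/N$, handling the coupling from shared vertices) that justifies it; this is a fair and welcome elaboration rather than a different method.
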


\begin{proof}
After a barycentric subdivision of $K$, every simplex $\Delta$ of $K$ has distinct vertices and at least three vertices of $\Delta$ are interior (link is a sphere). Fix lifts $e_i\in L$ of each interior vertex of $K$. Since the stabilizer of a lift of an interior vertex is trivial, assigning any coset $g_iH$ to $e_i$ yields an equivalent decoration. Since the $g_i$'s can be chosen arbitrarily, the result follows.
\end{proof}

\subsection{The geometry of the Ptolemy coordinates}
\lbl{sub.geometry}
We canonically identify each ordered $k$-simplex with a standard simplex
\begin{equation}
\Delta^k_n=\big\{(x_0,\dots,x_k)\in\R^{k+1}\bigm\vert 0\leq x_i\leq n,\,\sum_{i=0}^k x_i=n\big\}.
\end{equation}

Recall that a tuple $(g_0N,\dots,g_kN)$ has a Ptolemy coordinate for each tuple of $k+1$ non-negative integers summing to $n$. In other words, there is a Ptolemy coordinate for each integral point of $\Delta^k_n$. We denote the set of integral points in $\Delta^k_n$ by $\Delta^k_n(\Z)$.

\begin{definition}
\lbl{Ptolemyassignmentdefn}
A \emph{Ptolemy assignment} on $\Delta^k_n$ is an assignment of a non-zero complex number $c_t$ to each integral point $t$ of $\Delta^k_n$ such that the $c_t$'s are the Ptolemy coordinates of some tuple $(g_0N,\dots,g_kN)\in C_k^{\gen}(G/N)$. 
A Ptolemy assignment on $K$ is a Ptolemy assignment on each simplex $\Delta_i$ of $K$ such that the Ptolemy coordinates agree on identified faces.
\end{definition}

Note that a generically decorated boundary-unipotent representation
determines a Ptolemy assignment on $K$. In Section~\ref{proofsection}, we
show that every Ptolemy assignment is induced by a unique decorated
representation. 

\begin{lemma}
\lbl{numberofsubsimplices}
The number of elements in $\Delta^k_l(\Z)$ is $\binom{l+k}{k}$.
\end{lemma}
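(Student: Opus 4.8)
The plan is to recognize Lemma~\ref{numberofsubsimplices} as the classical \emph{stars and bars} count and to prove it by exhibiting an explicit bijection. Note first that an element of $T^k(l)$ is a tuple $(a_0,\dots,a_k)$ of $k+1$ non-negative integers with $a_0+\dots+a_k=l$; the relevant count is therefore the number of non-negative integer solutions of an equation in $k+1$ unknowns summing to $l$, which stars and bars evaluates to $\binom{l+k}{k}$. The only bookkeeping point to keep straight is that there are $k+1$ coordinates, not $k$, so that the number of separating ``bars'' is $k$.

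First I would set up the bijection. To a tuple $(a_0,\dots,a_k)\in T^k(l)$, associate a word in two symbols consisting of $l$ stars and $k$ bars, obtained by writing $a_0$ stars, then a bar, then $a_1$ stars, then a bar, and so on, ending with $a_k$ stars after the $k$th bar. This is a word of length $l+k$ in which exactly $k$ of the positions are bars. Conversely, any such word recovers the tuple by reading off the numbers of consecutive stars between (and around) the bars. These two assignments are mutually inverse, so $T^k(l)$ is in bijection with the set of length-$(l+k)$ words having exactly $k$ bars.

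Next I would count the latter set directly: a word of length $l+k$ with exactly $k$ bars is determined by choosing which $k$ of the $l+k$ positions are bars, and there are $\binom{l+k}{k}$ such choices. Combining this with the bijection of the previous step gives $\#T^k(l)=\binom{l+k}{k}$, as claimed.

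There is no genuine obstacle here; the content is entirely elementary. If one prefers to avoid the combinatorial encoding, the same result follows by induction on $k$: summing over the value $a_k=j$ gives the recurrence $\#T^k(l)=\sum_{j=0}^{l}\#T^{k-1}(l-j)$, and the hockey-stick identity $\sum_{j=0}^{l}\binom{(l-j)+(k-1)}{k-1}=\binom{l+k}{k}$ closes the induction, with the base case $k=0$ (a single coordinate forced to equal $l$) giving $\#T^0(l)=1=\binom{l}{0}$.
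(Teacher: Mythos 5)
Your proof is correct and is essentially the same as the paper's: the positions of the $k$ bars in your stars-and-bars word are exactly the partial sums $a_0+1,\,a_0+a_1+2,\dots,a_0+\cdots+a_{k-1}+k$, which is precisely the bijection with $k$-element subsets of $\{1,\dots,l+k\}$ that the paper writes down.
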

\begin{proof}
The map $(a_0,\dots,a_k)\mapsto\{a_0+1,a_0+a_1+2,\dots,a_0+\dots+a_{k-1}+k\}$ gives a bijection between $T^k(l)$ and subsets of $\{1,\dots,l+k\}$ with $k$ elements.
\end{proof}

Let $e_i$, $0\leq i\leq k$, be the $i$th standard basis vector of $\Z^{k+1}$. For each $\alpha\in \Delta^k_{n-2}(\Z)$, the points $\alpha+2e_i$ in $\Delta^k_n$ span a simplex $\Delta^k(\alpha)$, whose integral points are the points $\alpha_{ij}:=\alpha+e_i+e_j$, see Figure~\ref{PtolemyFigure}. We refer to $\Delta^k(\alpha)$ as a \emph{subsimplex} of $\Delta^k_n$. 
By Lemma~\ref{numberofsubsimplices}, $\Delta^3_n$ has $\binom{n+3}{3}$ integral points and $\binom{n+1}{3}$ subsimplices.
\begin{figure}[htpb]
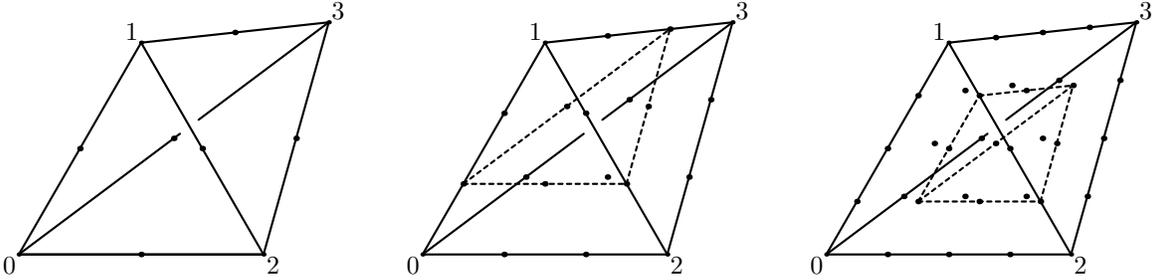

$$
\psdraw{SLrepFigures.1}{1.8in} \qquad
\psdraw{SLrepFigures.2}{1.8in} \qquad
\psdraw{SLrepFigures.3}{1.8in}
$$
\caption{The integral points on $\Delta^3_n$ for $n=2$, $3$ and $4$. The indicated subsimplices correspond to $\alpha=(0,1,0,0)$ and $\alpha=(0,1,1,0)$.}\lbl{PtolemyFigure}
\end{figure}

\begin{proposition}[{Fock-Goncharov~\cite[Lemma~10.3]{FockGoncharov}}]\label{PtolemyProposition}
The Ptolemy coordinates of a generic tuple $(g_0N\comma g_1N\comma g_2N\comma g_3N)$ satisfy the \emph{Ptolemy relations}
\begin{equation}\lbl{Ptolemyrelation}
c_{\alpha_{03}}c_{\alpha_{12}}+c_{\alpha_{01}}c_{\alpha_{23}}=c_{\alpha_{02}}c_{\alpha_{13}},\quad \alpha\in \Delta^3_{n-2}(\Z).
\end{equation}
\end{proposition}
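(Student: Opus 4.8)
The plan is to reduce the Ptolemy relation to a classical identity among minors of a matrix, namely a Plücker-type relation. First I would fix the tuple $(g_0N,g_1N,g_2N,g_3N)$ and unpack the meaning of each Ptolemy coordinate appearing in~\eqref{Ptolemyrelation}. For a fixed $\alpha=(a_0,a_1,a_2,a_3)\in T^3(n-2)$, each point $\alpha_{ij}=\alpha+e_i+e_j$ has coordinate sum $n$, so by~\eqref{ctformula} the coordinate $c_{\alpha_{ij}}$ is the determinant of the ordered collection of column vectors obtained by taking the first $a_0,a_1,a_2,a_3$ columns of $g_0,g_1,g_2,g_3$ respectively, but with one extra column from $g_i$ and one extra from $g_j$. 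Since all six coordinates $c_{\alpha_{ij}}$ share the common block of $\sum a_k=n-2$ columns (the first $a_k$ columns of each $g_k$), the only variation among them is which two of the four ``next'' column vectors $v_0,v_1,v_2,v_3$ (where $v_k$ is the $(a_k+1)$-st column of $g_k$) are adjoined.

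The key step is therefore to isolate this common $(n-2)$-dimensional data. Let $W$ be the ordered list of the $n-2$ common columns, and let $v_0,v_1,v_2,v_3\in\C^n$ be the four additional vectors. Then, up to a common sign coming from reordering columns into the canonical order demanded by~\eqref{ctformula}, we have $c_{\alpha_{ij}}=\pm\det(W\cup\{v_i,v_j\})$, where the determinant is taken with $v_i,v_j$ inserted in the positions dictated by the indexing. I would verify that the sign conventions are consistent across all six terms so that the signs factor out uniformly; this is the bookkeeping that makes the bare relation $c_{03}c_{12}+c_{01}c_{23}=c_{02}c_{13}$ come out with the correct signs rather than some permuted version.

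Having reduced to $\det(W\cup\{v_i,v_j\})$, the identity to prove is the statement that these six ``doubly augmented'' determinants satisfy the quadratic Plücker relation. Concretely, after a change of basis that sends $W$ to the span of the last $n-2$ coordinate axes, each $\det(W\cup\{v_i,v_j\})$ becomes (up to the fixed nonzero factor $\det$ of the $W$-block in those coordinates) a $2\times 2$ minor $[v_i\,v_j]$ formed from the first two coordinates of the projected vectors $\bar v_k\in\C^2$. Writing $\bar v_k=(x_k,y_k)$, the claim is exactly
\begin{equation}
(x_0y_3-x_3y_0)(x_1y_2-x_2y_1)+(x_0y_1-x_1y_0)(x_2y_3-x_3y_2)=(x_0y_2-x_2y_0)(x_1y_3-x_3y_1),
\end{equation}
which is the three-term Plücker relation for four points in $\C^2$ (equivalently in $\BP^1$), verifiable by direct expansion.

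The main obstacle I anticipate is \emph{not} the algebraic identity, which is elementary once the reduction is in place, but rather the sign and ordering bookkeeping in passing from~\eqref{ctformula} to $\det(W\cup\{v_i,v_j\})$. The definition of $c_t$ fixes a rigid order on the $n$ columns (first all chosen columns of $g_0$, then of $g_1$, and so on), so inserting the two extra vectors $v_i,v_j$ requires moving them past intervening blocks, each move contributing a sign. I would handle this by checking that the number of transpositions needed is the same parity pattern for the three pairs appearing on each side of~\eqref{Ptolemyrelation}, so that all signs cancel and the relation holds with the stated $+$ signs; this is precisely the calculation carried out in Fock--Goncharov~\cite[Lemma~10.3]{FockGoncharov}, which I would cite for the detailed sign analysis while giving the conceptual reduction above.
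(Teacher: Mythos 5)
Your proposal is correct and follows essentially the same route as the paper: isolate the common block of $n-2$ columns shared by all six coordinates $c_{\alpha_{ij}}$, normalize it by row operations so that each determinant reduces to a $2\times 2$ minor in the two remaining coordinates of the vectors $v_0,\dots,v_3$, and conclude by the three-term Pl\"{u}cker relation. The sign bookkeeping you flag does work out as you hope --- each of the three products $c_{\alpha_{ij}}c_{\alpha_{kl}}$ acquires the same overall sign $(-1)^{s_0+s_1+s_2+s_3}$ from reordering columns, so the relation holds with the stated $+$ signs; the paper simply suppresses this check.
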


\begin{proof}
Let $\alpha=(a_0,a_1,a_2,a_3)\in \Delta^3_{n-2}(\Z)$.
By performing row operations, we may assume that the first $n-2$ rows of the $n\times (n-2)$ matrix 
\begin{equation}\big(\{g_0\}_{a_0},\{g_1\}_{a_1},\{g_2\}_{a_2},\{g_3\}_{a_3}\big)\end{equation} 
are the standard basis vectors. Letting $x_i$ and $y_i$ denote the last two entries of $(g_i)_{a_i+1}$, the Ptolemy relation for $\alpha$ is then equivalent to the (Pl\"{u}cker) relation
\begin{equation}
\det\begin{pmatrix}x_0&x_3\\y_0&y_3\end{pmatrix}\det\begin{pmatrix}x_1&x_2\\y_1&y_2\end{pmatrix}+
\det\begin{pmatrix}x_0&x_1\\y_0&y_1\end{pmatrix}\det\begin{pmatrix}x_2&x_3\\y_2&y_3\end{pmatrix}=
\det\begin{pmatrix}x_0&x_2\\y_0&y_2\end{pmatrix}\det\begin{pmatrix}x_1&x_3\\y_1&y_3\end{pmatrix},
\end{equation}
which is easily verified.
\end{proof}

Note that the Ptolemy coordinate assigned to the $i$th vertex of $\Delta^k_n$ is $\det(\{g_i\}_n)=\det(g_i)=1$. We shall thus often ignore the vertex points. Let $\dot\Delta^k_n(\Z)$ denote the non-vertex integral points of $\Delta^k_n$.
The following is proved in Section~\ref{proofsection}.

\begin{proposition}
\lbl{uniquePtolemyassignment}
For every assignment $c\colon\dot\Delta^3_n(\Z)\to\C^*$, $t\mapsto c_t$ satisfying the Ptolemy relations~\eqref{Ptolemyrelation}, there is a unique Ptolemy assignment on $\Delta_n^3$ whose Ptolemy coordinates are $c_t$.\qed
\end{proposition}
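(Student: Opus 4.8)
The statement has two parts, and I would dispatch uniqueness first. For any Ptolemy cochain coming from a generic tuple $(g_0N,\dots,g_3N)$, the coordinate at the vertex $ne_i$ is $\det(\{g_i\}_n)=\det(g_i)=1$, so the four vertex values of $\Delta^3_n$ are forced. Hence a Ptolemy cochain is determined by its restriction to $\mathring T^3(n)$, and at most one cochain can restrict to the given $c$. I would also record here that no point $\alpha_{ij}=\alpha+e_i+e_j$ with $i\neq j$ can be a vertex, since it has at least two positive coordinates; thus the Ptolemy relations~\eqref{Ptolemyrelation} involve only points of $\mathring T^3(n)$, the hypothesis is exactly the set of relations the putative cochain must satisfy, and the extension by $1$ on the vertices imposes no new condition.

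The real content is existence: I must produce a generic tuple whose Ptolemy coordinates are the prescribed $c_t$ (extended by $1$ on the vertices). Since the coordinates~\eqref{ctformula} are invariant under the diagonal left $\SL(n,\C)$-action $g\cdot(g_iN)=(gg_i)N$ (as $\det g=1$), the plan is to normalize $g_0=I$. With this choice $c_t$, for $t=(t_0,t_1,t_2,t_3)$, becomes up to sign a maximal minor of the $n\times(n-t_0)$ matrix $(\{g_1\}_{t_1},\{g_2\}_{t_2},\{g_3\}_{t_3})$ on its bottom rows. I would then reconstruct the columns of $g_1,g_2,g_3$ recursively: the coordinates along the edges of $\Delta^3_n$ fix the leading data of each flag $g_iN$ up to the residual right $N$-action, and the Ptolemy relation at each subsimplex $\Delta^3(\alpha)$ expresses one interior coordinate in terms of three already-determined neighbors, letting one solve for the interior columns one layer at a time. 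The hypothesis $c_t\in\C^*$ for all $t$ guarantees that no division by zero occurs and that the resulting tuple lands in $C^{\gen}_3(G/N)$.

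The main obstacle is \emph{consistency}: one must verify that the tuple assembled from a chosen subset of the $c_t$ reproduces \emph{all} of them, i.e.\ that $\det(\bigcup_i\{g_i\}_{t_i})=c_t$ for every $t$, not merely for those used in the construction. This is precisely the step that forces the use of the Ptolemy relations, and I would handle it by reducing the general minor identity to the classical Grassmann--Pl\"ucker relations: after the normalization $g_0=I$, the quadratic relations~\eqref{Ptolemyrelation} become the three-term Pl\"ucker relations among the relevant $2\times 2$ minors (exactly as in the proof of Proposition~\ref{PtolemyProposition}), and a straightening argument shows these generate all relations among the flag minors. An alternative, which I would keep in reserve, is to organize the whole reconstruction as an induction on $n$, passing from configurations of decorated flags in $\C^n$ to those in $\C^{n-1}$ and gluing the top layer back via the edge coordinates. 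Either route identifies the solution set of~\eqref{Ptolemyrelation} with the generic tuples modulo $\SL(n,\C)$; this is consistent with the dimension count, in which the $(n+1)^2-4$ free non-vertex coordinates match the $(n+3)(n-1)$-dimensional moduli of four generic flags, supporting the claim that the Ptolemy relations form a complete set of conditions.
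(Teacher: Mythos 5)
Your framing is right---uniqueness is trivial once you note the vertex values are forced to $1$, and the whole content is the existence/consistency step---but that step is exactly where your proposal stops short of a proof. You write that ``a straightening argument shows these generate all relations among the flag minors,'' which is an assertion of the proposition itself rather than an argument for it. Worse, the assertion in the generality you invoke is false: the paper remarks immediately after this proposition that for configurations of more than four flags ($k>3$, $n>2$) the three-term Ptolemy relations are \emph{not} a complete set of relations among the flag minors. So no appeal to a general Grassmann--Pl\"ucker/straightening principle can work; any correct argument must use the specific combinatorics of four flags. (A small related slip: the Ptolemy relation at a subsimplex involves six coordinates, so it determines one from the other \emph{five}, not ``three already-determined neighbors.'')

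The paper closes this gap with a concrete two-stage induction. First (Proposition~\ref{reconstructtwotuple}) it shows that on a triangle $\Delta^2_n$ \emph{any} nonvanishing assignment is realized by a unique tuple: the long edges $q_{ij}$ are read off from the edge coordinates and the short edges $\alpha^i_{jk}$ from the diamond coordinates, and Lemma~\ref{uniquefactorization} (unique factorization of a unipotent matrix as an ordered product of elementary matrices) plus a sweep through the diamonds shows the reconstructed coordinates agree with the prescribed ones. Then for $\Delta^3_n$ it forms the explicit candidate $(N,q_{01}N,\alpha^0_{12}q_{02}N,\alpha^0_{13}q_{03}N)$, knows agreement on the faces through vertex $0$ by the two-dimensional case, and propagates agreement to the interior by induction on $a_2+a_3$, using only the elementary observation that if five of the six coordinates of a subsimplex agree then the Ptolemy relation forces the sixth. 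Your dimension count is a fine sanity check but cannot substitute for this inductive verification. As written, the proposal has a genuine gap at its central step.
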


\begin{corollary} 
The set of Ptolemy assignments on $K$ is an algebraic set $P_n(K)$ called the \emph{Ptolemy variety}. Its ideal is generated by the Ptolemy relations~\eqref{Ptolemyrelation} (together with an extra equation making sure that all Ptolemy coordinates are non-zero).\qed
\end{corollary}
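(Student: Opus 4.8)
The plan is to deduce the corollary directly from Proposition~\ref{uniquePtolemycochain}, which carries all the substantive content; the rest is a formal repackaging of the defining conditions as polynomial equations.

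First I would record what a Ptolemy cochain on a single simplex amounts to. By Proposition~\ref{uniquePtolemycochain}, giving a Ptolemy cochain on $\Delta^3_n$ is the same as giving an assignment $c\colon\mathring T^3(n)\to\C^*$ of non-zero complex numbers to the non-vertex integral points that satisfies the Ptolemy relations~\eqref{Ptolemyrelation} (the vertex coordinates are identically $1$ and are ignored). Thus the a priori geometric condition ``$c$ arises from a generic tuple'' is replaced, on each simplex, by the explicit system~\eqref{Ptolemyrelation} together with the non-vanishing of each $c_t$.

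Next I would assemble the simplices. A Ptolemy cochain on $K$ is a Ptolemy cochain on each $\Delta_i$ whose coordinates agree on identified faces. Because the face-gluings are order-preserving simplicial maps, ``agreement on identified faces'' merely identifies pairs of coordinate variables. Passing to the quotient, I obtain one coordinate for each equivalence class of non-vertex integral points of $K$; let $A$ be the affine space on these coordinates. A point of $A$ is then a Ptolemy cochain on $K$ precisely when (i) all of its coordinates are non-zero, and (ii) for every subsimplex of every $\Delta_i$ the Ptolemy relation~\eqref{Ptolemyrelation} holds (read off through the identifications). Conditions of type (ii) are homogeneous quadratic polynomial equations in the coordinates of $A$, so they cut out a Zariski-closed subset.

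The only point requiring genuine care is turning the open condition (i) into defining equations of an honest algebraic set. I would use the standard Rabinowitsch trick: adjoin a single variable $u$ and the equation $u\prod_t c_t=1$, which is solvable for $u$ exactly when every $c_t\neq 0$. The common zero locus in $A\times\C$ of the Ptolemy relations and this extra equation is Zariski-closed, its image in $A$ is the set of Ptolemy cochains, and its ideal is generated by the Ptolemy relations together with this one extra equation --- which is precisely the ``extra equation making sure that all Ptolemy coordinates are non-zero'' in the statement. Granting Proposition~\ref{uniquePtolemycochain}, the main obstacle is thus only bookkeeping: verifying that the face-identifications match integral points consistently, so that the quotient coordinates and the list of Ptolemy relations are well-defined, and that the gluing imposes no relations beyond these coordinate identifications.
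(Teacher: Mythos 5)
Your proof is correct and takes essentially the same route the paper intends (the corollary is stated with a bare \qed as an immediate consequence of Proposition~\ref{uniquePtolemycochain}): identify coordinates across glued faces, impose the quadratic Ptolemy relations, and adjoin the Rabinowitsch-style equation $u\prod_t c_t=1$ to encode non-vanishing, exactly as the paper does explicitly in Example~\ref{ex.52}. The only small addition worth making is to also invoke Proposition~\ref{PtolemyProposition} for the forward direction --- that the coordinates of an actual Ptolemy cochain do satisfy the relations --- so that the two descriptions are genuinely equivalent rather than one merely containing the other.
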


\begin{remark} 
It thus follows that Definition~\ref{Ptolemyassignmentdefn} agrees with Definition~\ref{Ptolemyassignmentdefnintro} when $k=3$. When $k>3$ and $n>2$ there are further relations among the Ptolemy coordinates. We shall not need these here.
\end{remark}


\subsection{$p\SL(n,\C)$-Ptolemy coordinates}
\lbl{sub.pSL}

When $n$ is even, a $p\SL(n,\C)$-Ptolemy assignment on $\Delta^k_n$ may be defined as in Definition~\ref{Ptolemyassignmentdefn}. Note, however, that the Ptolemy coordinates are now only defined up to a sign. Since we are mostly interested in $3$-cycles, the following definition is more useful.

\begin{definition}
\lbl{pSLPtolemydefn}
Let $\Delta=\Delta^3_n$, and let $\sigma\in Z^2(\Delta;\Z/2\Z)$ be a cellular $2$-cocycle.
A $p\SL(n,\C)$-\emph{Ptolemy assignment} on $\Delta$ with \emph{obstruction cocycle} $\sigma$ is an assignment $c\colon\dot\Delta^3_n(\Z)\to\C^*$ satisfying the $p\SL(n,\C)$-\emph{Ptolemy relations}
\begin{equation}\lbl{pSLPtolemyrelation}
\sigma_2\sigma_3c_{\alpha_{03}}c_{\alpha_{12}}+\sigma_0\sigma_3c_{\alpha_{01}}c_{\alpha_{23}}=c_{\alpha_{02}}c_{\alpha_{13}}.
\end{equation}
Here $\sigma_{i}\in\Z/2\Z=\langle\pm 1\rangle$ is the value of $\sigma$ on the face opposite the $i$th vertex of $\Delta$. 
A $p\SL(n,\C)$-Ptolemy assignment on $K$ with obstruction cocycle $\sigma\in Z^2(K;\Z/2\Z)$ is a $p\SL(n,\C)$-Ptolemy-assignment $c^i$ on each simplex $\Delta_i$ of $K$ such that the Ptolemy coordinates agree on identified faces, and such that the obstruction cocycle of $c^i$ is $\sigma_{\Delta_i}$.  
\end{definition}

Note that for each $\sigma\in Z^2(K;\Z/2\Z)$, the set of $p\SL(n,\C)$-Ptolemy-assignments on $K$ form a variety $P_n^\sigma(K)$. We show in Section~\ref{proofsection} that this variety only depends on the cohomology class of $\sigma$ in $H^2(K;\Z/2\Z)=H^2(M,\partial M;\Z/2\Z)$ and that the Ptolemy variety parametrizes generically decorated boundary-unipotent $p\SL(n,\C)$-representations whose obstruction class to lifting to a boundary-unipotent $\SL(n,\C)$-representation is $\sigma$. Note that when $\sigma$ is the trivial cocycle taking all $2$-cells to $1$, $P^\sigma(K)=P(K)$.

\subsection{Cross-ratios and flattenings}
\lbl{sub.ptolemyB}
For $x\in\C\backslash\{0\}$, let $\widetilde x=\log(x)$, where $\log$ is 
some fixed (set theoretic) section of the exponential map. 

Given a Ptolemy assignment $c$ on $\Delta^3_{n=2}$, we endow $\Delta^3_{n=2}$ with the shape of an ideal simplex with cross-ratio $z=\frac{c_{03}c_{12}}{c_{02}c_{13}}$ and a flattening 
\begin{equation}\lambda(c)=(\widetilde c_{03}+\widetilde c_{12}
-\widetilde c_{02}-\widetilde c_{13},\widetilde 
c_{01}+\widetilde c_{23}-\widetilde c_{02}-\widetilde c_{13})\in\widehat\Pre(\C).
\end{equation}

By Propositions~\ref{PtolemyProposition} and \ref{uniquePtolemyassignment}, a Ptolemy assignment on $\Delta^3_n$ induces a Ptolemy assignment $c_\alpha$ on each subsimplex $\Delta^3(\alpha)$. We thus have a map 
\begin{equation}\lbl{defoflambda}
\lambda\colon P_n(K)\to \widehat\Pre(\C),\qquad c\mapsto \sum_i\epsilon_i\sum_{\alpha\in \Delta^3_{n-2}(\Z)}\lambda(c_\alpha^i).
\end{equation}
Similarly, we have a map $P^\sigma_n(K)\to\widehat\Pre(\C)_{\PSL}$ defined by the same formula. We next prove that these maps have image in the respective extended Bloch groups. 

\begin{remark}
The shapes associated to a Ptolemy assignment satisfy equations resembling Thurston's gluing equations. This is studied in Garoufalidis-Goerner-Zickert~\cite{GaroufalidisGoernerZickert}.
\end{remark}

\section{A chain complex of Ptolemy assignments}
\lbl{sec.ptolemy}

Let $Pt_k^n$ be the free abelian group on Ptolemy assignments on $\Delta^k_n$. 
The usual boundary map induces a boundary map $Pt_k^n\to Pt_{k-1}^n$ and the 
natural map $C_*^{\gen}(G/N)\to Pt_*^n$ taking a tuple $(g_0N,\dots,g_kN)$ to 
its Ptolemy assignment is a chain map. The result below is proved in 
Section~\ref{proofsection}.

\begin{proposition}
\lbl{tupleuniqueuptoGaction}
A generic tuple is determined up to the diagonal $G$-action by its 
Ptolemy coordinates.\qed
\end{proposition}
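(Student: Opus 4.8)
The plan is to prove this by an explicit reconstruction: from the Ptolemy coordinates $c_t$ of a generic tuple $(g_0N,\dots,g_kN)$ I will recover the tuple, and the ambiguity in the recovery will be exactly a diagonal $G$-action. First I would isolate the two sources of ambiguity that the statement must absorb. On one hand, $G$ acts diagonally on the whole tuple; on the other, each individual factor $g_iN$ is only a coset, so the columns $\{g_i\}_j$ are defined merely modulo the span of the earlier columns $\{g_i\}_{j-1}$ (right multiplication by $N$ adds earlier columns to later ones). Crucially, the determinants in \eqref{ctformula} are insensitive to precisely this coset ambiguity, since adding a multiple of an earlier column of $g_i$ to a later one changes none of them. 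Thus the $c_t$ are genuine invariants of the coset tuple, and the content of the proposition is that they form a \emph{complete} set of invariants for the diagonal $G$-action.

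Next I would normalize. Since $G$ acts transitively on $G/N$, I may use the $G$-action to arrange $g_0=I$; the residual freedom is then the stabilizer of the coset $N$, namely left multiplication by $N$ on the remaining factors $(g_1N,\dots,g_kN)$. With $g_0=I$, the coordinate $c_t$ for $t=(t_0,t_1,\dots,t_k)$ becomes the determinant of $e_1,\dots,e_{t_0}$ together with columns drawn from $g_1,\dots,g_k$, which equals, up to sign, the minor of the matrix $(\{g_1\}_{t_1}\mid\cdots\mid\{g_k\}_{t_k})$ on its bottom $n-t_0$ rows. Taking $t_0$ as large as genericity permits reads off the deepest (bottom-row) part of the decorated flags of the $g_i$, and decreasing $t_0$ by one at a time recovers the data higher up. At each stage genericity guarantees that the relevant minor is a nonzero $c_t$, so the unknown entries are pinned down by Cramer's rule; the descending induction on $t_0$ therefore reconstructs each coset $g_iN$ (equivalently, its decorated flag) uniquely up to the residual left $N$-action. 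Unwinding the normalization shows that any two generic tuples with equal Ptolemy coordinates differ by an element of $G$.

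The main obstacle is the interplay of the two ambiguities together with the need to keep genericity available throughout: I must check that after normalizing $g_0$ and at every step of the descending induction the minors invoked for Cramer's rule are among the nonvanishing $c_t$, so that no step is degenerate, and that the only leftover indeterminacy is the residual left $N$-action, which combines with the $G$ already used to give exactly the diagonal $G$-orbit. An alternative, more conceptual route is an induction on $n$: the first columns $v_i=\{g_i\}_1$ are honest invariant vectors whose $n\times n$ determinants occur among the $c_t$, so by the first fundamental theorem of invariant theory for $\SL(n,\C)$ they are determined up to $G$ once they span; projecting to $\C^n/\langle v_0\rangle\cong\C^{n-1}$ turns the remaining decorated-flag data into a generic configuration in dimension $n-1$ whose Ptolemy coordinates are determined by the original ones, and the inductive hypothesis then finishes the argument after lifting. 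Either way, the heart of the matter is that genericity forces every required determinant to be nonzero, turning the reconstruction into a sequence of well-posed linear problems.
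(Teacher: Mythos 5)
Your main argument rests on the same mechanism as the paper's proof: use the $G$-action to normalize the first coset, and then observe that genericity makes every linear system you need to solve have a nonvanishing Ptolemy coordinate as its determinant, so Cramer's rule gives a well-posed reconstruction. That is precisely the content of Proposition~\ref{xyqExistence} and the computations following it. Where you diverge is in handling the residual ambiguity: you propose to carry the leftover left $N$-action through a descending induction on $t_0$ and have Cramer's rule pin down the entries ``up to the residual $N$-action.'' As you yourself flag, this is the delicate point, and as stated it is not a well-formed operation --- Cramer's rule produces unique solutions, not orbits, so you must say exactly which $N$-invariant combinations of entries each system determines. The paper sidesteps this by normalizing completely: Proposition~\ref{xyqExistence} factors each generic $A=g_i^{-1}g_j$ uniquely as $xqy^{-1}$ with $x,y\in N$ and $q$ counter-diagonal, and after bringing the tuple to the form $(N,q_{01}N,\dots)$ the residual freedom is gone (an element $u\in N$ with $uq_{01}N=q_{01}N$ satisfies $q_{01}^{-1}uq_{01}\in N$, and conjugation by a counter-diagonal matrix turns upper unitriangular into lower unitriangular, forcing $u=I$). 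The proposition then reduces to expressing the remaining cocycle labels $q_{ij}$ and $\alpha^i_{jk}$ as functions of the Ptolemy coordinates, which is Lemmas~\ref{longedges} and~\ref{shortedgeformula} via the column-move analysis of Proposition~\ref{moveeffectonx} and Jacobi's identity. Those closed formulas are not incidental: the paper reuses them to prove surjectivity (Propositions~\ref{reconstructtwotuple} and \ref{reconstructtriple}) and to build the $p\SL(n,\C)$ variant. So your plan works once you insert the full normalization step; at that point it essentially becomes the paper's proof.

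One caution about your alternative route by induction on $n$: the $n\times n$ determinants of the first columns $v_0,\dots,v_k$ occur among the $c_t$ only when $n\le k+1$, since a coordinate $c_t$ with every $t_i\le 1$ requires $\sum t_i=n\le k+1$. For a $3$-simplex and $n>4$ there are no such invariants to start the induction from, so you would have to begin with minors taken against standard basis vectors --- which lands you back in the first argument. Similarly, in the descending induction you should verify that each minor entering a Cramer system really is of the form \eqref{ctformula} for an admissible $t$; this holds, but it is exactly the bookkeeping that the explicit formulas in Proposition~\ref{xyqExistence} and Lemma~\ref{shortedgeformula} carry out.
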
 

\begin{corollary}
\lbl{Ptolemycomputeshomology} 
The natural map induces an isomorphism 
\begin{equation}\lbl{CgenPt}
C_*^{\gen}(G/N)\otimes_{\Z[G]}\Z\cong Pt_*^n.
\end{equation}
In particular, $H_*(G,N)=H_*(Pt^n_*)$.\qed
\end{corollary}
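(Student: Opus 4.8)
The plan is to deduce the corollary directly from Proposition \ref{tupleuniqueuptoGaction} by showing that the natural chain map $C_*^{\gen}(G/N)\to Pt_*^n$ descends to an isomorphism after tensoring with $\Z$ over $\Z[G]$, degree by degree. First I would unwind what $C_*^{\gen}(G/N)\otimes_{\Z[G]}\Z$ is: since $C_k^{\gen}(G/N)$ is the free $\Z$-module on generic tuples $(g_0N,\dots,g_kN)$ with the diagonal $G$-action, tensoring over $\Z[G]$ with $\Z$ yields the free $\Z$-module on $G$-orbits of generic tuples. The map to $Pt_k^n$ sends a tuple to its collection of Ptolemy coordinates $c_t$; since the Ptolemy coordinates $c_t=\det(\bigcup_i\{g_i\}_{a_i})$ are manifestly invariant under the diagonal action of $G=\SL(n,\C)$ (the determinant is multiplied by $\det(g)=1$ when all $g_i$ are left-multiplied by a common $g$), this map is well-defined on orbits, i.e.\ it factors through the tensor product.

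The core of the argument is to check this induced map on orbits is a bijection onto the generators of $Pt_k^n$, which splits into surjectivity and injectivity. Surjectivity is essentially the definition: by Definition \ref{Ptolemycochaindefn}, $Pt_k^n$ is freely generated by those assignments $t\mapsto c_t$ that actually arise as the Ptolemy coordinates of \emph{some} generic tuple, so every generator of $Pt_k^n$ is hit. Injectivity is exactly where Proposition \ref{tupleuniqueuptoGaction} enters: if two generic tuples have the same Ptolemy coordinates, that proposition tells us they differ by the diagonal $G$-action, hence represent the same orbit, hence the same generator of $C_*^{\gen}(G/N)\otimes_{\Z[G]}\Z$. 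Thus the map is a bijection on free generators and so an isomorphism of $\Z$-modules in each degree.

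Once the degreewise isomorphism \eqref{CgenPt} is established, I would observe that it is an isomorphism of \emph{chain complexes}, not merely of graded groups: the preamble to the corollary already records that the natural map $C_*^{\gen}(G/N)\to Pt_*^n$ is a chain map intertwining the usual boundary maps, and tensoring with $\Z$ over $\Z[G]$ is functorial, so the induced maps commute with the differentials. The final statement $H_*(G,N)=H_*(Pt^n_*)$ then follows by taking homology: Proposition \ref{generichomology} identifies $H_*\big(C_*^{\gen}(G/N)\otimes_{\Z[G]}\Z\big)$ with the relative homology $H_*(G,N)$, and the chain isomorphism \eqref{CgenPt} transports this to $H_*(Pt^n_*)$.

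I expect the only substantive point to be the invariance and bijectivity bookkeeping in the middle step, and that the entire weight of that step rests on Proposition \ref{tupleuniqueuptoGaction} (whose proof is deferred to Section \ref{proofsection}); everything else is formal manipulation of free modules, orbits, and functoriality of the tensor product. The main obstacle is therefore not in this corollary at all but in that cited proposition --- namely, that a generic tuple of $N$-cosets can be reconstructed, up to the diagonal $\SL(n,\C)$-action, from its collection of determinants $c_t$.
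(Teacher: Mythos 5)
Your proposal is correct and follows essentially the same (implicit) argument the paper intends: the corollary is stated with no separate proof precisely because it is the formal consequence of Proposition~\ref{tupleuniqueuptoGaction} (injectivity on orbits), the definition of a Ptolemy cochain (surjectivity onto generators), the $\SL(n,\C)$-invariance of the determinants, and Proposition~\ref{generichomology} for the identification with $H_*(G,N)$. You have correctly located the entire mathematical content in the deferred Proposition~\ref{tupleuniqueuptoGaction}; the rest is the bookkeeping with coinvariants of permutation modules that you describe.
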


\begin{lemma}
\lbl{inducedPtolemyassignment}
Let $c\in Pt_k^n$ be a Ptolemy assignment, and let $\alpha\in\Delta^k_{n-2}(\Z)$. The 
Ptolemy coordinates $c_{\alpha_{ij}}$, $i\neq j$ are the Ptolemy coordinates 
of a unique Ptolemy assignment $c_\alpha$ on the subsimplex $\Delta^k(\alpha)$.
\end{lemma}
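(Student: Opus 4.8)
The plan is to realize the passage to a subsimplex $\Delta^k(\alpha)$ as the projection of the decorated configuration onto the two-dimensional quotient of $\C^n$ by the span of the $n-2$ column vectors singled out by $\alpha$. On that quotient the subsimplex Ptolemy coordinates become, up to one common scalar, the $2\times 2$ determinants of the projected vectors, i.e.\ the $\SL(2,\C)$-Ptolemy coordinates of an honest generic tuple; realizing them this way is exactly what Definition~\ref{Ptolemycochaindefn} requires.

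Write $\alpha=(a_0,\dots,a_k)$ and pick a generic tuple $(g_0N,\dots,g_kN)$ whose Ptolemy coordinates are $c$. Let $W_\alpha\subset\C^n$ be the span of the $n-2$ vectors $\{g_0\}_{a_0}\cup\dots\cup\{g_k\}_{a_k}$. First I would check that $\dim W_\alpha=n-2$: applying genericity at the point $\alpha+2e_0\in T^k(n)$ (legitimate since $a_0\le n-2$) shows $c_{\alpha+2e_0}=\det(\dots,(g_0)_{a_0+1},(g_0)_{a_0+2},\dots)\neq 0$, and all $n-2$ spanning vectors occur among the columns of this nonvanishing $n\times n$ determinant, so they are linearly independent. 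Hence $V:=\C^n/W_\alpha$ is two-dimensional; I fix an identification $V\cong\C^2$, let $D=\det$ be the resulting area form, and write $v_i\in\C^2$ for the image of the column $(g_i)_{a_i+1}$.

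The key step is a factorization. Group the columns of the configuration matrix into blocks $0,\dots,k$, block $i$ consisting of the columns coming from $g_i$. For $i\neq j$ the map sending $(p,q)$ to the $n\times n$ determinant of $\{g_0\}_{a_0}\cup\dots\cup\{g_k\}_{a_k}$ with $p$ inserted in block $i$ and $q$ in block $j$ is alternating and bilinear in $(p,q)$ and vanishes whenever $p$ or $q$ lies in $W_\alpha$; it therefore descends to an alternating bilinear form on the two-dimensional space $V$, which must equal $\mu D$ for a single scalar $\mu\neq 0$ independent of $i,j$. Thus $c_{\alpha_{ij}}=\mu\,D(v_i,v_j)$ for all $i\neq j$. (In the row-reduced frame of the proof of Proposition~\ref{PtolemyProposition} one has $\mu=1$ and $v_i=(x_i,y_i)$, recovering the $2\times 2$ minors there.) Choosing a square root $\sqrt\mu$ and putting $u_i=\sqrt\mu\,v_i$ gives $D(u_i,u_j)=c_{\alpha_{ij}}$, with every $u_i\neq 0$ since $c_{\alpha_{ij}}\neq 0$.

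To finish, I would build the realizing tuple in $\SL(2,\C)$. A left $N$-coset in $\SL(2,\C)$ is determined by its first column, so there is a unique coset $h_iN$ whose first column is $u_i$. The tuple $(h_0N,\dots,h_kN)$ is generic---its edge Ptolemy coordinates are $D(u_i,u_j)=c_{\alpha_{ij}}\neq 0$ and its vertex coordinates are $\det h_i=1$---so by Definition~\ref{Ptolemycochaindefn} its Ptolemy coordinates form a Ptolemy cochain $c_\alpha$ on $\Delta^k(\alpha)\cong\Delta^k_2$ with the prescribed edge values $c_{\alpha_{ij}}$. Uniqueness is then automatic: every Ptolemy cochain on $\Delta^k_2$ has vertex values $\det h_i=1$, so its full assignment is determined by its edge values, whence $c_\alpha$ is the only such cochain. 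The one point needing genuine care is the common-scalar factorization $c_{\alpha_{ij}}=\mu\,D(v_i,v_j)$---that the determinant form descends to the quotient $V$ with a single $\mu$ independent of the pair $(i,j)$, together with the sign bookkeeping from the fixed insertion positions; once that is in hand, absorbing $\mu$ via a square root and reconstructing the cosets from first columns are routine.
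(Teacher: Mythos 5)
Your route is genuinely different from the paper's. The paper deduces the lemma from the reconstruction statements (Proposition~\ref{uniquePtolemycochain} and its $k=2$ analogue, proved later in Section~\ref{proofsection}) for $k\leq 3$, and handles $k>3$ by induction using the fact that five of the six Ptolemy coordinates on $\Delta^3_2$ determine the sixth via \eqref{Ptolemyrelation}; you instead construct the realizing $\SL(2,\C)$-tuple directly by passing to the two-dimensional quotient $V=\C^n/W_\alpha$. That is a clean argument, uniform in $k$ and independent of the later reconstruction machinery, and your genericity check at $\alpha+2e_0$ and your uniqueness step (the vertex coordinates of any realizing tuple are $\det h_i=1$, so the edge values determine the cochain) are both correct.

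There is, however, one real error in the key factorization. The bilinear forms $B_{ij}(p,q)$ obtained by inserting $p$ into block $i$ and $q$ into block $j$ do \emph{not} all descend to the \emph{same} multiple $\mu D$ of the area form on $V$: bringing the two inserted columns to the last two positions costs a sign $(-1)^{s_i+s_j}$, where $s_l=a_{l+1}+\cdots+a_k$ is the number of base columns to the right of the insertion slot in block $l$. Hence $B_{ij}=(-1)^{s_i+s_j}\mu_0 D$, and $\mu_{ij}$ genuinely depends on $(i,j)$; for instance, for $n=3$ and $\alpha=(0,1,0,\dots,0)$ one has $s_0=1$ and $s_l=0$ for $l\geq1$, so $\mu_{01}=-\mu_{12}$. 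For the same reason your parenthetical claim that $\mu=1$ and $c_{\alpha_{ij}}=\det(v_i,v_j)$ in the row-reduced frame of Proposition~\ref{PtolemyProposition} is not right; the paper only needs that these signs cancel in the three \emph{products} appearing in \eqref{Ptolemyrelation}, which they do since each product carries the sign $(-1)^{s_0+s_1+s_2+s_3}$. The error is repairable precisely because the discrepancy splits as a product over the two indices: set $u_i=\sqrt{\mu_0}\,(-1)^{s_i}v_i$ rather than $\sqrt{\mu_0}\,v_i$, after which $D(u_i,u_j)=c_{\alpha_{ij}}$ as required and the rest of your argument goes through unchanged.
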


\begin{proof} 
For $1\leq k\leq 3$, this follows from Proposition~\ref{uniquePtolemyassignment}. 
For $k>3$, the result follows by induction, using the fact that $5$ Ptolemy 
coordinates on $\Delta^3_2$ determines the last.
\end{proof}

A Ptolemy assignment $c$ on $\Delta^k_n$ thus induces a Ptolemy assignment $c_\alpha$ on each 
subsimplex. We thus have maps
\begin{equation}
\lbl{Jkn}
J_k^n\colon Pt_k^n\to Pt_k^2, \quad c\mapsto\sum_{\alpha\in \Delta^k_{n-2}(\Z)}c_{\alpha}.
\end{equation}

For a Ptolemy assignment $c\in Pt_k^n$ let $c_{\underline i}\in Pt_{k-1}^n$ be the 
induced Ptolemy assignment on the $i$th face of $\Delta^k_n$, i.e.~we have 
$\partial(c)=\sum_{i=0}^k (-1)^ic_{\underline i}$. Note that
\begin{equation}\lbl{helpfuleq1}
(c_{\underline i})_{(a_0,\dots,a_{k-1})}=c_{(a_0,\dots,a_{i-1},0,a_{i},\dots a_{k-1})_{\underline i}}
\in Pt_{k-1}^2.
\end{equation}

For $\beta\in \Delta^k_{n-3}(\Z)$, let $c_{\beta^i}=c_{(\beta+e_i)_{\underline i}}\in Pt_{k-1}^2$, 
and define $\partial_{\beta}(c)\in Pt_{k-1}^2$ by 
\begin{equation}
\partial_{\beta}(c)=\sum_{i=0}^k (-1)^i c_{\beta^i}\in Pt_{k-1}^2.
\end{equation}  
The geometry is explained in Figure~\ref{cbetafigure}.

\begin{figure}[htpb]
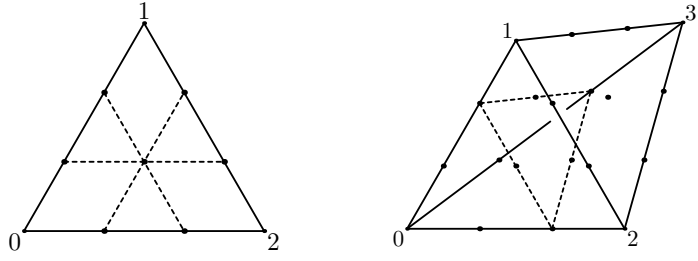

$$
\psdraw{SLrepFigures.4}{1.7in} \qquad
\psdraw{SLrepFigures.5}{1.6in}
$$
\caption{The dotted lines in the left figure indicate $c_{\beta^0}$, 
$c_{\beta^1}$ and $c_{\beta^2}$ for $k=2$. The triangle in the right figure 
indicates $c_{\beta^0}$ for $k=3$. Here, $n=3$ and $\beta=0$.}
\lbl{cbetafigure}
\end{figure}

\begin{proposition}
\lbl{DeltaJ} 
Let $c\in Pt_k^n$. We have
\begin{equation}\partial (J_k^n(c))-J^n_{k-1}(\partial(c))
=\sum_{\beta\in \Delta^k_{n-3}(\Z)}\partial_\beta(c)\in Pt_{k-1}^2.
\end{equation}
\end{proposition}

\begin{proof}
By~\eqref{helpfuleq1}, we have
\begin{equation}
\begin{aligned}
\partial (J_k^n(c))-J_{k-1}^n(\partial(c))=&
\sum_{i=0}^k(-1)^i\sum_{\alpha\in \Delta^k_{n-2}(\Z)}c_{\alpha_{\underline i}}-
\sum_{i=0}^k(-1)^i\sum_{\substack{\alpha\in \Delta^k_{n-2}(\Z)\\a_i=0}}c_{\alpha_{\underline i}}
\\=&\sum_{i=0}^k(-1)^i\sum_{\substack{\alpha\in \Delta^k_{n-2}(\Z)\\a_i>0}}c_{\alpha_{\underline i}}
\\=&\sum_{\beta\in \Delta^k_{n-3}(\Z)}\sum_{i=0}^k(-1)^ic_{(\beta+e_i)_{\underline i}}
\\=&\sum_{\beta\in \Delta^k_{n-3}(\Z)}\partial_\beta(c)
\end{aligned}
\end{equation}
as desired
\end{proof}

\subsection{The map to the extended Bloch group}
\lbl{sub.map2ebloch}

We wish to define a map 
\begin{equation}
\lambda\colon H_3(\SL(n,\C),N)\to\widehat\B(\C).
\end{equation}

Letting $\widetilde x$ denote a logarithm of $x$, we consider the maps 
\begin{gather}
\lambda\colon Pt_3^2\to\Z[\widehat\C],\quad c\mapsto (\widetilde c_{03}+\widetilde c_{12}-\widetilde c_{02}-\widetilde c_{13},\widetilde c_{01}+\widetilde c_{23}-\widetilde c_{02}-\widetilde c_{13})\\
\mu\colon Pt_2^2\to\wedge^2(\C),\quad c\mapsto -\widetilde c_{01}\wedge\widetilde c_{02}+\widetilde c_{01}\wedge\widetilde c_{12}-\widetilde c_{02}\wedge\widetilde c_{12
}+\widetilde c_{02}\wedge\widetilde c_{02}.
\end{gather}

\begin{remark} 
The term $\widetilde c_{02}\wedge\widetilde c_{02}$ vanishes in $\wedge^2(\C)$, 
but over general fields this term is needed. General fields are discussed in 
Section~\ref{otherfields}.
\end{remark}

\begin{lemma}
[{Zickert~\cite[Lemma~6.9]{ZickertAlgK}}]
\lbl{I3Bhat}
Let $\Z[\widehat\FT]$ be the subgroup of $\Z[\widehat\C]$ generated by the 
lifted five term relations. There is a commutative diagram
\begin{equation}
\cxymatrix{{Pt_4^2\ar[r]^-\partial\ar[d]^-{\lambda\circ\partial}&
Pt_3^2\ar[r]^-\partial\ar[d]^-{\lambda}&Pt_2^2\ar[d]^-\mu
\\{\Z[\widehat{\FT}]}\ar@{^{(}->}[r]&{\Z[\widehat \C]}\ar[r]^-{\widehat\nu}&
\wedge^2(\C).}}
\end{equation}
\qed
\end{lemma}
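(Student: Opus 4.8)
The diagram amounts to two independent assertions: commutativity of the right square, $\widehat\nu\circ\lambda=\mu\circ\partial$ on $Pt_3^2$, and commutativity of the left square, which is the statement that $\lambda(\partial c)\in\widehat{\FT}$ for every $c\in Pt_4^2$ (the vertical map $Pt_4^2\to\Z[\widehat{\FT}]$ being then defined as $c\mapsto\lambda(\partial c)$). The plan is to treat these separately: the first by a bookkeeping computation in $\wedge^2(\C)$, the second by identifying the alternating sum of the face-flattenings of a $4$-simplex with a lifted five-term relation.

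\textbf{Right square.} Fix $c\in Pt_3^2$, a Ptolemy cochain on $\Delta^3_2$, with edge coordinates $c_{ij}$ for $0\le i<j\le 3$. Each face $c_{\underline k}$ is a Ptolemy cochain on a triangle whose vertices inherit the ordering from $\Delta^3_2$, so its three edge coordinates are the $c_{ij}$ with $i,j\neq k$, relabeled in increasing order. First I would write out $\mu(c_{\underline k})$ for each of the four faces via the defining formula for $\mu$, form $\mu(\partial c)=\sum_k(-1)^k\mu(c_{\underline k})$, and compare with $\widehat\nu(\lambda(c))=e\wedge f$, where $e=\widetilde c_{03}+\widetilde c_{12}-\widetilde c_{02}-\widetilde c_{13}$ and $f=\widetilde c_{01}+\widetilde c_{23}-\widetilde c_{02}-\widetilde c_{13}$. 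Expanding $e\wedge f$ bilinearly and using antisymmetry, the diagonal terms $\widetilde c_{02}\wedge\widetilde c_{02}$ and $\widetilde c_{13}\wedge\widetilde c_{13}$ vanish over $\C$, the mixed terms $\widetilde c_{02}\wedge\widetilde c_{13}$ cancel in pairs, and the remaining twelve terms match $\mu(\partial c)$ coefficient by coefficient. This is a finite, mechanical check; the only point of care is the relabeling of each face's vertices, which also fixes the placement of the $\widetilde c_{02}\wedge\widetilde c_{02}$-type term (invisible over $\C$, but, as the preceding remark notes, needed over a general field).

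\textbf{Left square.} For $c\in Pt_4^2$, a Ptolemy cochain on $\Delta^4_2$, the relevant data is the ten edge coordinates $c_{ij}$ for $0\le i<j\le 4$, which for $G=\SL(2,\C)$ are the determinants $c_{ij}=\det(v_i,v_j)$ of the first columns $v_i$ of the decorating matrices. Each of the five tetrahedral faces $c_{\underline k}$ carries a flattening $(e_k,f_k):=\lambda(c_{\underline k})$, and I must show that $\lambda(\partial c)=\sum_{k=0}^4(-1)^k\lambda(c_{\underline k})$ lies in $\widehat{\FT}$. The plan is to record each $(e_k,f_k)$ explicitly as an integral linear combination of the logarithms $\widetilde c_{ij}$, again tracking the inherited vertex orderings. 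The underlying cross-ratios of the five faces satisfy the ordinary five-term relation: this is exactly the Pl\"ucker relation among the $c_{ij}=\det(v_i,v_j)$, equivalently the Ptolemy relation of Proposition~\ref{PtolemyProposition}. The content of the lift is then to verify that the five pairs $(e_k,f_k)$ satisfy the defining linear equations~\eqref{fiveeq}; since each $e_k$ and $f_k$ is a prescribed $\Z$-combination of the $\widetilde c_{ij}$, this becomes a set of identities among those combinations to be checked directly.

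The main obstacle is precisely this last step: the log-bookkeeping that upgrades the classical five-term relation to its lifted form. What makes it work is that the flattening formula is arranged so that the differences of logarithms appearing in $(e_k,f_k)$ satisfy the integral relations~\eqref{fiveeq} identically, independently of the chosen branches $\widetilde c_{ij}=\log(c_{ij})$; equivalently, changing any branch alters both sides of~\eqref{fiveeq} compatibly. This is the heart of Neumann's extended five-term relation, and once the ten coordinates and the five flattenings are written out, it reduces to matching integer coefficients. I expect no difficulty in the right square or in reducing the left square to~\eqref{fiveeq}; the care lies entirely in the consistent choice of orderings and in verifying that the logarithm terms obey~\eqref{fiveeq} on the nose.
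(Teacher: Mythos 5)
Your proposal is correct and follows the same route as the source: the paper itself defers the proof to Zickert~\cite[Lemma~6.9]{ZickertAlgK}, but that proof (and the paper's own analogous arguments in Lemmas~\ref{beta4} and~\ref{beta3}) proceeds exactly as you describe --- a direct expansion of $e\wedge f$ against $\mu(\partial c)$ for the right square, and a term-by-term verification that the five face flattenings of a $4$-simplex satisfy the linear equations~\eqref{fiveeq} for the left square. Both computations do close up as you anticipate (e.g.\ $e_1-e_0=\widetilde c_{04}-\widetilde c_{03}-\widetilde c_{14}+\widetilde c_{13}=e_2$, with the remaining four equations checking out identically in the $\widetilde c_{ij}$, independent of branch choices), so there is no gap.
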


It follows that $\lambda$ induces a map 
$\lambda\colon H_3(\SL(2,\C),N)\to\widehat\B(\C)$. This map equals the map 
defined in Zickert~\cite[Section~7]{ZickertDuke}. The fact that $\lambda$ 
is independent of the choice of logarithm is proved in 
Zickert~\cite[Remark~6.11]{ZickertDuke}, and also follows from 
Proposition~\ref{independenceoflog} below. 

\begin{lemma}
\lbl{beta4}
For each $c\in Pt_4^n$ and each $\beta\in \Delta^4_{n-3}(\Z)$, we have 
\begin{equation}\lambda(\partial_\beta(c))=0\in\widehat\Pre(\C).
\end{equation}
\end{lemma}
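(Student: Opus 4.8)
The plan is to show that the five flattenings $\lambda(c_{\beta^i})$, combined with the alternating signs $(-1)^i$ of $\partial_\beta$, form a single instance of Neumann's lifted five-term relation~\eqref{fiveeq}, which is by definition trivial in $\widehat\Pre(\C)$.

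First I would unwind the combinatorics behind $\partial_\beta$, guided by the telescoping in the proof of Lemma~\ref{DeltaJ}, where $c_{\beta^i}=c_{(\beta+e_i)_{\underline i}}$ is the $i$th face of the subsimplex $\Delta^4(\beta+e_i)$, hence a genuine Ptolemy cochain on $\Delta^3_2$. Its six edge coordinates are the numbers $c_S:=c_{\beta+e_i+e_j+e_l}$ indexed by the $3$-subsets $S=\{i,j,l\}\subset\{0,1,2,3,4\}$ containing $i$. Thus the only data entering are the ten numbers $c_S$ indexed by $3$-subsets of $\{0,\dots,4\}$, and $c_S$ occurs in exactly the three tetrahedra $c_{\beta^i}$ with $i\in S$; in particular $n$ serves only as an index and the whole statement reduces to an identity among these ten numbers. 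Because each $c_{\beta^i}$ is a bona fide Ptolemy cochain on $\Delta^3_2$, its Ptolemy relation (Proposition~\ref{PtolemyProposition}) yields $\exp(e_i)+\exp(f_i)=1$, so that $(e_i,f_i):=\lambda(c_{\beta^i})$ is indeed an element of $\widehat\C$.

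Next I would write out each flattening explicitly using the order-preserving identification of the face opposite vertex $i$ with the standard $\Delta^3_2$. For example, relabeling the vertices $1,2,3,4$ of the zeroth face by $0,1,2,3$, the defining formula for $\lambda$ gives
\[
\lambda(c_{\beta^0})=\big(\widetilde c_{014}+\widetilde c_{023}-\widetilde c_{013}-\widetilde c_{024},\ \widetilde c_{012}+\widetilde c_{034}-\widetilde c_{013}-\widetilde c_{024}\big),
\]
and similarly for $i=1,2,3,4$. Since every logarithm here is the value of the single fixed section $\widetilde{(\cdot)}=\log$, these are literal integer-linear combinations of the $\widetilde c_S$ with no hidden multiples of $2\pi i$. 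I would then substitute the five pairs $(e_i,f_i)$ into the five equations~\eqref{fiveeq} and verify each as an identity of linear combinations of the $\widetilde c_S$, valid regardless of any algebraic relation among the $c_S$ themselves. Tracking the signs produced by the face identifications, the alternating sum $\sum_i(-1)^i(e_i,f_i)$ is then exactly the left-hand side of the lifted five-term relation, so it vanishes in $\widehat\Pre(\C)$ and $\lambda(\partial_\beta(c))=0$.

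The routine but delicate part is the bookkeeping in the final step: one must fix once and for all the order-preserving bijections between the four surviving vertices of each face and $\{0,1,2,3\}$, and check that the resulting vertex orderings make the signs $(-1)^i$ align with Neumann's pattern $(e_0,f_0)-(e_1,f_1)+(e_2,f_2)-(e_3,f_3)+(e_4,f_4)$ rather than a permuted or negated variant. This is precisely the lifted analogue of the classical fact that the five sub-tetrahedra of an ideal $4$-simplex satisfy the five-term relation; the only genuinely new ingredient is that a single global choice of logarithm for the Ptolemy coordinates automatically yields compatible flattenings, so the relation lifts verbatim from $\Pre(\C)$ to $\widehat\Pre(\C)$.
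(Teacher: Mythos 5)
Your proposal is correct and follows essentially the same route as the paper: identify the five induced Ptolemy cochains $c_{\beta^i}$ on $\Delta^3_2$, write their flattenings $(e_i,f_i)$ explicitly as integer-linear combinations of the fixed logarithms $\widetilde c_{\beta+e_i+e_j+e_l}$, and verify the five equations~\eqref{fiveeq} as formal identities so that $\sum(-1)^i(e_i,f_i)$ is literally an instance of the lifted five-term relation. The paper carries out exactly this verification (displaying $e_0,e_1,e_2$ and checking $e_2=e_1-e_0$, leaving the remaining four equations to the reader), and your additional remarks --- that the Ptolemy relation guarantees $\exp(e_i)+\exp(f_i)=1$ so each $(e_i,f_i)\in\widehat\C$, and that the signs $(-1)^i$ from $\partial_\beta$ match Neumann's alternating pattern --- are correct details that the paper leaves implicit.
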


\begin{proof}
Let $(e_i,f_i)=\lambda(c_{\beta^i})$ be the flattening associated to $c_{\beta^i}$. 
We prove that the flattenings satisfy the five term relation by proving that 
the equations~\eqref{fiveeq} are satisfied. We have
\begin{equation}
\begin{gathered}
e_0=\widetilde c_{\beta+(1,1,0,0,1)}+\widetilde c_{\beta+(1,0,1,1,0)}-\widetilde c_{\beta+(1,1,0,1,0)}-\widetilde c_{\beta+(1,0,1,0,1)}\\
e_1=\widetilde c_{\beta+(1,1,0,0,1)}+\widetilde c_{\beta+(0,1,1,1,0)}-\widetilde c_{\beta+(1,1,0,1,0)}-\widetilde c_{\beta+(0,1,1,0,1)}\\
e_2=\widetilde c_{\beta+(1,0,1,0,1)}+\widetilde c_{\beta+(0,1,1,1,0)}-\widetilde c_{\beta+(1,0,1,1,0)}-\widetilde c_{\beta+(0,1,1,0,1)}\\
\end{gathered}
\end{equation}
and it follows that $e_2=e_1-e_0$ as desired. The other $4$ equations are 
proved similarly.
\end{proof}

\begin{lemma}
\lbl{beta3} 
For each $c\in Pt_3^n$ and each $\beta\in \Delta^3_{n-3}(\Z)$, 
$\mu(\partial_\beta(c))=0\in\wedge^2(\C)$.
\end{lemma}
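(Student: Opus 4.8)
The plan is to imitate the proof of Lemma~\ref{beta4}: expand $\mu(\partial_\beta(c))=\sum_{i=0}^3(-1)^i\mu(c_{\beta^i})$ in terms of logarithms of the Ptolemy coordinates of $c$ and check that everything cancels in $\wedge^2(\C)$.

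The first step is to identify the coordinates that occur. For $\beta\in T^3(n-3)$ and each $i$, the face $c_{\beta^i}=(c_{\beta+e_i})_{\underline i}$ is a Ptolemy cochain on a triangle whose three edge-midpoint coordinates are the values of $c$ at the points $\beta+e_i+e_j+e_l$, where $\{j,l\}$ runs over the two-element subsets of $\{0,1,2,3\}\setminus\{i\}$. The key observation is that $e_i+e_j+e_l$ depends only on the set $\{i,j,l\}$, hence only on the complementary index $m=\{0,1,2,3\}\setminus\{i,j,l\}$. Thus only the four logarithms
\[
A_m:=\widetilde c_{\beta+(1,1,1,1)-e_m},\qquad m\in\{0,1,2,3\},
\]
occur, and inside the triangle $c_{\beta^i}$ the edge opposite a vertex $m\in\{0,1,2,3\}\setminus\{i\}$ carries the coordinate $A_m$ (so $A_i$ does not occur in the $i$-th face).

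The second step is to substitute and sum. Writing $\mu$ in the (exactly equivalent) factored form $\mu(c)=(\widetilde c_{01}-\widetilde c_{02})\wedge(\widetilde c_{12}-\widetilde c_{02})$, and using the convention that the three vertices of the $i$-th face, listed in increasing order, are the local vertices $0,1,2$, one obtains
\[
\mu(c_{\beta^i})=(A_{l_i}-A_{k_i})\wedge(A_{j_i}-A_{k_i}),
\]
where $(j_i,k_i,l_i)$ is the increasing enumeration of $\{0,1,2,3\}\setminus\{i\}$. Forming $\mu(c_{\beta^0})-\mu(c_{\beta^1})+\mu(c_{\beta^2})-\mu(c_{\beta^3})$ and expanding, each of the six basis bivectors $A_p\wedge A_q$ occurs in exactly two of the four faces, and with the alternating signs the two contributions cancel; hence the total vanishes.

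The only real difficulty is the bookkeeping in the first two steps: matching the local vertex ordering of each triangular face (which fixes the form of $\mu$) to the global labels $\{0,1,2,3\}$, and tracking which $A_m$ sits on which edge. Once the reduction to the four coordinates $A_m$ is in place the cancellation is purely formal; in contrast to Lemma~\ref{beta4}, no relation among the $A_m$ is required, since the identity lives in the linear target $\wedge^2(\C)$ (where moreover $A_m\wedge A_m=0$).
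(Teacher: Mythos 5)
Your proposal is correct and follows essentially the same route as the paper: the paper likewise expands $\mu(c_{\beta^0})$ in the three logarithms $\widetilde c_{\beta+(1,1,1,0)},\widetilde c_{\beta+(1,1,0,1)},\widetilde c_{\beta+(1,0,1,1)}$ (your $A_3,A_2,A_1$) and observes that the alternating sum of the four faces cancels term by term in $\wedge^2(\C)$. Your reindexing by the complementary vertex $m$ and the factored form of $\mu$ are just a cleaner way of organizing the same sign bookkeeping, and your identification of which $A_m$ sits on which edge checks out.
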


\begin{proof}
We have
\begin{multline}
\mu(c_{\beta^0})=-\widetilde c_{\beta+(1,1,1,0)}\wedge \widetilde c_{\beta+(1,1,0,1)}
+\widetilde c_{\beta+(1,1,1,0)}\wedge \widetilde c_{\beta+(1,0,1,1)}
\\-\widetilde c_{\beta+(1,1,0,1)}\wedge \widetilde c_{\beta+(1,0,1,1)}
+\widetilde c_{\beta+(1,1,0,1)}\wedge \widetilde c_{\beta+(1,1,0,1)}.
\end{multline}
Using this together with the similar formulas for $\mu(c_{\beta^i})$, 
we obtain that 
\[\sum(-1)^i\mu(c_{\beta^i})=0\in\wedge^2(\C),\]
proving the result.
\end{proof}

\begin{corollary} 
The map $\lambda\circ J^n_3$ induces a map
\begin{equation}
\lambda\colon H_3(\SL(n,\C),N)\to\widehat\B(\C).
\end{equation}
\end{corollary}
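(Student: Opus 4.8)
The plan is to combine the map $\lambda\circ J_3^n\colon Pt_3^n\to\Z[\widehat\C]$ with the chain-level results already established (Lemma~\ref{I3Bhat}, Lemma~\ref{DeltaJ}, Lemma~\ref{beta4} and Lemma~\ref{beta3}) to show that $\lambda\circ J_3^n$ descends to a well-defined map on homology $H_3(Pt^n_*)=H_3(\SL(n,\C),N)$ (using Corollary~\ref{Ptolemycomputeshomology}) landing in the extended Bloch group $\widehat\B(\C)\subset\widehat\Pre(\C)$.

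\emph{First} I would verify that $\lambda\circ J_3^n$ sends $3$-cycles to elements of $\widehat\Pre(\C)$ that are closed under $\widehat\nu$, i.e.\ that land in $\widehat\B(\C)=\ker\widehat\nu$. Here the key identity is Lemma~\ref{DeltaJ} applied at level $k=3$: for $c\in Pt_3^n$,
\begin{equation*}
\partial(J_3^n(c))-J_2^n(\partial(c))=\sum_{\beta\in T^3(n-3)}\partial_\beta(c)\in Pt_2^2.
\end{equation*}
Composing with $\mu\colon Pt_2^2\to\wedge^2(\C)$ and using the commutativity $\widehat\nu\circ\lambda=\mu\circ\partial$ from Lemma~\ref{I3Bhat}, I get for a cycle $c$ (so $\partial c=0$) that $\widehat\nu(\lambda(J_3^n(c)))=\mu(\partial J_3^n(c))=\mu\big(\sum_\beta\partial_\beta(c)\big)$. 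By Lemma~\ref{beta3} each summand $\mu(\partial_\beta(c))$ vanishes, so $\widehat\nu(\lambda(J_3^n(c)))=0$ and the image lies in $\widehat\B(\C)$.

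\emph{Next} I would check that $\lambda\circ J_3^n$ kills boundaries, so that it factors through homology. If $c=\partial(d)$ for some $d\in Pt_4^n$, then applying Lemma~\ref{DeltaJ} at level $k=4$ gives
\begin{equation*}
J_3^n(\partial d)=\partial(J_4^n(d))-\sum_{\beta\in T^4(n-3)}\partial_\beta(d).
\end{equation*}
Applying $\lambda$, the term $\lambda(\partial J_4^n(d))$ is a boundary in $\Z[\widehat\C]$ under the top row of the diagram in Lemma~\ref{I3Bhat}, hence lies in the subgroup $\widehat\FT$ of lifted five-term relations and so vanishes in $\widehat\Pre(\C)$; the remaining terms $\lambda(\partial_\beta(d))$ each vanish by Lemma~\ref{beta4}. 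Thus $\lambda(J_3^n(\partial d))=0$ in $\widehat\Pre(\C)$, and the induced map on $H_3(Pt^n_*)$ is well defined. Identifying $H_3(Pt^n_*)\cong H_3(\SL(n,\C),N)$ via Corollary~\ref{Ptolemycomputeshomology} yields the desired map $\lambda\colon H_3(\SL(n,\C),N)\to\widehat\B(\C)$.

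\emph{The main obstacle} is purely bookkeeping rather than conceptual: one must be careful that the two applications of Lemma~\ref{DeltaJ} (at $k=3$ for closedness and $k=4$ for well-definedness) are combined with the correct rows of the diagram in Lemma~\ref{I3Bhat}, and that the passage from the chain level in $Pt_2^2$ to the statement in $\widehat\Pre(\C)$ respects the subgroup $\widehat\FT$. The independence of the auxiliary choice of logarithm is a separate issue, deferred to Proposition~\ref{independenceoflog}, so I would not need to address it here.
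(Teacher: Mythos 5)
Your proposal is correct and follows essentially the same route as the paper: the paper's proof is exactly the one-line observation that Lemma~\ref{DeltaJ} combined with Lemmas~\ref{beta4} and~\ref{beta3} (together with the diagram of Lemma~\ref{I3Bhat} and the identification $H_*(\SL(n,\C),N)\cong H_*(Pt^n_*)$) gives the result, and your write-up simply makes the two applications of Lemma~\ref{DeltaJ} (at $k=3$ for landing in $\ker\widehat\nu$, at $k=4$ for killing boundaries) explicit. No gaps.
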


\begin{proof} 
Using Proposition~\ref{DeltaJ}, this follows from Lemma~\ref{beta4} and 
Lemma~\ref{beta3}.
\end{proof}

\begin{remark}
For $n=3$, this map agrees with the map considered in Zickert~\cite{ZickertAlgK}.
\end{remark}

\begin{definition}
The \emph{extended Bloch group element} of a decorated $(G,N)$-representation $\rho$ is defined by $\lambda([\rho])$, where $[\rho]\in H_3(\SL(n,\C),N)$ is the fundamental class of $\rho$.
\end{definition}
Note that if the decoration of $\rho$ is generic, and $c$ is the corresponding Ptolemy assignment, the extended Bloch group element is given by $\lambda(c)$, where $\lambda\colon P_n(K)\to \widehat\Pre(\C)$ is given by~\eqref{defoflambda}. 
\begin{proposition}\label{lambdaimageinBhat} The map $\lambda\colon P_n(K)\to \widehat\Pre(\C)$ has image in $\widehat\B(\C)$. 
\end{proposition}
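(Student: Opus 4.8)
The plan is to show that $\lambda(c)$ lies in the kernel of the map $\widehat\nu\colon\widehat\Pre(\C)\to\wedge^2(\C)$, which by Definition~\ref{def.ebloch} is precisely the extended Bloch group $\widehat\B(\C)$. Concretely, I must verify that $\widehat\nu(\lambda(c))=0$ for any Ptolemy cochain $c$ representing a boundary-unipotent representation on the closed $3$-cycle $K$. The key observation is that $\lambda(c)=\sum_i\epsilon_i\sum_{\alpha}\lambda(c^i_\alpha)=\lambda(J^n_3(\sum_i\epsilon_i c^i))$, so it suffices to understand $\widehat\nu\circ\lambda$ applied to the fundamental cycle $z=\sum_i\epsilon_i c^i\in Pt^n_3$.

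First I would exploit the commutative diagram of Lemma~\ref{I3Bhat}, which gives $\widehat\nu\circ\lambda=\mu\circ\partial$ on $Pt_3^2$. Applying this to $J^n_3(z)$ yields $\widehat\nu(\lambda(c))=\mu(\partial(J^n_3(z)))$. Now I would invoke Lemma~\ref{DeltaJ}, which relates $\partial\circ J^n_3$ to $J^n_2\circ\partial$ up to the correction term $\sum_{\beta\in T^3(n-3)}\partial_\beta$. Since $z=\sum_i\epsilon_i c^i$ is a \emph{cycle} representing the fundamental class (all faces of $K$ are glued in pairs with opposite signs, as in Definition~\ref{cycledefn} and Proposition~\ref{fundclassrepprop}), we have $\partial(z)=0$ in $Pt^n_2$, hence $J^n_2(\partial(z))=0$. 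Therefore Lemma~\ref{DeltaJ} collapses to $\partial(J^n_3(z))=\sum_{\beta\in T^3(n-3)}\partial_\beta(z)$.

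Applying $\mu$ and using linearity, I obtain $\widehat\nu(\lambda(c))=\mu\big(\sum_{\beta}\partial_\beta(z)\big)=\sum_i\epsilon_i\sum_{\beta\in T^3(n-3)}\mu(\partial_\beta(c^i))$. By Lemma~\ref{beta3}, each term $\mu(\partial_\beta(c^i))$ vanishes in $\wedge^2(\C)$. Hence $\widehat\nu(\lambda(c))=0$, which is exactly the statement that $\lambda(c)\in\widehat\B(\C)$.

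The main obstacle I anticipate is confirming that $z=\sum_i\epsilon_i c^i$ is genuinely a cycle in $Pt^n_2$, i.e.\ that $\partial(z)=0$ and not merely that it represents a relative homology class. This hinges on the face-matching condition built into the definition of a Ptolemy cochain on $K$ (the coordinates agree on identified faces), together with the sign convention $\epsilon_i$ and the fact that every face of $K$ is glued to exactly one other face with opposite induced orientation. Once the face-identifications are accounted for, the boundary contributions cancel pairwise, giving $\partial(z)=0$ at the level of $Pt^n_2$; this is the same mechanism that makes $\sum_i\epsilon_i(g_0^iN,\dots,g_3^iN)$ a cycle in Proposition~\ref{generichomology}. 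Verifying this cancellation carefully — rather than the purely formal diagram chase — is where the real content lies.
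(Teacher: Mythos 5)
Your proof is correct and follows essentially the same route as the paper: the paper's one-line argument observes that $\sum_i\epsilon_i c^i$ is a cycle in $Pt^n_3$ and appeals to the corollary that $\lambda\circ J^n_3$ induces a map $H_3(\SL(n,\C),N)\to\widehat\B(\C)$, whose proof is exactly the diagram chase you carry out via Lemma~\ref{DeltaJ}, Lemma~\ref{I3Bhat} and Lemma~\ref{beta3}. You have merely unpacked that corollary explicitly, and you correctly identify the cycle condition $\partial\bigl(\sum_i\epsilon_i c^i\bigr)=0$ (coming from the face identifications and the signs $\epsilon_i$) as the point carrying the real content.
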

\begin{proof}If $c\in P_n(K)$ is a Ptolemy assignment on $K$, we have a cycle $\alpha=\sum_i\epsilon_ic^i\in Pt^n_3$, and one easily checks that $\lambda(c)$ as defined in~\eqref{defoflambda} equals $\lambda([\alpha])$. This proves the result.
\end{proof}

\subsection{Stabilization}
\lbl{stabilizationsection}

We now prove that the map $\lambda\colon H_3(\SL(n,\C),N)\to\widehat\B(\C)$ 
respects stabilization. We regard $\SL(n-1,\C)$ as a subgroup of $\SL(n,\C)$ 
via the standard inclusion adding a $1$ as the upper left entry. 

Let $\pi\colon M(n,\C)\to M(n-1,\C)$ be the map sending a matrix to the 
submatrix obtained by removing the first row and last column. The subgroup 
$D_k(\SL(n,\C)/N)$ of $C^{\gen}_k(\SL(n,\C)/N)$ generated by tuples 
$(g_0N,\dots,g_kN)$ such that the upper left entry of each $g_i$ is $1$ and such that
\begin{equation}
(\pi(g_0)N,\dots,\pi(g_k)N)\in C^{\gen}_k(\SL(n-1,\C)/N)
\end{equation}
form an $\SL(n-1,\C)$-complex. Consider the $\SL(n-1,\C)$-invariant chain maps
\begin{gather} \pi\colon D_*(\SL(n,\C)/N)\to Pt_*^{n-1}\\
i\colon D_*(\SL(n,\C)/N)\to Pt_*^n,
\end{gather}
where the first map is induced by $\pi$ and the second is induced by the 
inclusion $D_*(\SL(n,\C)/N)\to C_*^{\gen}(\SL(n,\C)/N$. Let 
$D_k=D_k(\SL(n,\C)/N)\otimes_{\Z[\SL(n-1,\C)]}\Z$.

\begin{lemma}
\lbl{stabilizationlemma} 
The maps $\lambda\circ\pi$ and $\lambda\circ i$ from $D_3$ to 
$\widehat\Pre(\C)$ agree on cycles.
\end{lemma}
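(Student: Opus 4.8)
Write $\Phi_m=\lambda\circ J_3^m\colon Pt_3^m\to\widehat\Pre(\C)$ for the ``sum of subsimplex flattenings'' map of Section~\ref{sub.map2ebloch} (this is the map denoted $\lambda$ on $Pt_3^m$), so that the two maps in question are $\Phi_n\circ i$ and $\Phi_{n-1}\circ\pi$. The plan is to produce a homomorphism $h\colon D_2\to\widehat\Pre(\C)$ with
\begin{equation*}
\Phi_n\circ i-\Phi_{n-1}\circ\pi=h\circ\partial\colon D_3\to\widehat\Pre(\C).
\end{equation*}
Granting this, a cycle $z\in D_3$ has $\partial z=0$, whence $(\Phi_ni-\Phi_{n-1}\pi)(z)=h(0)=0$, which is the assertion. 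All three maps descend to the $\SL(n-1,\C)$-coinvariants because Ptolemy coordinates are invariant under the diagonal left $\SL(n,\C)$-action (a determinant is unchanged by $g_i\mapsto Ag_i$), so the flattenings, and hence $\Phi_m$ and $h$, are automatically $\SL(n-1,\C)$-invariant.

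I must stress that there is no termwise cancellation. The simplices $\Delta^3_n$ and $\Delta^3_{n-1}$ carry $\binom{n+1}{3}$ and $\binom{n}{3}$ subsimplices, and a direct computation shows that even under the evident count-matching bijection (the subsimplices with $\alpha_0\geq 1$ against those of the $\pi$-image) the cross-ratios genuinely \emph{differ}; the agreement is therefore global and must be routed through the lifted five term relation. Accordingly $h$ is a prism operator. Using that each $g_i$ has upper-left entry $1$, I may normalize every $g_i$ to have first row $(1,0,\dots,0)$ --- this is legitimate because Ptolemy coordinates depend only on the $N$-cosets and the first row is preserved by the embedded $\SL(n-1,\C)$. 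Under this normalization the flag $\pi(g_i)N$ in $\C^{n-1}$ and the flag $g_iN$ in $\C^n$ assemble into a single generic configuration interpolating between a ``bottom'' (size $n-1$) and a ``top'' (size $n$). The standard decomposition of $\Delta^2\times[0,1]$ into three tetrahedra then assigns to $y=(g_0N,g_1N,g_2N)\in D_2$ three flattenings, and $h(y)$ is their signed sum.

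The identity $\Phi_n i-\Phi_{n-1}\pi=h\partial$ is the flattening refinement of the simplicial prism identity ``top minus bottom equals the boundary of the prism minus the prism on the boundary.'' I would verify it by exactly the bookkeeping of Lemmas~\ref{DeltaJ},~\ref{beta4} and~\ref{beta3}: expand both sides as formal sums of flattenings indexed by the subsimplices of $\Delta^3_n$, of $\Delta^3_{n-1}$, and of the prism cells, match them up, and check that each local discrepancy is a lifted five term relation, hence trivial in $\widehat\Pre(\C)$ by Lemma~\ref{I3Bhat}. The normalization above makes the determinant identities relating the size-$n$ and size-$(n-1)$ Ptolemy coordinates explicit and guarantees that every prism cell is generic, so that all the flattenings are in fact defined.

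The main obstacle is the passage from $\Pre(\C)$ to $\widehat\Pre(\C)$. At the level of cross-ratios the prism identity is the classical statement that the five term relation expresses invariance of the flattening sum under the change of triangulation involved in the prism decomposition, and this part is routine. The real work is the logarithmic refinement: one must choose the branches $\widetilde c_t$ for the prism tetrahedra coherently and verify that the flattening equations~\eqref{fiveeq} hold on the nose for each interpolating instance --- precisely the computation performed in Lemma~\ref{beta4}, but now for the mixed-dimension prism cells rather than for a genuine $4$-simplex. Pinning down the genericity of all prism configurations and organizing the signs in the three-tetrahedron decomposition is the remaining technical content.
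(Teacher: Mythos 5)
Your overall strategy---exhibit the difference $\Phi_n\circ i-\Phi_{n-1}\circ\pi$ as (five-term junk) $+\,h\circ\partial$ and conclude on cycles---has exactly the shape of the paper's argument, and your observation that the cancellation cannot be termwise is correct. The gap is that the one construction on which everything rests, the interpolating configuration, is never actually defined, and the prism picture you propose does not obviously make sense: the ``bottom'' vertices carry flags in $\C^{n-1}$ and the ``top'' vertices carry flags in $\C^n$, so a tetrahedron of $\Delta^2\times[0,1]$ with vertices on both levels has no evident tuple of $N$-cosets attached to it, hence no Ptolemy coordinates and no flattening. Saying the two flags ``assemble into a single generic configuration'' is precisely the point that needs a definition, and your count (three flattenings per $2$-simplex, independently of $n$) is a symptom that the construction has not been pinned down: any correct homotopy must produce on the order of $\binom{n}{2}$ flattenings per $2$-cell, since that is the excess $\binom{n+1}{3}-\binom{n}{3}$ of subsimplices that must be accounted for.

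The paper's device is a cone rather than a prism. To $c=(g_0N,\dots,g_kN)\in D_k$ it attaches the $(k+1)$-tuple $(N,g_0N,\dots,g_kN)$ in $\SL(n,\C)/N$, writes $c^I$ for its (possibly partially vanishing) Ptolemy coordinates, and sets $P(c)=\sum_\alpha c^I_\alpha\in Pt^2_{k+1}$, summed over the subsimplices $\alpha\in T^{k+1}(n-2)$ with $a_0=0$. Although the full coned tuple need not be generic, exactly these subsimplices are honest Ptolemy cochains, because their coordinates only involve $t_0\in\{0,1\}$, i.e.\ only the coordinates of $i(c)$ and---since $\det(e_1,\{g_0\}_{b_0},\dots)$ is $\pm$ an $(n-1)\times(n-1)$ determinant of the $\pi(g_i)$'s---of $\pi(c)$. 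This single configuration therefore interpolates between the two levels with no mixing problem, and the identity $\partial P(c)+P\partial(c)=J^n_k(i(c))-J^{n-1}_k(\pi(c))+\sum_{b_0=0}\partial_\beta(c^I)$ is a finite bookkeeping computation. Applying $\lambda$, the $\partial_\beta(c^I)$ terms die by Lemma~\ref{beta4} and $\lambda\circ\partial$ dies by Lemma~\ref{I3Bhat}, which also disposes of your worry about verifying the flattening equations on new ``mixed-dimension'' cells: the only lifted five term relations needed are the ones already established for genuine $4$-tuples. Until you supply a concrete substitute for this cone (or turn your prism cells into actual tuples of cosets with nonvanishing determinants), the proposal remains a plan rather than a proof.
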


\begin{proof} 
Let $c\in D_k$ be induced by a tuple $(g_0N,\dots,g_kN)\in D_k(\SL(n,\C)/N)$, 
and let $c^I$ be the collection of Ptolemy coordinates associated to 
$(N,g_0N,\dots,g_kN)$. Since some Ptolemy coordinates may be zero, $c^I$ 
is not necessarily a Ptolemy assignment. Note, however, that $c^I_\alpha$ \emph{is} a Ptolemy assignment for each $(a_0,\dots,a_{k+1})\in \Delta^{k+1}_{n-2}(\Z)$ with $a_0=0$. Note also that $c^I_\alpha\in Pt_{k+1}^2$ only depends on $c$.
Hence, there is a map 
\begin{equation}
P\colon D_k\to Pt_{k+1}^2,\quad c\mapsto \sum_{\substack{\alpha\in \Delta^{k+1}_{n-2}(\Z)\\a_0=0}}c^I_{\alpha}.
\end{equation}
We wish to prove the following:
\begin{equation}\lbl{theresult}
\partial P(c)+P\partial(c)=J_k^n(i(c))-J_k^{n-1}(\pi(c))+\sum_{\substack{\beta\in \Delta^{k+1}_{n-3}(\Z)\\b_0=0}}\partial_\beta(c^I)\in Pt_{k+1}^2.
\end{equation}
Given this, the result follows immediately from Lemma~\ref{beta4}. 

One easily verifies that
\begin{gather}
c^I_{(\underline 1,b_0,\dots, b_k)}=\pi(c)_{(b_0,\dots,b_k)}\in Pt_k^{n-1},\quad (b_0,\dots,b_k)\in \Delta^k_{n-3}(\Z).\\
c^I_{(\underline 0,a_0,\dots,a_k)}=i(c)_{(a_0,\dots,a_k)}, \quad (a_0,\dots,a_k)\in \Delta^k_{n-2}(\Z).
\end{gather}
Using this, one has
\begin{equation}
\begin{aligned}
\partial P(c)+P\partial(c)&=\sum_{\alpha\in \Delta^k_{n-2}(\Z)}i(c)_\alpha\,+\,\sum_{i=1}^{k+1}(-1)^{i}\!\sum_{\substack{\alpha\in \Delta^{k+1}_{n-2}(\Z)\\a_0=0}}c^I_{\alpha_{\underline{i}}}\,+\,\sum_{i=0}^k(-1)^i\!\!\!\!\!\!\sum_{\substack{\alpha\in \Delta^{k+1}_{n-2}(\Z)\\a_0=0,a_{i+1}=0}}\!\!c^I_{\alpha_{\underline{i+1}}}\\
&=\sum_{\alpha\in \Delta^k_{n-2}(\Z)}i(c)_\alpha\,+\,\sum_{i=1}^{k+1}(-1)^{i}\!\!\!\sum_{\substack{\alpha\in \Delta^{k+1}_{n-2}(\Z)\\a_0=0,a_{i}>0}}c^I_{\alpha_{\underline{i}}}\\
&=\sum_{\alpha\in \Delta^k_{n-2}(\Z)}i(c)_\alpha+\,\,\sum_{\substack{\beta\in \Delta^{k+1}_{n-3}(\Z)\\b_0=0}}\,\sum_{i=1}^{k+1}(-1)^{i}c^I_{\beta^{i}}\\
&=\sum_{\alpha\in \Delta^k_{n-2}(\Z)}i(c)_\alpha-\sum_{\substack{\beta\in \Delta^{k+1}_{n-3}(\Z)\\b_0=0}}c^I_{\beta^0}+\sum_{\substack{\beta\in \Delta^{k+1}_{n-3}(\Z)\\b_0=0}}\partial_\beta(c^I)\\
&=J_k^n(i(c))-J_k^{n-1}(\pi(c))+\sum_{\substack{\beta\in \Delta^{k+1}_{n-3}(\Z)\\b_0=0}}\partial_\beta(c^I).
\end{aligned}
\end{equation}
This proves \eqref{theresult}, hence the result.
\end{proof}

\begin{proposition} 
The map $\lambda\colon H_3(\SL(n,\C),N)\to\widehat\B(\C)$ respects 
stabilization.
\end{proposition}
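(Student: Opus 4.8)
The plan is to show that the stabilization map $\SL(n-1,\C)\hookrightarrow\SL(n,\C)$ induces a commutative diagram at the level of the extended Bloch group, i.e.\ that $\lambda\colon H_3(\SL(n-1,\C),N)\to\widehat\B(\C)$ and $\lambda\colon H_3(\SL(n,\C),N)\to\widehat\B(\C)$ agree after applying the natural inclusion-induced map on homology. The essential content is already isolated in Lemma~\ref{stabilizationlemma}, which asserts that the two chain-level maps $\lambda\circ\pi$ and $\lambda\circ i$ from $D_3$ to $\widehat\Pre(\C)$ agree on cycles. What remains is to translate this chain-level statement into the homology-level statement, using the fact that the complex $D_*(\SL(n,\C)/N)$, together with the maps $\pi$ and $i$, computes the relevant homology groups.

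First I would recall that by Corollary~\ref{Ptolemycomputeshomology} the complex $Pt^n_*$ computes $H_*(\SL(n,\C),N)$, and similarly $Pt^{n-1}_*$ computes $H_*(\SL(n-1,\C),N)$. The point of introducing the intermediate complex $D_*=D_*(\SL(n,\C)/N)\otimes_{\Z[\SL(n-1,\C)]}\Z$ is that it receives an $\SL(n-1,\C)$-invariant chain map $i$ to $Pt^n_*$ and a chain map $\pi$ to $Pt^{n-1}_*$; one checks (a standard acyclicity/contractibility argument, as in the setup preceding the lemma) that $\pi$ is a quasi-isomorphism realizing the homology of $D_*$ as $H_*(\SL(n-1,\C),N)$, and that $i$ on homology is precisely the stabilization map induced by the inclusion $\SL(n-1,\C)\hookrightarrow\SL(n,\C)$. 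Thus for a cycle $c\in D_3$, its class $[\pi(c)]\in H_3(\SL(n-1,\C),N)$ is a general element, and $[i(c)]$ is its image under stabilization.

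Given these identifications, the proof is short: take any cycle $c\in D_3$ representing a class in $H_3(\SL(n-1,\C),N)$. By Lemma~\ref{stabilizationlemma} we have $\lambda(\pi(c))=\lambda(i(c))$ in $\widehat\Pre(\C)$, and since $\pi(c)$ and $i(c)$ are cycles their $\lambda$-images lie in $\widehat\B(\C)$ (by Proposition~\ref{lambdaimageinBhat}, or equivalently by the fact that $\lambda$ descends to homology). Passing to homology classes, this says exactly that $\lambda([\pi(c)])=\lambda([i(c)])$, i.e.\ $\lambda$ applied to a class in $H_3(\SL(n-1,\C),N)$ equals $\lambda$ applied to its stabilization in $H_3(\SL(n,\C),N)$. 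Since every class in $H_3(\SL(n-1,\C),N)$ arises this way, the two $\lambda$-maps commute with stabilization, as claimed.

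The main obstacle is not the final deduction, which is essentially formal, but rather verifying that the intermediate complex $D_*$ correctly computes $H_*(\SL(n-1,\C),N)$ via $\pi$ and that $i$ genuinely induces the stabilization map on homology. This requires checking that $D_*(\SL(n,\C)/N)$ is an acyclic complex of $\Z[\SL(n-1,\C)]$-modules with the appropriate freeness/induced-module structure so that tensoring computes the right homology, and that the column-deletion map $\pi$ is compatible with the generic-tuple determinant conditions defining Ptolemy cochains. These are the genericity and well-definedness bookkeeping already implicit in the definition of $D_k$ and in Lemma~\ref{stabilizationlemma}; I would cite the chain-map verifications given there rather than redo them, and concentrate only on assembling the homology-level conclusion.
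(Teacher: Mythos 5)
Your proposal is correct and follows essentially the same route as the paper: both arguments reduce to Lemma~\ref{stabilizationlemma} after verifying that $D_*(\SL(n,\C)/N)$ is a free $\SL(n-1,\C)$-resolution (so that $D_*$ computes $H_*(\SL(n-1,\C),N)$, with $\pi$ inducing the canonical identification and $i$ inducing the stabilization map). The paper dispatches the acyclicity/freeness verification you flag as the ``main obstacle'' with a standard cone argument, exactly as you anticipate.
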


\begin{proof}
First note that $\pi$ induces an isomorphism $D^0(\SL(n,\C)/N)\cong C^0(\SL(n-1)/N)$. Using a standard cone argument, one easily checks that $D_*(\SL(n,\C)/N)$ 
is a free $\SL(n-1,\C)$-resolution of $\Ker (D^0(\SL(n,\C)/N)\to \Z)$. Hence, $D_*$ computes $H_*(\SL(n-1,\C),N)$, and the result follows from Lemma~\ref{stabilizationlemma}.
\end{proof}

\subsection{$p\SL(n,\C)$-Ptolemy assignments}
\lbl{sub.pSLassignments}

When $n$ is even, define $pPt_*^n$ to be the complex of Ptolemy coordinates 
of generic tuples in $p\SL(n,\C)/N$. The Ptolemy coordinates are defined as 
in~\eqref{ctformula}, and take values in $\C^*\big/\langle\pm 1\rangle$. As 
in \eqref{CgenPt}, we have an isomorphism
$C_*^{\gen}(p\SL(n,\C)/N)_{p\SL(n,\C)}\cong pPt_*^n$.
For $c\in C^*/\langle\pm 1\rangle$ let $\widetilde c\in \C$ be the image of 
some fixed set theoretic section of 
$\C\xrightarrow{\exp}\C^*\rightarrow\C^*/\langle\pm 1\rangle$, 
e.g.~$\frac{1}{2}\log(x^2)$ (the particular choice is inessential).
The map 
\begin{equation}
\lambda\colon pPt_3^2\to\Z[\widehat{\C}_{\odd}], \quad 
c\mapsto (\widetilde c_{03}+\widetilde c_{12}
-\widetilde c_{02}-\widetilde c_{13},\widetilde c_{01}
+\widetilde c_{23}-\widetilde c_{02}-\widetilde c_{13})
\end{equation} 
induces a map $H_3(\PSL(2,\C),N)\to\widehat\B(\C)_{\PSL}$, which agrees 
with the map constructed in Zickert~\cite[Section~3]{ZickertDuke}. By precomposing $\lambda$ with the map $pJ_3^n\colon pPt_3^n\to pPt_3^2$ defined as in \eqref{Jkn} we obtain a map
\begin{equation}
\lambda\colon H_3(p\SL(n,\C),N)\to\widehat\B(\C)_{\PSL},
\end{equation}
which commutes with stabilization. This proves that a decorated boundary-unipotent representation
in $p\SL(n,\C)$ determines an element in $\widehat\B(\C)_{\PSL}$.
The proofs of the above assertions are word by word identical to their 
$\SL(n,\C)$-analogs.

\section{Independence of the decoration}
\lbl{decorationsection}

We now show that the extended Bloch group element of a decorated 
representation is independent of the decoration. We first prove that 
we can choose logarithms of the Ptolemy coordinates independently, 
without affecting the extended Bloch group element.

\begin{definition} 
Let $c\colon\dot\Delta^k_n(\Z)\to \C^*$ be a Ptolemy assignment. A \emph{lift} 
of $c$ is an assignment $\widetilde c\colon\dot\Delta^k_n(\Z)\to\C$ such 
that $\exp(\widetilde c)=c$.
\end{definition}

For any lift $\widetilde c$ of a Ptolemy assignment $c$ on $\Delta^3_2$, we have a flattening
\begin{equation}\lbl{cwidetilde}
\lambda(\widetilde c)=(\widetilde c_{03}+\widetilde c_{12}
-\widetilde c_{02}-\widetilde c_{13},\widetilde c_{01}+\widetilde c_{23}
-\widetilde c_{02}-\widetilde c_{13})\in\widehat\C.
\end{equation}

\begin{definition} 
The \emph{log-parameters} of a flattening 
$(e,f)\in\widehat\C$ are defined by
\begin{equation}\lbl{logparameters}
w_{ij}=\begin{cases}e&\text{ if }ij=01\text{ or }ij=23\\
-f&\text{ if }ij=12\text{ or }ij=03\\-e+f&\text{ if }ij=02\text{ or }ij=13.
\end{cases}
\end{equation}
\end{definition}

\begin{lemma}
\lbl{addtwopi} 
Let $\widetilde c\colon\dot\Delta^3_2(\Z)\to\C$ be a lifted Ptolemy assignment, and let 
$w_{ij}$ be the log-parameters of $\lambda(\widetilde c)$. Fix 
$i<j\in\{0,\dots,3\}$ and let $\widetilde c^\prime$ be the lifted Ptolemy 
assignment obtained from  $\widetilde c$ by adding $2\pi\sqrt{-1}$ to 
$\widetilde c_{ij}$. Then
\begin{equation}
\lambda(\widetilde c^\prime)-\lambda(\widetilde c)=\chi(w_{ij}
+2\pi\sqrt{-1}\delta_{ij}),
\end{equation}  
where $\delta_{ij}$ is $1$ if $ij=02$ or $13$ and $0$ otherwise.
\end{lemma}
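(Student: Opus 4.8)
The plan is to compute $\lambda(\widetilde c')-\lambda(\widetilde c)$ directly from the defining formula \eqref{cwidetilde} and then match the result against the formula for $\chi$, using Lemma~\ref{chipqlemma} to convert a shift of one logarithm into a $\chi$-term. First I would observe that adding $2\pi\sqrt{-1}$ to a single entry $\widetilde c_{ij}$ changes the flattening $\lambda(\widetilde c)=(e,f)$ by shifting $e$ and/or $f$ by integer multiples of $2\pi\sqrt{-1}$, and the precise shift is read off from \eqref{cwidetilde}: the entry $\widetilde c_{ij}$ appears in the first coordinate $e$ (resp.\ second coordinate $f$) with the sign dictated by which of the six index pairs $ij$ is being perturbed. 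Concretely, perturbing $\widetilde c_{01}$ or $\widetilde c_{23}$ adds $2\pi\sqrt{-1}$ to $f$ only; perturbing $\widetilde c_{03}$ or $\widetilde c_{12}$ adds $2\pi\sqrt{-1}$ to $e$ only; and perturbing $\widetilde c_{02}$ or $\widetilde c_{13}$ subtracts $2\pi\sqrt{-1}$ from both $e$ and $f$.

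Next I would feed each of these three cases into the first identity of Lemma~\ref{chipqlemma}, namely $(e+2\pi ip,f+2\pi iq)-(e,f)=\chi(qe-pf+2pq\pi i)$. For $ij=01,23$ we have $p=0,q=1$, giving $\chi(e)$; for $ij=03,12$ we have $p=1,q=0$, giving $\chi(-f)$; for $ij=02,13$ we have $p=-1,q=-1$, giving $\chi(-e+f+2\pi\sqrt{-1})$. The final step is simply to verify that these three outputs coincide with $\chi(w_{ij}+2\pi\sqrt{-1}\delta_{ij})$ using the definition \eqref{logparameters} of the log-parameters: indeed $w_{01}=w_{23}=e$, $w_{03}=w_{12}=-f$, and $w_{02}=w_{13}=-e+f$, while $\delta_{ij}=1$ exactly for $ij=02,13$, which accounts for the extra $2\pi\sqrt{-1}$ appearing in precisely the third case. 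This matches term-by-term and completes the proof.

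There is no serious obstacle here; the statement is essentially a bookkeeping exercise, and the only point requiring care is getting the signs of $p$ and $q$ right in the three cases, together with tracking the role of $\delta_{ij}$. The one genuinely load-bearing input is Lemma~\ref{chipqlemma}, which packages the general effect of shifting a flattening by lattice vectors into a $\chi$-term; once that is invoked, the result reduces to matching two piecewise-defined expressions indexed by the six edges of the tetrahedron.
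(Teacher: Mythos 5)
Your proposal is correct and follows essentially the same route as the paper: read off from \eqref{cwidetilde} how the shift of a single $\widetilde c_{ij}$ moves the flattening $(e,f)$ by a lattice vector, apply the first identity of Lemma~\ref{chipqlemma} in the three cases, and match the outputs against the log-parameters \eqref{logparameters} with the $\delta_{ij}$ correction. All signs and cases check out.
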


\begin{proof} 
Denote the flattening $\lambda(\widetilde c)$ by $(e,f)$. If $ij=03$ or 
$12$, it follows from \eqref{cwidetilde} that $\lambda(\widetilde c^\prime)
=(e+2\pi\sqrt{-1},f)$. Similarly, $\lambda(\widetilde c^\prime)
=(e,f+2\pi\sqrt{-1})$ if $ij=01$ or $23$, and $\lambda(\widetilde c^\prime)
=(e-2\pi\sqrt{-1},f-2\pi\sqrt{-1})$ if $ij=02$ or $13$. By 
Lemma~\ref{chipqlemma},
\begin{equation}
\begin{aligned}
(e+2\pi\sqrt{-1},f)-(e,f)=&\chi(-f)\\ (e,f+2\pi\sqrt{-1})-(e,f)=&\chi(e)\\
(e-2\pi\sqrt{-1},f-2\pi\sqrt{-1})=&\chi(-e+f+2\pi\sqrt{-1}).
\end{aligned}
\end{equation} 
This proves the result.
\end{proof}

Let $\widetilde c$ be a lift of a Ptolemy assignment $c$. For each 
$\alpha\in \Delta^3_{n-2}(\Z)$, $\widetilde c$ induces a lift $\widetilde c_\alpha$ 
of $c_\alpha$. Consider the element 
\begin{equation}\lbl{tauinPhat}
\tau=\sum_{\alpha\in \Delta^k_{n-2}(\Z)}\lambda(\widetilde c_\alpha)\in\widehat\Pre(\C).
\end{equation}
Fix a point $t_0\in\dot\Delta^k_n(\Z)$. We wish to understand the effect 
on $\tau$ of adding $2\pi\sqrt{-1}$ to $\widetilde c_{t_0}$. This changes 
$\tau$ into an element $\tau^{\prime}\in\widehat\Pre(\C)$. Let $w_{ij}(\alpha)$ 
denote the log-parameters of $\lambda(\widetilde c_\alpha)$. Note that $t_0$ 
either lies on an edge, on a face, or in the interior of $\Delta^3_n$. 

\begin{lemma}
\lbl{edgepoint} 
Suppose $t_0$ is on the edge $ij$ of $\Delta^3_n$. Then 
\begin{equation}\tau^{\prime}-\tau=\chi(w_{ij}(\alpha)+2\pi\sqrt{-1}\delta_{ij}),
\end{equation}
where $\alpha=t-e_i-e_j$, (i.e.~$\alpha$ is such that $t_0$ is an edge point 
of $\Delta^3(\alpha)$).
\end{lemma}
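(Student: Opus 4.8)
The plan is to track which subsimplices $\Delta^3(\alpha)$, $\alpha\in T^3(n-2)$, contain $t_0$ among their integral points, and to exploit the fact that the flattening $\lambda(\widetilde c_\alpha)$ defined in~\eqref{cwidetilde} depends only on the six \emph{edge-midpoint} coordinates $\widetilde c_{\alpha_{kl}}$ with $k<l$, and not on the three vertex coordinates $\widetilde c_{\alpha+2e_k}$. Since $\tau=\sum_\alpha\lambda(\widetilde c_\alpha)$ can only change through those summands in which $\widetilde c_{t_0}$ actually occurs as an edge-midpoint coordinate, the whole computation reduces to a single local one, to which Lemma~\ref{addtwopi} applies directly.

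First I would enumerate the subsimplices having $t_0$ among their integral points. Writing $t_0=\alpha+e_k+e_l$ forces $\alpha=t_0-e_k-e_l$, which must have non-negative entries. Since $t_0$ lies on the edge $ij$, its only nonzero coordinates are the $i$th and $j$th, so subtracting $e_k+e_l$ remains non-negative precisely when $\{k,l\}\subseteq\{i,j\}$. This leaves exactly three candidates: $\alpha=t_0-e_i-e_j$ (present always, with $t_0=\alpha_{ij}$ an edge midpoint of $\Delta^3(\alpha)$), and $\alpha=t_0-2e_i$ resp. $\alpha=t_0-2e_j$ (present when $t_i\geq 2$ resp. $t_j\geq 2$, with $t_0$ a \emph{vertex} of the subsimplex).

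Now comes the key reduction. In the (at most two) subsimplices where $t_0$ is a vertex, $\widetilde c_{t_0}$ is a vertex coordinate, so by the observation above $\lambda(\widetilde c_\alpha)$ is unaffected when $2\pi\sqrt{-1}$ is added to $\widetilde c_{t_0}$. Hence the only summand of $\tau$ that changes is the one for $\alpha=t_0-e_i-e_j$, in which $t_0=\alpha_{ij}$ is the edge midpoint on edge $ij$. Therefore $\tau^\prime-\tau=\lambda(\widetilde c_\alpha^\prime)-\lambda(\widetilde c_\alpha)$, where $\widetilde c_\alpha^\prime$ is obtained from the lifted Ptolemy cochain $\widetilde c_\alpha$ on $\Delta^3(\alpha)\cong\Delta^3_2$ by adding $2\pi\sqrt{-1}$ to its $ij$ coordinate.

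Finally I would invoke Lemma~\ref{addtwopi} for this single subsimplex, which yields $\lambda(\widetilde c_\alpha^\prime)-\lambda(\widetilde c_\alpha)=\chi(w_{ij}(\alpha)+2\pi\sqrt{-1}\,\delta_{ij})$, exactly the assertion. The only genuine content is the combinatorial bookkeeping of the first step together with the independence of the flattening from the vertex coordinates; once these are in hand, Lemma~\ref{addtwopi} is the engine and nothing further is required. The main obstacle, such as it is, lies in making sure no edge-midpoint occurrence of $t_0$ other than the one on edge $ij$ has been missed — but the non-negativity constraint $\{k,l\}\subseteq\{i,j\}$ rules this out, leaving the edge-$ij$ occurrence as the unique one.
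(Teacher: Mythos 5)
Your proof is correct and follows the same route as the paper, whose entire proof of this lemma is ``This follows immediately from Lemma~\ref{addtwopi}''; your enumeration of the three candidate subsimplices $t_0-e_i-e_j$, $t_0-2e_i$, $t_0-2e_j$ and the observation that the flattening $\lambda(\widetilde c_\alpha)$ ignores the vertex coordinates of $\Delta^3(\alpha)$ is exactly the bookkeeping the authors leave implicit. Nothing is missing.
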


\begin{proof}
This follows immediately from Lemma~\ref{addtwopi}.
\end{proof}

\begin{lemma}
\lbl{facepoint} 
Suppose $t_0$ is on a face opposite vertex $i$. Then 
$\tau^\prime-\tau=(-1)^i\chi(\kappa+2\pi\sqrt{-1})$, where $\kappa$ is given by
\begin{equation}
\kappa=\widetilde c_{\eta_i(0,-1,1)}-\widetilde c_{\eta_i(0,1,-1)}-\big(\widetilde 
c_{\eta_i(-1,0,1)}-\widetilde c_{\eta_i(1,0,-1)}\big)+\widetilde c_{\eta_i(-1,1,0)}
-\widetilde c_{\eta_i(1,-1,0)},
\end{equation}
where $\eta_i$ inserts a zero as the $i$th vertex.
\end{lemma}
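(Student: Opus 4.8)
The plan is to reduce the face-point computation to repeated application of the edge-point Lemma~\ref{edgepoint}, or directly to Lemma~\ref{addtwopi}, by examining exactly which subsimplices $\Delta^3(\alpha)$ contain $t_0$ and in what position. First I would fix the face opposite vertex $i$ and use the map $\eta_i$ to reduce notation to the three remaining coordinate directions; writing $t_0=\eta_i(\cdot)$ makes it clear that $t_0$ is a face point of each subsimplex $\Delta^3(\alpha)$ in which it appears, and its position within each such subsimplex lies on one of the three edges of that triangular face. The key combinatorial observation is that a given non-vertex face point $t_0$ is shared by exactly \emph{six} subsimplices $\Delta^3(\alpha)$ (one for each of the three edges of the face through $t_0$, counted from each of the two adjacent subsimplices along that edge), and in each of these $t_0$ sits as an edge point of type $01$, $02$, $12$ (within the face), never of type $03$, $13$, $23$ involving the apex direction $e_i$.

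The main step is to add up the six contributions from Lemma~\ref{addtwopi}, one per incident subsimplex. For each incident $\alpha$, adding $2\pi\sqrt{-1}$ to $\widetilde c_{t_0}$ changes $\lambda(\widetilde c_\alpha)$ by $\chi(w_{jk}(\alpha)+2\pi\sqrt{-1}\delta_{jk})$, where $jk$ is the edge-type of $t_0$ inside $\Delta^3(\alpha)$. Since $\chi$ is a homomorphism, $\tau'-\tau=\chi$ applied to the \emph{sum} of these six log-parameters. The heart of the calculation is then to verify that this sum of six log-parameters collapses, after cancellation, to the claimed $\kappa+2\pi\sqrt{-1}$, with overall sign $(-1)^i$. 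I would substitute the definition~\eqref{logparameters} of $w_{jk}$ in terms of the flattening $(e,f)=\lambda(\widetilde c_\alpha)$, and then unwind each $e$ and $f$ via~\eqref{cwidetilde} into a signed sum of four lifted Ptolemy coordinates of the form $\widetilde c_{\eta_i(\cdots)}$. The six faces pair up so that the ``$e$'' and ``$-f$'' and ``$-e+f$'' contributions from opposite subsimplices cancel the bulk of the terms, leaving precisely the six entries appearing in $\kappa$.

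The sign $(-1)^i$ will come from the orientation of the face opposite vertex $i$ relative to the standard ordering of the three surviving coordinates under $\eta_i$: inserting a zero in the $i$th slot reverses orientation by a factor of $(-1)^i$, which is why the three difference-pairs in $\kappa$ appear with alternating signs matching the cyclic order $(0,-1,1)$, $(-1,0,1)$, $(-1,1,0)$. I would track this sign carefully through the $\eta_i$ substitution rather than absorbing it silently. The extra $2\pi\sqrt{-1}$ appears because, among the six incident subsimplices, exactly the ones where $t_0$ is an edge point of type $02$ or $13$ contribute the $\delta$-correction; after cancellation a single net factor of $2\pi\sqrt{-1}$ survives.

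The hard part will be the bookkeeping of which six subsimplices are incident to $t_0$ and the precise edge-type and sign of $t_0$ in each, since a careless enumeration will either double-count or assign the wrong $w_{jk}$ and produce spurious surviving terms. I expect the cleanest route is to set $i=3$ (so $\eta_3$ omits the last coordinate and the sign is $+$) and carry out the full term-by-term cancellation there, verifying that the surviving six terms are exactly $\kappa$; the general case then follows by the symmetry of the construction under permuting vertices, which multiplies the flattening parameters—and hence $\tau'-\tau$—by the sign $(-1)^i$ of the induced orientation reversal. The routine algebra of cancelling the paired log-parameters I would only sketch, stating that direct substitution of~\eqref{cwidetilde} and~\eqref{logparameters} yields the result.
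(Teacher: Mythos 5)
Your overall strategy---apply Lemma~\ref{addtwopi} once for each subsimplex $\Delta^3(\alpha)$ having $t_0$ as an edge point, sum the resulting log-parameters using that $\chi$ is a homomorphism, and then cancel term by term---is exactly the paper's strategy. But your key combinatorial step is wrong: a face point $t_0$ on the face opposite vertex $i$ is an edge point of exactly \emph{three} subsimplices, not six. Indeed $t_0=\alpha_{jk}=\alpha+e_j+e_k$ forces $\alpha=t_0-e_j-e_k$, and since the $i$th coordinate of $t_0$ is zero, $\alpha$ has non-negative entries only for the three pairs $\{j,k\}$ not containing $i$; the ``two adjacent subsimplices along each edge'' you invoke do not exist, because the subsimplices $\Delta^3(\alpha)$ do not tile $\Delta^3_n$ and only those three $\alpha$'s lie in $T^3(n-2)$. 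Six is the correct count for an \emph{interior} point (Lemma~\ref{interiorpoint}), and that is precisely the point of the dichotomy: the interior case gives complete cancellation ($\tau^\prime=\tau$), while the face case leaves the residue $\kappa$. Summing six contributions as you propose would reproduce the interior-point cancellation and yield $\tau^\prime-\tau=0$ rather than $(-1)^i\chi(\kappa+2\pi\sqrt{-1})$.

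A secondary consequence: with the correct three subsimplices (for $i=0$ these are $\alpha=t_0-(0,0,1,1)$, $t_0-(0,1,0,1)$, $t_0-(0,1,1,0)$, in which $t_0$ has edge types $23$, $13$, $12$ respectively), exactly one of the three types is of the form $02$ or $13$, so exactly one $\delta$-correction of $2\pi\sqrt{-1}$ appears; there is no ``net factor surviving after cancellation'' among several correction terms. Once the enumeration is fixed, the remaining substitution of~\eqref{logparameters} and~\eqref{cwidetilde} and the bookkeeping of surviving terms go through essentially as you describe and match the paper's computation.
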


\begin{proof}
For simplicity assume $i=0$. The other cases are proved similarly.
There are exactly three $\alpha$'s for which $t_0$ is an edge point of 
$\Delta^3(\alpha)$. These are
\begin{equation}
\lbl{alphais}
\alpha_0=t_0-(0,0,1,1),\quad\alpha_1=t_0-(0,1,0,1),\quad\alpha_2=t_0-(0,1,1,0).
\end{equation}  
Note that $\widetilde c_t=(\widetilde c_{\alpha_0})_{23}
=(\widetilde c_{\alpha_1})_{13}=(\widetilde c_{\alpha_2})_{12}$.
Since adding $2\pi\sqrt{-1}$ to $\widetilde c_{t_0}$ leaves $\widetilde c_\alpha$ unchanged unless $\alpha\in\{\alpha_0,\alpha_1,\alpha_2\}$, Lemma~\ref{addtwopi} implies that
\begin{equation}
\tau^\prime-\tau=\chi(w_{23}(\alpha_0))+\chi(w_{13}(\alpha_1)+2\pi\sqrt{-1})+\chi(w_{12}(\alpha_2)).
\end{equation}
One easily checks that
\begin{equation}
\lbl{firstthree}
\begin{gathered}
w_{23}(\alpha_0)=\widetilde c_{(1,0,-1,0)}+\widetilde c_{(0,1,0,-1)}-\widetilde c_{(1,0,0,-1)}-\widetilde c_{(0,1,-1,0)}\\
w_{13}(\alpha_1)=\widetilde c_{(1,0,0,-1)}+\widetilde c_{(0,-1,1,0)}-\widetilde c_{(1,-1,0,0)}-\widetilde c_{(0,0,1,-1)}\\
w_{12}(\alpha_2)=\widetilde c_{(1,-1,0,0)}+\widetilde c_{(0,0,-1,1)}-\widetilde c_{(1,0,-1,0)}-\widetilde c_{(0,-1,0,1)},
\end{gathered}
\end{equation}
from which the result follows.
\end{proof}

\begin{lemma}
\lbl{interiorpoint} 
If $t_0$ is an interior point, $\tau^\prime=\tau$.
\end{lemma}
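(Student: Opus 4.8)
The plan is to mimic the analysis of Lemma~\ref{facepoint}, but now accounting for the fact that an interior point $t_0\in\mathring{T}^3(n)$ is an edge point of \emph{six} subsimplices $\Delta^3(\alpha)$ rather than three. Concretely, if $t_0=(t_0,t_1,t_2,t_3)$, then for each unordered pair $ij$ the point $t_0$ is the edge-$ij$ point of the subsimplex $\Delta^3(\alpha_{ij})$ with $\alpha_{ij}=t_0-e_i-e_j$; since $t_0$ is interior, all four coordinates are positive, so all six pairs $ij\in\{01,02,03,12,13,23\}$ yield a genuine subsimplex (each $\alpha_{ij}\in T^3(n-2)$). Adding $2\pi\sqrt{-1}$ to $\widetilde c_{t_0}$ leaves $\widetilde c_\alpha$ unchanged except for these six values of $\alpha$.

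First I would invoke Lemma~\ref{addtwopi} once for each of the six subsimplices to write
\begin{equation}
\tau^\prime-\tau=\sum_{i<j}\chi\big(w_{ij}(\alpha_{ij})+2\pi\sqrt{-1}\,\delta_{ij}\big),
\end{equation}
where $\delta_{ij}$ is $1$ for $ij\in\{02,13\}$ and $0$ otherwise. Since $\chi$ is a homomorphism, it suffices to show that the argument sums to zero, i.e.\ that $\sum_{i<j}w_{ij}(\alpha_{ij})+2\cdot 2\pi\sqrt{-1}=0$ (the two diagonal pairs contribute the $4\pi\sqrt{-1}$, which vanishes in the target $\C/4\pi i\Z$ of $\chi$). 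Next I would expand each $w_{ij}(\alpha_{ij})$ using~\eqref{logparameters} and~\eqref{cwidetilde}, exactly as in the displayed formulas~\eqref{firstthree} of the previous lemma; each $w_{ij}$ is an alternating sum of four logarithms $\widetilde c$ at the near-neighbors of $t_0$.

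The main step is then a bookkeeping cancellation: one checks that every logarithm $\widetilde c_s$ appearing (for $s$ a neighbor of $t_0$ obtained by shifting $t_0$ by $\pm e_a\mp e_b$) occurs with total coefficient zero once the six contributions are summed. The cleanest way to organize this is by symmetry: the diagonal-type log-parameters ($w_{02}$, $w_{13}$, and the $-e+f$ cases) and the off-diagonal ones ($w_{01},w_{23},w_{03},w_{12}$) pair up so that each neighbor term appears once with $+$ and once with $-$ sign, and the residual $4\pi\sqrt{-1}$ is killed mod $4\pi i\Z$. I expect this cancellation to be the only real obstacle, and it is purely combinatorial; there is no need to reinvoke the Ptolemy relations, only the explicit definition of the flattening and of $\chi$. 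Having shown $\tau^\prime=\tau$, the lemma follows.
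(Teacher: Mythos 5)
Your proposal follows the paper's proof essentially verbatim: the paper likewise identifies the six subsimplices $\alpha=t_0-e_i-e_j$, applies Lemma~\ref{addtwopi} to each, notes that the two extra $2\pi\sqrt{-1}$ terms (from the $02$ and $13$ cases) sum to $4\pi\sqrt{-1}\equiv 0$ in the source $\C/4\pi i\Z$ of $\chi$, and then checks by writing out all six log-parameters explicitly that each neighboring $\widetilde c_s$ occurs once with each sign, so everything cancels. The only remaining work in your sketch is that explicit sign bookkeeping, which the paper carries out in displayed formulas and which does come out as you predict (and note that $\C/4\pi i\Z$ is the domain of $\chi$, not its target).
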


\begin{proof}
If $t_0$ is an interior point, there are six $\alpha$'s for which $t_0$ is an edge point of $\Delta^3(\alpha)$.
These are $\alpha_0$, $\alpha_1$ and $\alpha_2$ as defined in \eqref{alphais} as well as 
\begin{equation}
\alpha_3=t_0-(1,1,0,0),\quad \alpha_4=t_0-(1,0,1,0),\quad \alpha_5=t_0-(1,0,0,1).
\end{equation}
Again, by Lemma~\ref{addtwopi}
\begin{multline}\lbl{intpoint}
\tau^\prime-\tau=\chi(w_{23}(\alpha_0))+\chi(w_{13}(\alpha_1)+2\pi\sqrt{-1})+\chi(w_{12}(\alpha_2))+\\\chi(w_{01}(\alpha_3))+\chi(w_{02}(\alpha_4)+2\pi\sqrt{-1})+\chi(w_{03}(\alpha_5)).
\end{multline}
Using \eqref{firstthree} as well as
\begin{equation}
\begin{gathered}
w_{01}(\alpha_3)=\widetilde c_{(0,-1,0,1)}+\widetilde c_{(-1,0,1,0)}-\widetilde c_{(0,-1,1,0)}-\widetilde c_{(-1,0,0,1)}\\
w_{02}(\alpha_4)=\widetilde c_{(0,1,-1,0)}+\widetilde c_{(-1,0,0,1)}-\widetilde c_{(0,0,-1,1)}-\widetilde c_{(-1,1,0,0)}\\
w_{03}(\alpha_5)=\widetilde c_{(0,0,1,-1)}+\widetilde c_{(-1,1,0,0)}-\widetilde c_{(0,1,0,-1)}-\widetilde c_{(-1,0,1,0)}
\end{gathered}
\end{equation}
we see that all terms in~\eqref{intpoint} cancel out. Hence, $\tau^\prime=\tau$.
\end{proof}

\begin{proposition}
\lbl{independenceoflog} 
Let $c$ be a Ptolemy assignment on $K$. For any lift $\widetilde c$ of $c$, the element
\begin{equation}\lbl{cwidetildeelement}
\lambda(\widetilde c)=\sum_i\sum_{\alpha\in \Delta^k_{n-2}(\Z)}\epsilon_i\lambda(\widetilde c^i_\alpha)\in\widehat\Pre(\C)
\end{equation}
is independent of the choice of lift. In particular, if $c$ is the Ptolemy assignment of a decorated representation $\rho$, $\lambda(\widetilde c)$ is the extended Bloch group element of $\rho$.
\end{proposition}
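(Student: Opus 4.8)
The plan is to reduce the statement to a single combinatorial cancellation at each integral point, and then to treat the three possible positions of that point separately. First I would observe that any two lifts of $c$ differ by adding integer multiples of $2\pi\sqrt{-1}$ to the values $\widetilde c_t$, $t\in\mathring{T}^3(n)$, where points identified under the gluing are moved simultaneously. Writing the passage from one lift to the other as a finite composition of elementary moves, each adding $2\pi\sqrt{-1}$ to a single $\widetilde c_{t_0}$, it suffices to show that every such move leaves $\lambda(\widetilde c)=\sum_i\epsilon_i\sum_{\alpha}\lambda(\widetilde c^i_\alpha)$ unchanged for every lift; composing the moves then gives the result. Since $\chi\colon\C/4\pi\sqrt{-1}\Z\to\widehat\Pre(\C)$ is injective (the middle column of \eqref{bigdiagram}), and the effect of a move on each simplex containing $t_0$ is a $\chi$-term by Lemmas \ref{edgepoint}, \ref{facepoint} and \ref{interiorpoint}, I only need the sum of the $\chi$-arguments over the simplices meeting $t_0$ to vanish in $\C/4\pi\sqrt{-1}\Z$.

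Because $K$ is a closed $3$-cycle, the non-vertex point $t_0$ is interior to a single simplex, lies on a $2$-face shared by exactly two simplices, or lies on an edge whose link is a circle of simplices. In the interior case $t_0$ occurs in only one simplex and Lemma~\ref{interiorpoint} already gives no change. In the face case, let $\Delta_i,\Delta_j$ be the two simplices meeting along the face $F\ni t_0$, opposite the vertices $a$ and $b$ respectively. Lemma~\ref{facepoint} gives contributions $\epsilon_i(-1)^a\chi(\kappa+2\pi\sqrt{-1})$ and $\epsilon_j(-1)^b\chi(\kappa'+2\pi\sqrt{-1})$. The attaching maps are order preserving, so the induced vertex orderings on $F$ agree, the Ptolemy coordinates and their chosen logarithms match on $F$, and the six-term expression defining $\kappa$ (which involves only points of $F$) yields $\kappa=\kappa'$. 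Finally, since $\sum_i\epsilon_i c^i$ is a cycle in $Pt_3^n$ (as used in Proposition~\ref{lambdaimageinBhat}) and $c^i_{\underline a}=c^j_{\underline b}$ on the identified face, the coefficients must satisfy $\epsilon_i(-1)^a=-\epsilon_j(-1)^b$; the two contributions therefore cancel.

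The edge case is where I expect the real work to lie. When $t_0$ lies on an edge $E$, the simplices $\Delta_{i_1},\dots,\Delta_{i_m}$ around $E$ each contribute $\epsilon_{i_k}\chi\big(w_{a_kb_k}(\alpha_k)+2\pi\sqrt{-1}\,\delta_{a_kb_k}\big)$ by Lemma~\ref{edgepoint}, where $w_{a_kb_k}(\alpha_k)$ is the log-parameter of the subsimplex through $t_0$. I must show $\sum_k\epsilon_{i_k}\big(w_{a_kb_k}(\alpha_k)+2\pi\sqrt{-1}\,\delta_{a_kb_k}\big)\equiv 0\pmod{4\pi\sqrt{-1}}$. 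Each log-parameter is the explicit alternating sum of logarithms $\widetilde c$ of the Ptolemy coordinates of the neighbours of $t_0$ recorded in \eqref{firstthree}; because $\widetilde c$ is a single global lift on $K$ and the Ptolemy coordinates agree on identified faces, the logarithmic terms cancel in pairs across the shared faces as one travels once around $E$, leaving only a multiple of $2\pi\sqrt{-1}$ fixed by the combinatorics of the cycle. Together with the $\delta$-corrections this multiple is even, so the sum is $0$ in $\C/4\pi\sqrt{-1}\Z$. This telescoping is exactly the analogue of Neumann's flattening (edge-consistency) condition, and it is the only step that uses the global structure of $K$ rather than one simplex at a time; I regard it as the main obstacle.

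Once independence of the lift is established, $\lambda(\widetilde c)$ coincides with the value $\lambda(c)$ of \eqref{defoflambda} computed with any fixed choice of logarithm. If $c$ is the Ptolemy cochain of a generically decorated representation $\rho$, then $\sum_i\epsilon_i c^i$ represents the fundamental class $[\rho]\in H_3(\SL(n,\C),N)$ by Proposition~\ref{generichomology}, so by Proposition~\ref{lambdaimageinBhat} and the definition of the extended Bloch group element, $\lambda(\widetilde c)=\lambda([\rho])$ is the extended Bloch group element of $\rho$.
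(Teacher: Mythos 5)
Your overall strategy coincides with the paper's: reduce to a single elementary move at one point $t_0$, then split into the interior, face, and edge cases, using Lemmas~\ref{interiorpoint}, \ref{facepoint} and \ref{edgepoint}. The interior and face cases are handled correctly (your sign computation $\epsilon_i(-1)^a=-\epsilon_j(-1)^b$ from the cycle condition, together with $\kappa=\kappa'$ from the order-preserving gluings, is exactly what the paper means by ``the two contributions appear with opposite signs''), and the final deduction of the second statement is fine.

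The gap is in the edge case, which you correctly identify as the crux but do not actually prove. Two separate facts are needed there and you assert both without argument. First, that $\sum_k\epsilon_{i_k}w_{a_kb_k}(\alpha_k)=0$: this is the statement that the total log-parameter around the edge vanishes, which the paper does not treat as a routine telescoping but cites the argument of Zickert~\cite[Theorem~6.5]{ZickertDuke} for; your phrase ``cancel in pairs across the shared faces'' is the right picture but the signs in~\eqref{logparameters} depend on which abstract edge of each simplex maps to $E$, so the pairwise cancellation has to be checked, not merely invoked. Second, and more seriously, the residual term $2\pi\sqrt{-1}\sum_k\epsilon_{i_k}\delta_{a_kb_k}$ must be shown to lie in $4\pi\sqrt{-1}\Z$, i.e.\ the number of simplices around $E$ in which $E$ is a $02$- or $13$-edge must be even. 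This is not ``fixed by the combinatorics of the cycle'' in any formal way: the paper proves it by a genuine topological argument, considering a curve encircling $E$ and observing that the vertex orderings induce face orientations that flip exactly when $E$ is a $02$- or $13$-edge, so that orientability of $M$ forces the count to be even. Without this parity argument the conclusion can fail (the obstruction is exactly the $2$-torsion element $\chi(2\pi\sqrt{-1})\in\widehat\Pre(\C)$), so this step must be supplied for the proof to be complete.
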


\begin{proof} 
Let $\widetilde c$ and $\widetilde c^\prime$ be lifts of $c$. Let 
$t_0\in\dot\Delta^3_n(\Z)$. We wish to prove that 
$\lambda(\widetilde c)=\lambda(\widetilde c^\prime)$. It is enough to prove 
this when $\widetilde c^\prime$ is obtained from $\widetilde c$ by adding 
$2\pi\sqrt{-1}$ to $\widetilde c_t$. If $t_0$ is an interior point, the 
result follows immediately from Lemma~\ref{interiorpoint}.
If $t_0$ is a face point, $t_0$ lies in exactly two simplices of $K$, and 
it follows from Lemma~\ref{facepoint} that the two contributions to the 
change in $\lambda(\widetilde c)$ appear with opposite signs (by~\eqref{bigdiagram}, 
$2\chi(2\pi\sqrt{-1})=0$). Suppose $t_0$ is an edge point. Let $C$ be the 
$3$-cycle obtained by gluing together all the $\Delta^3(\alpha)$'s having 
$t_0$ as an edge point, using the face pairings induced from $K$. Let $e$ 
be the (interior) $1$-cell of $C$ containing $t_0$. The argument in 
Zickert~\cite[Theorem~6.5]{ZickertDuke} shows that the total log-parameter 
around $e$ is zero. It thus follows from Lemma~\ref{edgepoint} that adding 
$2\pi\sqrt{-1}$ to $\widetilde c_{t_0}$ changes $\lambda(\widetilde c)$ by 
$2$-torsion which is trivial if and only if the number $n$ of simplices in 
$C$ for which $t$ is a $02$ edge or a $13$ edge is even. Consider a curve 
$\lambda$ in $C$ encircling $e$. The vertex ordering induces an orientation 
on each face of each simplex of $C$, such that when $\lambda$ passes through 
two faces of a simplex in $C$, the two orientations agree unless $e$ is a 
$02$ edge or a $13$ edge. Since $M$ is orientable, it follows that $n$ is even.
The second statement follows by letting $\widetilde c=\log c$.
\end{proof}

\begin{proposition}
\lbl{independenceofdec} 
The extended Bloch group element of a decorated 
boundary-unipotent representation is independent of the decoration.
\end{proposition}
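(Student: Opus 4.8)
The plan is to reduce independence of the decoration to a single invariance computation under the torus action, and then verify by a telescoping cancellation that each subsimplex flattening is left unchanged. First I would record that the extended Bloch group element depends only on the \emph{equivalence class} of the decoration and not on the chosen $3$-cycle structure: it is computed from the relative fundamental class $[\rho]\in H_3(\SL(n,\C),N)$, which is insensitive to equivalence and, as noted after its definition, to subdivision. By Remark~\ref{normalizerremark} the equivalence classes of decorations of a fixed $\rho$ form a torsor under $(N_G(N)/N)^h$, so it suffices to prove that the element is unchanged when a decoration is modified by an element of this group. Using Proposition~\ref{barycentric} I would pass to a single barycentric subdivision, on which every decoration is equivalent to a generic one; since subdivision and equivalence preserve $\lambda([\rho])$, and since right multiplication of coset representatives by invertible diagonal matrices only rescales the determinants \eqref{ctformula} by nonzero factors and hence preserves genericity, it is enough to compare the Ptolemy cochains $c,c'$ of two generic decorations that differ by the torus action.

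Next I would describe that action concretely. For $\SL(n,\C)$ the normalizer $N_G(N)$ is the Borel subgroup of upper triangular matrices and $N_G(N)/N$ is the diagonal torus, so the action attaches a diagonal matrix to each $0$-cell and replaces the representative at every vertex by its right multiple by the diagonal matrix of the $0$-cell it covers. Thus on a lifted simplex with vertex representatives $h_0,\dots,h_3$ the new representatives are $h_jd_j$ with $d_j=\diag(a^{(j)}_1,\dots,a^{(j)}_n)$, and since the determinant \eqref{ctformula} is multilinear in the columns, $c'_t=\big(\prod_{j=0}^3\prod_{m=1}^{t_j}a^{(j)}_m\big)c_t$. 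Because the diagonal matrices attached to a shared face agree for both adjacent simplices, these scaling factors are consistent across identified faces.

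The heart of the argument is to show that this rescaling fixes every subsimplex flattening. Writing $L_j(t)=\sum_{m=1}^{t}\log a^{(j)}_m$, I would take the compatible lift $\widetilde c^{\,\prime}_t=\widetilde c_t+\sum_{j=0}^3 L_j(t_j)$, which by the consistency just noted is a genuine lift of $c'$ defined on all of $\mathring{T}^3(n)$. For a subsimplex $\Delta^3(\alpha)$ with $\alpha=(a_0,a_1,a_2,a_3)$, the first coordinate of $\lambda(\widetilde c^{\,\prime}_\alpha)-\lambda(\widetilde c_\alpha)$ equals $\sum_{j=0}^3\big(L_j((\alpha_{03})_j)+L_j((\alpha_{12})_j)-L_j((\alpha_{02})_j)-L_j((\alpha_{13})_j)\big)$, where $(\alpha_{kl})_j=a_j+\delta_{jk}+\delta_{jl}$; for each fixed $j$ exactly one positive and one negative term has argument $a_j+1$ (the remaining two have argument $a_j$), so the signed sum of the $L_j$ cancels, and the same happens for the second coordinate. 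Hence $\lambda(\widetilde c^{\,\prime}_\alpha)=\lambda(\widetilde c_\alpha)$ for every $\alpha$ and every simplex, so $\lambda(\widetilde c^{\,\prime})=\lambda(\widetilde c)$. By Proposition~\ref{independenceoflog} the element $\lambda(c)$ is independent of the lift, whence $\lambda(c')=\lambda(\widetilde c^{\,\prime})=\lambda(\widetilde c)=\lambda(c)$, which is the desired invariance.

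The main obstacle here is conceptual rather than computational: one must be sure that the difference of two arbitrary decorations is genuinely captured by the $(N_G(N)/N)^h$-action of Remark~\ref{normalizerremark}, and that replacing a decoration by an equivalent generic one after a barycentric subdivision (Proposition~\ref{barycentric}) changes neither the fundamental class nor its image $\lambda([\rho])$. A secondary subtlety to handle with care is that the cancellation must be established at the level of the actual flattening in $\widehat\Pre(\C)$, not merely for the cross-ratios $z_\alpha=\tfrac{c_{\alpha_{03}}c_{\alpha_{12}}}{c_{\alpha_{02}}c_{\alpha_{13}}}$; this is precisely why one fixes the single compatible lift $\widetilde c^{\,\prime}$ and then invokes Proposition~\ref{independenceoflog} to discharge the residual dependence on the choice of logarithms.
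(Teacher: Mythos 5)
Your proposal is correct and follows essentially the same route as the paper's proof: reduce via Remark~\ref{normalizerremark} and Proposition~\ref{barycentric} to comparing two generic decorations differing by diagonal matrices, compute the resulting rescaling of the Ptolemy coordinates from \eqref{ctformula}, choose the compatible lift obtained by adding the corresponding sums of logarithms, observe that each subsimplex flattening is unchanged, and conclude with Proposition~\ref{independenceoflog}. The only difference is that you spell out the telescoping cancellation that the paper dismisses with ``one easily checks,'' and your verification of it is accurate.
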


\begin{proof}
By performing a barycentric subdivision if necessary, we may assume that any 
decoration is generic.
Fix a lift $\widetilde\Delta_i$ of each simplex $\Delta_i$ of $K$. A 
decoration $D$ assigns a coset $g_j^iN$ to each vertex $j$ of 
$\widetilde\Delta_i$. Suppose we have another decoration $D^\prime$ of $\rho$ 
with cosets $h_j^iN$. Since equivalent decorations have the same fundamental 
class, we may assume that $h_j^i=g_j^id_j^i$, where the $d_j^i$'s are diagonal 
matrices (see Remark~\ref{normalizerremark}). Let $c$ and $c^\prime$ denote the 
Ptolemy assignments on $K$ induced by $D$ and $D^\prime$. Suppose 
$d^i_j=\diag(d^i_{j0},\dots,d^i_{j,n-1})$. By \eqref{ctformula} we have 
\begin{equation}
c^{i\prime}_t=c^i_t\prod_{k=0}^{t_0}d^i_{0k}\prod_{k=0}^{t_1}d^i_{1k}
\prod_{k=0}^{t_2}d^i_{2k}\prod_{k=0}^{t_3}d^i_{3k}.
\end{equation}
Fix a lift $\widetilde c$ of $c$. Letting $\log$ denote a logarithm, 
define a lift $\widetilde c^\prime$ of $c^\prime$ by
\begin{equation}
\widetilde c^{\prime i}_t=\widetilde c^i_t+\sum_{k=0}^{t_0}\log(d^i_{0k})
+\sum_{k=0}^{t_1}\log(d^i_{1k})+\sum_{k=0}^{t_2}\log(d^i_{2k})+\sum_{k=0}^{t_3}\log(d^i_{3k}).
\end{equation}
Using this, one easily checks that $\lambda(c^i_\alpha)
=\lambda(c^{\prime i}_\alpha)$ for each $i$ and each $\alpha\in \Delta^3_{n-2}(\Z)$.
The result now follows from Proposition~\ref{independenceoflog}.
\end{proof}

\subsection{$p\SL(n,\C)$-decorations}
\lbl{sub.pSLdecorations}

Let $n$ be even. All results in this section have 
natural analogs for $p\SL(n,\C)$. The proofs of these are obtained by replacing 
$2\pi\sqrt{-1}$ by $\pi\sqrt{-1}$, and logarithms by lifts of 
$\C\xrightarrow{\exp}\C^*\big/\langle\pm 1\rangle$. In particular, we have

\begin{proposition} 
\lbl{pSLindependence}
The fundamental class $[\rho]\in\widehat\B(\C)_{\PSL}$ of a decorated boundary-unipotent representation $\rho\colon\pi_1(M)\to p\SL(n,\C)$ is independent 
of the decoration.\qed
\end{proposition}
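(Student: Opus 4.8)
The plan is to observe that Proposition~\ref{pSLindependence} is the exact $p\SL(n,\C)$-analog of Proposition~\ref{independenceofdec}, which we have already established for $\SL(n,\C)$, and that all the intermediate lemmas used in that proof carry over verbatim once two systematic substitutions are made: replace $2\pi\sqrt{-1}$ by $\pi\sqrt{-1}$ throughout, and replace ordinary logarithms by set-theoretic lifts of the composite $\C\xrightarrow{\exp}\C^*\to\C^*/\langle\pm 1\rangle$ (for instance $x\mapsto\tfrac12\log(x^2)$, as in Section~\ref{sub.pSLcochains}). Since $p\SL(n,\C)$-Ptolemy coordinates take values in $\C^*/\langle\pm 1\rangle$, this is precisely the change of coefficients that the $\PSL$-variant of the extended Bloch group $\widehat\B(\C)_{\PSL}$ was built to accommodate.

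First I would set up the $p\SL(n,\C)$-versions of the key computational lemmas. The analog of Lemma~\ref{addtwopi} uses the second identity of Lemma~\ref{chipqlemma}, namely $(e+\pi i p,f+\pi i q)-(e,f)=\chi(qe-pf+pq\pi i)$ in $\widehat\Pre(\C)_{\PSL}$; the effect of adding $\pi\sqrt{-1}$ to a single lifted Ptolemy coordinate $\widetilde c_{ij}$ on the flattening $\lambda(\widetilde c)$ is computed exactly as before using the defining formula~\eqref{cwidetilde}, yielding $\chi(w_{ij}+\pi\sqrt{-1}\delta_{ij})$. The edge, face, and interior cases (Lemmas~\ref{edgepoint}, \ref{facepoint}, \ref{interiorpoint}) then go through with identical bookkeeping, since those proofs only manipulate the log-parameters $w_{ij}(\alpha)$ via linear relations among the lifted coordinates, and those relations are insensitive to whether one works mod $2\pi i$ or mod $\pi i$. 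The interior cancellation and the face-point sign-reversal (which used $2\chi(2\pi\sqrt{-1})=0$, now replaced by the corresponding vanishing in the $\PSL$-diagram) proceed the same way.

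The one step deserving genuine care is the edge-point case in the analog of Proposition~\ref{independenceoflog}. There the change in $\lambda(\widetilde c)$ is governed by a sum of $\chi$-terms around the interior edge $e$ of the auxiliary $3$-cycle $C$, and the residual contribution is a $2$-torsion element that vanishes precisely when the number of simplices for which $t$ is a $02$- or $13$-edge is even. This parity is forced by orientability of $M$ exactly as in the $\SL$-case, using the argument from Zickert~\cite[Theorem~6.5]{ZickertDuke} that the total log-parameter around $e$ is zero; the switch from $2\pi\sqrt{-1}$ to $\pi\sqrt{-1}$ does not affect the orientation-counting argument, so I expect this to be the main point to verify but not a real obstacle. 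Granting the $\PSL$-analog of Proposition~\ref{independenceoflog}, the proof of independence of the decoration then copies the proof of Proposition~\ref{independenceofdec}: after a barycentric subdivision we may assume the decoration is generic, two decorations differ by diagonal matrices $d^i_j$ as in Remark~\ref{normalizerremark}, and the explicit lift $\widetilde c^{\prime}$ built from $\log$ of the diagonal entries gives $\lambda(c^i_\alpha)=\lambda(c^{\prime i}_\alpha)$ for each simplex, reducing the claim to the lift-independence already shown. This establishes that $[\rho]\in\widehat\B(\C)_{\PSL}$ is independent of the decoration.
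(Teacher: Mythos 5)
Your proposal is correct and follows exactly the route the paper takes: in Section~\ref{sub.pSLdecorations} the paper states that all results of Section~\ref{decorationsection} have $p\SL(n,\C)$-analogs whose proofs are obtained by replacing $2\pi\sqrt{-1}$ with $\pi\sqrt{-1}$ and logarithms with set-theoretic lifts of $\C\xrightarrow{\exp}\C^*/\langle\pm 1\rangle$, which is precisely your substitution scheme. Your additional care about the edge-point parity argument and the use of the second identity in Lemma~\ref{chipqlemma} fills in details the paper leaves implicit, but the argument is the same.
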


\section{A cocycle formula for $\widehat c$}
\lbl{sec.computationofchat}

Let $i_*\colon H_3(\SL(n,\C))\to H_3(\SL(n,\C),N)$ denote the natural map.
We wish to prove that the composition
\begin{equation}
\xymatrix{{H_3(\SL(n,\C))}\ar[r]^-{i_*}&{H_3(\SL(n,\C),N)}\ar[r]^-{\lambda}&
{\widehat\B(\C)}\ar[r]^-R&{\C/4\pi^2\Z}}
\end{equation}
equals the Cheeger-Chern-Simons class $\widehat c$. Note that $i_*$ is induced by the map $(g_0,\dots,g_3)\mapsto (g_0N,\dots,g_3N)$.

We shall make use of the canonical isomorphisms
\begin{equation}\label{canonicalisomorphisms}
H_3(\SL(n,\C))\cong H_3(\SL(3,\C))\cong H_3(\SL(2,\C))\oplus K_3^M(\C).
\end{equation}
The first isomorphism is induced by stabilization (see Suslin~\cite{Suslin1046}) and the second isomorphism 
is the $\pm$-eigenspace decomposition with respect to the transpose-inverse 
involution on $\SL(3,\C)$ (see Sah~\cite{Sah}).

\begin{lemma}
[Suslin~\cite{Suslin1046}]
\lbl{Milnorgenerator}
Let $D\subset\SL(3,\C)$ be the subgroup of diagonal matrices. The $K_3^M(\C)$ 
summand of $H_3(\SL(3,\C))$ is generated by the elements $B\rho_*([T])$, where 
$T=S^1\times S^1\times S^1$ is the $3$-torus, and $\rho\colon \pi_1(T)\to D$ is 
a representation.\qed
\end{lemma}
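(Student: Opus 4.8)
The plan is to identify the $K_3^M(\C)$-summand explicitly via the involution in~\eqref{canonicalisomorphisms}, and then to realize all Milnor symbols by $3$-tori landing in the diagonal.

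First I would pin down where the torus classes live. By~\eqref{canonicalisomorphisms} and Sah~\cite{Sah}, $K_3^M(\C)$ is an eigenspace of the involution $\tau$ of $H_3(\SL(3,\C))$ induced by transpose-inverse $g\mapsto (g^T)^{-1}$. On the block subgroup $\SL(2,\C)\subset\SL(3,\C)$ the automorphism $\tau$ agrees with conjugation by $\diag(w,1)$, where $w=\Mat{0}{1}{-1}{0}$, since $wAw^{-1}=(A^T)^{-1}$ for $A\in\SL(2,\C)$; as conjugation by an element of $\SL(3,\C)$ acts trivially on $H_*(\SL(3,\C))$, the summand $H_3(\SL(2,\C))$ is the $(+1)$-eigenspace and hence $K_3^M(\C)$ is the $(-1)$-eigenspace. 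On the diagonal subgroup $D$ the involution $\tau$ is just inversion $d\mapsto d^{-1}$, which acts by $(-1)^3=-1$ on the image of $\bigwedge^3 D\to H_3(BD^\delta)$. Thus every class $B\rho_*([T])$ with $\rho\colon\pi_1(T)\to D$ lies in the $K_3^M(\C)$-summand.

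Next I would compute these classes via the Pontryagin product. Since $D$ is abelian, $BD^\delta$ is a homotopy-commutative $H$-space, and graded commutativity of degree-$1$ classes gives a natural map $\bigwedge^3 D\to H_3(BD^\delta)$. A representation $\rho\colon\Z^3\to D$ sending the standard generators to $d_1,d_2,d_3\in D$ takes the fundamental class $[T]\in H_3(\Z^3)=\bigwedge^3\Z^3$ to the image of $d_1\wedge d_2\wedge d_3$, represented in the bar complex by $\sum_{\sigma\in S_3}\sgn(\sigma)\,[d_{\sigma(1)}\mid d_{\sigma(2)}\mid d_{\sigma(3)}]$. Consequently the subgroup of $H_3(\SL(3,\C))$ generated by all $B\rho_*([T])$ is precisely the image of $\bigwedge^3 D$ under $\bigwedge^3 D\to H_3(D)\to H_3(\SL(3,\C))$.

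Finally I would invoke Suslin~\cite{Suslin1046}. His computation of $H_n(\GL_n)$ shows that the symbol map built from the diagonal torus surjects onto the Milnor quotient $K_n^M$, the composite with the canonical map $K_n^M\to H_n$ being multiplication by $\pm(n-1)!$; for $n=3$ this factor is $\pm 2$. Specializing to the diagonal $D$ of $\SL(3,\C)$ and choosing $d_1,d_2,d_3$ realizing a prescribed symbol $\{a,b,c\}$, the image of $\bigwedge^3 D$ surjects onto $K_3^M(\C)$ after inverting $2$. Since $\C$ is algebraically closed, $K_3^M(\C)$ is uniquely divisible, so inverting $2$ is harmless and the torus classes generate the entire summand. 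The main obstacle is this last step: matching the geometrically defined classes $B\rho_*([T])$ for the rank-$2$ diagonal of $\SL(3,\C)$ with Suslin's symbol map for $\GL_n$, correctly tracking the $(n-1)!$ factor, and checking that an arbitrary symbol $\{a,b,c\}$ is hit by an admissible wedge $d_1\wedge d_2\wedge d_3\in\bigwedge^3 D$. The eigenspace bookkeeping and the Pontryagin-product identification are routine by comparison.
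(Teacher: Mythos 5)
First, note that the paper itself does not prove this lemma: it is quoted from Suslin~\cite{Suslin1046} with no argument supplied, so there is no in-paper proof to compare yours against. Judged on its own terms, your skeleton is the right one. The eigenspace bookkeeping is correct (the transpose-inverse involution is inner on the $\SL(2,\C)$-block and is inversion on $D$, so the torus classes do land in the $K_3^M(\C)$-summand), and the Pontryagin-product identification of the subgroup generated by the classes $B\rho_*([T])$ with the image of $\bigwedge^3 D$ is standard.

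The genuine gap is exactly the step you flag and then do not close. Suslin's statement that $K_n^M(F)\to H_n(\GL_n(F))\to K_n^M(F)$ is multiplication by $\pm(n-1)!$ concerns the rank-$n$ diagonal $T$ of $\GL_n$, whose standard symbol representatives $\diag(a,1,1)\wedge\diag(1,b,1)\wedge\diag(1,1,c)$ do not lie in $\bigwedge^3 D$ for $D=T\cap\SL(3,\C)$. This is not a cosmetic point: the induced map $\bigwedge^3 T\to K_3^M(\C)$ kills every K\"unneth component except the multidegree-$(1,1,1)$ one, where it is the alternating sum
\begin{equation*}
d_1\wedge d_2\wedge d_3\;\longmapsto\;\sum_{\sigma\in S_3}\sgn(\sigma)\,\bigl\{(d_{\sigma(1)})_1,(d_{\sigma(2)})_2,(d_{\sigma(3)})_3\bigr\},
\end{equation*}
and several natural determinant-one substitutes actually vanish; for instance $\diag(a,a^{-1},1)\wedge\diag(1,b,b^{-1})\wedge\diag(c^{-1},1,c)$ maps to $\{a,b,c\}-\{a,b,c\}=0$. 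The check can be completed, though: for $d_1=\diag(a,1,a^{-1})$, $d_2=\diag(1,b,b^{-1})$, $d_3=\diag(c,1,c^{-1})$ only the identity and the transposition $(13)$ contribute, giving $\{a,b,c^{-1}\}-\{c,b,a^{-1}\}=-2\{a,b,c\}$. Since symbols generate $K_3^M(\C)$ and $K_3^M(\C)$ is divisible (because $\C^*$ is), the classes $B\rho_*([T])$ with holonomy in $D$ therefore generate the entire summand. With this one computation supplied, your argument is complete; without it, the crucial surjectivity assertion is unproved.
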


\begin{lemma}
\lbl{zeroonMilnor} 
Let $T=S^1\times S^1\times S^1$ and let $\rho\colon\pi_1(T)\to D$ 
be a representation.
The extended Bloch group element $[\rho]\in\widehat\B(\C)$ of $\rho$ is trivial.
\end{lemma}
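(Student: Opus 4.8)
The plan is to exploit the eigenspace decomposition \eqref{canonicalisomorphisms} together with the transpose--inverse involution $\tau(g)=(g^{-1})^{T}$ on $\SL(3,\C)$. The first thing I would record is that on $\SL(2,\C)$ the map $\tau$ is \emph{inner}: a direct check shows $\tau=\mathrm{conj}_{J}$ for $J=\Mat{0}{1}{-1}{0}$, and $J$ extends to $\SL(3,\C)$ as $\mathrm{diag}(J,1)$, so $\tau_{*}$ acts as the identity on the image of $H_{3}(\SL(2,\C))$. Hence, by \eqref{canonicalisomorphisms}, the summand $K_{3}^{M}(\C)$ is exactly the $(-1)$--eigenspace of $\tau_{*}$ on $H_{3}(\SL(3,\C))$. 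The classes of Lemma~\ref{Milnorgenerator} lie in this summand: for a diagonal matrix $d$ one has $\tau(d)=d^{-1}$, so $\tau$ inverts each generator of $\rho$ and therefore acts by $(-1)^{3}=-1$ on $B\rho_{*}([T])$, which is the antisymmetrization of the tuples $(1,d_{\sigma(1)},d_{\sigma(1)}d_{\sigma(2)},d_{\sigma(1)}d_{\sigma(2)}d_{\sigma(3)})$ over $\sigma\in S_{3}$.

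The heart of the argument is to show that the map $\lambda$ of Section~\ref{sub.map2ebloch} is invariant under $\tau$. Since $\tau$ sends upper-- to lower--triangular unipotents it does not preserve $N$, so I would first replace it by $\tau'=\mathrm{conj}_{w_{0}}\circ\tau$, where $w_{0}$ is the antidiagonal permutation matrix; this preserves $N$ and induces the same map on $H_{3}(\SL(3,\C))$ because inner automorphisms act trivially on homology. It then suffices to compare the Ptolemy cochains of a generic tuple $(g_{0}N,\dots,g_{3}N)$ and of $(\tau'(g_{0})N,\dots,\tau'(g_{3})N)$. By Cramer's rule the first $t_{i}$ columns of $(g_{i}^{-1})^{T}$ span the annihilator of the last $n-t_{i}$ columns of $g_{i}$, so the standard self--duality of the Grassmannian identifies the Ptolemy coordinate \eqref{ctformula} of the $\tau'$--tuple at an index $t$ with that of the original tuple at the complementary index, up to sign and the reversal of vertices produced by $w_{0}$. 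Feeding this reflection through the flattening formula \eqref{cwidetilde} and summing over subsimplices, one obtains $\lambda\circ\tau'_{*}=\lambda$. Combined with $\tau'_{*}=-1$ on $K_{3}^{M}(\C)$, this yields $\lambda(x)=\lambda(\tau'_{*}x)=-\lambda(x)$, i.e.\ $2\lambda(x)=0$ for every $x$ in the Milnor summand, in particular for $[\rho]$.

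To finish I would invoke divisibility: $K_{3}^{M}(\C)$ is a quotient of $\C^{*}\otimes\C^{*}\otimes\C^{*}$ and hence divisible, so writing $[\rho]=2y$ gives $\lambda([\rho])=2\lambda(y)=0$ in $\widehat\B(\C)$. I expect the main obstacle to be the Ptolemy duality identity under $\tau'$: specifically, pinning down the signs in the complementary--minor (Pl\"ucker) relation and verifying that the vertex reversal induced by $w_{0}$ matches the symmetry of $\Delta^{3}$, so that the two flattenings agree exactly rather than only up to a torsion correction in the image of $\chi$ (which would weaken $2\lambda(x)=0$). As an independent cross--check one can argue directly: choosing the generic decoration $g\mapsto gw$ for a fixed $w\in\SL(3,\C)$ (permissible by Proposition~\ref{independenceofdec}), the Ptolemy coordinates are Vandermonde--type determinants $V(i,j,k)$ in the diagonal entries times fixed minors of $w$, and one verifies that the antisymmetrized sum of flattenings over $S_{3}$ collapses to zero.
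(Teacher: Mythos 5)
There is a genuine gap, and it sits exactly where you flagged it: the ``Ptolemy duality identity'' for $\tau'=\mathrm{conj}_{w_0}\circ\tau$. Your setup is fine --- $\tau$ is inner on $\SL(2,\C)$ via $J=\Mat{0}{1}{-1}{0}$, the classes $B\rho_*([T])$ lie in the $(-1)$-eigenspace since inversion acts by $(-1)^3$ on $H_3(\Z^3)$, and $K_3^M(\C)$ is divisible --- so if $\lambda\circ\tau'_*=\lambda$ held, the conclusion would follow. The duality statement is true for \emph{edge} points: a computation with Jacobi's identity gives $c_{(t_0,t_1,0,0)}(\tau' g_\bullet)=\pm\, c_{(t_1,t_0,0,0)}(g_\bullet)$. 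But it fails already for $n=3$ at face points, which is precisely the case you need. Writing $\Delta_t$ for the coordinates of a triple of decorated flags in $\C^3$ and $\Delta^*_t$ for those of the dual triple, one has the classical identity
\begin{equation*}
\Delta_{(1,1,1)}\,\Delta^*_{(1,1,1)}=\Delta_{(2,1,0)}\Delta_{(0,2,1)}\Delta_{(1,0,2)}+\Delta_{(1,2,0)}\Delta_{(0,1,2)}\Delta_{(2,0,1)},
\end{equation*}
(e.g.\ for $a_0=e_1$, $b_0=e_2$, $a_1=(1,1,1)$, $b_1=(1,2,4)$, $a_2=(1,-1,1)$, $b_2=(1,-2,4)$ the two sides are both $-24$ while the individual coordinates are $\pm1,\pm2,\pm6$), so $\Delta^*_{(1,1,1)}$ is a nontrivial rational expression in the original coordinates, not $\pm$ a single coordinate at a ``complementary index.'' Consequently the flattenings of the dual configuration are not a relabeling of the original ones, and the proposed proof of $\lambda\circ\tau'_*=\lambda$ collapses. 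Note also that on the Milnor summand the identity $\lambda\circ\tau'_*=\lambda$ is \emph{equivalent} to $2\lambda=0$ there, i.e.\ to essentially the statement being proved, so it cannot be imported from elsewhere in the paper (Theorem~\ref{formulaforchat} is downstream of this lemma). Your fallback ``cross-check'' --- that the antisymmetrized sum of flattenings over $S_3$ collapses --- is the entire content of the lemma and is asserted rather than proved.

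For comparison, the paper's argument is much more elementary: it triangulates $T$ as the cone on a triangulation of the boundary of a cube and pairs each $3$-simplex $\Delta$ with its opposite $\Delta^{\mathrm{opp}}$; the corresponding tuples are $(N,g_0N,g_1N,g_2N)$ and $(N,dg_0N,dg_1N,dg_2N)$ for a diagonal $d$, so the Ptolemy coordinates differ by a factor depending only on $t_0$. Choosing the lifts $\widetilde c'_t=\widetilde c_t+\sum_{i=t_0}^n\log(d_i)$, these $t_0$-dependent corrections cancel in each flattening $\lambda(\widetilde c_\alpha)$ (the indices $\alpha_{03},\alpha_{12}$ and $\alpha_{02},\alpha_{13}$ contribute the same pair of $t_0$-values with opposite signs), so each pair of simplices contributes zero. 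If you want to rescue your approach, you would need either to prove the cancellation for the dual-configuration flattenings directly (tracking the non-monomial transformation law above through $\widehat\Pre(\C)$, including possible $\chi$-torsion corrections), or to abandon the involution and carry out your Vandermonde computation in full --- at which point you are essentially reproducing the paper's direct argument in a less convenient triangulation.
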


\begin{proof}
We regard $T$ as a cube $C$ with opposite faces identified, and triangulate 
$C$ as the cone on the triangulation on $\partial C$ indicated in 
Figure~\ref{cubefigure} with cone point in the center. We order the vertices 
of each simplex by codimension, i.e.~the $0$-vertex is the cone point, the 
$1$-vertex is a face point etc. Let $\rho\colon\pi_1(T)\to D$ be a 
representation, and pick a decoration of $\rho$ by cosets in general 
position (the triangulation is such that this is always possible). 
Note that for every $3$-simplex $\Delta$ of $T$, there is a unique opposite 
$3$-simplex $\Delta^{\text{opp}}$, such that the faces opposite the cone point 
are identified. We may assume that the cone point is decorated by the coset 
$N$. If a simplex $\Delta$ is decorated by the cosets $(N,g_0N,g_1N,g_2N)$, 
the simplex $\Delta^{\text{opp}}$ must be decorated by the cosets 
$(N,dg_0N,dg_1N,dg_2N)$, where $d$ is the image of the generator of 
$\pi_1(T)$ pairing the faces of $\Delta$ and $\Delta^{\text{opp}}$. It thus 
follows from \eqref{cosetrep} that the fundamental class is represented by 
a sum of terms of the form
\begin{equation}
\lbl{terms}
(N,dg_0N,dg_1N,dg_2N)-(N,g_0N,g_1N,g_2N)\in C_3^{\gen}(\SL(n,\C)/N).
\end{equation}
Let $c$ and $c^\prime$ be the Ptolemy assignments associated to the tuples 
$(N\comma g_0N\comma g_1N\comma g_2N)$ and $(N\comma dg_0N\comma dg_1N\comma dg_2N)$. Letting 
$d=\diag(d_1,\dots,d_n)$, we have $c_t^\prime=c_t\prod_{i=t_0}^n d_i$. Fix a lift 
$\widetilde c$ of $c$, and consider the lift  
\begin{equation}
\widetilde c^\prime_t=\widetilde c_t+\sum_{i=t_0}^n\log(d_i)
\end{equation}
of $c^\prime$. One now checks that $\lambda(\widetilde c_\alpha^\prime)
=\lambda(\widetilde c_\alpha)$ for all $\alpha\in\dot\Delta^k_n(\Z)$, so 
$\lambda(\widetilde c)-\lambda(\widetilde c^\prime)=0$. This proves the result.
\end{proof}

\begin{theorem}
\lbl{formulaforchat} 
The composition $R\circ\lambda\circ i_*$ equals $\widehat c$.
\end{theorem}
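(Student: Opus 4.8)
The plan is to exhibit both $R\circ\lambda\circ i_*$ and $\widehat c$ as \emph{stable} homomorphisms and then compare them summand by summand under the decomposition \eqref{canonicalisomorphisms}. First I would verify that both maps commute with the stabilization inclusion $\SL(n-1,\C)\hookrightarrow\SL(n,\C)$. For the left-hand side this is immediate: $i_*$ is visibly natural in $n$, and $\lambda$ respects stabilization (Section~\ref{stabilizationsection}). For $\widehat c$ it follows from Example~\ref{SLnExample}, since the renormalized Killing form is $P=\tfrac12\Tr$ for every $n$ and the trace form restricts to the trace form under the block inclusion; hence $\widehat c=\widehat c_2$ is a stable characteristic class. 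As the first isomorphism in \eqref{canonicalisomorphisms} is induced by stabilization, it therefore suffices to prove the identity on $H_3(\SL(3,\C))$.

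I would then use the splitting $H_3(\SL(3,\C))\cong H_3(\SL(2,\C))\oplus K_3^M(\C)$ from \eqref{canonicalisomorphisms}, the two summands being the $\pm1$-eigenspaces of the transpose--inverse involution. Since both maps are additive, it is enough to check agreement on each summand. On the $H_3(\SL(2,\C))$-summand, which is the image of stabilization, compatibility with stabilization again reduces the claim to the case $n=2$, where $R\circ\lambda\circ i_*=\widehat c$ is precisely the theorem of Neumann and Goette--Zickert quoted in Section~\ref{sub.vbloch}, together with the identification (Lemma~\ref{I3Bhat}) of our $\lambda$ for $n=2$ with the map of Zickert~\cite{ZickertDuke}.

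The remaining and most delicate case is the Milnor summand $K_3^M(\C)$, where I would show that both maps vanish. By Lemma~\ref{Milnorgenerator} this summand is generated by classes $B\rho_*([T])$ for representations $\rho\colon\pi_1(T)\to D$ into the diagonal torus; Lemma~\ref{zeroonMilnor} gives $\lambda(B\rho_*([T]))=0$, so $R\circ\lambda\circ i_*$ kills $K_3^M(\C)$. For $\widehat c$ I would compute $\Vol_\C(\rho)$ directly via Lemma~\ref{rhomapandVolC}. Because $D$ is abelian, $[\mathfrak d,\mathfrak d]=0$, so the invariant $3$-form $\sigma(P)=-\tfrac16 P(\omega\wedge[\omega,\omega])$ restricts to $0$ on $D$. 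Since $\rho$ lands in the connected abelian group $D$, the $\rho$-equivariant map $\widetilde T\to\SL(3,\C)$ can be chosen to factor through $D$ — for instance as $\exp\circ L$ for an $\R$-linear lift $L\colon\R^3\to\mathfrak d$ of $\rho$, which is equivariant because $\exp$ is a homomorphism on the abelian $\mathfrak d$ — so that its pullback of $\sigma(P)$ is identically zero and hence $\Vol_\C(\rho)=0$. Thus $\widehat c$ also annihilates $K_3^M(\C)$.

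Since the two additive maps agree on both summands of $H_3(\SL(3,\C))$, they coincide there, and by the stabilization step they coincide for every $n$. I expect the main obstacle to be exactly this vanishing of $\widehat c$ on the Milnor part: the crucial observation is that abelian holonomy permits pushing the equivariant developing map into the maximal torus, where the invariant form $\sigma(P)$ degenerates, whereas the corresponding vanishing on the Bloch-group side is supplied by Lemma~\ref{zeroonMilnor}.
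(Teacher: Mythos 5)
Your proof is correct and follows the same overall route as the paper: reduce by stabilization and the decomposition \eqref{canonicalisomorphisms} to $H_3(\SL(2,\C))\oplus K_3^M(\C)$, invoke Goette--Zickert~\cite{GoetteZickert} on the first summand, and kill the Milnor summand using Lemmas~\ref{Milnorgenerator} and~\ref{zeroonMilnor}. The one place you genuinely diverge is the vanishing of $\widehat c$ on $K_3^M(\C)$: the paper disposes of this by citing \cite[Theorem~8.22]{CheegerSimons} as a black box, whereas you compute it directly from Lemma~\ref{rhomapandVolC} --- choosing the $\rho$-equivariant map to factor through the diagonal torus via $\exp\circ L$ (which is legitimate: equivariance holds because the torus is abelian, so left and right translation by $\rho(n)$ agree) and observing that $\sigma(P)=-\tfrac16P(\omega\wedge[\omega,\omega])$ pulls back to zero there since $[\mathfrak d,\mathfrak d]=0$. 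Your version is more self-contained and makes transparent \emph{why} abelian holonomy forces $\Vol_\C=0$, at the cost of a short differential-geometric argument; the paper's citation is shorter but opaque. Your explicit remark that $\widehat c$ itself is a stable class (via $P=\tfrac12\Tr=c_2$ from Example~\ref{SLnExample}) fills in a point the paper leaves implicit.
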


\begin{proof}
Since $\lambda$ commutes with stabilization, it follows from Goette-Zickert~\cite{GoetteZickert} that $R\circ\lambda\circ i_*=\widehat c$ 
on $H_3(\SL(2,\C))$. Since $\widehat c$ is zero on $K_3^M(\C)$ (this follows 
from Lemma~\ref{Milnorgenerator} and 
\cite[Theorem~8.22]{CheegerSimons}), the result follows from~\eqref{canonicalisomorphisms} and
Lemma~\ref{zeroonMilnor}.
\end{proof}

\begin{figure}[htpb]
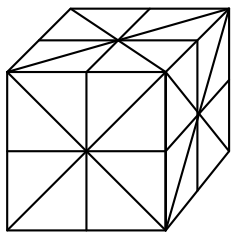

$$
\psdraw{SLrepFigures.6}{1.2in}
$$
\caption{Triangulation of $\partial C$.}\lbl{cubefigure}
\end{figure}

\begin{remark}\label{extensionremark}By defining $\widehat c=R\circ\lambda\colon H_3(\SL(n,\C),N)\to\C/4\pi^2\Z$, we have a natural extension of the 
Cheeger-Chern-Simons class to bundles with boundary-unipotent holonomy, and we can define the complex volume as in Definition~\ref{complexvolumedefn}.
\end{remark}

\section{Recovering a representation from its Ptolemy coordinates}
\lbl{proofsection}

We now show that a Ptolemy assignment on $K$ determines a
generically decorated
boundary-unipotent representation, which is given explicitly in terms of the Ptolemy coordinates. 
The idea is that a Ptolemy assignment 
canonically determines a $(G,N)$-cocycle on $M$. 

\begin{definition} 
An $n\times n$ matrix $A$ is \emph{counter diagonal} if the only non-zero 
entries of $A$ are on the lower left to upper right diagonal, i.e.~$A_{ij}=0$ 
unless $j=n-i+1$. If $A_{ij}=0$ for $j>n-i+1$ (resp.~$j<n-i+1$), $A$ is 
\emph{upper} (resp.~\emph{lower}) \emph{counter triangular}.
\end{definition}

Given subsets $I,J$ of $\{1,\dots,n\}$, let $A_{I,J}$ denote the submatrix 
of $A$ whose rows and columns are indexed by $I$ and $J$, respectively. If 
$\Norm{I}=\Norm{J}$, let $\Norm A_{I,J}$ denote the minor $\det(A_{I,J})$.
Let $I^c$ denote $\{1,\dots,n\}\setminus I$. 


Recall that the adjugate $\Adj(A)$ of a matrix $A$ is the matrix whose 
$ij$th entry is $(-1)^{i+j}\Norm{A}_{\{j\}^c,\{i\}^c}$. It is well known that 
$\Adj(A)=\det(A)A^{-1}$. The following result by Jacobi 
(see e.g.~\cite[Section~42]{DeterminantBook}) expresses the minors of 
$\Adj(A)$ in terms of the minors of $A$.

\begin{lemma} 
Let $I,J$ be subsets of $\{1,\dots,n\}$ with $\Norm{I}=\Norm{J}=r$. We have
\begin{equation}\lbl{Jacobi}
\Norm{\Adj(A)}_{I,J}=(-1)^{\sum(I,J)}\det(A)^{r-1}\Norm{A}_{J^c,I^c},
\end{equation}
where $\sum(I,J)$ is the sum of the indices occurring  in $I$ and~$J$.\qed
\end{lemma}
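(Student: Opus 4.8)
The plan is to derive Jacobi's identity from the corresponding statement for the inverse matrix, which in turn I would reduce to a single block computation. Since both sides of \eqref{Jacobi} are polynomials in the entries of $A$, it suffices to prove it on the Zariski-dense locus of invertible matrices and then extend by continuity (equivalently, by equality of polynomials). So assume $\det(A)\neq 0$. From $\Adj(A)=\det(A)A^{-1}$ the submatrix $\Adj(A)_{I,J}$ equals $\det(A)$ times $(A^{-1})_{I,J}$ entrywise, and taking determinants of these $r\times r$ matrices gives
\[
\Norm{\Adj(A)}_{I,J}=\det(A)^{r}\,\Norm{A^{-1}}_{I,J}.
\]
Hence \eqref{Jacobi} is equivalent to the \emph{Jacobi identity for the inverse},
\[
\Norm{A^{-1}}_{I,J}=(-1)^{\sum(I,J)}\det(A)^{-1}\,\Norm{A}_{J^c,I^c},
\]
the factor $\det(A)^{r}$ converting $\det(A)^{-1}$ into the desired $\det(A)^{r-1}$.

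To prove the identity for the inverse, I would first reduce to the case $I=J=\{1,\dots,r\}$. Let $\pi$ and $\rho$ be the permutations of $\{1,\dots,n\}$ that list the elements of $I$, respectively $J$, in increasing order followed by those of $I^c$, respectively $J^c$, in increasing order, and let $P_\pi,P_\rho$ be the associated permutation matrices. Replacing $A$ by $A'=P_\rho A P_\pi^{-1}$ moves the rows indexed by $J$ and the columns indexed by $I$ into the leading positions; correspondingly $(A')^{-1}=P_\pi A^{-1}P_\rho^{-1}$ has $(A^{-1})_{I,J}$ as its top-left $r\times r$ block, while $A_{J^c,I^c}$ is the bottom-right $(n-r)\times(n-r)$ block of $A'$. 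Since permuting rows and columns multiplies the relevant determinants by $\sgn(\pi)\sgn(\rho)$, and since the sign of the permutation sorting a size-$r$ set $I$ to the front is $(-1)^{\sum_{i\in I}i-r(r+1)/2}$, one checks (using that $r(r+1)$ is even) that $\sgn(\pi)\sgn(\rho)=(-1)^{\sum(I,J)}$. This sign bookkeeping, together with keeping the row/column roles straight under inversion, is the one genuinely fiddly point of the proof, and I expect it to be the main obstacle; everything else is formal.

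It then remains to treat the case $I=J=\{1,\dots,r\}$, where the sign is trivial. Writing the permuted matrix in block form $\left(\begin{smallmatrix}P&Q\\R&S\end{smallmatrix}\right)$ with $P$ of size $r\times r$ and $S$ invertible (generically), the standard block-inverse formula identifies the top-left $r\times r$ block of the inverse with $(P-QS^{-1}R)^{-1}$, while the Schur determinant formula gives $\det(A)=\det(S)\det(P-QS^{-1}R)$. Taking determinants yields $\Norm{A^{-1}}_{\{1,\dots,r\},\{1,\dots,r\}}=\det(P-QS^{-1}R)^{-1}=\det(S)/\det(A)=\det(A)^{-1}\Norm{A}_{\{r+1,\dots,n\},\{r+1,\dots,n\}}$, which is precisely the inverse identity in this case (here $\sum(I,J)=2(1+\dots+r)$ is even). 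Combining the three steps proves \eqref{Jacobi}.
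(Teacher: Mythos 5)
Your proof is correct. Note that the paper does not actually prove this lemma: it attributes the identity to Jacobi and cites a classical determinant text, ending the statement with a \qed, so there is no in-paper argument to compare against. Your argument is the standard self-contained derivation: reduce to the invertible case by polynomial density, convert the adjugate statement to the identity $\Norm{A^{-1}}_{I,J}=(-1)^{\sum(I,J)}\det(A)^{-1}\Norm{A}_{J^c,I^c}$ via $\Adj(A)=\det(A)A^{-1}$, normalize $I=J=\{1,\dots,r\}$ by permutation matrices, and finish with the Schur complement. The sign bookkeeping checks out: the shuffle sorting $I$ to the front has sign $(-1)^{\sum_{i\in I}i-r(r+1)/2}$, the relative orders within $I$, $J$, $I^c$, $J^c$ are preserved so the minors themselves pick up no extra sign, and the product of the two shuffle signs is $(-1)^{\sum(I,J)}$ because $r(r+1)$ is even; the $r=1$ case recovers the paper's own definition of $\Adj$, which is a useful sanity check of the $J^c,I^c$ transposition. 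The only point needing a word of care is that the block-inverse step requires the bottom-right block $A_{J^c,I^c}$ to be invertible, an extra genericity condition beyond $\det(A)\neq 0$; you flag this, and it is harmless because both sides of \eqref{Jacobi} are polynomials of the same degree $r(n-1)$ in the entries of $A$, so verifying the identity on any Zariski-dense locus suffices.
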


\begin{definition} 
A matrix $A\in\GL_n(\C)$ is \emph{generic} if 
$\Norm{A}_{\{k,\dots,n\},\{1,\dots,n-k+1\}}\neq 0$ for all $k\in\{1,\dots,n\}$.
\end{definition}

Note that $A$ is generic if and only if the Ptolemy coordinates of $(N,AN)$ 
are non-zero. The following is a generalization of 
Zickert~\cite[Lemma~3.5]{ZickertDuke}.

\begin{proposition}
\lbl{xyqExistence} 
Let $A\in \GL_n(\C)$ be generic. There exist unique $x\in N$ and $y\in N$ 
such that $q=x^{-1}Ay$ is counter diagonal. The entries of $x$, $y$ and $q$ 
are given by
\begin{gather} 
q_{n,1}=A_{n,1}, \quad q_{n-j+1,j}=(-1)^{j-1}
\frac{\Norm{A}_{\{n-j+1,\dots,n\},\{1,\dots, j\}}}{\Norm{A}_{\{n-j+2,\dots,n\},\{1,\dots,j-1\}}} 
\text{ for }1<j\leq n\lbl{q}\\
x_{ij}=\frac{\Norm{A}_{\{i,j+1,\dots,n\},\{1,\dots,n-j+1\}}}{
\Norm{A}_{\{j,\dots,n\},\{1,\dots,n-j+1\}}} \text{ for $j>i$}\lbl{x}\\
y_{ij}=(-1)^{i+j}\frac{\Norm{A}_{\{n-j+2,\dots,n\},\{1,\dots,\widehat i,\dots,j\}}}{
\Norm{A}_{\{n-j+2,\dots,n\},\{1,\dots,j-1\}}} \text{ for $j>i$}\lbl{y}.
\end{gather}
\end{proposition}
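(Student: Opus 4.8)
The plan is to prove existence and uniqueness separately: uniqueness follows from a short conjugation argument, while existence and the explicit formulas \eqref{q}, \eqref{x}, \eqref{y} come out together by solving for the columns of $y$, then $q$ and $x$, one at a time via Cramer's rule. Throughout let $w_0$ denote the permutation matrix of the order-reversing permutation $i\mapsto n+1-i$, so that $w_0^2=I$ and conjugation by $w_0$ interchanges upper and lower triangular matrices; any invertible counter diagonal matrix can be written $Dw_0$ with $D$ the (invertible) diagonal matrix of its anti-diagonal entries.

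For uniqueness, suppose $A=xqy^{-1}=x'q'y'^{-1}$ with $x,x',y,y'\in N$ and $q=Dw_0$, $q'=D'w_0$ counter diagonal (invertible since $A$ is). Setting $u=x'^{-1}x$ and $v=y'^{-1}y$ in $N$, the identity $uq=q'v$ gives $v=q'^{-1}uq=w_0\big((D')^{-1}uD\big)w_0$. The middle factor is upper triangular, so $v$ is lower triangular; being also unipotent upper triangular it must be $I$. Then $u=q'q^{-1}=D'D^{-1}$ is diagonal and unipotent, hence trivial, forcing $q=q'$, $x=x'$, $y=y'$.

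For existence I would build the factorization in the form $Ay=xq$, column by column. The counter diagonal shape of $q$ and the upper-unipotent shape of $x$ force column $j$ of $Ay$ to be supported in rows $1,\dots,n-j+1$ and to equal $q_{n-j+1,j}$ times column $n-j+1$ of $x$. Since $y\in N$, setting $y_{kj}=0$ for $k>j$ and $y_{jj}=1$, column $j$ of $Ay$ is $\sum_{k\le j}y_{kj}A_{\cdot k}$, so the vanishing in rows $n-j+2,\dots,n$ is a system of $j-1$ linear equations in $y_{1j},\dots,y_{j-1,j}$ whose coefficient matrix is $A_{\{n-j+2,\dots,n\},\{1,\dots,j-1\}}$, with determinant the genericity minor $\Norm{A}_{\{n-j+2,\dots,n\},\{1,\dots,j-1\}}\neq0$. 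Cramer's rule gives a unique solution, and reordering the replaced column into increasing order produces the sign $(-1)^{i+j}$ and exactly formula \eqref{y}. Laplace-expanding the resulting column along its $i$th row then gives $(Ay)_{ij}=(-1)^{j-1}\Norm{A}_{\{i,n-j+2,\dots,n\},\{1,\dots,j\}}/\Norm{A}_{\{n-j+2,\dots,n\},\{1,\dots,j-1\}}$ for $i\le n-j+1$; taking $i=n-j+1$ yields \eqref{q} (non-zero again by genericity, so the division below is legitimate), and dividing by this entry yields \eqref{x}. By construction $x$ and $y$ lie in $N$, $q$ is counter diagonal, and $Ay=xq$.

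The conceptual content is entirely in these two observations; the step most likely to hide errors is the sign and index bookkeeping needed to match the ordered-minor formulas exactly — in particular tracking the permutation sign when the replaced column is moved back into increasing order, and the sign $(-1)^{j-1}$ arising from placing row $i$ first in the Laplace expansion. As a consistency check I would note that \eqref{y} can also be recovered from \eqref{x}: applying uniqueness to $A^{-1}=y\,q^{-1}\,x^{-1}$ identifies the two unipotent factors of $A^{-1}$ with $y$ and $x$, and Jacobi's identity \eqref{Jacobi}, together with $\Adj(A)=\det(A)A^{-1}$, rewrites the minors of $A^{-1}$ in terms of those of $A$, giving an independent verification of the signs.
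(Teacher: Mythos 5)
Your proof is correct, and while the computational core (solving for the columns of $y$ by Cramer's rule against the genericity minors, then Laplace-expanding $(Ay)_{n-j+1,j}$ to get \eqref{q}) coincides with the paper's, you diverge from it in two genuine ways. First, the paper has no separate uniqueness argument: it reduces the problem to ``$Ay$ upper counter triangular and $x^{-1}A$ lower counter triangular'' and lets uniqueness fall out of the unique solvability of the linear systems, whereas your conjugation-by-$w_0$ argument isolates uniqueness as a clean, self-contained group-theoretic fact (and incidentally makes transparent why $N\cdot(\text{counter diagonal})\cdot N$ decompositions are unique in general). Second, and more substantively, the paper obtains the entries of $x$ by applying the $y$-argument to $A^{-1}$ (via ``$x^{-1}A$ lower counter triangular iff $A^{-1}x$ upper counter triangular'') and then invoking Jacobi's identity \eqref{Jacobi} to convert minors of $\Adj(A)$ back into minors of $A$; you instead read $x$ off directly as the $j$th column of $Ay$ normalized by its pivot entry $q_{n-j+1,j}$, which after the substitution $j\mapsto n-j+1$ reproduces \eqref{x} exactly and demotes Jacobi's identity to an optional consistency check. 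Your route is arguably more economical here, at the small cost of having to observe that the resulting $x$ is automatically upper unipotent (which it is, precisely because the columns of $Ay$ vanish below the counter diagonal by construction). One cosmetic point: the index set $\{1,j\}$ in \eqref{q} as printed should be read as $\{1,\dots,j\}$, which is what both your computation and the paper's final display produce.
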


\begin{proof}
It is enough to prove existence and uniqueness of $x$ and $y$ in $N$ such 
that $Ay$ and $x^{-1}A$ are upper and lower counter triangular, respectively. 
Suppose $Ay$ is upper counter triangular. Then the vector $y_{\{1,\dots,j-1\},\{j\}}$ 
consisting of the part above the counter diagonal of the $j$th column vector 
of $y$ must satisfy 
\begin{equation}A_{\{n-j+2,\dots,n\},\{1,\dots,j-1\}}y_{\{1,\dots, j-1\},\{j\}}
+A_{\{n-j+2,\dots,n\},\{j\}}=0.
\end{equation}
The existence and uniqueness of $y$, as well as the given formula for the entries, now follow from Cramer's rule.
Since $x^{-1}A$ is lower counter-triangular if and only if $A^{-1}x$ is upper counter-triangular, existence and uniqueness of $x$ follows. The explicit formula for the entries follows from Jacobi's identity~\eqref{Jacobi} and the formula for the entries of $y$. To obtain the formula for the entries of $q$, note that $q_{n-j+1,j}=(Ay)_{n-j+1,j}$. Hence, $q_{n,1}=A_{n,1}$, and for $1<j\leq n$,
\begin{align*}
q_{n-j+1,j}&=\sum_{i=1}^{j-1}A_{n-j+1,i}y_{i,j}+A_{n-j+1,j}\\&=\frac{\sum_{i=1}^{j}(-1)^{i+j}A_{n-j+1,i}\Norm{A}_{\{n-j+2,\dots,n\},\{1,\dots\widehat i,\dots,j\}}}{\Norm{A}_{\{n-j+2,\dots,n\},\{1,\dots,j-1\}}}\\&=(-1)^{j-1}\frac{\Norm{A}_{\{n-j+1,\dots,n\},\{1,\dots,j\}}}{\Norm{A}_{\{n-j+2,\dots,n\},\{1,\dots,j-1\}}},
\end{align*}
where the second equality follows from~\eqref{y}.
\end{proof}


For a generic matrix $A$, let $x_A$, $y_A$ and $q_A$ be the unique matrices provided by Proposition~\ref{xyqExistence}. Given cosets $g_{i}N$, $g_jN$, $g_kN$, define
\begin{equation}
\lbl{qandalpha}
q_{ij}=q_{g_i^{-1}g_j},\qquad \alpha^i_{jk}=(x_{g_i^{-1}g_j})^{-1}x_{g_i^{-1}g_k}.
\end{equation}

\begin{corollary}
\lbl{freeaction}
The diagonal left $G$-action on $C_k^{\gen}(G/N)$ is free when $k\geq 1$, and the chain complex $C_{*\geq 1}^{\gen}(G/N)\otimes_{\Z[G]}\Z$ computes relative homology.
\end{corollary}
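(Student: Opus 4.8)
The plan is to prove the two assertions separately, using the factorization of Proposition~\ref{xyqExistence} for the freeness and a general-position resolution argument for the homology statement.

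For freeness, fix a generic tuple $(g_0N,\dots,g_kN)$ with $k\geq 1$. An element $g\in G$ fixes it under the diagonal left action precisely when $g_i^{-1}gg_i\in N$ for every $i$, so the stabilizer is $\bigcap_i g_iNg_i^{-1}$; hence it suffices to trivialize $g_0Ng_0^{-1}\cap g_1Ng_1^{-1}$. Genericity of the whole tuple forces genericity of the pair $(g_0N,g_1N)$ (take $t$ supported on the indices $0,1$), and multiplying on the left by $g_0^{-1}$ reduces us to the pair $(N,aN)$ with $a=g_0^{-1}g_1$. Its Ptolemy coordinates are (up to sign) the minors $\det(\{e_1,\dots,e_j\}\cup\{a\}_{n-j})$, so genericity says exactly that $a$ is a generic matrix in the sense preceding Proposition~\ref{xyqExistence}. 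Writing $a=xqy$ with $x,y\in N$ and $q$ counter diagonal, I argue as follows: for $g\in N\cap aNa^{-1}$ we have $a^{-1}ga\in N$, and since conjugation by $y\in N$ preserves $N$, this is equivalent to $q^{-1}(x^{-1}gx)q\in N$. But $x^{-1}gx\in N$, and conjugation by the counter-diagonal $q$ reverses the order of the basis and thus carries $N$ onto the lower unipotent group $N^{-}$; therefore $q^{-1}(x^{-1}gx)q\in N\cap N^{-}=\{I\}$, which forces $g=I$. Hence the action on generators is free and each $C_k^{\gen}(G/N)$, $k\geq 1$, is a free $\Z[G]$-module.

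For the homology statement I would first record that $C_*^{\gen}(G/N)\to\Z$ is a resolution: any generic cycle involves only finitely many cosets, and since $\C$ is infinite one can choose a coset $vN$ in general position with respect to all of them, so that coning over $vN$ stays generic and exhibits the cycle as a boundary (the standard cone argument). Combined with the freeness just established, the truncated complex $\dots\to C_2^{\gen}(G/N)\to C_1^{\gen}(G/N)$, together with the surjection $\partial_1\colon C_1^{\gen}(G/N)\twoheadrightarrow I$ onto the augmentation ideal $I=\ker(\Z[G/N]\to\Z)$, is a free $\Z[G]$-resolution of $I$. Consequently $H_k\big(C^{\gen}_{*\geq 1}(G/N)\otimes_{\Z[G]}\Z\big)\cong\Tor^{\Z[G]}_{k-1}(\Z,I)=H_{k-1}(G;I)$ for $k\geq 1$. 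Feeding $0\to I\to\Z[G/N]\to\Z\to 0$ into group homology and applying Shapiro's lemma $H_*(G;\Z[G/N])\cong H_*(N)$, the resulting long exact sequence is identified with that of the pair $(BG^\delta,BN^\delta)$, whence $H_{k-1}(G;I)\cong H_k(G,N)$. Thus the truncated complex computes $H_*(G,N)$. The degree-zero term is dropped precisely because $\partial_1=0$ on $C_*^{\gen}(G/N)\otimes_{\Z[G]}\Z$ (both vertices of a $1$-simplex map to the generator of $C_0^{\gen}(G/N)\otimes_{\Z[G]}\Z=\Z$), so nothing in positive degrees is lost, while the spurious class in $H_0$ of the full complex, which has no counterpart in relative homology, is removed.

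The step I expect to require the most care is the general-position acyclicity: one must verify that for a fixed finite generic configuration each condition "the coned tuple is generic" is a nonempty Zariski-open subset of $G/N$, so that a simultaneously generic $vN$ exists over $\C$. Once the reduction to the pair $(N,aN)$ and the factorization $a=xqy$ are in hand, the freeness computation is essentially formal; there the only point to check carefully is that conjugation by the counter-diagonal $q$ interchanges the upper and lower unipotent subgroups, giving $N\cap N^{-}=\{I\}$.
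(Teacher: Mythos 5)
Your proof is correct, but the freeness step is executed differently from the paper's. The paper deduces freeness from the \emph{uniqueness} clause of Proposition~\ref{xyqExistence}: every generic tuple has a canonical normal form $g_0x_{g_0^{-1}g_1}(N,q_{01}N,\alpha^0_{12}N,\dots,\alpha^0_{1k}N)$, and since the group element in front is uniquely determined by the tuple, any $g$ fixing the tuple is the identity. You instead compute the stabilizer $\bigcap_i g_iNg_i^{-1}$ directly and kill it using only the pair $(g_0N,g_1N)$: genericity of the pair gives the factorization $a=xqy$ with $q$ counter diagonal, and conjugation by $q$ interchanges $N$ with the lower unipotent group, so the stabilizer lands in $N\cap N^{-}=\{I\}$. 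Your route uses only the \emph{existence} of the factorization and proves the slightly stronger fact that a single generic sub-pair already trivializes the stabilizer; the paper's route is shorter once the uniqueness statement is in hand. For the homology assertion the two arguments coincide: both rest on the cone construction (with the simultaneous general-position choice of the cone vertex, valid because each genericity condition is a nonempty Zariski-open condition and $\C$ is infinite), and you simply unpack the dimension shift $H_k\bigl(C^{\gen}_{*\geq 1}(G/N)\otimes_{\Z[G]}\Z\bigr)\cong H_{k-1}(G;I)\cong H_k(G,N)$ via Shapiro's lemma, which the paper delegates to the reference~\cite{ZickertDuke}.
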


\begin{proof}
By Proposition~\ref{xyqExistence}, every generic tuple $(g_0N,\dots,g_kN)$ may be uniquely written as
\begin{equation}
g_0x_{g_0^{-1}g_1}(N,q_{01}N,\alpha^0_{12}q_{02}N,\dots,\alpha^0_{1k}q_{0k}N).
\end{equation}
This proves that the $G$-action is free. Also note that for each generic tuple $(g_0N,\dots,g_kN)$, there exists a coset $gN$ such that $(gN,g_0N,\dots,g_kN)$ is generic. Hence, $C_{*\geq 1}^{\gen}(G/N)$ is acyclic, and is thus a free resolution of $\Ker(C_0(G/N)\to\Z$). This proves the result (see e.g.~Zickert~\cite[Theorem~2.1]{ZickertDuke}).
\end{proof}

A generic tuple $(g_0N,\dots,g_kN)$ determines a $(G,N)$-cocycle on a truncated simplex $\overline\Delta$, by labeling the long edges by $q_{ij}$ and the short edges by $\alpha^i_{jk}$ (see Figure~\ref{cocyclefigure}). In particular, a generically decorated representation $\rho$ determines a $(G,N)$-cocycle representing $\rho$. 

\begin{figure}[htpb]
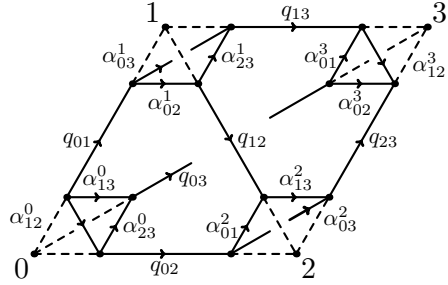

$$
\psdraw{SLrepFigures.8}{2.3in}
$$
\caption{A $(G,N)$-cocycle on a truncated $3$-simplex.}\lbl{cocyclefigure}
\end{figure}

A $(G,N)$-cocycle $\tau$ coming from a generic tuple is called \emph{generic}. 
Letting $\overline B_*^{\gen}(G,N)$ be the subcomplex of $\overline B_*(G,N)$ generated by generic cocycles on a standard simplex, it follows that we have a canonical isomorphism
\begin{equation}
\overline B_*^{\gen}(G,N)=C_*^{\gen}(G/N)\otimes_{\Z[G]}\Z.
\end{equation} 
It now follows from Proposition~\ref{fundclassrepprop}, that the fundamental class can be represented as in \eqref{fundclassrep}. 

We wish to prove that a generic $(G,N)$-cocycle is uniquely determined by the Ptolemy coordinates.
\begin{notation}
Let $k\in\{1,\dots,n-1\}$.
\begin{enumerate}[(i)]
\item For $a_1,\dots,a_n\in\C^*$, let $q(a_1,\dots,a_n)$ be the counter-diagonal matrix whose entries on the counter-diagonal (from lower left to upper right) are $a_1,\dots ,a_n$.
\item For $x\in\C$, let $x_k(x)$ be the elementary matrix whose $(k,k+1)$ entry is $x$. 
\item For $b_1,\dots, b_k\in\C$, let $\pi_k(b_1,\dots,b_k)=x_1(b_1)x_2(b_2)\cdots x_k(b_k)$.
\end{enumerate}
\end{notation}


\begin{proposition}
\lbl{longedges} 
The long edges of a generic $(G,N)$-cocycle are determined by the Ptolemy coordinates as follows:
\begin{equation}
\lbl{qij}
q_{ij}=q(a_1,\dots,a_n),\qquad a_k=(-1)^{k-1}\frac{c_{(n-k)e_i+ke_j}}{c_{(n-k+1)e_i+(k-1)e_j}}.
\end{equation} 
\end{proposition}

\begin{proof} 
Let $(g_0N,\dots,g_kN)$ be a generic tuple, and let $A=g_i^{-1}g_j$. 
Then $q_{ij}=q_A$. Since 
\begin{equation}
\Norm{A}_{\{n-j+1,\dots,n\},\{1,j\}}=\det(\{g_i\}_{n-k},\{g_j\}_k)=c_{(n-k)e_i+ke_j},
\end{equation} 
the result follows from~\eqref{q}.
\end{proof}

The corresponding formula for the short edges requires considerable more work, and is given in Proposition~\ref{shortedgeformula} below. 

\begin{lemma} 
Let $A$ be generic, and let $L=x_A^{-1}A$.
The entries $L_{i,n-i+2}$ right below the counter diagonal are given by
\begin{equation}\lbl{belowcounterdiag}L_{i,n-i+2}=(-1)^{n-i}\frac{\Norm{A}_{\{i,\dots,n\},\{1,\dots,\widehat{n-i+1},n-i+2\}}}{\Norm{A}_{\{i+1,\dots,n\},\{1,\dots,n-i\}}}.
\end{equation}
\end{lemma}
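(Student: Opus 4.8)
The plan is to avoid inverting $x_A$ directly and instead reuse the explicit formulas already recorded in Proposition~\ref{xyqExistence}. The starting observation is that, by construction, $q_A=x_A^{-1}Ay_A$, and hence
\[
L=x_A^{-1}A=q_A\,y_A^{-1}.
\]
Since $q_A$ is counter-diagonal, its only nonzero entry in row $i$ lies in column $n-i+1$, so the matrix product collapses to a single term,
\[
L_{i,c}=(q_A)_{i,n-i+1}\,(y_A^{-1})_{n-i+1,c}.
\]
This simultaneously confirms that $L$ is lower counter triangular and isolates exactly the data we need.

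Next I would specialize to $c=n-i+2$, the column immediately to the right of the counter-diagonal position. Here the relevant entry of $y_A^{-1}$ is a superdiagonal entry, and since $y_A\in N$ is unipotent upper triangular, writing $y_A=I+\nu$ with $\nu$ strictly upper triangular gives $(y_A^{-1})_{m,m+1}=-\nu_{m,m+1}=-(y_A)_{m,m+1}$, because $\nu^k$ has vanishing superdiagonal for $k\geq 2$. Therefore
\[
L_{i,n-i+2}=-(q_A)_{i,n-i+1}\,(y_A)_{n-i+1,n-i+2}.
\]

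The remainder is bookkeeping with~\eqref{q} and~\eqref{y}. Substituting $j=n-i+1$ into~\eqref{q} yields
\[
(q_A)_{i,n-i+1}=(-1)^{n-i}\frac{\Norm{A}_{\{i,\dots,n\},\{1,\dots,n-i+1\}}}{\Norm{A}_{\{i+1,\dots,n\},\{1,\dots,n-i\}}},
\]
while evaluating~\eqref{y} at row index $n-i+1$ and column index $n-i+2$ yields
\[
(y_A)_{n-i+1,n-i+2}=-\frac{\Norm{A}_{\{i,\dots,n\},\{1,\dots,\widehat{n-i+1},n-i+2\}}}{\Norm{A}_{\{i,\dots,n\},\{1,\dots,n-i+1\}}}.
\]
Multiplying, the two explicit signs combine with $(-1)^{n-i}$ to leave an overall $(-1)^{n-i}$, and the minor $\Norm{A}_{\{i,\dots,n\},\{1,\dots,n-i+1\}}$ cancels between the numerator of $(q_A)_{i,n-i+1}$ and the denominator of $(y_A)_{n-i+1,n-i+2}$, producing precisely the asserted formula~\eqref{belowcounterdiag}.

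The only delicate point is the index substitution in~\eqref{y}: one must track that the column index $n-i+2$ forces the row set $\{n-(n-i+2)+2,\dots,n\}=\{i,\dots,n\}$, that the deleted (hatted) column equals the row index $n-i+1$, and that the sign $(-1)^{(n-i+1)+(n-i+2)}$ equals $-1$. Beyond this sign and index tracking there is no genuine obstacle. The generic derivation applies for $2\le i\le n-1$; as a consistency check, the boundary case $i=n$ uses $q_{n,1}=A_{n,1}$ and collapses to $L_{n,2}=A_{n,2}$, matching the fact that the last row of $x_A^{-1}$ is $e_n$.
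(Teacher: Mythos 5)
Your proof is correct, but it takes a genuinely different route from the paper's. The paper re-runs the Cramer's-rule argument of Proposition~\ref{xyqExistence} on the rows of $x_A^{-1}$ (using that $L$ is lower counter triangular to get a linear system for the $i$th row of $x_A^{-1}$), and then evaluates $L_{i,n-i+2}$ as a sum of products of minors which it recognizes as the Laplace expansion of the single minor $\Norm{A}_{\{i,\dots,n\},\{1,\dots,\widehat{n-i+1},n-i+2\}}$ along its last column. You instead factor $L=x_A^{-1}A=q_A\,y_A^{-1}$, use the counter-diagonality of $q_A$ to collapse the product to the single term $q_{i,n-i+1}(y_A^{-1})_{n-i+1,n-i+2}$, and then use the elementary fact that the superdiagonal of the inverse of a unipotent upper triangular matrix is the negative of its superdiagonal. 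This reduces everything to the already-established formulas \eqref{q} and \eqref{y}, and your index and sign bookkeeping is right (including reading the numerator of \eqref{q} as $\Norm{A}_{\{n-j+1,\dots,n\},\{1,\dots,j\}}$, which is what the proof of Proposition~\ref{xyqExistence} actually establishes, and including the degenerate case $i=n$ via the empty-minor convention). What your approach buys is the avoidance of a fresh Cramer's-rule computation and of the Laplace-expansion identity; what the paper's approach buys is a formula for all entries of $x_A^{-1}$, not just the piece needed here, though it does not use the extra generality. Both are complete proofs.
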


\begin{proof}
We proceed as in the proof of Proposition~\ref{xyqExistence}. Let $x=x_A^{-1}$. Since $L$ is lower counter-triangular, we must have
\begin{equation}
x_{\{i\},\{i+1,\dots,n\}}A_{\{i+1,\dots,n\},\{1,\dots,n-i\}}+A_{\{i\},\{1,\dots,n-i\}}=0,
\end{equation}
so by Cramer's rule,
\begin{equation}
x_{ij}=(-1)^{i+j}\frac{\Norm{A}_{\{i,\dots,\widehat j,\dots,n\},\{1,\dots,n-i\}}}{\Norm{A}_{\{i+1,\dots,n\},\{1,\dots,n-i\}}}\text{ for }j>i.
\end{equation}
We thus have
\begin{align*}
\Norm{A}_{\{i+1,\dots,n\},\{1,\dots,n-i\}}L_{i,n-i+2}&=A_{i,n-i+2}\Norm{A}_{\{i+1,\dots,n\},\{1,\dots,n-i\}}\\&\hphantom{==}+\sum_{k=i+i}^n(-1)^{i+k}\Norm{A}_{\{j,\dots,\widehat k,\dots,n\},\{1,\dots,n-j\}}A_{k,n-i+2}\\&=\sum_{k=j}^n(-1)^{i+k}\Norm{A}_{\{j,\dots,\widehat k,\dots,n\},\{1,\dots,n-i\}}A_{k,n-i+2}\\&=(-1)^{n-i}\Norm{A}_{\{i,\dots,n\},\{1,\dots,\widehat{n-i+1},\dots,n-i+2\}},
\end{align*}
which proves the result.
\end{proof}

\begin{definition} 
Let $A,B\in\GL(n,\C)$. 
\begin{enumerate}[(i)]
\item $A$ and $B$ are related by a \emph{type $0$ move} if all but the last column of $A$ and $B$ are equal.
\item $A$ and $B$ are related by a \emph{type $1$ move} if all but the second last column of $A$ and $B$ are equal.
\item $A$ and $B$ are related by a \emph{type $2$ move} if for some $j<n-1$, $B$ is obtained from $A$ by switching columns $j$ and $j+1$. 
\end{enumerate}
\end{definition}


\begin{proposition}
\lbl{moveeffectonx} 
Let $A$ and $B$ be generic, and let $A_i$ and $B_i$ denote the $i$th column of $A$, resp.~$B$.
\begin{enumerate}[(i)]
\item If $A$ and $B$ are related by a type $0$ move, $x_B=x_A$.
\item If $A$ and $B$ are related by a type $1$ move, $x_B=x_Ax_1(x)$, where
\begin{equation}
x=-\frac{\det(A_1,\dots,A_{n-1},B_{n-1})\det(e_1,e_2,A_1,\dots,A_{n-2})}{\det(e_1,A_1,\dots,A_{n-1})\det(e_1,A_1,\dots,A_{n-2},B_{n-1})}.
\end{equation}
\item If $A$ and $B$ are related by a type $2$ move switching columns $j$ and $j+1$, $x_B=x_Ax_{n-j}(x)$, where
\begin{equation}
x=-\frac{\det(e_1,\dots,e_{n-j-1},A_1,\dots,A_{j+1})\det(e_1,\dots,e_{n-j+1},A_1,\dots,A_{j-1})}{\det(e_1,\dots,e_{n-j},A_1,\dots,A_j)\det(e_1,\dots,e_{n-j},A_1,\dots,A_{j-1},B_j)}.
\end{equation}
\end{enumerate}
\end{proposition}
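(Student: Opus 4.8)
The plan is to reduce the statement to a single geometric fact about the assignment $A\mapsto x_A$ and then to one short linear-algebra computation that pins down the scalar $x$.

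First I would record the observation underlying Proposition~\ref{xyqExistence}: the matrix $x_A\in N$ depends only on the flag $W_\bullet(A)$ whose $k$-th step is $W_k(A)=\langle A_1,\dots,A_k\rangle$, the span of the first $k$ columns of $A$. Indeed, writing $A=x_AL$ with $L=x_A^{-1}A$ lower counter-triangular and generic, the first $k$ columns of $L$ span $\langle e_{n-k+1},\dots,e_n\rangle$, so $W_k(A)=x_A\langle e_{n-k+1},\dots,e_n\rangle$; conversely, if $A'=Ag$ with $g$ upper triangular invertible, then $x_A^{-1}A'=Lg$ is again lower counter-triangular, whence $x_{A'}=x_A$. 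Thus $x_A$ is the unique element of $N$ carrying the reference flag with $k$-th step $\langle e_{n-k+1},\dots,e_n\rangle$ to $W_\bullet(A)$, and it is unchanged by any modification of $A$ preserving all the subspaces $W_k(A)$.

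Part (i) is then immediate: a type $0$ move changes only the last column, so $W_k(A)=W_k(B)$ for $k\le n-1$ while $W_n=\C^n$ always; the two flags coincide and $x_B=x_A$. For parts (ii) and (iii) I would note that the flags $W_\bullet(A)$ and $W_\bullet(B)$ agree in every step but one: a type $1$ move alters only $W_{n-1}$, and a type $2$ move swapping columns $j,j+1$ alters only $W_j$ (the spans $W_k$ with $k<j$ or $k>j$ are untouched by the transposition). Writing $k_0$ for the exceptional index, the element $u=x_A^{-1}x_B\in N$ satisfies $u\langle e_{n-k+1},\dots,e_n\rangle=x_A^{-1}W_k(B)=x_A^{-1}W_k(A)=\langle e_{n-k+1},\dots,e_n\rangle$ for every $k\ne k_0$, and moves the reference subspace for $k=k_0$. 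A direct check shows that the subgroup of $N$ fixing $\langle e_{n-k+1},\dots,e_n\rangle$ for all $k\ne k_0$ forces $u_{il}=0$ except at $(i,l)=(n-k_0,n-k_0+1)$, i.e.\ $u=x_{n-k_0}(x)$ for a single scalar $x$. Since $n-k_0=1$ for a type $1$ move and $n-k_0=n-j$ for a type $2$ move, this yields the asserted forms $x_B=x_Ax_1(x)$ and $x_B=x_Ax_{n-j}(x)$.

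It remains to compute $x$, which is the main (purely computational) obstacle. The clean way is to exploit that $x_B^{-1}B=x_{n-k_0}(x)^{-1}\bigl(x_A^{-1}B\bigr)$ must itself be lower counter-triangular. In the type $1$ case $x_A^{-1}B$ differs from $L=x_A^{-1}A$ only in column $n-1$, whose new value is $v=x_A^{-1}B_{n-1}$; demanding that the first coordinate of $x_1(x)^{-1}v$ vanish gives $x=v_1/v_2$, with $v_1,v_2$ the first two coordinates of $v$. In the type $2$ case the same argument applied to column $j$ gives $x=w_{n-j}/w_{n-j+1}$ with $w=x_A^{-1}A_{j+1}$. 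I would then expand these coordinates by Cramer's rule, using $\det x_A=1$, so that each $v_i$ (resp.\ $w_i$) becomes the determinant of $x_A$ with its $i$-th column replaced by the relevant column of $B$. Finally I would convert each such determinant into one involving the $A_i$ and the standard vectors $e_i$, using the two flag identities $\langle c_1,\dots,c_i\rangle=\langle e_1,\dots,e_i\rangle$ and $\langle c_{n-k+1},\dots,c_n\rangle=\langle A_1,\dots,A_k\rangle$ for the columns $c_i=x_Ae_i$ (together with $c_1=e_1$), evaluating the resulting change-of-basis factors again by $\det x_A=1$. Reordering columns to the stated arrangement produces the displayed formulas; the only real care is the bookkeeping of permutation signs, which is where I expect essentially all the work—and the one chance for error—to lie.
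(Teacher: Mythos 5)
Your proof is correct, and it takes a genuinely different route from the paper's. The paper establishes the shape of the answer case by case: for type $1$ it inspects the explicit minor formula for the entries of $x_A$ to see that only the $(1,2)$ entry changes, and for type $2$ it performs a row operation on $LE_{j,j+1}$; it then evaluates the scalar via Jacobi's adjugate identity (type $1$) and via auxiliary minor formulas for the entries of $L=x_A^{-1}A$ just below the counter-diagonal (type $2$). You instead organize everything around the single observation that $x_A$ is the unique element of $N$ carrying the reference flag $\langle e_{n-k+1},\dots,e_n\rangle$ to the flag of column spans of $A$; since each move changes exactly one step of that flag, $u=x_A^{-1}x_B$ stabilizes all but one reference subspace and is therefore forced to be an elementary matrix $x_{n-k_0}(x)$ -- a uniform argument covering all three cases at once (and your stabilizer computation pinning down the single free entry is correct). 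Your determination of the scalar, by requiring the relevant coordinate of $x_{n-k_0}(x)^{-1}x_A^{-1}B$ to vanish and then evaluating via Cramer's rule together with $\det(c_1,\dots,c_i,A_1,\dots,A_k)=\det(e_1,\dots,e_i,A_1,\dots,A_k)$ for the columns $c_i$ of $x_A$, does reproduce the stated formulas (I checked that the type $1$ case yields exactly the displayed expression, sign included, and that your type $2$ intermediate quantity $L_{n-j,j+1}/L_{n-j+1,j+1}$ is literally the same one the paper computes). What your approach buys is conceptual clarity and independence from the adjugate machinery; what it costs is that the final conversion of $v_1/v_2$ into the displayed determinants is still a nontrivial bookkeeping exercise that you only sketch, whereas the paper discharges it explicitly. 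This is a presentational rather than a logical gap, but to make the proof complete you would need to write out those base-change determinants and signs.
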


\begin{proof}
The first statement follows from the fact that $x_A$ is independent of the last column of $A$.
Suppose $A$ and $B$ are related by a type $1$ move. Using \eqref{x}, one sees that $(x_A)_{ij}=(x_B)_{ij}$ except when $i=1$ and $j=2$. It thus follows that $x_B=x_Ax_1(x)$, where $x=(x_B)_{12}-(x_A)_{12}$. Letting $C$ be the matrix obtained from $A$ by replacing the $n$th column by the $(n-1)$th column of $B$, one has 
\begin{equation*}
\begin{gathered}
\Norm{A}_{\{1,3,\dots,n\},\{1,\dots,n-1\}}=\Adj(C)_{n,2},\quad \Norm{B}_{\{1,3,\dots,n\},\{1,\dots,n-1\}}=\Adj(C)_{n-1,2},\\
\Norm{A}_{\{2,\dots,n\},\{1,\dots,n-1\}}=\Adj(C)_{n,1},\quad \Norm{B}_{\{2,\dots,n\},\{1,\dots,n-1\}}=\Adj(C)_{n-1,1},
\end{gathered}
\end{equation*}
and it follows from~\eqref{x} that 
\begin{equation}\lbl{type1eq} x=(x_B)_{12}-(x_A)_{12}=\frac{\Adj(C)_{n-1,2}}{\Adj(C)_{n-1,1}}-\frac{\Adj(C)_{n,2}}{\Adj(C)_{n,1}}.
\end{equation}
We then have
\begin{align*}
x\Adj(C)_{n,1}\Adj(C)_{n-1,1}&=\Adj(C)_{n-1,2}\Adj(C)_{n,1}-\Adj(C)_{n-1,1}\Adj(C)_{n,2}\\&=-\Norm{\Adj(C)}_{\{n-1,n\},\{1,2\}}\\&=
-\det(C)\Norm{C}_{\{3,\dots,n\},\{1,\dots,n-2\}}\\&=-\det(A_1,\dots,A_{n-1},B_{n-1})\det(e_1,e_2,A_1,\dots,A_{n-2}),
\end{align*}
where the third equality follows from Jacobi's identity~\eqref{Jacobi}.
Since \begin{equation*}\Adj(C)_{n,1}\Adj(C)_{n-1,1}=\det(e_1,A_1,\dots,A_{n-1})\det(e_1,A_1,\dots,A_{n-2},B_{n-1}),\end{equation*} this proves the second statement. 

Now suppose $A$ and $B$ are related by a type $2$ move. Let $E_{j,j+1}$ be the elementary matrix obtained from the identity matrix by switching the $j$th and $(j+1)$th columns. Then $B=AE_{j,j+1}$. Since $L=x_A^{-1}A$ is lower counter triangular, $x_{n-j}(-\frac{L_{n-j,j+1}}{L_{n-j+1,j+1}})LE_{j,j+1}$ must also be lower counter triangular. We thus have
\begin{equation}
x_B=x_Ax_{n-j}(-\frac{L_{n-j,j+1}}{L_{n-j+1,j+1}})^{-1}=x_Ax_{n-j}(\frac{L_{n-j,j+1}}{L_{n-j+1,j+1}}).
\end{equation}
By \eqref{belowcounterdiag} and \eqref{q}, we have
\begin{equation}
\begin{aligned}
L_{n-j+1,j+1}&=(-1)^{j-1}\frac{\Norm{A}_{\{n-j+1,\dots,n\},\{1,\dots,\widehat j,j+1\}}}{\Norm{A}_{\{n-j+2,\dots,n\},\{1,\dots,j-1\}}}\\
L_{n-j,j+1}&=(-1)^j\frac{\Norm{A}_{\{n-j,\dots,n\},\{1,\dots,j+1\}}}{\Norm{A}_{\{n-j+1,\dots,n\},\{1,\dots,j\}}}.
\end{aligned}
\end{equation}
Hence,
\begin{align*}
\frac{L_{n-j,j+1}}{L_{n-j+1,j+1}}&=-\frac{\Norm{A}_{\{n-j,\dots,n\},\{1,\dots,j+1\}}\Norm{A}_{\{n-j+2,\dots,n\},\{1,\dots,j-1\}}}{\Norm{A}_{\{n-j+1,\dots,n\},\{1,\dots,j\}}\Norm{A}_{\{n-j+1,\dots,n\},\{1,\dots,\widehat j,j+1\}}}\\&=-
\frac{\det(e_1,\dots,e_{n-j-1},A_1,\dots,A_{j+1})\det(e_1,\dots,e_{n-j+1},A_1,\dots,A_{j-1})}{\det(e_1,\dots,e_{n-j},A_1,\dots,A_j)\det(e_1,\dots,e_{n-j},A_1,\dots,A_{j-1},B_j)},
\end{align*} 
proving the third statement.
\end{proof}

Note that any two matrices $A,B\in\GL(n,\C)$ are related by a sequence of moves of type $1$, $2$ and $0$ as follows:
\begin{equation}\lbl{movestring}
\begin{aligned}A\xrightarrow{1}&[A_1,\dots,A_{n-2},B_1,A_n]\xrightarrow{2}[A_1,\dots,A_{n-3},B_1,A_{n-2},A_n]\xrightarrow
{2}\dots\xrightarrow{2}\\&[B_1,A_1,\dots,A_{n-2},A_n]\xrightarrow{1}[B_1,A_1,\dots,A_{n-3},B_2,A_n]\xrightarrow{2}\dots\xrightarrow{2}\\&[B_1,B_2,A_1,\dots,A_{n-3},A_n]\xrightarrow{1,2}\dots\xrightarrow{1,2}[B_1,\dots,B_{n-1},A_n]\xrightarrow{0}B.
\end{aligned}
\end{equation}

Consider the tilings of a face ${ijk}$, $i<j<k$, of $\Delta^2_n$ by \emph{diamonds} shown in Figure~\ref{diamondfigure}. We refer to the diamonds as being of type $i$, $j$ and $k$, respectively. 
\begin{definition}\lbl{diamondcoordinates}
The \emph{diamond coordinate} $d^k_{r,s}$ of a diamond $(r,s)$ of type $k$ is the alternating product of the Ptolemy coordinates assigned to its vertices, see Figure~\ref{diamondfigure}.
\end{definition}
\begin{figure}[ht]
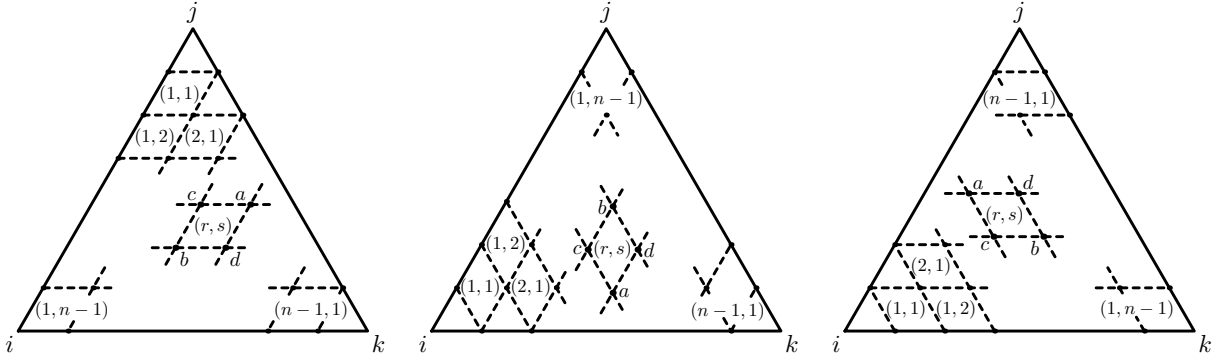

$$
\psdraw{SLrepFigures.9}{2in} \quad
\psdraw{SLrepFigures.10}{2in} \quad
\psdraw{SLrepFigures.11}{2in} 
$$
\caption{Diamonds of type $i$, $j$ and $k$. The diamond coordinates are $d^i_{r,s}=d^k_{r,s}=\frac{-ab}{cd}$, and $d^j_{r,s}=\frac{ab}{cd}$, where $a$, $b$, $c$, and $d$ are Ptolemy coordinates.}\lbl{diamondfigure}
\end{figure}




\begin{proposition}
\lbl{shortedgeformula} 
The short edges $\alpha^i_{jk}$, $j<k$, of a generic $(G,N)$-cocycle are determined by the Ptolemy coordinates as follows:
\begin{equation}\lbl{alphaijk}
\alpha^i_{jk}=\pi_{n-1}(d^i_{1,1},\dots,d^i_{1,n-1})\pi_{n-2}(d^i_{2,1},\dots,d^i_{2,n-2})\cdots \pi_1(d^i_{n-1,1}),
\end{equation}
where the $d^i_{j,k}$'s are the type $i$ diamond coordinates on the face $ijk$.
\end{proposition}

\begin{proof} 
Let $(g_0N,\dots,g_lN)$ be a generic tuple, and let $A=g_i^{-1}g_j$ and $B=g_i^{-1}g_k$.
We assume that $i<j<k$, the other cases being similar. Note that the Ptolemy coordinates on the $ijk$ face are given by 
\begin{equation}
c_{t_ie_i+t_je_k+t_ke_k}=\det(e_1,\dots,e_{t_i},A_1,\dots,A_{t_j},B_1,\dots,B_{t_k}).
\end{equation}
Performing the sequence of moves in \eqref{movestring}, the result follows from Proposition~\ref{moveeffectonx}. 
\end{proof}

\begin{corollary}
A generic tuple is determined up to the diagonal $G$-action by its Ptolemy coordinates.\qed
\end{corollary}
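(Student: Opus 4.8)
The plan is to reduce the statement to the two explicit edge-label formulas just established, using the canonical normal form for generic tuples. Recall from Corollary~\ref{freeaction} that every generic tuple $(g_0N,\dots,g_kN)$ may be written \emph{uniquely} as
\[
g_0x_{g_0^{-1}g_1}\cdot(N,q_{01}N,\alpha^0_{12}N,\dots,\alpha^0_{1k}N),
\]
so the second factor is a canonical representative of the diagonal $G$-orbit of the tuple, and two generic tuples lie in the same orbit if and only if their normal forms $(N,q_{01}N,\alpha^0_{12}N,\dots,\alpha^0_{1k}N)$ agree as tuples of $N$-cosets. The whole argument therefore reduces to showing that this normal form is recoverable from the Ptolemy coordinates alone.

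That recovery is exactly the content of the two preceding lemmas. By Lemma~\ref{longedges}, the long edge $q_{01}$ equals $q(a_1,\dots,a_n)$ with each $a_k$ an explicit ratio of Ptolemy coordinates, so $q_{01}$ is determined by the $c_t$. By Lemma~\ref{shortedgeformula}, each short edge $\alpha^0_{1j}$ (with $2\le j\le k$) is the prescribed product of elementary matrices $\pi_\bullet(\cdots)$ whose entries are the type-$0$ diamond coordinates on the face $01j$, and since every diamond coordinate is an alternating product of Ptolemy coordinates, $\alpha^0_{1j}$ is likewise determined by the $c_t$. Hence the entire normal-form tuple is a function of the Ptolemy coordinates, so two generic tuples with identical Ptolemy coordinates have identical normal forms, and thus lie in a common $G$-orbit, which is what we want.

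I expect essentially no obstacle at the level of this corollary: the real difficulty has already been absorbed into Lemma~\ref{shortedgeformula}, whose proof tracked how the matrices $x_A$ transform under the elementary column moves of types $0$, $1$, $2$ (Proposition~\ref{moveeffectonx}) composed along the move string~\eqref{movestring}. The only point to keep straight here is the indexing: the short edges occurring in the normal form are precisely the $\alpha^0_{1j}$ with $1<j$, which is the case $j<k$ of Lemma~\ref{shortedgeformula}, so no additional computation is required.
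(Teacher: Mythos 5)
Your proof is correct and follows essentially the same route as the paper, which states the corollary without proof as an immediate consequence of Lemmas~\ref{longedges} and~\ref{shortedgeformula}: the $(G,N)$-cocycle data $q_{ij}$, $\alpha^i_{jk}$ --- equivalently the canonical representative of the orbit from Corollary~\ref{freeaction} --- is an explicit function of the Ptolemy coordinates and determines the tuple up to the diagonal action. The only point to tighten is that the canonical representative is really $(N,q_{01}N,\alpha^0_{12}q_{02}N,\dots,\alpha^0_{1k}q_{0k}N)$ (as used in Propositions~\ref{reconstructtwotuple} and~\ref{reconstructtriple}), so you also need the long edges $q_{0j}$ for $j\geq 2$; these are likewise given by Lemma~\ref{longedges}, so the argument is unaffected.
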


This result together with Proposition~\ref{longedges} shows that there is at most one generic $(G,N)$-cocycle with a given collection of Ptolemy coordinates. We now prove that the Ptolemy relations are the only relations among the Ptolemy coordinates when $k\leq 3$.

\begin{example} 
Suppose Ptolemy assignments on $\Delta^2_n$, $n\in\{2,3\}$, are given as in Figure~\ref{Ptolemyandassignment}. Using~\eqref{qij} and \eqref{alphaijk}, we obtain that the corresponding $(G,N)$-cocycle is given by

\begin{equation}
\begin{gathered}
q_{01}=q(a,-1/a),\quad q_{12}=q(b,-1/b),\quad q_{02}=q(c,-1/c),\\
\alpha^0_{12}=x_1\Bigl(\frac{-b}{ac}\Bigr),\quad \alpha^1_{02}=x_1\Bigl(\frac{c}{ab}\Bigr),\quad \alpha^2_{01}=x_1\Bigl(\frac{-a}{cb}\Bigr)
\end{gathered}
\end{equation}
when $n=2$, and
\begin{equation}
\begin{gathered}
q_{01}=q(c,-a/c,1/a),\quad q_{12}-q(b,-e/b,1/e),\quad q_{02}=q(f,-g/f,1/g),\\
\alpha_{02}^1=x_1\Bigl(\frac{fa}{cd}\Bigr)x_2\Bigl(\frac{d}{ab}\Bigr)x_1\Bigl(\frac{gb}{de}\Bigr),\\
\alpha^0_{12}=x_1\Bigl(\frac{-bc}{ad}\Bigr)x_2\Bigl(\frac{-d}{cf}\Bigr)x_1\Bigl(\frac{-ef}{dg}\Bigr),\quad \alpha_{01}^2=x_1\Bigl(\frac{-cg}{fd}\Bigr)x_2\Bigl(\frac{-d}{ge}\Bigr)x_1\Bigl(\frac{-ae}{db}\Bigr)
\end{gathered}
\end{equation}
when $n=3$.
\begin{figure}[ht]
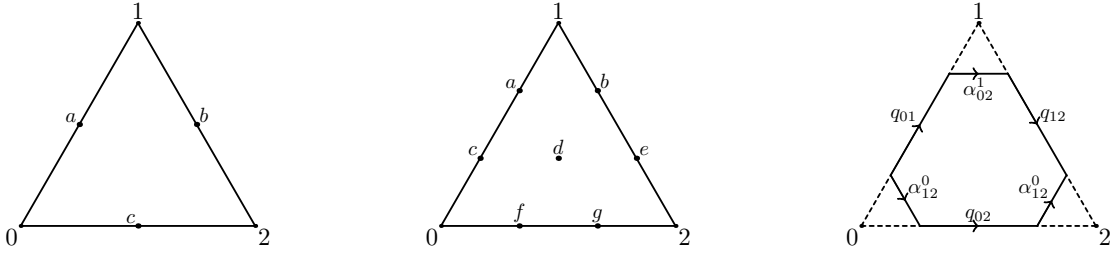

$$
\psdraw{SLrepFigures.23}{1.4in}\hspace{2cm}
\psdraw{SLrepFigures.19}{1.4in}\hspace{2cm}
\psdraw{SLrepFigures.20}{1.4in}
$$
\caption{Ptolemy assignments and the corresponding cocycle for $n=2$ and $n=3$.}\lbl{Ptolemyandassignment}
\end{figure}
\end{example}

\begin{lemma}
\lbl{uniquefactorization}
Let $a_{i,j}$ and $b_{i,j}$ be non-zero complex numbers. The equality 
\begin{equation} \pi_{n-1}(a_{1,1},\dots,a_{1,n-1})\cdots \pi_1(a_{n-1,1})=\pi_{n-1}(b_{1,1},\dots,b_{1,n-1})\cdots \pi_1(b_{n-1,1})
\end{equation}
holds if and only if $a_{i,j}=b_{i,j}$ for all $i,j$.
\end{lemma}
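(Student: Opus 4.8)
The plan is to prove the non-trivial ``only if'' direction by induction on $n$, reading off the parameters one block at a time from the matrix
\[
P=\pi_{n-1}(a_{1,1},\dots,a_{1,n-1})\,\pi_{n-2}(a_{2,1},\dots,a_{2,n-2})\cdots\pi_1(a_{n-1,1}).
\]
(The ``if'' direction is immediate.) The base case is $n=2$, where $P=x_1(a_{1,1})$ and $a_{1,1}$ is simply the $(1,2)$-entry. This factorization expresses a unipotent upper-triangular matrix along the reduced word $s_1s_2\cdots s_{n-1}\mid s_1\cdots s_{n-2}\mid\cdots\mid s_1$ for the longest element $w_0\in S_n$, so uniqueness is expected on general grounds (cf.\ Lusztig's parametrization); the point is to give a short self-contained argument.

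The key step I would carry out is to isolate the first block $B_1:=\pi_{n-1}(a_{1,1},\dots,a_{1,n-1})$ using the last column of $P$. Writing $P=B_1B_2\cdots B_{n-1}$ with $B_i=x_1(a_{i,1})\cdots x_{n-i}(a_{i,n-i})$, each factor $B_i$ with $i\geq 2$ is built only from $x_1,\dots,x_{n-i}$ with $n-i+1<n$, and hence fixes $e_n$. Therefore $Pe_n=B_1e_n$, and computing $B_1e_n$ from the right via $x_k(x)e_{k+1}=e_{k+1}+x\,e_k$ gives that the $i$th entry of the last column of $P$ equals $a_{1,i}a_{1,i+1}\cdots a_{1,n-1}$ (with $n$th entry equal to $1$). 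Since the $a_{1,k}$ are non-zero, these entries are non-zero and $a_{1,i}=P_{i,n}/P_{i+1,n}$ for $1\leq i\leq n-1$ recovers the entire first block.

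Finally I would peel off $B_1$: since $B_1$ is now determined, so is $B_1^{-1}P=B_2\cdots B_{n-1}$, and each $B_i$ with $i\geq 2$ is the identity outside the top-left $(n-1)\times(n-1)$ block. Thus $B_1^{-1}P$ is block diagonal with a $1$ in the $(n,n)$-slot and an $(n-1)\times(n-1)$ block that is exactly a product of the same shape with parameters $a_{i,j}$, $2\leq i\leq n-1$; the induction hypothesis then determines these, closing the induction. The only real obstacle is the bookkeeping needed to verify this block structure precisely---namely that all $x_k$ occurring in $B_2,\dots,B_{n-1}$ carry indices $\leq n-1$, so that left multiplication by $B_1^{-1}$ genuinely returns a product of the required form in one lower dimension---after which the argument is routine.
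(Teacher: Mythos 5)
Your proof is correct and takes essentially the same route as the paper's: both observe that the last column of the product equals the last column of the first factor $\pi_{n-1}(a_{1,1},\dots,a_{1,n-1})$, whose $i$th entry is $\prod_{k=i}^{n-1}a_{1,k}$, so non-vanishing of the parameters lets one recover $a_{1,j}$ and then induct. You merely make explicit the block-structure bookkeeping for the inductive step that the paper leaves to the reader.
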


\begin{proof}
For any $c_{i,j}$, the $n$th column of $\pi_{n-1}(c_{1,1},\dots,c_{1,n-1})\cdots \pi_1(c_{n-1,1})$ is equal to the $n$th column of $\pi_{n-1}(c_{1,1},\dots,c_{1,n-1})$, which equals
\[(\prod_{i=1}^{n-1} c_{1,i},\prod_{i=2}^{n-1}c_{1,i},\dots,c_{1,n-1}).\]
This proves that $a_{1,j}=b_{1,j}$ for all $j$. The result now follows by induction.
\end{proof}

\begin{proposition}
\lbl{reconstructtwotuple} 
For any assignment $c\colon\dot\Delta^2_n(\Z)\to\C^*$, there is a unique Ptolemy assignment $c\in Pt_2^n$ whose Ptolemy coordinates are $c_t$.
\end{proposition}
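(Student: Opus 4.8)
The plan is to deduce uniqueness for free and to prove existence by writing down a tuple by hand. Since the Ptolemy coordinate of $(g_0N,g_1N,g_2N)$ at the $i$-th vertex of $\Delta^2_n$ is $\det(\{g_i\}_n)=\det(g_i)=1$, a Ptolemy cochain is completely determined by its restriction to $\mathring{T}^2(n)$; hence any two Ptolemy cochains with the same coordinates on $\mathring{T}^2(n)$ are equal, and uniqueness is automatic. It remains to show that an arbitrary assignment $c\colon\mathring{T}^2(n)\to\C^*$ is realized, i.e.\ that there is a generic tuple $(g_0N,g_1N,g_2N)\in C_2^{\gen}(G/N)$ whose Ptolemy coordinates are the $c_t$ (and $1$ at the vertices).

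First I would normalize $g_0=I$ using the diagonal $G$-action, and then run the reconstruction formulas backwards. Define counter-diagonal matrices $q_{01}=q(a_1,\dots,a_n)$ and $q_{02}$ directly from $c$ by \eqref{qij}, and define $\alpha^0_{12}\in N$ from the type-$0$ diamond coordinates of the face $012$ by \eqref{alphaijk}; all of these are nonzero ratios of the $c_t$, so they are well defined. I then set
\[
g_0=I,\qquad g_1=q_{01},\qquad g_2=\alpha^0_{12}\,q_{02}.
\]
A telescoping computation shows $\prod_k a_k=\pm 1$ with the sign cancelling the anti-diagonal sign, so $\det g_1=1$; as $\alpha^0_{12}\in N$ is unipotent and $\det q_{02}=1$ likewise, $\det g_2=1$, and a direct check that the relevant minors are nonzero shows the tuple is generic. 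Because $q_{01}$ is already lower counter-triangular, the uniqueness in Proposition~\ref{xyqExistence} gives $x_{g_1}=I$ and $q_{g_1}=q_{01}$; writing $g_2=\alpha^0_{12}q_{02}$ with $\alpha^0_{12}\in N$ and $q_{02}$ counter-diagonal identifies $x_{g_2}=\alpha^0_{12}$ and $q_{g_2}=q_{02}$. Thus the long edges of this tuple are $q_{01},q_{02}$ and its short edge is $\alpha^0_{12}=x_{g_1}^{-1}x_{g_2}$, exactly the data we prescribed from $c$.

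The crux is to check that the Ptolemy coordinates $c'$ of the constructed tuple agree with $c$ on all of $\mathring{T}^2(n)$, including the $\binom{n-1}{2}$ interior points that appear for $n\ge 3$. Computing the cocycle of the tuple in the other direction, Lemma~\ref{longedges} expresses $q_{01},q_{02}$ through $c'$ and Lemma~\ref{shortedgeformula} expresses $\alpha^0_{12}$ through the type-$0$ diamond coordinates of $c'$. Equating with the prescribed data and inverting: the identity of counter-diagonal matrices recovers the successive ratios of the edge-$01$ and edge-$02$ coordinates, which together with the vertex normalization pin down those edge coordinates by telescoping; and Lemma~\ref{uniquefactorization} forces the type-$0$ diamond coordinates of $c'$ to equal those of $c$. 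Finally, knowing the edge coordinates and all type-$0$ diamond coordinates, one recovers every remaining coordinate on the face by sweeping the diamonds inward from the edges $01$ and $02$ (each new point is the fourth vertex of a diamond whose other three values are already known), so $c'=c$ throughout. This last sweeping-and-matching step is the main obstacle; it is exactly where the unique factorization of Lemma~\ref{uniquefactorization} and the combinatorics of the diamond tiling do the essential work. As a consistency check, the cocycle data carries $2(n-1)+\binom{n}{2}=\tfrac{(n-1)(n+4)}{2}=\#\mathring{T}^2(n)$ free parameters, matching the count of Ptolemy coordinates.
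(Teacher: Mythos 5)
Your proposal is correct and takes essentially the same route as the paper: both construct the tuple $(N,q_{01}N,\alpha^0_{12}q_{02}N)$ from the prescribed coordinates via \eqref{qij} and \eqref{alphaijk}, match the coordinates on the edges through vertex $0$, invoke Lemma~\ref{uniquefactorization} to force the type-$0$ diamond coordinates of the constructed tuple to agree with those of $c$, and then sweep inward through the diamonds, determining each new point as the fourth vertex of a diamond whose other three values are known. The extra checks you include (vertex normalization for uniqueness, $\det q_{01}=1$) are harmless elaborations of what the paper leaves implicit.
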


\begin{proof} 
We prove that the Ptolemy coordinates $c_t^\prime$ of $(N,q_{01}N,\alpha^0_{12}q_{02}N)$ equal $c_t$, where $q_{01}$, $q_{02}$ and $\alpha^0_{12}$ are given in terms of the $c_t$'s by \eqref{qij} and \eqref{alphaijk}. First note that $c_t=c^\prime_t$ if either $t_1$ or $t_2$ is $0$, i.e.~if $t$ is on one of the edges of $\Delta^2_n$ containing the $0$th vertex. 
Each of the other integral points $t$ is the upper right vertex of a unique diamond $(r,s)$ of type $0$.
Let $\tau_k$ be the upper right vertex of the $k$th diamond $D_k$ in the sequence
\begin{equation}
\lbl{diamondsequence}
(1,n-1),(1,n-2),\dots(1,1),(2,n-2),\dots,(2,1),\dots,(n-1,1).
\end{equation}
By Lemma~\ref{uniquefactorization}, $d^{0\prime}_{r,s}=d^0_{r,s}$ for all diamonds $(r,s)$ of type $0$. It thus follows that if $c_t=c^\prime_t$ for all but one of the vertices of a diamond $D$, then $c_t=c^\prime_t$ for all vertices of $D$. In particular $c^\prime_{\tau_1}=c_{\tau_1}$.
Suppose by induction that $c^\prime_{\tau_i}=c_{\tau_i}$ for all $i<k$. Then $c^\prime_t=c_t$, for all vertices of $D_k$ except $\tau_k$. Hence, we also have $c^\prime_{\tau_k}=c_{\tau_k}$, completing the induction.
\end{proof}

\begin{proposition}
\lbl{reconstructtriple} 
For any assignment $c\colon\\dot\Delta^3_n(\Z)\to\C^*$ satisfying the Ptolemy relations, there is a unique Ptolemy assignment $c\in Pt_3^n$ whose Ptolemy coordinates are $c_t$.
\end{proposition}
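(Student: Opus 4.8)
The plan is to reduce the statement to the two-dimensional case (Proposition~\ref{reconstructtwotuple}) together with a combinatorial propagation argument driven by the Ptolemy relations. Uniqueness is immediate: a Ptolemy cochain on $\Delta^3_n$ is determined by its values on $\mathring T^3(n)$, since the vertex values are forced to equal $\det(g_i)=1$. So only existence — realizability of the given assignment $c$ by a generic tuple $(g_0N,g_1N,g_2N,g_3N)$ — needs proof, and this is where the Ptolemy relations must enter.

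For existence I would first build a candidate tuple from just two faces. Restricting $c$ to the faces $012$ and $023$ gives assignments on two copies of $\mathring T^2(n)$, and by Proposition~\ref{reconstructtwotuple} each is realized by a generic triple, say $(g_0N,g_1N,g_2N)$ and $(g_0'N,g_2'N,g_3'N)$. These carry the same Ptolemy coordinates on the shared edge $02$, so the pairs $(g_0N,g_2N)$ and $(g_0'N,g_2'N)$ are generic $2$-tuples with identical coordinates; by Proposition~\ref{tupleuniqueuptoGaction} they differ by the diagonal $G$-action. After translating the second triple by the appropriate $g\in G$ I may take its first two cosets to be $g_0N$ and $g_2N$, and I let $g_3N$ denote its third coset. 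The resulting tuple $(g_0N,g_1N,g_2N,g_3N)$ then has Ptolemy coordinates $c'$ agreeing with $c$ on all of the faces $012$ and $023$ by construction.

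It remains to check $c'=c$ at every integral point. Every $t\in T^3(n)$ lies on the face $012$ (where $t_3=0$), or on the face $023$ (where $t_1=0$), or in the set $S=\{t\mid t_1\ge 1,\ t_3\ge 1\}$, and these three regions exhaust $\Delta^3_n$. Both $c$ and $c'$ satisfy the Ptolemy relations~\eqref{Ptolemyrelation}, the former by hypothesis and the latter by Proposition~\ref{PtolemyProposition}. I would prove $c_t=c'_t$ for all $t\in S$ by strong induction on $m=t_1+t_3\ge 2$. Given $t\in S$, set $\alpha=t-e_1-e_3\in T^3(n-2)$, so $t=\alpha_{13}$, and solve the relation for this corner:
\begin{equation}
c_{\alpha_{13}}=\frac{c_{\alpha_{03}}c_{\alpha_{12}}+c_{\alpha_{01}}c_{\alpha_{23}}}{c_{\alpha_{02}}},
\end{equation}
and likewise for $c'$. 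Each of the five remaining corners is obtained from $t$ by decreasing the coordinate indexed $1$ and/or the coordinate indexed $3$ (while possibly raising the others): thus it has coordinate $1$ equal to $0$ (lying on face $023$), or coordinate $3$ equal to $0$ (lying on face $012$), or it again lies in $S$ but with strictly smaller $m$. In every case $c$ and $c'$ already agree there — on the two faces by construction, inside $S$ by the inductive hypothesis — and the denominator $c_{\alpha_{02}}=c'_{\alpha_{02}}\neq 0$ is a known nonzero value, so $c_t=c'_t$. This simultaneously shows that all $c'_t=c_t$ are nonzero, hence the tuple is generic and realizes $c$, finishing existence.

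The main obstacle, and the only point where the Ptolemy relations are genuinely used, is this propagation step: one must choose the monovariant $m=t_1+t_3$ so that solving each Ptolemy relation for the single corner $\alpha_{13}$ expresses an unknown value of $S$ in terms of values strictly closer to the two seed faces, and then verify that all five auxiliary corners land in the already-determined region (faces $012$, $023$, or lower strata of $S$). Everything else is bookkeeping layered on Proposition~\ref{reconstructtwotuple} and Proposition~\ref{tupleuniqueuptoGaction}.
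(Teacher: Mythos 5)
Your proof is correct and follows essentially the same route as the paper's: seed the coordinates on two faces via Proposition~\ref{reconstructtwotuple}, then propagate inward by induction using the Ptolemy relations (the paper seeds on the faces $t_2=0$ and $t_3=0$, inducts on $a_2+a_3$, and solves each relation for the corner $\alpha_{23}$; you seed on $t_3=0$ and $t_1=0$, induct on $t_1+t_3$, and solve for $\alpha_{13}$ --- the same argument up to relabeling). The only cosmetic difference is in how the candidate tuple is produced: the paper writes it down explicitly via the cocycle formulas~\eqref{qij} and~\eqref{alphaijk}, whereas you glue two triples along the shared edge $02$ using Proposition~\ref{tupleuniqueuptoGaction}.
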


\begin{proof}
Let $c^\prime_t$ be the Ptolemy coordinates of the tuple $(N,q_{01}N,\alpha^0_{12}q_{02}N,\alpha^0_{13}q_{03}N)$ defined from the $c_t$'s by \eqref{qij} and \eqref{alphaijk}. We wish to prove that $c^\prime_t=c_t$ for all $t$. Note that if, for some subsimplex $\Delta^3(\alpha)$, $c^\prime_{\alpha_{ij}}=c_{\alpha_{ij}}$ for all but one of the $6$ $\alpha_{ij}$'s, then $c^\prime_{\alpha_{ij}}=c_{\alpha_{ij}}$ holds for all $\alpha_{ij}$. This is a direct consequence of the Ptolemy relations.
By Proposition~\ref{reconstructtwotuple}, $c^\prime_t=c_t$, when either $t_2$ or $t_3$ is zero.
Hence, for each $\alpha=(a_0,a_1,a_2,a_3)$ with $a_2=a_3=0$, $c^\prime_{\alpha_{ij}}=c_{\alpha_{ij}}$ except possibly when $(i,j)=(2,3)$. As explained above, $c^\prime_{\alpha_{23}}=c_{\alpha_{23}}$ as well. Now suppose by induction that $c^\prime_{\alpha_{ij}}=c_{\alpha_{ij}}$ for all $\alpha$ with $a_2+a_3<k$. Then $c^\prime_{\alpha_{ij}}=c_{\alpha_{ij}}$ holds except possibly when $(i,j)=(2,3)$. Again, $c^\prime_{\alpha_{23}}=c_{\alpha_{23}}$ must also hold, completing the induction. 
\end{proof}

A $(G,N)$-cocycle on $M$ obviously determines a decorated representation (up to conjugation). The main results of this section can thus be summarized by the diagram below.
\begin{equation}\label{summaryof1to1}
\xymatrix{\left\{\txt{Points in $P_n(K)$}\right\}\ar@{<->}[r]&\left\{\txt{Generic
$(G,N)$-cocycles\\on $M$}\right\}\ar@{<->}[r]&\left\{\txt{Generically decorated\\
$(G,N)$-representations}\right\}}
\end{equation}

\begin{remark}
We stress that the Ptolemy variety parametrizes decorated representations and \emph{not} decorated representations up to equivalence. In particular, the dimension of $P(K)$ depends on the triangulation, and may be very large if $K$ has many interior vertices. 
\end{remark}


\subsection{Obstruction cocycles and the $p\SL(n,\C)$-Ptolemy varieties}
\lbl{sub.obstruction}
Suppose $n$ is even. The projection $G\to pG$ maps $N$ isomorphically onto its image (also denoted by $N$), and by elementary obstruction theory (see e.g.~Steenrod~\cite{Steenrod}), the obstruction to lifting a $(pG,N)$-representation $\rho$ to a $(G,N)$-representation is a class in 
\begin{equation}
H^2(M,\partial M;\Z/2\Z)=H^2(K;\Z/2\Z).
\end{equation} 
We can represent it by an explicit cocycle in $Z^2(K;\Z/2\Z)$ as follows: Pick any $(p\SL(n,\C),N)$-cocycle $\bar\tau$ on $M$ representing $\rho$ and a lift $\tau$ of $\bar\tau$ to a $(G,N)$-cochain. Each $2$-cell of $K$ corresponds to a hexagonal $2$-cell of $M$, and the $2$-cocycle $\sigma\in Z^2(K;\Z/2\Z)$ taking a $2$-cell to the product of the $\tau$-labelings along the corresponding hexagonal $2$-cell of $M$ represents the obstruction class. 

\begin{proposition} 
Suppose the interior of $M$ is a $1$-cusped hyperbolic $3$-manifold with finite volume. The obstruction class in $H^2(K;\Z/2\Z)$ to lifting the geometric representation is non-trivial.
\end{proposition}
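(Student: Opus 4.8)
The plan is to reduce the statement to the case $n=2$ and then invoke Calegari's theorem. First I would record that the class recalled at the start of this subsection is the \emph{complete} obstruction to a boundary-unipotent lift: the homotopy fiber of $B\SL(n,\C)^\delta\to Bp\SL(n,\C)^\delta$ is $B(\Z/2\Z)=K(\Z/2\Z,1)$, so there is a single obstruction, lying in $H^2(M,\partial M;\Z/2\Z)$, and it vanishes exactly when a boundary-unipotent lift exists. Next I would exploit the compatibility of $\phi_n$ with the two central extensions. Since $n$ is even, $\phi_n(-I)=(-I)^{n+1}=-I$, so $\phi_n$ maps the kernel $\langle -I\rangle$ of $\SL(2,\C)\to\PSL(2,\C)$ isomorphically onto the kernel $\langle s_G\rangle=\langle -I\rangle$ of $\SL(n,\C)\to p\SL(n,\C)$. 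Thus $\phi_n$ is a morphism of central $\Z/2\Z$-extensions, and by naturality of the obstruction the class for $\phi_n\circ\rho_{\geo}$ is the pullback by $\rho_{\geo}$ of the same extension class governing lifting from $\PSL(2,\C)$ to $\SL(2,\C)$. Hence it coincides with the obstruction $\sigma_{\geo}\in H^2(M,\partial M;\Z/2\Z)$ to lifting $\rho_{\geo}$ to a boundary-unipotent representation in $\SL(2,\C)$, and it suffices to show $\sigma_{\geo}\neq 0$.

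For the $\SL(2,\C)$ statement I would argue by contradiction. If $\sigma_{\geo}=0$, completeness of the obstruction produces a boundary-unipotent lift $\tilde\rho\colon\pi_1(M)\to\SL(2,\C)$ of $\rho_{\geo}$. For each cusp the longitude $\lambda$ is a nontrivial peripheral element, so $\tilde\rho(\lambda)$ is conjugate into $N$ and therefore has trace $2$. On the other hand, by Calegari~\cite{Calegari}, every lift of the geometric representation to $\SL(2,\C)$ sends $\lambda$ to an element of trace $-2$. Since $2\neq -2$, this is a contradiction, so no boundary-unipotent lift exists and $\sigma_{\geo}\neq 0$. By the reduction of the previous paragraph, the obstruction class for $\phi_n\circ\rho_{\geo}$ is then nontrivial for every even $n$ as well.

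The argument requires essentially no computation; its two inputs are Calegari's theorem on the longitude trace and the fact, implicit in this subsection, that the relevant class is the complete (not merely primary) obstruction, which is what licenses the passage from ``vanishing obstruction'' to ``existence of a boundary-unipotent lift.'' I expect the one point needing genuine care — and hence the main obstacle — to be the identification of the $\SL(n,\C)$ and $\SL(2,\C)$ obstruction classes under $\phi_n$: this rests on $\phi_n$ respecting the order-two central elements, i.e.\ on $\phi_n(-I)=-I$ for even $n$, together with the naturality of the obstruction class under morphisms of central extensions. Everything else is a direct application of Calegari's result and the definition of boundary-unipotence.
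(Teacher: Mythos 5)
Your proof is correct and follows essentially the same route as the paper: both rest on Calegari's result that any lift of $\rho_{\geo}$ to $\SL(2,\C)$ sends a curve bounding a two-sided incompressible surface (e.g.\ a longitude) to an element of trace $-2$, which is incompatible with boundary-unipotence, so the (complete) obstruction class cannot vanish. The only difference is that you add an explicit reduction from general even $n$ to $n=2$ via $\phi_n(-I)=-I$ and naturality of the obstruction under morphisms of central extensions, whereas the paper's proof addresses only the $\SL(2,\C)$ case directly and leaves that compatibility implicit.
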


\begin{proof} 
By a result of Calegari~\cite[Corollary~2.4]{Calegari}, any lift of the geometric representation takes a longitude to an element in $\SL(2,\C)$ with trace $-2$. This shows that no lift is boundary-unipotent, so the obstruction class must be non-trivial.
\end{proof}

Proposition~\ref{xyqExistence} also holds in $p\SL(n,\C)$, and we thus have a $1$-$1$ correspondence between generically decorated representations and $(pG,N)$-cocycles on $M$.

\begin{definition} 
Let $\sigma\in Z^2(K;\Z/2\Z)$. A lifted $(pG,N)$ cocycle on $M$ with obstruction cocycle $\sigma$ is a generic $(G,N)$-assignment on $M$ lifting a $(pG,N)$-cocycle on $M$ such that the 2-cocycle on $K$ obtained by taking products along hexagonal faces of $M$ equals $\sigma$.
\end{definition}

A $1$-cochain $\eta\in C^1(K;\Z/2\Z)$ acts on a lifted $(pG,N)$-cocycle $\tau$ by multiplying a long edge $e$ by $\eta(e)$. Note that if $\tau$ has obstruction cocycle $\sigma$, $\eta\tau$ has obstruction cocycle $\delta(\eta)\sigma$, where $\delta$ is the standard coboundary operator.
Recall that there is a $1$-$1$ correspondence between generic $(G,N)$-cocycles on $M$ and points in the Ptolemy-variety.
We shall prove a similar result for $pG$.

We wish to define a coboundary action on $pG$-Ptolemy assignments (see Definition~\ref{pSLPtolemydefn}). Let $c$ be a $pG$-Ptolemy assignment on $\Delta$, and let $\eta_{ij}\in C^1(\Delta;\Z/2\Z)$ be the cochain taking the edge $ij$ to $-1$ and all other edges to $1$. Define
\begin{equation}\lbl{actiononPtolemys}
\eta_{ij} c\colon\dot\Delta^3_n(\Z)\to \C^*,\qquad (\eta_{ij}c)_t=(-1)^{t_it_j}c_t
\end{equation} 
and extend in the natural way to define $\eta c$ for a $pG$-Ptolemy assignment $c$ on $K$ and $\eta\in C^1(K;\Z/2\Z)$. A priori $\eta c$ is only an assignment of complex numbers to the integral points of the simplices of $K$. However, we have:
 
\begin{lemma}
\lbl{boundaryactiononPtolemy} 
If $c$ is a $pG$-Ptolemy assignment on $K$ with obstruction cocycle $\sigma$, $\eta c$ is a $pG$-Ptolemy assignment on $K$ with obstruction cocycle $\delta(\eta)\sigma$.
\end{lemma}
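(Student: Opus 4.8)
The plan is to reduce the statement to a sign computation by writing the action of $\eta$ in closed form. Writing $\eta(pq)\in\{\pm1\}$ for the value of $\eta$ on the edge $pq$ and $\eta(pq)=(-1)^{m_{pq}}$, the generator formula~\eqref{actiononPtolemys} extends multiplicatively to
\begin{equation*}
(\eta c)_t=s(t)\,c_t,\qquad s(t)=\prod_{p<q}\eta(pq)^{t_pt_q}.
\end{equation*}
Since $(\eta c)_t=\pm c_t$ with $c_t\in\C^*$, the assignment $\eta c$ automatically takes values in $\C^*$, so it remains to verify two things: that on each simplex $\eta c$ satisfies the $p\SL(n,\C)$-Ptolemy relations~\eqref{pSLPtolemyrelation} with obstruction cocycle $\delta(\eta)\sigma$, and that the coordinates of $\eta c$ still agree on identified faces of $K$.

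The heart of the argument is an identity for $s(t)$. Fix $\alpha=(a_0,a_1,a_2,a_3)\in T^3(n-2)$ and a partition $\{i,j,k,l\}=\{0,1,2,3\}$. The points $\alpha_{ij}$ and $\alpha_{kl}$ have coordinates $a_p+[p\in\{i,j\}]$ and $a_p+[p\in\{k,l\}]$, and expanding the quadratic exponent of $s(\alpha_{ij})s(\alpha_{kl})$ gives
\begin{equation*}
s(\alpha_{ij})\,s(\alpha_{kl})=S\cdot\eta(ij)\,\eta(kl),\qquad S:=(-1)^{\sum_p N_p a_p},\quad N_p:=\sum_{q\neq p}m_{pq}.
\end{equation*}
The key point is that the bulk factor $S$ is \emph{independent of the chosen pairing}: the purely quadratic-in-$a$ part $\sum_{p<q}m_{pq}a_pa_q$ appears once in each of the two factors, hence with even multiplicity, and drops out mod $2$; in the linear-in-$a$ part the coefficient of each $a_p$ collapses to $N_p$, the sum of the exponents over all edges incident to $p$, which is the same for all three pairings. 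Thus $S$ is common to all three degree-two monomials of the Ptolemy relation, and the only pairing-dependent contribution is the constant factor $\eta(ij)\eta(kl)$.

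Granting this, I substitute into the relation for $\eta c$, cancel the common factor $S$, and divide through by the factor $\eta(02)\eta(13)$ coming from the right-hand term. Using $\sigma'_m=(\delta\eta)_m\sigma_m$ with $(\delta\eta)_m=\prod_{pq}\eta(pq)$ over the edges $pq$ of the face opposite $m$, the resulting coefficients $\sigma'_2\sigma'_3\,\eta(03)\eta(12)/(\eta(02)\eta(13))$ and $\sigma'_0\sigma'_3\,\eta(01)\eta(23)/(\eta(02)\eta(13))$ reduce to $\sigma_2\sigma_3$ and $\sigma_0\sigma_3$ respectively, since over $\{\pm1\}$ every $\eta(\cdot)$ occurring does so squared; this is exactly the original relation~\eqref{pSLPtolemyrelation} for $c$. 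Hence $\eta c$ satisfies the $p\SL(n,\C)$-Ptolemy relations with obstruction cocycle $\delta(\eta)\sigma$ on each simplex. Finally, for a point $t$ on a face of $K$ the coordinate at the opposite vertex vanishes, so $s(t)$ depends only on the restriction of $\eta$ to that face and on the barycentric position of $t$; since $\eta$ is a globally defined cochain on $K$ and $c$ agrees on identified faces, so does $\eta c$, and $\eta c$ is a genuine $p\SL(n,\C)$-Ptolemy cochain on $K$. The main obstacle is the sign bookkeeping establishing the pairing-independence of $S$; once that is isolated, matching the residual signs against $\delta(\eta)$ is a mechanical $\Z/2\Z$ computation.
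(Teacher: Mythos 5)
Your proof is correct and follows essentially the same route as the paper's: both verify the lemma by tracking how the signs of the three monomials $c_{\alpha_{03}}c_{\alpha_{12}}$, $c_{\alpha_{01}}c_{\alpha_{23}}$, $c_{\alpha_{02}}c_{\alpha_{13}}$ change under the action and matching the residual, pairing-dependent signs against $\delta(\eta)$. The only difference is organizational: the paper reduces to a single generator $\eta_{ij}$ (taking $ij=01$), whereas you treat a general $\eta$ in one shot via the closed-form sign $s(t)=\prod_{p<q}\eta(pq)^{t_pt_q}$, which also lets you make the agreement on identified faces of $K$ explicit rather than implicit.
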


\begin{proof}
It is enough to prove this for a simplex $\Delta$ and for $\eta=\eta_{ij}$. Let $c^\prime=\eta_{ij}c$. We assume for simplicity that $ij=01$; the other cases are proved similarly. For any $\alpha=(a_0,a_1,a_2,a_3)\in \Delta^k_{n-2}(\Z)$, we then have
\begin{equation}\lbl{actionpreservesPtolemy}
c^\prime_{\alpha_{03}}c^\prime_{\alpha_{12}}+c^\prime_{\alpha_{01}}c^\prime_{\alpha_{23}}-c^\prime_{\alpha_{02}}c^\prime_{\alpha_{13}}=
(-1)^{a_0+a_1}(c_{\alpha_{03}}c_{\alpha_{12}}-c_{\alpha_{01}}c_{\alpha_{23}}-c_{\alpha_{02}}c_{\alpha_{13}})
\end{equation}
Let $\tau=\delta(\eta_{01})$. Since $\delta(\eta_{01})_2=\delta(\eta_{01})_3=-1$ and $\delta(\eta_{01})_0=1$, \eqref{actionpreservesPtolemy} implies that
\begin{equation}
\tau_2\tau_3c^\prime_{\alpha_{03}}c^\prime_{\alpha_{12}}+\tau_0\tau_3c^\prime_{\alpha_{03}}c^\prime_{\alpha_{01}}c^\prime_{\alpha_{23}}=c^\prime_{\alpha_{02}}c^\prime_{\alpha_{13}},
\end{equation}
as desired.
\end{proof}

\begin{definition}
\lbl{pSLdiamondcoordinates}
The diamond coordinates of a $p\SL(n,\C)$-Ptolemy assignment with obstruction cocycle $\sigma$ are defined as in Definition~\ref{diamondcoordinates}, but multiplied by the sign (provided by $\sigma$) of the face.
\end{definition}

Note that for $\eta\in C^1(K;\Z/2/\Z)$, the diamond coordinates of $c$ and $\eta c$ are identical.

\begin{proposition}
\lbl{pGcocyclesandPtolemy} 
For any $\sigma\in Z^2(K;\Z/2\Z)$, there is a $1$-$1$ correspondence between $p\SL(n,\C)$-Ptolemy assignments on $K$ with obstruction cocycle $\sigma$, and lifted $(p\SL(n,\C),N)$-cocycles on $M$ with obstruction cocycle $\sigma$. The correspondence preserves the coboundary actions.
\end{proposition}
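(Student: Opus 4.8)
The plan is to mirror the $\SL(n,\C)$ correspondence summarized in \eqref{summaryof1to1}, upgrading it to keep track of the obstruction cocycle. Since Proposition~\ref{xyqExistence} holds verbatim in $p\SL(n,\C)$, a lifted $(p\SL(n,\C),N)$-cocycle on $M$ is the same data as a collection of long edges $q_{ij}$ and short edges $\alpha^i_{jk}$ (one generic labeling per truncated simplex) that agree on identified faces, and the only extra information is the sign $\sigma$ recorded by the hexagonal face products. I would therefore define the forward map $c\mapsto\tau(c)$ directly by the reconstruction formulas: the long edges by \eqref{qij} applied to the edge Ptolemy coordinates of $c$, and the short edges by \eqref{alphaijk} applied to the \emph{signed} diamond coordinates of Definition~\ref{pSLdiamondcoordinates}. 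This is a canonical assignment with no auxiliary choices.

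Next I would verify that $\tau(c)$ is a lifted $(p\SL(n,\C),N)$-cocycle on $M$ with obstruction cocycle $\sigma$. The key observation is that $\tau$ intertwines the two coboundary actions: acting on $c$ by $\eta\in C^1(K;\Z/2\Z)$ leaves the diamond coordinates (hence all short edges) unchanged and flips exactly the edge coordinates, which has the effect of multiplying each long edge by the corresponding value of $\eta$, i.e.\ it is precisely the action on lifted cocycles. Working one simplex at a time, I choose $\eta_\Delta$ with $\delta\eta_\Delta=\sigma_\Delta$, possible since $H^2(\Delta;\Z/2\Z)=0$; by Lemma~\ref{boundaryactiononPtolemy} the cochain $\eta_\Delta c$ has obstruction $\delta(\eta_\Delta)\sigma_\Delta=1$, so it is an honest Ptolemy cochain and $\tau(\eta_\Delta c)$ is a genuine $(\SL(n,\C),N)$-cocycle by Propositions~\ref{reconstructtwotuple} and \ref{reconstructtriple}. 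Since $\tau(c)=\eta_\Delta^{-1}\tau(\eta_\Delta c)$ and multiplication of a genuine cocycle by $\eta_\Delta$ changes its obstruction by $\delta\eta_\Delta$, the cochain $\tau(c)$ descends to a $(p\SL(n,\C),N)$-cocycle with obstruction $\sigma_\Delta$ on that simplex. Because long and short edges on a face depend only on the face data, and identified faces of $K$ carry the same Ptolemy coordinates and the same value of $\sigma$, the pieces glue to a cocycle on all of $M$ with obstruction cocycle $\sigma$.

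For the inverse and bijectivity I would read the edge Ptolemy coordinates off the long edges via \eqref{qij} and the signed diamond coordinates off the short edges via \eqref{alphaijk} and Lemma~\ref{uniquefactorization}, then reconstruct the unique $p\SL(n,\C)$-Ptolemy cochain exactly as in the proofs of Propositions~\ref{reconstructtwotuple} and \ref{reconstructtriple}, the $p\SL(n,\C)$-Ptolemy relations of Definition~\ref{pSLPtolemydefn} replacing the ordinary ones. That these two maps are mutually inverse, and that both are equivariant for the coboundary actions, is then immediate from the definitions and the intertwining above.

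The step I expect to be the main obstacle is pinning down that the hexagonal face product of $\tau(c)$ equals the value of $\sigma$ on that face, i.e.\ the sign bookkeeping that upgrades the $\SL(n,\C)$ cocycle identity (product $=I$) to product $=\sigma_{ijk}I$. Concretely, I must check that the sign $\sigma_{ijk}$ folded into the diamond coordinates of the face $ijk$ propagates, through \eqref{alphaijk}, to exactly one factor of $\sigma_{ijk}$ in the product of the three short edges around the hexagon, while the long edges, being built from sign-free edge coordinates, contribute nothing. The coboundary reduction above is designed to sidestep a direct computation of this product by transporting it to the $\SL(n,\C)$ case, but one still has to confirm that the intertwining statement and the obstruction-change identity $\eta\cdot(\text{obstruction }\sigma)=\delta(\eta)\sigma$ hold at the level of cochains and not merely cohomology classes.
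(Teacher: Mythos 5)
Your proposal is correct and follows essentially the same route as the paper: define the cocycle by the formulas \eqref{qij} and \eqref{alphaijk} using the $\sigma$-modified diamond coordinates, transport the obstruction bookkeeping to the $\SL(n,\C)$ case by acting with $\eta$ satisfying $\delta\eta=\sigma$ (via Lemma~\ref{boundaryactiononPtolemy} and the invariance of diamond coordinates under the $\eta$-action), and deduce bijectivity from the inductive arguments of Propositions~\ref{reconstructtwotuple} and \ref{reconstructtriple}. The ``main obstacle'' you flag at the end is indeed exactly what the coboundary reduction handles, and this is how the paper handles it too.
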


\begin{proof}
It is enough to prove this for a simplex $\Delta$. For a $pG$-Ptolemy assignment $c$ on $\Delta$ with obstruction cocycle $\sigma\in Z^2(\Delta;\Z/2\Z)$, define a cochain $\tau$ on $\overline\Delta$ by the formulas~\eqref{qij} and \eqref{alphaijk} using the $\sigma$-modified diamond coordinates (Definition~\ref{pSLdiamondcoordinates}). Let $\eta\in C^1(\Delta;\Z/2\Z)$ be such that $\delta\eta=\sigma$, where $\delta$ is the standard coboundary map. By Lemma~\ref{boundaryactiononPtolemy} $\eta c$ satisfies the $\SL(n,\C)$ Ptolemy relations~\eqref{Ptolemyrelation}, and hence corresponds to an $(\SL(n,\C),N)$-cocycle $\tau^{\prime}$. Since the diamond coordinates of $c$ and $\eta c$ are the same, the short edges of $\tau^\prime$ agree with those of $\tau$ and the long edges differ from those of $\tau$ by $\eta$. This proves that $\tau$ is a lifted $(pG,N)$-cocycle with obstruction cocycle $\sigma$. The inductive arguments of Propositions~\ref{reconstructtwotuple} and \ref{reconstructtriple} show that this is a $1$-$1$ correspondence. The fact that the actions by coboundaries correspond is immediate from the construction.
\end{proof}

\begin{corollary}
\lbl{pGvariety}
Let $\sigma\in Z^2(K;\Z/2\Z)$. There is an algebraic variety
$P_n^\sigma(K)$ of generically decorated boundary-unipotent
representations $\rho\colon\pi_1(M)\to p\SL(n,\C)$ whose obstruction
class to lifting to $\SL(n,\C)$ is represented by~$\sigma$. Up to canonical
isomorphism, the variety
$P_n^\sigma(K)$ only depends on the cohomology class of~$\sigma$.
\end{corollary}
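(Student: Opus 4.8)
The plan is to read the corollary off from Proposition~\ref{pGcocyclesandPtolemy} together with Lemma~\ref{boundaryactiononPtolemy}, since these already encode the two assertions being made. First I would note that, by Definition~\ref{pSLPtolemydefn}, $P_n^\sigma(K)$ is cut out as the common solution set of the $p\SL(n,\C)$-Ptolemy relations~\eqref{pSLPtolemyrelation} — homogeneous quadratic equations whose coefficients $\pm 1$ are prescribed by $\sigma$ — together with the non-vanishing of all coordinates; this exhibits it as an algebraic variety exactly as in the $\SL(n,\C)$-case. Then Proposition~\ref{pGcocyclesandPtolemy} identifies its points with lifted $(p\SL(n,\C),N)$-cocycles on $M$ with obstruction cocycle $\sigma$, and the correspondence~\eqref{summaryof1to1} between generic $(pG,N)$-cocycles and generically decorated representations turns each such cocycle into a boundary-unipotent $\rho\colon\pi_1(M)\to p\SL(n,\C)$ whose obstruction class to lifting to $\SL(n,\C)$ is $[\sigma]$, by the very definition of the obstruction cocycle. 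This gives the first sentence.

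For the second sentence I would argue as follows. If $[\sigma]=[\sigma']$, write $\sigma'=\delta(\eta)\sigma$ for some $\eta\in C^1(K;\Z/2\Z)$. By Lemma~\ref{boundaryactiononPtolemy} the assignment $c\mapsto\eta c$ maps $P_n^\sigma(K)$ into $P_n^{\sigma'}(K)$; by~\eqref{actiononPtolemys} it merely multiplies coordinates by signs, so it is regular, and since $\eta$ has order two in $C^1(K;\Z/2\Z)$ the analogous map on $P_n^{\sigma'}(K)$ is a two-sided inverse. Hence $c\mapsto\eta c$ is an isomorphism of varieties. To see that it deserves to be called canonical, I would invoke the last clause of Proposition~\ref{pGcocyclesandPtolemy}: the coboundary action corresponds to multiplying the long edges of the associated lifted cocycle by the signs $\eta(e)$, i.e.\ by $\pm I\in\langle s_G\rangle$, which are trivial in $p\SL(n,\C)=G/\langle s_G\rangle$. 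Thus the underlying $(p\SL(n,\C),N)$-cocycle, and the represented $p\SL(n,\C)$-representation, are unchanged, so the isomorphism covers the identity on the set of parametrized representations.

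The delicate point I expect to have to address is the precise sense of \emph{canonical}. A choice of $\eta$ with $\delta(\eta)\sigma=\sigma'$ is unique only up to a cocycle $\zeta\in Z^1(K;\Z/2\Z)$, and two such choices yield isomorphisms differing by the automorphism $c\mapsto\zeta c$ of $P_n^\sigma(K)$. The remedy is the same computation: because $\zeta$ likewise multiplies long edges by central signs, every such automorphism fixes the underlying representation. Consequently all the isomorphisms $c\mapsto\eta c$ agree once one passes to the represented $p\SL(n,\C)$-representations, which is exactly the content of the claim that $P_n^\sigma(K)$, viewed as a variety of representations, depends only on the cohomology class of $\sigma$.
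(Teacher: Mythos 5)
Your proposal is correct and follows essentially the same route as the paper, which derives the corollary directly from Proposition~\ref{pGcocyclesandPtolemy}; you have simply made explicit the role of Lemma~\ref{boundaryactiononPtolemy} and the coboundary action $c\mapsto\eta c$ in establishing independence of the cohomology class, together with the observation that any two choices of $\eta$ differ by a cocycle acting by central signs on long edges and hence fixing the underlying $p\SL(n,\C)$-representation. This last point, which the paper leaves implicit, is handled correctly and is exactly the right justification for the word \emph{canonical}.
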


\begin{proof}
This follows immediately from Proposition~\ref{pGcocyclesandPtolemy}.
\end{proof}

Note that the canonical isomorphisms in Corollary~\ref{pGvariety} respect the extended Bloch group element. This follows from the $pG$ variant of Proposition~\ref{independenceoflog}. The analogue of~\eqref{summaryof1to1} is
\begin{equation}
\xymatrix{\left\{\txt{Points in $P^\sigma_n(K)$}\right\}\ar@{<->}[r]&\left\{\txt{Lifted
$(pG,N)$-cocycles on $M$\\with obstruction cocycle $\sigma$}\right\}\ar@{->>}[r]^{k:1}&\left\{\txt{Generically decorated\\
$(pG,N)$-representations\\with obstruction cocycle $\sigma$}\right\},}
\end{equation}
where $k$ is the number of lifts, i.e.~$k=\vert Z^1(K;\Z/2\Z)\vert$.

\subsection{Proof of Theorem~\ref{mainthmintro} and Theorem~\ref{mainthmpSLintro}}\label{Proofofmainthm}
Let $\rho\colon P_n(K)\to R_{G,N}(M)\big/\Conj$ be the composition of the map in~\eqref{summaryof1to1} with the forgetful map ignoring the decoration.  Let $c\in P_n(K)$. By Proposition~\ref{lambdaimageinBhat}, $\lambda(c)$ is in $\widehat\B(\C)$ and by Proposition~\ref{independenceofdec}, $\lambda(c)$ only depends on $\rho(c)$.  Commutativity of diagram~\eqref{mainthmdiag} follows from Remark~\ref{extensionremark}, and the fact that $\rho$ is surjective if $K$ is sufficiently fine follows from Proposition~\ref{barycentric}.
This concludes the proof of Theorem~\ref{mainthmintro}. The first part of Theorem~\ref{mainthmpSLintro} is proved similarly, and the last part follows from Theorem~\ref{Ptolemydetectsrhogeoproof} below.

\section{Examples}
\lbl{sec.examples}

In the examples below, all computations of Ptolemy varieties are exact, whereas the computations of complex volume are numerical with at least $50$ digits precision.

\begin{example}
[The $5_2$ knot complement]
\lbl{ex.52}
Consider the $3$-cycle $K$ obtained from the simplices in 
Figure~\ref{52triangulation} by identifying the faces via the unique 
simplicial attaching maps preserving the arrows. The space obtained 
from $K$ by removing the $0$-cell is homeomorphic to the complement of 
the $5_2$ knot, as can be verified by SnapPy~\cite{SnapPy}.

\begin{figure}[ht]
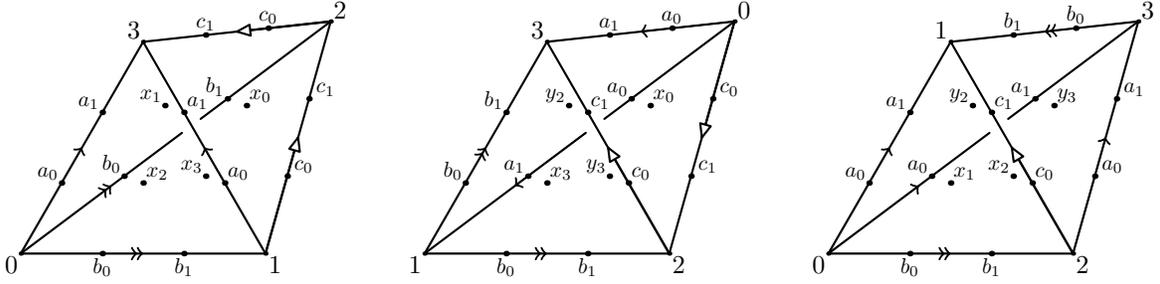

$$
\psdraw{SLrepFigures.12}{1.8in} \qquad
\psdraw{SLrepFigures.13}{1.8in} \qquad
\psdraw{SLrepFigures.14}{1.8in} 
$$
\caption{A $3$-cycle structure on the $5_2$ knot complement, and Ptolemy 
coordinates for $n=3$.}\lbl{52triangulation}
\end{figure}

Labeling the Ptolemy coordinates as in Figure~\ref{52triangulation}, the 
Ptolemy variety for $n=3$ is given by the equations
\begin{equation}
\begin{aligned}
&a_0x_3+b_0x_1=b_0x_2,&\quad &a_0y_3+a_0x_0=c_0y_2,&\quad &a_0x_2+b_0y_2=a_0x_1\\
&x_2c_0+b_1x_0=x_3a_0,&\quad &y_2b_0+a_1x_3=y_3b_0,&\quad &x_1a_0+b_1y_3=x_2c_0\\
&x_1c_1+x_3c_0=b_1x_0,&\quad &x_0b_1+y_3c_0=c_1x_3,&\quad &y_2a_1+x_2b_0=a_1y_3\\
&a_1x_0+x_2c_1=x_1a_1,&\quad &a_1x_3+y_2c_1=x_0b_1,&\quad &a_1y_3+x_1b_1=y_2c_1
\end{aligned} 
\end{equation}
together with an extra equation (involving an additional variable $t$)
\begin{equation}
a_0a_1b_0b_1c_0c_1x_0x_1x_2x_3y_2y_3t=1,
\end{equation}
making sure that all Ptolemy coordinates are non-zero. By 
Remark~\ref{normalizerremark}, the diagonal matrices act on the decorations, 
and one easily checks that the action by a matrix $\diag(x,y,z)$ 
with determinant $1$ multiplies a Ptolemy coordinate on an edge by $x^2y$ and a Ptolemy coordinate on a face by $x^3$.
Since we are not interested in the particular decoration, we may thus assume e.g.~that $a_0=y_3=1$.  
Using Magma~\cite{Magma}, one finds that 
the Ptolemy variety, after setting $a_0=y_3=1$, has three zero-dimensional
components with $3$, $4$ and $6$ points respectively. 
One of these is given by
\begin{equation}
\lbl{geometriccomp}
\begin{gathered}
a_0=a_1=y_3=1,\quad x_1=-1, \quad c_0=c_1=x_0^2+2x_0+1\\
y_2=x_0^2+2=-x_2,\quad x_3=-x_0^2-x_0-1\\
x_0^3+x_0^2+2x_0+1=0 
\end{gathered}
\end{equation}
Thus, this component gives rise to $3$ representations, one for each 
solution to $x_0^3+x_0^2+2x_0+1=0$. Using the fact that 
$R(\lambda(c))=i\Vol_\C(\rho)$, the complex volumes of these can be 
computed to be
\begin{equation}
\lbl{SL3geometric} 
0.0-4.453818209\dots i\in\C/4\pi^2i\Z,
\qquad \pm 11.31248835\ldots+12.09651350\dots i\in\C/4\pi^2i\Z
\end{equation}
corresponding to the values $x_0=-0.5698\dots$ and 
$x_0=-0.2150\mp1.3071\dots i$, respectively. 

In Zickert~\cite[Section~6]{ZickertDuke}, the complex volumes of the 
Galois conjugates of the geometric representation are computed to be
\begin{equation}
\lbl{PSL2geometric}
0.0-1.113454552\ldots i\in\C/\pi^2i\Z,
\qquad \pm 2.828122088\ldots + 3.024128376\ldots i\in\C/\pi^2i\Z.
\end{equation}
Notice that \eqref{SL3geometric} is (approximately) $4$ times 
\eqref{PSL2geometric}. It thus follows from Theorem~\ref{detectgeo} 
that the representations given by~\eqref{geometriccomp} are $\phi_3$ 
composed with the geometric component of $\PSL(2,\C)$-representations 
and that the factor of $4$ is exact.

Another component is given by
\begin{equation}
\begin{gathered} 
a_0=a_1=y_3=1,\quad x_1=-1,\quad b_1=-x_0\\
b_0=1/4x_0^3-1/4x_0^2+3/4x_0-1/2\\
c_0=c_1=1/4x_0^3-1/4x_0^2-1/4x_0+1/2\\
y_2=-x_2=1/4x_0^3+3/4x_0^2+7/4x_0+3/2\\
x_3=-x_0^2-x_0-1\\
x_0^4+x_0^3+x_0^2-4x_0-4=0.  
\end{gathered}
\end{equation}
In this case there are two distinct complex volumes given by:
\begin{equation}
0.0+2.631894506\ldots i=\frac{4}{15}\pi^2i\in\C/4\pi^2i\Z,
\quad 0.0+10.527578027\ldots i=\frac{16}{15}\pi^2i\in\C/4\pi^2i\Z.
\end{equation}

The third component has somewhat larger coefficients, but after introducing 
a variable $u$ with $u^6 + 5u^4 + 8u^2 - 2u + 1=0$, the defining equations 
simplify to
\begin{equation}
\begin{gathered}
a_0=y_3=1,\quad a_1=1/4u^5+1/4u^4+5/4u^3+1/2u^2+2u-3/4\\
b_0=b_1=-1/4u^4-3/4u^2-1/4u-3/4,\\
c_1=-1/4u^5-3/4u^3-1/4u^2-3/4u,\\
c_0=1/2u^5+9/4u^3+1/4u^2+7/2u-1/4,\\
y_2=-8/17u^5-1/34u^4-79/34u^3-3/17u^2-105/34u+26/17,\\
x_3=1/17u^5-1/17u^4+6/17u^3-6/17u^2+14/17u-16/17,\\
x_2=9/34u^5+4/17u^4+37/34u^3+31/34u^2+75/34u+13/17,\\
x_1=8/17u^5+1/34u^4+79/34u^3+3/17u^2+139/34u-9/17,\\
x_0=15/34u^5+1/17u^4+73/34u^3+29/34u^2+125/34u-1/17,\\
u^6 + 5u^4 + 8u^2 - 2u + 1=0.\\
\end{gathered}
\end{equation}
In this case, there are $3$ distinct complex volumes:
\begin{equation}
0.0+1.241598704\dots i,\quad \pm 6.332666642\ldots+1.024134714\dots i
\end{equation}
According to Conjecture~\ref{newconjecture}, $6.33\dots + 1.02\dots i$ 
should (up to rational multiples of $\pi^2i$) be an integral linear 
combination of complex volumes of hyperbolic manifolds. 
Using e.g.~Snap~\cite{snapprogram}, one checks that the complex volume of 
the manifold $m034$ is given by
\begin{equation}
3.166333321\ldots + 2.157001424\dots i,
\end{equation}
and we have
\begin{equation}
6.332666642\ldots+1.024134714\dots i=2\Vol_\C(m034)
-\frac{1}{3}\pi^2i\in\C/4\pi^2i\Z.
\end{equation}
\end{example}

\begin{example}
[The figure $8$ knot complement]
\lbl{ex.41}
Let $K$ be the $3$-cycle in Figure~\ref{fig8triangulation}. Then $M=M(K)$ 
is the figure $8$ knot complement, and $H^2(K;\Z/2\Z)=H^2(M,\partial M;
\Z/2\Z)=\Z/2\Z$.

\begin{figure}[ht]
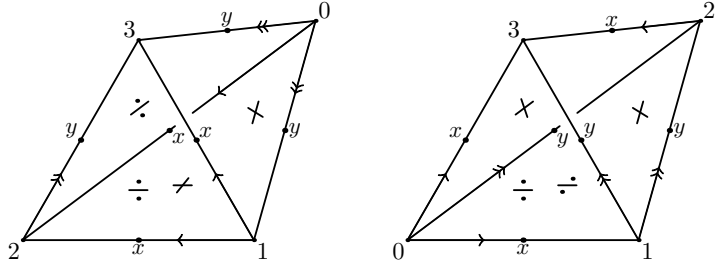

$$
\psdraw{SLrepFigures.15}{1.7in} \qquad
\psdraw{SLrepFigures.16}{1.7in} 
$$
\caption{A $3$-cycle structure on the figure $8$ knot complement and 
Ptolemy coordinates for $n=2$. The signs indicate the non-trivial second 
$\Z/2\Z$ cohomology class.} 
\lbl{fig8triangulation}
\end{figure}

For the trivial obstruction class, the Ptolemy variety for $n=2$ is given by
\begin{equation}
yx+y^2=x^2,\qquad xy+x^2=y^2, 
\end{equation}
and is thus empty since $x$ and $y$ are non-zero. In fact, the only 
boundary-unipotent representations in $\SL(2,\C)$ are reducible, so this is not surprising. 
The non-trivial obstruction 
class can be represented by the cocycle indicated in 
Figure~\ref{fig8triangulation}, and the Ptolemy variety is given by
\begin{equation}
yx-y^2=x^2,\qquad xy-x^2=y^2.
\end{equation}
As in Example~\ref{ex.52}, we may assume $y=1$. Hence, the Ptolemy variety 
detects two (complex conjugate) representations corresponding to the 
solutions to $x^2-x+1=0$. The extended Bloch group elements are
\begin{equation}
-(-\widetilde x,-2\widetilde x)+(\widetilde x,2\widetilde x)
\in\widehat\B(\C)_{\PSL},
\end{equation}
with complex volume
\begin{equation}
\pm 2.029883212\ldots + 0.0i.
\end{equation}
We thus recover the well known complex volume of the figure $8$ knot 
complement.
\end{example}
For $n=3$, similar calculations as those in Example~\ref{ex.52} show that 
the Ptolemy variety detects $3$ zero-dimensional components, but the only one with non-zero volume 
is the one induced by the geometric representation. For $n=4$, lots of new 
complex volumes emerge. For the trivial obstruction class, the non-zero 
complex volumes are
\begin{equation}
\pm 7.327724753\ldots + 0.0i=2\Vol_\C(5^2_1)+\pi^2i/4,
\end{equation}
where the manifold $5^2_1$ is the whitehead link complement.
For the non-trivial obstruction class, the complex volumes are
\begin{equation}
\begin{gathered}
\pm20.29883212\ldots+0.0i=10\Vol_\C(4_1)\in\C/\pi^2i\Z\\
\pm4.260549384\ldots\pm 0.136128165\ldots i\\
\pm3.230859569\ldots+0.0i\\
\pm8.355502146\ldots + 2.428571615\ldots i=\Vol_\C(-9^3_{15})+2\pi^2i/3\\
\pm3.276320849\ldots + 9.908433886\ldots i.
\end{gathered}
\end{equation}

\begin{example}[$S^1\times S^2$]
\lbl{ex.S1S2}
Figure~\ref{S1S2triangulation} shows a triangulation of $M=S^1\times S^2$ 
taken from the Regina census~\cite{Regina}. Since $\pi_1(S^1\times S^2)=\Z$, 
all representations in $\PSL(2,\C)$ lift to $\SL(2,\C)$, so we expect the 
Ptolemy variety for the non-trivial class in $H^2(M;\Z/2\Z)$ to be zero. 
This class is represented by the cocycle shown in 
Figure~\ref{S1S2triangulation}, and the Ptolemy variety is given by
\begin{equation}
-zx+x^2=y^2,\qquad x^2+zx=y^2,
\end{equation}
which indeed has no solutions in $\C^*$. For the trivial cohomology class, 
all signs are positive, and the two equations are equivalent. The extended 
Bloch group element is 
\begin{equation}
(\widetilde z+\widetilde x-2\widetilde y,2\widetilde x-2\widetilde y)
-(\widetilde z+\widetilde x-2\widetilde y,2\widetilde x-2\widetilde y)
=0\in\widehat\B(\C).
\end{equation}
In fact, the extended Bloch group element of a Ptolemy assignment is trivial 
for all $n$, as one easily verifies (the subsimplices cancel out in pairs).

We wish to find out which representations are detected by $P_2(K)$.
A choice of fundamental domain $F$ for $K$ in $L$ determines a presentation 
of $\pi_1(M)$ with a generator for each face pairing of $F$ and a relation 
for each $1$-cell of $K$ (to see this consider the standard presentation 
for the dual triangulation of $K$). Letting $F$ be the fundamental domain 
of $S^1\times S^2$ given by gluing the bottom faces of the two simplices 
together, one easily checks that the generator of $\pi_1(M)=\Z$ is given 
by the self gluing of the first simplex taking the face opposite the third 
vertex to the face opposite the zeroth. For $\alpha\in\SL(2,\C)$, the 
representation given by taking the generator to $\alpha$ has a decoration 
as in Figure~\ref{S1S2triangulation}. For $A=\Mat{a}{b}{c}{d}$, let 
$c(A)=c$, and note that $\det(e_1,Ae_1)=c(A)$. Letting $x$, $y$ and $z$ 
denote the Ptolemy coordinates, we have
\begin{equation} 
x=c(\alpha),\quad y=c(\alpha^2)=x\Tr(\alpha),\quad 
z=c(\alpha^3)=x(\Tr(\alpha)^2-1),
\end{equation}
and it follows that the Ptolemy variety detects all representations except 
those where $\Tr(\alpha)=\pm 1$.

\begin{figure}[ht]
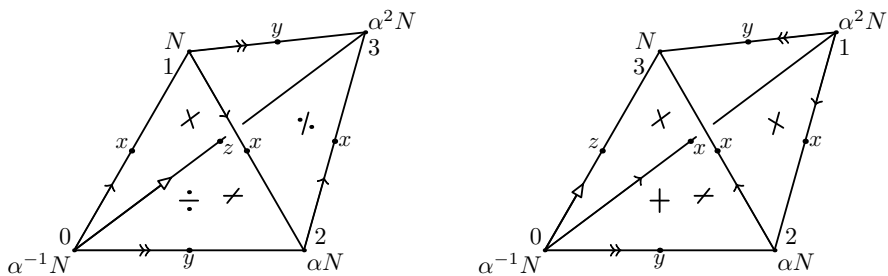

$$
\psdraw{SLrepFigures.17}{2.15in} \qquad
\psdraw{SLrepFigures.18}{2.15in} 
$$
\caption{A triangulation of $S^1\times S^2$. Both simplices have self 
gluings.}
\lbl{S1S2triangulation}
\end{figure}
\end{example}

\begin{remark}
When $n=2$, examples of Conjecture~\ref{newconjecture} are abundant. E.g.~for the $10_{155}$ knot complement ($10$ simplices), the volumes of the representations detected by the Ptolemy variety are (numerically)
\begin{equation}
\Vol(m032(6,1)),\quad 2\Vol(4_1),\quad 3\Vol(10_{155})-4\Vol(v3461),\quad \Vol(10_{155}).
\end{equation}
\end{remark}

\begin{remark} For the hyperbolic census manifolds, most of the components of the Ptolemy varieties (after fixing the action by diagonal matrices) tend to be zero-dimensional. By a result of Menal-Ferrer and Porti~\cite{FerrerPorti}, the composition of the geometric representation with $\phi_n$ is isolated among boundary-unipotent $p\SL(n,\C)$-representations. Higher dimensional components also occur (rarely for $n=2$, quite often for $n>2$), but as mentioned earlier, the complex volume is constant on components. 
\end{remark}

\begin{remark} 
If the face pairings do not respect the vertex orderings, one can still 
define a Ptolemy variety by introducing more signs. See~Garoufalidis--Goerner--Zickert~\cite{GaroufalidisGoernerZickert} for details.
\end{remark}

\section{The irreducible representations of $\SL(2,\C)$}
\lbl{sec.adjoint}

Let $\phi_n\colon\SL(2,\C)\to\SL(n,\C)$ denote the canonical irreducible 
representation. It is induced by the Lie algebra homomorphism 
$\mathfrak{sl}(2,\C)\to\mathfrak{sl}(n,\C)$ given by
\begin{equation}
\left[\begin{smallmatrix}0&1\\0&0\end{smallmatrix}\right]
\mapsto\diag^+(n-1,\dots,1),\quad 
\left[\begin{smallmatrix}0&0\\1&0\end{smallmatrix}\right]
\mapsto\diag^-(1,\dots,n-1),\quad \left[\begin{smallmatrix}1&0\\0&-1
\end{smallmatrix}\right]\mapsto\diag(n-1,n-3,\dots,-n+1),
\end{equation}
where $\diag^+(v)$ and $\diag^-(v)$ denote matrices whose first upper 
(resp.~lower) diagonal is $v$ and all other entries are zero. One has
\begin{align}
\phi_n\left(\left[\begin{smallmatrix}0&-a^{-1}\\a&0\end{smallmatrix}
\right]\right)&=q(a^{n-1},-a^{n-3},\dots,(-1)^{n-1}a^{-(n-1)})\lbl{phinlong}\\
\phi_n\left(\left[\begin{smallmatrix}1&x\\0&1\end{smallmatrix}\right]\right)&
=\pi_{n-1}(x,\dots ,x)\pi_{n-2}(x,\dots,x)\cdots\pi_1(x)\lbl{phinshort}.
\end{align}

\begin{proposition}
\lbl{adjointPtolemy} 
Let $c$ be a Ptolemy assignment on $\Delta_2^3$, and let $\tau$ denote the 
corresponding cocycle. The assignment 
\begin{equation}
\phi_n(c)\colon\dot\Delta^3_n(\Z)\to \C^*,\qquad t\mapsto\phi_n(c)_t
=\prod_{i<j}c_{ij}^{t_it_j}
\end{equation}
is a Ptolemy assignment on $\Delta_n^3$. If $c$ is a $\PSL(2,\C)$-Ptolemy 
assignment with obstruction cocycle $\sigma$, $\phi_n(c)$ is a 
$p\SL(n,\C)$-Ptolemy assignment with obstruction cocycle $\sigma$. Moreover, 
$\phi_n(c)$ is the Ptolemy assignment corresponding to $\phi_n(\tau)$.
\end{proposition}

\begin{proof}
Let $\alpha=(a_0,\dots,a_3)\in \Delta^3_{n-2}(\Z)$. Letting 
$k_\alpha=\prod_{i<j}c_{ij}^{a_ia_j}$, and $l_\alpha=\prod_{i<j}c_{ij}^{a_i+a_j}$, we have
\begin{equation}
\phi_n(c)_{\alpha_{03}}\phi_n(c)_{\alpha_{12}}=k_\alpha^2l_\alpha c_{03}c_{12},\quad 
\phi_n(c)_{\alpha_{01}}\phi_n(c)_{\alpha_{23}}=k_\alpha^2l_\alpha c_{01}c_{23},\quad 
\phi_n(c)_{\alpha_{02}}\phi_n(c)_{\alpha_{13}}=k_\alpha^2l_\alpha c_{02}c_{13}.
\end{equation}
Hence, the appropriate Ptolemy relations are satisfied, proving the first 
two statements. The long and short edges of the cocycle corresponding to 
$\phi_n(c)$ are given by \eqref{qij} and \eqref{alphaijk}, and we must 
prove that these agree with those of $\phi_n(\tau)$. For the long edges, 
this follows immediately from \eqref{phinlong}. For the short edges, an 
easy computation shows that all the diamond coordinates of a face are equal, 
and equal to the corresponding diamond coordinate of $c$. For example, the 
type $1$ diamond coordinate on face $3$ whose left vertex is 
$t=(t_0,t_1,t_2,0)$ is given by
\begin{equation}
\begin{aligned}
\frac{\phi_n(c)_{t+(0,-1,1,0)}\phi_n(c)_{t+(-1,1,0,0)}}{\phi_n(c)_{t}\phi_n(c)_{t+(-1,0,1,0)}}&=\frac{c_{01}^{t_0(t_1-1)}c_{02}^{t_0(t_2+1)}c_{12}^{(t_1-1)(t_2+1)}c_{01}^{(t_0-1)(t_1+1)}c_{02}^{(t_0-1)t_2}c_{12}^{(t_1+1)t_2}}{c_{01}^{t_0t_1}c_{02}^{t_0t_2}c_{12}^{t_1t_2}c_{01}^{(t_0-1)t_1}c_{02}^{(t_0-1)(t_2+1)}c_{12}^{t_1(t_2+1)}}\\&=\frac{c_{02}}{c_{01}c_{12}},
\end{aligned}
\end{equation}
which is a diamond coordinate for $c$. By~\eqref{phinshort} the short edges 
thus agree with those of $\phi_n(\tau)$, proving the result.  
\end{proof}

\begin{corollary}\label{adjointPtolemycor} If a representation $\rho\colon\pi_1(M)\to\PSL(2,\C)$ is detected by $P_2^\sigma(K)$ then $\phi_{2k+1}\circ\rho$ is detected by $P_{2k+1}(K)$ and $\phi_{2k}\circ\rho$ is detected by $P_{2k}^\sigma(K)$.\qed
\end{corollary}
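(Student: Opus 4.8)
The plan is to globalize the single-simplex statement of Lemma~\ref{adjointPtolemy} to the whole $3$-cycle $K$, and then read off the detected representation from the homomorphism property of $\phi_n$ via the cocycle correspondence~\eqref{summaryof1to1}. So suppose $P_2^\sigma(K)$ detects $\rho$: there is a $p\SL(2,\C)=\PSL(2,\C)$-Ptolemy cochain $c=\{c^i\}$ on $K$ with obstruction cocycle $\sigma$ whose associated representation is $\rho$ (up to conjugation). Applying the assignment of Lemma~\ref{adjointPtolemy} on each simplex $\Delta_i$, I would set $\phi_n(c)=\{\phi_n(c^i)\}$, where $\phi_n(c^i)_t=\prod_{a<b}(c^i_{ab})^{t_at_b}$. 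Each $\phi_n(c^i)$ is a Ptolemy cochain on $\Delta_i$ by the lemma, and all its coordinates are non-zero since the $c^i_{ab}$ are.

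The first thing to verify is that these glue to a Ptolemy cochain on $K$. The key point is that the defining formula is \emph{local to faces}: on the face opposite the $l$-th vertex the only surviving factors are those indexed by pairs $a<b$ among the three remaining vertices, so $\phi_n(c^i)$ restricted to that face depends only on the three edge coordinates of $c^i$ there. Since the face-pairings of $K$ are order-preserving and $c$ already has matching Ptolemy coordinates on identified faces, the cochains $\phi_n(c^i)$ agree on identified faces automatically; in the even case the same computation shows the resulting signs reassemble into the obstruction cocycle $\sigma$. Hence $\phi_n(c)$ is a $p\SL(n,\C)$-Ptolemy cochain on $K$ with obstruction cocycle $\sigma$ (for $n$ odd this is an ordinary $\SL(n,\C)$-cochain).

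Next I would identify the detected representation. By the last clause of Lemma~\ref{adjointPtolemy}, on each simplex the $(G,N)$-cocycle attached to $\phi_n(c^i)$ is obtained by applying $\phi_n$ to the labels of the cocycle $\tau^i$ of $c^i$; under the correspondence~\eqref{summaryof1to1} these assemble to the cocycle $\phi_n(\tau)$ on $M$ obtained by post-composing every edge label of $\tau$ with $\phi_n$. Because $\phi_n$ is a group homomorphism and holonomy is computed as a product of edge labels along loops, the holonomy of $\phi_n(\tau)$ is exactly $\phi_n\circ\rho$. Thus $\phi_n(c)$ detects $\phi_n\circ\rho$.

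The remaining work is the variety bookkeeping, which I expect to be the only genuinely delicate point. For $n=2k+1$ the map $\phi_n$ factors through $\PSL(2,\C)$ and lands in $\SL(2k+1,\C)$; even though the $\SL(2,\C)$-lift of $\tau$ has products $\pm I$ around the hexagonal faces measured by $\sigma$, these become $\phi_n(\pm I)=I$ (since $\phi_n(-I)=(-I)^{n+1}=I$ for odd $n$), so $\phi_n(\tau)$ is an honest $\SL(2k+1,\C)$-cocycle and gives a point of $P_{2k+1}(K)$. For $n=2k$ the descended map $\PSL(2,\C)\to p\SL(2k,\C)$ keeps the obstruction cocycle equal to $\sigma$, yielding a point of $P_{2k}^\sigma(K)$. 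The main thing to get right is precisely this annihilation of the nontrivial lifting obstruction $\sigma$ by $\phi_n$ in the odd case, which places $\phi_{2k+1}\circ\rho$ in the untwisted variety $P_{2k+1}(K)$ rather than in a twisted $P_{2k+1}^\sigma(K)$; everything else is a direct consequence of Lemma~\ref{adjointPtolemy} together with the homomorphism property of $\phi_n$.
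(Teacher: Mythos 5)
Your argument is correct and is the intended derivation: the paper offers no separate proof of this corollary precisely because it follows from Lemma~\ref{adjointPtolemy} together with the correspondence~\eqref{summaryof1to1}, and you have supplied the same ingredients (locality of $t\mapsto\prod_{a<b}c_{ab}^{t_at_b}$ on faces so the cochains glue, the homomorphism property of $\phi_n$ identifying the holonomy as $\phi_n\circ\rho$, and the vanishing $\phi_{2k+1}(-I)=I$ that trivializes the obstruction in the odd case). One imprecision is worth flagging: for $n$ odd and $[\sigma]\neq 0$ the literal cochain $\phi_n(c)_t=\prod_{a<b}c_{ab}^{t_at_b}$ still satisfies the $\sigma$-twisted Ptolemy relations --- the computation in the proof of Lemma~\ref{adjointPtolemy} shows the three products acquire exactly the factors $c_{03}c_{12}$, $c_{01}c_{23}$, $c_{02}c_{13}$, so the signs persist for every $n$ --- hence your parenthetical claim that it is ``an ordinary $\SL(n,\C)$-cochain'' is not right as stated. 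The honest point of $P_{2k+1}(K)$ is the Ptolemy cochain of the cocycle $\phi_n(\tau)$, which on each simplex is $\phi_n(\eta^i c^i)$ for any local $\eta^i$ with $\delta\eta^i=\sigma|_{\Delta_i}$ (well defined and consistently glued for odd $n$ since coboundaries $\delta\mu$ act on $n$-cochains by $\prod_j\mu_j^{t_j(n-t_j)}=1$); your final paragraph, which works at the cocycle level via $\phi_n(\pm I)=I$, is the correct mechanism and already supplies this.
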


\begin{theorem}
Let $\rho$ be a boundary-unipotent representation in $\SL(2,\C)$ or 
$\PSL(2,\C)$. The extended Bloch group element of $\phi_n\circ \rho$ 
is $\binom{n+1}{3}$ times that of $\rho$. In fact, the shapes of all subsimplices are equal.
\end{theorem}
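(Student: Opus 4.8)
The plan is to reduce the statement to the single-simplex computation already set up in Lemma~\ref{adjointPtolemy} and then count subsimplices. First I would represent $\rho$ by a Ptolemy cochain: after a barycentric subdivision if necessary (Proposition~\ref{barycentric}), choose a generic decoration of $\rho$ and let $c$ be the resulting $\SL(2,\C)$- (resp.\ $\PSL(2,\C)$-) Ptolemy cochain on $K$, so that the extended Bloch group element of $\rho$ is $\lambda(c)=\sum_i\epsilon_i\lambda(c^i)$ with $c^i$ the cochain on $\Delta^3_2$ attached to the $i$th simplex. By Lemma~\ref{adjointPtolemy} the representation $\phi_n\circ\rho$ is represented by the Ptolemy cochain $\phi_n(c)$ whose restriction to the $i$th simplex is $\phi_n(c^i)$. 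Hence it suffices to prove the per-simplex identity
\begin{equation}
\lambda(\phi_n(c^i))=\binom{n+1}{3}\lambda(c^i)\in\widehat\Pre(\C),
\end{equation}
after which summing over $i$ with signs $\epsilon_i$ gives the theorem (the $\PSL$ case being identical with $\widehat\Pre(\C)$ replaced by $\widehat\Pre(\C)_{\PSL}$).

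The heart of the argument is a flattening computation for each subsimplex. Recall from \eqref{defoflambda} that $\lambda(\phi_n(c^i))=\sum_{\alpha\in T^3(n-2)}\lambda(\phi_n(c^i)_\alpha)$, so I would compute $\lambda(\phi_n(c^i)_\alpha)$ for a fixed $\alpha=(a_0,\dots,a_3)$. To control the logarithms I would fix a lift $\widetilde c^i$ of $c^i$ and equip $\phi_n(c^i)$ with the \emph{natural} lift $\widetilde{\phi_n(c^i)}_t=\sum_{p<q}t_pt_q\,\widetilde c^i_{pq}$, which is an honest logarithm of $\phi_n(c^i)_t$. Writing $\widetilde k_\alpha=\sum_{p<q}a_pa_q\widetilde c^i_{pq}$ and $\widetilde l_\alpha=\sum_{p<q}(a_p+a_q)\widetilde c^i_{pq}$, the monomial identities established in the proof of Lemma~\ref{adjointPtolemy} lift additively (both sides are monomials in the six edge coordinates with equal exponents, so the additive version holds on the nose) to
\begin{gather}
\widetilde{\phi_n(c^i)}_{\alpha_{03}}+\widetilde{\phi_n(c^i)}_{\alpha_{12}}=2\widetilde k_\alpha+\widetilde l_\alpha+\widetilde c^i_{03}+\widetilde c^i_{12},\\
\widetilde{\phi_n(c^i)}_{\alpha_{02}}+\widetilde{\phi_n(c^i)}_{\alpha_{13}}=2\widetilde k_\alpha+\widetilde l_\alpha+\widetilde c^i_{02}+\widetilde c^i_{13},\\
\widetilde{\phi_n(c^i)}_{\alpha_{01}}+\widetilde{\phi_n(c^i)}_{\alpha_{23}}=2\widetilde k_\alpha+\widetilde l_\alpha+\widetilde c^i_{01}+\widetilde c^i_{23}.
\end{gather}
Subtracting, the common factor $2\widetilde k_\alpha+\widetilde l_\alpha$ cancels in both flattening coordinates, giving $\lambda(\phi_n(c^i)_\alpha)=\lambda(\widetilde c^i)$ exactly in $\widehat\C$, independently of $\alpha$.

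It then remains only to count: by Lemma~\ref{numberofsubsimplices} the index set $T^3(n-2)$ has $\binom{(n-2)+3}{3}=\binom{n+1}{3}$ elements, so $\lambda(\phi_n(c^i))=\binom{n+1}{3}\lambda(\widetilde c^i)$, which is the desired per-simplex identity. I expect the only delicate point to be the branch-of-logarithm bookkeeping in the flattening coordinates; choosing the natural lift above makes every subsimplex contribute the identical flattening, so no appeal to torsion corrections is needed, though one could alternatively invoke independence of the lift (Proposition~\ref{independenceoflog}) to the same effect. Finally, since composition with $\phi_n$ multiplies the extended Bloch group element by $\binom{n+1}{3}$ and the regulator $R$ is a homomorphism, the complex volume is multiplied by the same factor.
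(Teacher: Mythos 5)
Your proposal is correct and follows essentially the same route as the paper: represent $\rho$ by a Ptolemy cochain, apply Lemma~\ref{adjointPtolemy}, choose the lift $\widetilde{\phi}^i_t=\sum_{j<k}t_jt_k\widetilde c^i_{jk}$ so that every subsimplex carries the same flattening as $c^i$, and count $\Norm{T^3(n-2)}=\binom{n+1}{3}$. The paper likewise invokes Proposition~\ref{independenceoflog} to justify the choice of lift, so your bookkeeping matches its argument exactly.
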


\begin{proof}
By refining the triangulation if necessary, we may represent $\rho$ by a 
Ptolemy assignment $c$ on $K$. Then $\phi=\phi_n(c)$ is a Ptolemy assignment 
representing $\phi_n\circ \rho$, and the extended Bloch group element of 
$\phi_n\circ\rho$ is given by
\begin{equation}
[\phi_n(\rho)]=\sum_i\epsilon_i\sum_{\alpha\in \Delta^3_{n-2}(\Z)}
(\widetilde{\phi}^i_{\alpha_{03}}+\widetilde{\phi}^i_{\alpha_{12}}
-\widetilde{\phi}^i_{\alpha_{02}}-\widetilde{\phi}^i_{\alpha_{13}},
\widetilde{\phi}^i_{\alpha_{01}}+\widetilde{\phi}^i_{\alpha_{23}}
-\widetilde{\phi}^i_{\alpha_{02}}-\widetilde{\phi}^i_{\alpha_{13}}).
\end{equation} 
By Proposition~\ref{independenceoflog}, we may choose the logarithms 
independently as long as we use the same logarithm for identified points. 
Defining $\widetilde{\phi}^i_t=\sum_{j<k}t_jt_k\widetilde c^i_{jk}$,
we see that
\begin{equation}
(\widetilde{\phi}^i_{\alpha_{03}}+\widetilde{\phi}^i_{\alpha_{12}}
-\widetilde{\phi}^i_{\alpha_{02}}-\widetilde{\phi}^i_{\alpha_{13}},
\widetilde{\phi}^i_{\alpha_{01}}+\widetilde{\phi}^i_{\alpha_{23}}
-\widetilde{\phi}^i_{\alpha_{02}}-\widetilde{\phi}^i_{\alpha_{13}})
=(\widetilde c_{03}+\widetilde c_{12}-\widetilde c_{02}-\widetilde c_{13},
\widetilde c_{01}+\widetilde c_{23}-\widetilde c_{02}-\widetilde c_{13}),
\end{equation}
which means that the flattenings assigned to each subsimplex of $\Delta^i_n$ 
are equal. By Lemma~\ref{numberofsubsimplices}, 
$\Norm{\Delta^3_{n-2}(\Z)}=\binom{n+1}{3}$, and the result follows.
\end{proof}

\subsection{Essential edges}
\lbl{sub.essential}

\begin{definition}
\lbl{def.essential}
An edge of $K$ is \emph{essential} if the lifts to $L$ have distinct end 
points.
\end{definition}

Note that an edge may be essential even though it is homotopically trivial 
in $K$. Let $L^{(0)}$ denote the zero skeleton of $L$.

\begin{lemma}
\lbl{developingdecoration}
Let $\rho$ be a representation in $\SL(2,\C)$ or $\PSL(2,\C)$. A decoration 
of $\rho$ determines a $\rho$-equivariant map
\begin{equation}
\lbl{developingCinfty}
D\colon L^{(0)}\to \partial\overline\H^3=\C\cup\{\infty\},
\quad e_i\mapsto g_i\infty.
\end{equation} 
Every such map comes from a decoration, and the decoration is generic 
if and only if the vertices of each simplex of $L$ map to distinct points 
in $\C\cup\{\infty\}$.
\end{lemma}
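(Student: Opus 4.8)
The plan is to exploit the identification of the boundary $\partial\overline\H^3=\C\cup\{\infty\}$ with the projective line $\BP^1(\C)$, on which $\SL(2,\C)$ (and hence $\PSL(2,\C)$) acts by M\"{o}bius transformations, i.e.\ through its action on lines in $\C^2$. Under this action the point $\infty=[1:0]$ has stabilizer the Borel subgroup $B$ of upper triangular matrices, and $N\subset B$. Thus there is a canonical $G$-equivariant projection $G/N\to G/B=\BP^1(\C)=\C\cup\{\infty\}$ sending $gN\mapsto g\infty$. Given a decoration $\{g_eN\}$ of $\rho$, I would define $D$ as the composite of the decoration with this projection, so $D(e)=g_e\infty$. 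This is well defined on cosets because any $n\in N\subset B$ fixes $\infty$, and it is $\rho$-equivariant because the decoration is: $D(\alpha e)=\rho(\alpha)g_e\infty=\rho(\alpha)D(e)$. This settles the first assertion.

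For the converse, fix lifts $e_1,\dots,e_k$ of the $0$-cells of $K$. Given a $\rho$-equivariant map $D$, I would use transitivity of $G$ on $\BP^1(\C)$ to choose $g_i\in G$ with $g_i\infty=D(e_i)$, assign the coset $g_iN$ to $e_i$, and extend $\rho$-equivariantly by $\alpha e_i\mapsto\rho(\alpha)g_iN$. The one point to verify is that this assignment is well defined, i.e.\ that $g_i^{-1}\rho(\Stab(e_i))g_i\subset N$ (cf.\ the remark after Definition~\ref{decorationdefn}); the choices differ only by $B/N$, matching Remark~\ref{normalizerremark}. This $B$-to-$N$ step is where I expect the only real subtlety. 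Since $\Stab(e_i)$ fixes $e_i$ and $D$ is $\rho$-equivariant, $\rho(\Stab(e_i))$ fixes $D(e_i)=g_i\infty$, which gives only $g_i^{-1}\rho(\Stab(e_i))g_i\subset B$. To upgrade $B$ to $N$ I would invoke that $\rho$ is boundary-unipotent: the peripheral subgroup $\Stab(e_i)$ maps into a conjugate of $N$, so its image consists of unipotent matrices, and a unipotent element $\Mat{a}{b}{0}{a^{-1}}$ of $B$ must have $a=1$, hence lies in $N$. Therefore $g_i^{-1}\rho(\Stab(e_i))g_i\subset N$ and the assignment is a genuine decoration inducing $D$.

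Finally, for the genericity criterion I would specialize to $n=2$, where on each ordered $3$-simplex the tuple $(g_0N,\dots,g_3N)$ has only the edge coordinates as nontrivial Ptolemy coordinates: the vertex coordinates are $c_{2e_i}=\det(\{g_i\}_2)=\det g_i=1$, while for $i<j$ one has $c_{e_i+e_j}=\det(\{g_i\}_1,\{g_j\}_1)$, the determinant of the first columns of $g_i$ and $g_j$. Since $g\cdot\infty=g\cdot[1:0]$ is precisely the projective class of the first column of $g$, the coordinate $c_{e_i+e_j}$ vanishes if and only if these two columns are proportional, i.e.\ if and only if $D(e_i)=g_i\infty$ and $D(e_j)=g_j\infty$ coincide in $\C\cup\{\infty\}$. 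Hence the tuple on a simplex is generic in the sense of Definition~\ref{genericdecorationdefn} exactly when its four vertices have pairwise distinct images under $D$, which is the asserted condition. The argument is identical for $\PSL(2,\C)$, since the vanishing of these determinants is well defined modulo sign. Everything apart from the $B$-to-$N$ step is a direct unwinding of the definitions.
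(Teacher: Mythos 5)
Your proof is correct and follows essentially the same route as the paper's: equivariance is immediate from the definitions, the converse is obtained by choosing $g_i$ with $g_i\infty=D(e_i)$ and extending equivariantly, and the genericity criterion reduces to the observation that for $n=2$ the non-vertex Ptolemy coordinates are determinants of first columns, which vanish exactly when the corresponding points of $\C\cup\{\infty\}$ coincide. The only difference is that you explicitly verify the well-definedness of the reconstructed decoration --- upgrading $g_i^{-1}\rho(\Stab(e_i))g_i\subset B$ to containment in $N$ via boundary-unipotence --- a point the paper's proof passes over in silence, so this is a welcome completion rather than a deviation.
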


\begin{proof}
Equivariance of \eqref{developingCinfty} follows from the definition of a 
decoration. A $\rho$-equivariant map $D\colon L^{(0)}\to\Cinfty$ is uniquely 
determined by its image of lifts $\widetilde e_i\in L$ of the zero cells 
$e_i$ of $K$. Picking $g_i$ such that $g_i\infty=D(\widetilde e_i)$, we 
define a decoration by assigning the coset $g_iN$ to $\widetilde e_i$. 
The last statement follows from the fact that $\det(g_1e_1,g_2e_1)=0$ if 
and only if $g_1\infty=g_2\infty$.
\end{proof}

In the following we assume that the interior of $M$ is a cusped hyperbolic $3$-manifold $\H^3/\Gamma$ with finite volume.
\begin{proposition}\label{detectsgeometric}
If all edges of $K$ are essential, 
the geometric representation has a generic decoration.
\end{proposition}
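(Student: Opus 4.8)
The plan is to build an explicit decoration of $\rho_{\geo}$ out of the cusp geometry and then read off genericity from Lemma~\ref{developingdecoration}. By that lemma it suffices to exhibit a $\rho_{\geo}$-equivariant map $D\colon L^{(0)}\to\Cinfty$ whose restriction to the vertex set of each simplex of $L$ is injective. The natural candidate uses boundary-unipotency: for any vertex $e$ of $L$ the stabilizer $\Stab(e)$ is a peripheral subgroup, so its image $\rho_{\geo}(\Stab(e))$ is a nontrivial parabolic subgroup of $\PSL(2,\C)$ and fixes a unique point of $\partial\overline\H^3=\Cinfty$. I would \emph{define} $D(e)$ to be this fixed point, i.e.\ the ideal point of the corresponding cusp lift.

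First I would check equivariance, which is automatic: since $\Stab(\alpha e)=\alpha\,\Stab(e)\,\alpha^{-1}$ for $\alpha\in\pi_1(M)$, the parabolic $\rho_{\geo}(\Stab(\alpha e))=\rho_{\geo}(\alpha)\rho_{\geo}(\Stab(e))\rho_{\geo}(\alpha)^{-1}$ fixes $\rho_{\geo}(\alpha)D(e)$, whence $D(\alpha e)=\rho_{\geo}(\alpha)D(e)$. By Lemma~\ref{developingdecoration} this $D$ comes from a decoration of $\rho_{\geo}$, and that decoration is generic exactly when the vertices of every simplex of $L$ are sent to distinct points of $\Cinfty$.

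It remains to verify distinctness of images on each simplex, and here both hypotheses enter. I would first argue that $D$ is injective on all of $L^{(0)}$. The vertices of $L$ correspond to cosets of the peripheral subgroups, and $D$ sends them to parabolic fixed points of $\Gamma$. For a finite-volume cusped manifold the cusp cross-sections are tori, so for each cusp the full $\Gamma$-stabilizer of the associated parabolic fixed point $p$ equals the peripheral subgroup itself, and distinct cusps give distinct $\Gamma$-orbits of parabolic fixed points. Thus if $D(e)=D(e')=p$, both $\rho_{\geo}(\Stab(e))$ and $\rho_{\geo}(\Stab(e'))$ lie in $\Stab_\Gamma(p)$, which is a single peripheral subgroup; using that $\rho_{\geo}$ is discrete and faithful this forces the two cosets, hence $e$ and $e'$, to coincide. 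Finally, the essentiality hypothesis guarantees that every edge of $L$, being a lift of an (essential) edge of $K$, has distinct endpoints, so the four vertices of each $3$-simplex of $L$ are pairwise distinct. Combining this with injectivity of $D$, the four images are distinct, so the decoration is generic, as claimed.

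The main obstacle is the global injectivity of $D$, i.e.\ the identification of $\Stab_\Gamma(p)$ with a single peripheral subgroup: this is the one step where one must invoke the geometry of the discrete faithful representation rather than pure combinatorics. Essentiality only rules out the degenerate case of a simplex with a repeated vertex; the substantive content is that genuinely distinct cusp lifts never share an ideal point, which is standard for finite-volume cusped hyperbolic manifolds but is precisely what makes $\rho_{\geo}$ (as opposed to an arbitrary boundary-unipotent representation) admit a \emph{generic} decoration.
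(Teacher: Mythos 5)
Your construction on the cusp vertices is exactly the paper's: define $D(e)$ as the fixed point of the parabolic subgroup $\rho_{\geo}(\Stab(e))$, use that for a finite-volume cusped manifold the full $\Gamma$-stabilizer of a parabolic fixed point is the corresponding maximal peripheral subgroup (and that distinct cusps give distinct orbits of fixed points), and then invoke essentiality together with Lemma~\ref{developingdecoration}. That part is correct; the paper only checks $D(e_1)\neq D(e_2)$ for vertices joined by an edge rather than proving global injectivity, but the argument is the same.

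There is, however, a genuine gap: you assert that \emph{every} vertex $e$ of $L$ has stabilizer a \emph{nontrivial} peripheral subgroup mapping to a parabolic with a well-defined fixed point. This fails for interior vertices of $K$, i.e.\ vertices whose link is a sphere (these arise, for instance, after a barycentric subdivision, and all edges can still be essential in that situation). For such a vertex $\Stab(e)$ is trivial, so $\rho_{\geo}(\Stab(e))$ fixes all of $\Cinfty$ and your definition of $D(e)$ does not make sense; consequently neither your equivariance check nor your injectivity argument applies there. The paper's proof explicitly splits $L^{(0)}=L^{(0)}_{\cusp}\cup L^{(0)}_{\textnormal{int}}$, runs your argument on $L^{(0)}_{\cusp}$, and then extends $D$ over $L^{(0)}_{\textnormal{int}}$ by an arbitrary equivariant map: since such an extension is determined by finitely many freely chosen values, one can choose them to avoid the finitely many coincidences that would violate genericity. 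You need to add this extension step (or explicitly restrict to triangulations with no interior vertices, which the proposition does not do).
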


\begin{proof}
We identify $\pi_1(M)$ with $\Gamma\subset\PSL(2,\C)$. Each vertex of $L$ 
corresponds to either a cusp of $M$ or an interior point of $M$. Accordingly, 
we have $L^{(0)}=L^{(0)}_{\cusp}\cup L^{(0)}_{\textnormal{int}}$. 
Each point in $L^{(0)}_{\cusp}$ determines a parabolic subgroup of $\PSL(2,\C)$ 
stabilizing a unique point in $\C\cup\{\infty\}$. We thus have an equivariant 
map $D\colon L^{(0)}_{\cusp}\to \C\cup\{\infty\}$ taking a point to its 
stabilizer. Let $e_1$ and $e_2$ be points in $L^{(0)}_{\cusp}$ connected by an 
edge. Since all edges of $K$ are essential, $e_1\neq e_2$. It is well known 
that the point stabilizers of different cusps are distinct. Hence, 
$D(e_1)\neq D(e_2)$ if $e_1$ and $e_2$ correspond to different cusps. If 
$e_1$ and $e_2$ correspond to the same cusp, there exists an element in 
$\Gamma$ taking $e_1$ to $e_2$. Since only peripheral elements (i.e.~cusp 
stabilizers) have fixed points in $\Cinfty$, it follows that 
$D(e_1)\neq D(e_2)$. We extend $D$ to $L^{(0)}$ by choosing any equivariant 
map $L^{(0)}_{\textnormal{int}}\to\C\cup\{\infty\}$. Since such map is uniquely 
determined by finitely many values (which may be chosen freely), we can 
pick the extension so that the vertices of each simplex map to distinct 
points. This proves the result. 
\end{proof}

\begin{theorem}\label{Ptolemydetectsrhogeoproof} Suppose all edges of $K$ are essential. The representation $\phi_n\circ\rho_{\geo}$ is detected by $P_n(K)$ if $n$ is odd, and by $P_n^{\sigma_{\geo}}(K)$ if $n$ is even.
\end{theorem}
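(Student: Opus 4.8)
The plan is to reduce the statement to the case $n=2$ already handled by Proposition~\ref{detectsgeometric}, and then to propagate detection up the tower of irreducible representations $\phi_n$ using Corollary~\ref{adjointPtolemycor}. In other words, the entire theorem should follow by combining the fact that essential edges force the geometric representation to admit a generic decoration with the fact that composition with $\phi_n$ preserves detection; no new geometric input is needed beyond what has been established.

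First I would invoke Proposition~\ref{detectsgeometric}: since every edge of $K$ is essential, the geometric representation $\rho_{\geo}\colon\pi_1(M)\to\PSL(2,\C)$ admits a generic decoration. Via Lemma~\ref{developingdecoration} this decoration is nothing but a $\rho_{\geo}$-equivariant map $L^{(0)}\to\Cinfty$ sending each cusp to its parabolic fixed point (and interior vertices generically), genericity meaning exactly that the vertices of every simplex have distinct images. Applying the $p\SL(2,\C)$-analog of the correspondence \eqref{summaryof1to1}, this generic decoration corresponds to a point of $P_2^\sigma(K)$ for some obstruction cocycle $\sigma\in Z^2(K;\Z/2\Z)$. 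By Corollary~\ref{pGvariety} only the cohomology class of $\sigma$ matters, and by the very definition of $\sigma_{\geo}$ this class is the obstruction to lifting $\rho_{\geo}$ to a boundary-unipotent $\SL(2,\C)$-representation. Hence $P_2^{\sigma_{\geo}}(K)$ detects $\rho_{\geo}$.

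With the base case in hand, I would simply feed it into Corollary~\ref{adjointPtolemycor}: since $\rho_{\geo}$ is detected by $P_2^{\sigma_{\geo}}(K)$, that corollary yields at once that $\phi_{2k+1}\circ\rho_{\geo}$ is detected by $P_{2k+1}(K)$ and that $\phi_{2k}\circ\rho_{\geo}$ is detected by $P_{2k}^{\sigma_{\geo}}(K)$, which is precisely the odd and even cases of the theorem. The odd case lands in the unobstructed variety because $\phi_{2k+1}$ factors through $\PSL(2,\C)$ and thus defines an honest representation into $\SL(2k+1,\C)$, so no obstruction cocycle survives.

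The heavy lifting has already been done in Proposition~\ref{detectsgeometric} and in Lemma~\ref{adjointPtolemy} (which underlies Corollary~\ref{adjointPtolemycor}), so the only genuinely new point to nail down is the bookkeeping identification of the obstruction cocycle attached to the generic decoration of $\rho_{\geo}$ with the class $\sigma_{\geo}$. I expect this matching of obstruction classes---tracing how the sign cocycle produced by the $pG$-cocycle on $M$ (products of the $\tau$-labelings along hexagonal faces) compares with the topological obstruction to a boundary-unipotent $\SL(2,\C)$-lift---to be the only subtle step; everything else is a direct citation of the machinery already assembled.
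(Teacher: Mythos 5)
Your proposal is correct and follows exactly the paper's own proof: Proposition~\ref{detectsgeometric} gives a generic decoration of $\rho_{\geo}$, hence a point of $P_2^{\sigma_{\geo}}(K)$ detecting it, and Corollary~\ref{adjointPtolemycor} then propagates detection to $\phi_n\circ\rho_{\geo}$ for all $n$. The only difference is that you spell out the identification of the obstruction cocycle of the decoration with $\sigma_{\geo}$, a bookkeeping step the paper leaves implicit.
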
 
\begin{proof}
By Proposition~\ref{detectsgeometric}, $P_2^{\sigma_{\geo}}(K)$ detects $\rho_{\geo}$. The result now follows from Corollary~\ref{adjointPtolemycor}.
\end{proof}

\begin{remark}
The census triangulations all have essential edges.
\end{remark}



\section{Gluing equations and Ptolemy assignments}
\lbl{sec.gluingequations}

In this section we discuss the relation between Ptolemy assignments and 
solutions to the gluing equations. The latter were invented by 
Thurston~\cite{ThurstonNotes} to explicitly compute the hyperbolic 
structure (and its deformations) of a triangulated hyperbolic manifold,
and used effectively in \cite{NeumannZagier, snapprogram, SnapPy}.  
The gluing equations make sense for any $3$-cycle. 
They are defined by assigning a \emph{cross-ratio} $z_i\in\Cremove$ to each simplex 
$\Delta_i$ of $K$. Given these, we assign cross-ratio parameters to 
the edges of $\Delta_i$ as in Figure~\ref{Crossratioparameters}.

\begin{figure}[ht]
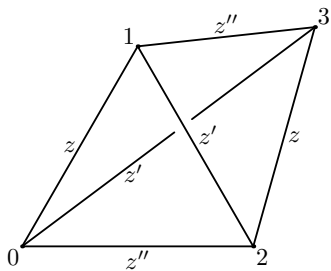

$$
\psdraw{SLrepFigures.22}{1.7in}
$$
\caption{Assigning cross-ratio parameters to the edges of $\Delta_i$. By definition, $z^\prime=\frac{1}{1-z}$ and $z^{\prime\prime}=1-\frac{1}{z}$.}
\lbl{Crossratioparameters}
\end{figure}
There is a gluing equation for each edge $E$ in $K$ and each generator 
$\gamma$ of the fundamental group of each boundary component of $M$. These 
are given by
\begin{equation}\lbl{gluingequations}
\prod_{e\mapsto E}z(e)^{\epsilon_i(e)}=1,\qquad \prod_{\gamma\textnormal{ passes }e}
z(e)^{\epsilon_i(e)}=1.
\end{equation}
Here $z(e)$ denotes the cross-ratio parameter assigned to $e$, and 
$\epsilon_i(e)=\epsilon_i$ if $e$ is an edge of $\Delta_i$. 
It follows that the set of assignments $\Delta_i\mapsto z_i\in\Cremove$ 
satisfying the gluing equations~\eqref{gluingequations} is an algebraic 
set $V(K)$.
 
\begin{lemma} 
For every point $\{z_i\}\in V(K)$ there is a map $D\colon L^{(0)}\to\Cinfty$ 
such that if $\widetilde\Delta_i$ is a lift of $\Delta_i$ with vertices 
$e_1,\dots,e_3$ in $L$, the cross-ratio of the ideal simplex with vertices 
$D(e_1),\dots,D(e_3)$ is $z_i$. It is unique up to multiplication by an 
element in $\PSL(2,\C)$. Moreover, there is a unique (up to conjugation) 
boundary-unipotent representation $\pi_1(M)\to\PSL(2,\C)$ such that $D$ is 
$\rho$-equivariant.
\end{lemma}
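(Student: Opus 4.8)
The plan is to realize $\{z_i\}$ as the shape parameters of a hyperbolic structure and take $D$ to be its developing map restricted to $L^{(0)}$. I would build $D$ by developing the simplices of $L$ one at a time. Fix a lift $\widetilde\Delta_0$ of some $\Delta_{i_0}$ and send its four vertices to the vertices of an ideal simplex in $\H^3$ with cross-ratio $z_{i_0}$; since $\PSL(2,\C)$ acts simply transitively on ordered triples of distinct points of $\Cinfty$, this normalizes away all freedom except an overall element of $\PSL(2,\C)$. Then I extend across faces: whenever a simplex $\widetilde\Delta'$ (covering $\Delta_{i'}$) shares a $2$-face with an already-developed simplex, three of its vertices are already placed, and the requirement that its cross-ratio equal $z_{i'}$ determines the fourth vertex uniquely, because for three distinct points of $\Cinfty$ and a prescribed value in $\Cremove$ there is exactly one completing fourth point.

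The crux is that this extension is well defined on all of $L^{(0)}$, i.e.\ independent of the chain of faces used to reach a given simplex. Two such chains differ by a loop in the dual graph of $L$, and since $L$ is simply connected (it is the universal cover $\widetilde M$ with its simply connected boundary lifts collapsed) every such loop is generated by the elementary loops circling a single edge $E$ of $L$. The main work is therefore the local computation around one edge: developing $E$ to the geodesic from $0$ to $\infty$, the non-$E$ vertices of the cyclically adjacent simplices lie in $\C^*$, and passing from one simplex to the next multiplies the relevant vertex by the edge parameter $z(e)$ of Figure~\ref{Crossratioparameters}. Going once around $E$ multiplies by $\prod_{e\mapsto E} z(e)^{\epsilon_i(e)}$, which equals $1$ precisely by the edge gluing equation in~\eqref{gluingequations}; hence the simplices close up around $E$ and $D$ is globally well defined. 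I expect this monodromy-around-an-edge identity to be the principal obstacle, since it is where the shape-parameter bookkeeping ($z,z',z''$) must be matched to the actual M\"obius transformations gluing consecutive simplices.

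Uniqueness up to $\PSL(2,\C)$ then follows from rigidity of the extension: given two developing maps $D_1,D_2$ for the same shapes, choose $g\in\PSL(2,\C)$ matching them on three vertices of $\widetilde\Delta_0$; the fourth vertex agrees because it is the unique point with the correct cross-ratio, and by the inductive determination of each subsequent vertex $D_2=gD_1$ on all of $L^{(0)}$. To produce the holonomy, note that $\pi_1(M)$ acts on $L$ by simplicial automorphisms preserving the shape assignment (which is pulled back from $K$), so for each $\gamma$ the map $D\circ\gamma$ is again a developing map for $\{z_i\}$; uniqueness gives a unique $\rho(\gamma)\in\PSL(2,\C)$ with $D\circ\gamma=\rho(\gamma)\circ D$. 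The cocycle computation $D\circ(\gamma_1\gamma_2)=\rho(\gamma_1)\rho(\gamma_2)\circ D$, together with the fact that the image of $D$ contains at least three points, shows $\rho$ is a homomorphism, equivariant by construction; replacing $D$ by $gD$ conjugates $\rho$, giving uniqueness up to conjugation.

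Finally I would check boundary-unipotency. Let $v\in L^{(0)}$ be a vertex with peripheral stabilizer $\Stab(v)$. For $\gamma\in\Stab(v)$ we have $\rho(\gamma)D(v)=D(\gamma v)=D(v)$, so $\rho(\gamma)$ fixes $D(v)$; normalizing $D(v)=\infty$, the link of $v$ develops into $\C$ and $\rho(\gamma)$ acts as an affine map $z\mapsto az+b$ whose derivative $a$ is the product of the cross-ratio parameters crossed by a curve representing $\gamma$ in the cusp cross-section. The second family of equations in~\eqref{gluingequations}, one per generator of each boundary fundamental group, forces $a=1$, so $\rho(\gamma)$ is parabolic; hence each peripheral subgroup maps into a conjugate of the unipotent group, and $\rho$ is boundary-unipotent. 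This completes the construction of $D$ and $\rho$ with the asserted uniqueness.
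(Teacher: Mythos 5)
Your proof is correct and follows essentially the same route as the paper's (very terse) argument: develop $L^{(0)}$ simplex by simplex using the prescribed cross-ratios, with the edge gluing equations guaranteeing consistency around edges, sharp $3$-transitivity of $\PSL(2,\C)$ giving uniqueness, and the holonomy defined by equivariance of the developing map. You have in fact supplied the details the paper explicitly leaves to the reader, including the monodromy computation around an edge and the verification of boundary-unipotency from the cusp equations.
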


\begin{proof}
Pick a fundamental domain $F$ for $K$ in $L$. Pick a simplex $\Delta$ in 
$F$ and define $D$ by mapping the first $3$ vertices of $\Delta$ to $0$, 
$\infty$ and $1$. The map $D$ is now uniquely determined by the cross-ratios. 
The fundamental group of $M$ has a presentation with a generator for each 
face pairing of $F$. The second statement thus follows from the fact that 
$\PSL(2,\C)$ is $3$-transitive. We leave the details to the reader.
\end{proof}

Given a Ptolemy assignment on $K$, we assign the cross-ratio $z_i=\frac{c^i_{03}c^i_{12}}{c^i_{02}c^i_{13}}$ to $\Delta_i$.
Note that the Ptolemy relations imply that the cross-ratio parameters are given by
\begin{equation}\label{eq:shapeparameterformula}
z_i=\frac{c^i_{03}c^i_{12}}{c^i_{02}c^i_{13}},\qquad z_i^\prime=\frac{c^i_{02}c^i_{13}}{c^i_{01}c^i_{23}}, \qquad z_i^{\prime\prime}=-\frac{c^i_{01}c^i_{23}}{c^i_{03}c^i_{12}}. 
\end{equation}

\begin{theorem} 
There is a surjective regular map
\begin{equation}
\coprod_{\sigma\in H^2(K;\Z/2\Z)} P_2^\sigma(K)\to V(K),
\quad c\mapsto \{z_i=\frac{c^i_{03}c^i_{12}}{c^i_{02}c^i_{13}}\}.
\end{equation}
The fibers are disjoint copies of $(\C^*)^h$, where $h$ is the number of zero-cells of $K$.
\end{theorem}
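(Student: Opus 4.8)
The plan is to invert the cross-ratio map: a point of $V(K)$ records the shapes of the developed ideal simplices, while a Ptolemy cochain records in addition a choice of decoration at each vertex, so the map should be surjective with fibers $(\C^*)^h$, the $(\C^*)^h$ being exactly the decoration freedom. The map $c\mapsto\{z_i\}$ is visibly regular, each $z_i=c^i_{03}c^i_{12}/(c^i_{02}c^i_{13})$ being a ratio of products of non-vanishing Ptolemy coordinates. I would then check three things in turn: that the image lies in $V(K)$, that the map is onto, and that every fiber is $(\C^*)^h$.

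First I would verify well-definedness. Given $c\in P_2^\sigma(K)$, the correspondence of Corollary~\ref{pGvariety} (and diagram~\eqref{summaryof1to1}) produces a generically decorated boundary-unipotent $\rho\colon\pi_1(M)\to\PSL(2,\C)$, and Lemma~\ref{developingdecoration} gives a $\rho$-equivariant developing map $D\colon L^{(0)}\to\Cinfty$, $e_i\mapsto g_i\infty$. Writing $w_j=g_j\infty$ and using $c_{jk}=\det(\{g_j\}_1,\{g_k\}_1)=\lambda_j\lambda_k(w_j-w_k)$ for suitable nonzero scalars $\lambda_j$, one finds that $z_i$ is precisely a cross-ratio of the four developed vertices of $\Delta_i$. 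Because these shapes come from one globally consistent equivariant map $D$, the developed simplices close up around each edge and around each cusp: closing the developed link of an edge $E$ yields the edge equation $\prod_{e\mapsto E}z(e)^{\epsilon_i(e)}=1$, and boundary-unipotency of $\rho$ (peripheral holonomy parabolic with unit derivative at its fixed point) yields the cusp equations, so $\{z_i\}\in V(K)$. I expect this is also provable by a direct telescoping, since each parameter $z(e)$ is, up to sign, a ratio of Ptolemy coordinates on the two faces of $\Delta_i$ along $e$, and the coordinates of identified faces cancel in the product around $E$.

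Next I would prove surjectivity by running the construction backwards. Given $\{z_i\}\in V(K)$, the Lemma immediately preceding the theorem supplies a developing map $D\colon L^{(0)}\to\Cinfty$ realizing the prescribed cross-ratios, unique up to $\PSL(2,\C)$, together with a boundary-unipotent $\rho$, unique up to conjugation, for which $D$ is equivariant. Choosing $g_i$ with $g_i\infty=D(e_i)$ at each zero-cell defines, via Lemma~\ref{developingdecoration}, a decoration of $\rho$ that is automatically generic (each $z_i\in\Cremove$, so the developed vertices of every simplex are distinct), hence a Ptolemy cochain $c\in P_2^{\sigma}(K)$ with $\sigma$ the obstruction class of $\rho$ to lifting to $\SL(2,\C)$; by construction $c$ maps to $\{z_i\}$.

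Finally I would compute the fibers, which I expect to be the main obstacle, chiefly because of the bookkeeping over the disjoint union. Fixing $\{z_i\}$ pins down $D$ up to $\PSL(2,\C)$ and hence $\rho$ up to conjugation, so also its obstruction class $\sigma$; thus the whole fiber lands in the single component $P_2^{\sigma}(K)$, and one must argue that no other class in $H^2(K;\Z/2\Z)$ contributes (the cross-ratios determine $\rho$, hence $\sigma$). A cochain in the fiber is then exactly a decoration of the fixed $\rho$ realizing the fixed $D$, i.e.\ a choice, for each of the $h$ zero-cells $e_i$ of $K$, of a coset $g_iN$ with $g_i\infty=D(e_i)$; since the stabilizer of $\infty$ is the Borel $N_G(N)$, this coset ranges freely over $N_G(N)/N\cong\C^*$, independently over the $h$ zero-cells (Remark~\ref{normalizerremark}), and distinct choices rescale the Ptolemy coordinates and hence give distinct cochains. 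This would identify every fiber with $(\C^*)^h$ as a variety, completing the proof.
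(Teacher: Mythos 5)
Your proposal is correct, and for two of the three steps it coincides with the paper's own argument: surjectivity is obtained exactly as in the paper from the lemma that builds a developing map and a boundary-unipotent representation out of a point of $V(K)$, combined with Lemma~\ref{developingdecoration} (genericity is automatic because each $z_i\neq 0,1,\infty$ forces the four developed vertices of each simplex to be distinct), and the fiber computation rests on the same observation that the cosets $gN$ with $g\infty$ equal to a prescribed point of $\Cinfty$ form a torsor under the diagonal torus $N_G(N)/N\cong\C^*$, one independent copy for each of the $h$ zero-cells. Where you genuinely diverge is in showing that the image lies in $V(K)$. The paper cites an earlier result of Zickert that the log-parameters of a Ptolemy cochain satisfy the logarithmic gluing equations, and must then contend with a sign: the log-parameter of a $02$ or $13$ edge is a logarithm of \emph{minus} the cross-ratio parameter, and the discrepancy is cancelled by the parity argument from the proof of Proposition~\ref{independenceoflog} (any curve crosses such edges an even number of times). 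You instead pass directly to the developing map $D\colon L^{(0)}\to\Cinfty$ of the decorated representation, identify $z_i$ with the cross-ratio of the four developed vertices, and deduce the edge equations from the telescoping of cross-ratio parameters around an edge link (which closes up in $L$ because the meridian of an interior edge is null-homotopic) and the cusp equations from the unit derivative of a parabolic at its fixed point. This is Thurston's original derivation, is more self-contained, and conveniently sidesteps the $\pm$ bookkeeping that your alternative ``direct telescoping'' of Ptolemy coordinates (and the paper's route) would have to confront; what the paper's route buys is economy, since the log-parameter statement is needed elsewhere anyway. Both proofs are equally terse on the final point that distinct coset tuples yield distinct Ptolemy cochains, so I see no gap relative to the standard the paper sets for itself.
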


\begin{proof}
By a simple cancellation argument (as in the proof of Zickert~\cite[Theorem~6.5]{ZickertDuke}), the gluing equations would be satisfied if the formula \eqref{eq:shapeparameterformula} for $z_i^{\prime\prime}$ did not have the minus sign. The minus sign appears whenever the edge is $02$ or $13$. As explained in the proof of Proposition~\ref{independenceoflog}, any curve passes these an even number of times. It thus follows that the cross-ratios satisfy the gluing equations. Surjectivity follows from Lemma~\ref{developingdecoration}, and the fact that fibers are $(\C^*)^h$ follows from the fact that $g_1\infty=g_2\infty$ if and only if $g_1N=g_2dN$ for a unique diagonal matrix $d$.
\end{proof}

\begin{remark}
Gluing equation varieties for $n>2$ are studied in Garoufalidis-Goerner-Zickert~\cite{GaroufalidisGoernerZickert}.
\end{remark}
\section{Other fields}
\lbl{otherfields}
The Ptolemy varieties $P_n(K)$ and $P_n^\sigma(K)$ may be defined over an arbitrary field $F$, and as in Section~\ref{proofsection}, a Ptolemy assignment determines a boundary-unipotent representation in $\SL(n,F)$, respectively, $p\SL(n,F)$. If $E$ is a primitive extension of $F^*$ by $\Z$, there are maps
\begin{equation}
V_n(K)_F\to\widehat\B_E(F),\qquad V_n^\sigma(K)_F\to\widehat\B_E(F)_{\PSL}
\end{equation}
defined as in~\eqref{defoflambda} using a set theoretic section of $E\to F^*$ instead of a logarithm. If $F$ is infinite, the chain complex of Ptolemy assignments computes relative homology (see Proposition~\ref{freeaction}) and we have maps
\begin{equation}
H_3(\SL(n,F))\to\widehat\B_E(F),\qquad H_3(p\SL(n,F))\to\widehat\B_E(F)_{\PSL}.
\end{equation}
It thus follows that every boundary-unipotent representation has an extended Bloch group element $[\rho]$. If $F$ is a number field, the extended Bloch groups are independent of the extension $E$.  

\begin{theorem}
Let $F$ be a number field, and let $\rho\colon\pi_1(M)\to\SL(n,F)$ be a boundary-unipotent representation. If $\rho$ is irreducible, $[\rho]$ lies in $\widehat\B(\Tr(\rho))$. 
\end{theorem}
\begin{proof}
Let $\sigma$ be an automorphism of $F$ over $\Tr(\rho)$ and let $\tau\colon F\to\C$ be an embedding. Then $\rho$ and $\sigma\circ\rho$ have the same traces, so $\tau\!\circ\!\rho$ and $\tau\!\circ\!\sigma\!\circ\!\rho$ are conjugate in $\SL(n,\C)$, and thus have the same extended Bloch group element in $\widehat\B(\C)$. By Corollary~\ref{injectivity}, it follows that $[\rho]=[\sigma\!\circ\!\rho]\in\widehat\B(F)$. Hence, $[\rho]$ is invariant under all automorphisms of $F$ over $\Tr(\rho)$, so $[\rho]\in\widehat\B(\Tr(\rho))$ by Galois descent.
\end{proof}



\bibliographystyle{plain}
\bibliography{BibFile}

\def\cprime{$'$}
\begin{thebibliography}{10}

\bibitem{DeterminantBook}
A.C. Aitken.
\newblock {\em Determinants and {M}atrices}.
\newblock Oliver and Boyd, Edinburgh, 1939.

\bibitem{FrenchPeople}
Nicolas Bergeron, Elisha Falbel, and Antonin Guilloux.
\newblock Tetrahedra of flags, volume and homology of {SL($3$)}, 2011.
\newblock \href{http://arXiv.org/abs/1101.2742}{arXiv:1101.2742} [math.KT].

\bibitem{Magma}
Wieb Bosma, John Cannon, and Catherine Playoust.
\newblock The {M}agma algebra system. {I}. {T}he user language.
\newblock {\em J. Symbolic Comput.}, 24(3-4):235--265, 1997.
\newblock Computational algebra and number theory (London, 1993).

\bibitem{Regina}
Benjamin Burton.
\newblock Census of closed prime minimal triangulations.
\newblock http://regina.sourceforge.net/data.html.

\bibitem{Calegari}
Danny Calegari.
\newblock Real places and torus bundles.
\newblock {\em Geom. Dedicata}, 118:209--227, 2006.

\bibitem{CheegerSimons}
Jeff Cheeger and James Simons.
\newblock Differential characters and geometric invariants.
\newblock In {\em Geometry and topology ({C}ollege {P}ark, {M}d., 1983/84)},
  volume 1167 of {\em Lecture Notes in Math.}, pages 50--80. Springer, Berlin,
  1985.

\bibitem{ChernSimons}
Shiing~Shen Chern and James Simons.
\newblock Characteristic forms and geometric invariants.
\newblock {\em Ann. of Math. (2)}, 99:48--69, 1974.

\bibitem{Snapdescription}
David Coulson, Oliver~A. Goodman, Craig~D. Hodgson, and Walter~D. Neumann.
\newblock Computing arithmetic invariants of 3-manifolds.
\newblock {\em Experiment. Math.}, 9(1):127--152, 2000.

\bibitem{SnapPy}
Marc Culler, Nathan~M. Dunfield, and Jeffery~R. Weeks.
\newblock {S}nap{P}y, a computer program for studying the geometry and topology
  of 3-manifolds.
\newblock Available at \url{http://snappy.computop.org/}.

\bibitem{DGon}
Tudor Dimofte, Maxime Gabella, and Alexander~B. Goncharov.
\newblock {K}-{D}ecompositions and 3d {G}auge {T}heories.
\newblock Preprint 2013.

\bibitem{DupontHainZucker}
Johan Dupont, Richard Hain, and Steven Zucker.
\newblock Regulators and characteristic classes of flat bundles.
\newblock In {\em The arithmetic and geometry of algebraic cycles ({B}anff,
  {AB}, 1998)}, volume~24 of {\em CRM Proc. Lecture Notes}, pages 47--92. Amer.
  Math. Soc., Providence, RI, 2000.

\bibitem{Dupont}
Johan~L. Dupont.
\newblock The dilogarithm as a characteristic class for flat bundles.
\newblock In {\em Proceedings of the Northwestern conference on cohomology of
  groups (Evanston, Ill., 1985)}, volume~44, pages 137--164, 1987.

\bibitem{DupontKamber}
Johan~L. Dupont and Franz~W. Kamber.
\newblock On a generalization of {C}heeger-{C}hern-{S}imons classes.
\newblock {\em Illinois J. Math.}, 34(2):221--255, 1990.

\bibitem{FockGoncharov}
Vladimir Fock and Alexander Goncharov.
\newblock Moduli spaces of local systems and higher {T}eichm\"uller theory.
\newblock {\em Publ. Math. Inst. Hautes \'Etudes Sci.}, (103):1--211, 2006.

\bibitem{GaroufalidisGoernerZickert}
Stavros Garoufalidis, Matthias Goerner, and Christian~K. Zickert.
\newblock Gluing equations for {$\rm{PGL}(n,\mathbb C)$}-representations of
  3-manifolds.
\newblock {\em arXiv:1207.6711}, 2012.

\bibitem{GoetteZickert}
Sebastian Goette and Christian Zickert.
\newblock The extended {B}loch group and the {C}heeger-{C}hern-{S}imons class.
\newblock {\em Geom. Topol.}, 11:1623--1635 (electronic), 2007.

\bibitem{snapprogram}
Oliver Goodman.
\newblock Snap.
\newblock Available at \url{http://www.ms.unimelb.edu.au/~snap/}.

\bibitem{KamberTondeur}
Franz~W. Kamber and Philippe Tondeur.
\newblock {\em Foliated bundles and characteristic classes}.
\newblock Lecture Notes in Mathematics, Vol. 493. Springer-Verlag, Berlin,
  1975.

\bibitem{FerrerPorti}
Pere Menal-Ferrer and Joan Porti.
\newblock Local coordinates for {${\rm SL}(n,\bold C)$}-character varieties of
  finite-volume hyperbolic 3-manifolds.
\newblock {\em Ann. Math. Blaise Pascal}, 19(1):107--122, 2012.

\bibitem{Neumann}
Walter~D. Neumann.
\newblock Extended {B}loch group and the {C}heeger-{C}hern-{S}imons class.
\newblock {\em Geom. Topol.}, 8:413--474 (electronic), 2004.

\bibitem{NeumannRealize}
Walter~D. Neumann.
\newblock Realizing arithmetic invariants of hyperbolic {$3$}-manifolds.
\newblock In {\em Interactions Between Hyperbolic Geometry, Quantum Topology
  and Number Theory)}, volume 541 of {\em Contemp. Math.}, pages 233--246.
  Amer. Math. Soc., 2011.

\bibitem{NeumannZagier}
Walter~D. Neumann and Don Zagier.
\newblock Volumes of hyperbolic three-manifolds.
\newblock {\em Topology}, 24(3):307--332, 1985.

\bibitem{Sah}
Chih-Han Sah.
\newblock Homology of classical {L}ie groups made discrete. {III}.
\newblock {\em J. Pure Appl. Algebra}, 56(3):269--312, 1989.

\bibitem{Steenrod}
Norman Steenrod.
\newblock {\em The {T}opology of {F}ibre {B}undles}.
\newblock Princeton Mathematical Series, vol. 14. Princeton University Press,
  Princeton, N. J., 1951.

\bibitem{Suslin1046}
A.~A. Suslin.
\newblock Homology of {${\rm GL}\sb{n}$}, characteristic classes and {M}ilnor
  {$K$}-theory.
\newblock In {\em Algebraic {$K$}-theory, number theory, geometry and analysis
  ({B}ielefeld, 1982)}, volume 1046 of {\em Lecture Notes in Math.}, pages
  357--375. Springer, Berlin, 1984.

\bibitem{Suslin}
A.~A. Suslin.
\newblock {$K\sb 3$} of a field, and the {B}loch group.
\newblock {\em Trudy Mat. Inst. Steklov.}, 183:180--199, 229, 1990.
\newblock Translated in Proc.\ Steklov Inst.\ Math.\ {\bf 1991}, no.\ 4,
  217--239, Galois theory, rings, algebraic groups and their applications
  (Russian).

\bibitem{ThurstonNotes}
William~P. Thurston.
\newblock The geometry and topology of three-manifolds.
\newblock 1980 Princeton lecture notes, available at
  \url{http://library.msri.org/books/gt3m/}.

\bibitem{Yoshida}
Tomoyoshi Yoshida.
\newblock The {$\eta$}-invariant of hyperbolic {$3$}-manifolds.
\newblock {\em Invent. Math.}, 81(3):473--514, 1985.

\bibitem{ZickertAlgK}
Christian~K. Zickert.
\newblock Algebraic {$K$}-theory and the extended {B}loch group, 2009.
\newblock \href{http://arXiv.org/abs/0910.4005}{arXiv:0910.4005} [math.GT].

\bibitem{ZickertDuke}
Christian~K. Zickert.
\newblock The volume and {C}hern-{S}imons invariant of a representation.
\newblock {\em Duke Math. J.}, 150(3):489--532, 2009.

\end{thebibliography}

\end{document}